  \def\NAT@nmfmt#1{{\scshape\NAT@up#1}}
  \let\c@equation\c@paragraph
\title{Th\'eor\`emes d'\'equidistribution pour les syst\`emes dynamiques 
d'origine arithm\'etique}
\begin{document}
\maketitle

{\makeatletter\let\Ref@Option\Ref@Option@Ignore\makeatother
\tableofcontents}

\let\bar\overline
\def\abs#1{\left|{#1}\right|}
\def\norm#1{\left\|{#1}\right\|}
\def\P{{\mathbf P}}
\def\Card{\operatorname{card}}
\def\kod{\operatorname{kod}}
\def\Aut{\operatorname{Aut}}
\let\hra\hookrightarrow
\let\ra\rightarrow
\def\Pic{\operatorname{Pic}}
\def\div{\operatorname{div}}
\def\Ample{\operatorname{Ample}}
\def\Picplus{\operatorname{Pic^+}}
\def\Gal{\operatorname{Gal}}
\def\Spec{\operatorname{Spec}}
\def\A{{\mathbf A}}
\def\gm{{\mathbf G}_{\mathrm m}}
\def\Spec{\operatorname{Spec}}
\def\ord{\operatorname{ord}}
\def\abs#1{\left|#1\right|}
\def\hdeg{\mathop{\widehat{\mathrm {deg}}}}
\def\hdiv{\mathop{\widehat{\operatorname{div}}}}
\def\hvol{\mathop{\widehat{\operatorname{vol}}}}
\def\vol{\operatorname{vol}}
\def\ddc{\mathop{\mathrm d\mathrm d^{c}}}
\def\hZ^#1{\widehat{\mathrm{Z}}\vphantom{\operatorname{Z}}^{#1}}
\def\CH^#1{\operatorname{CH}}
\def\hCH^#1{\widehat{\mathrm{CH}}\vphantom{\operatorname{CH}}^{#1}}
\def\hdiv{\widehat{\mathrm{div}}}
\def\hPic{\mathop{\widehat{\operatorname{Pic}}}}

\chapter*{Introduction}

Ce texte est une introduction à quelques problèmes
arithmétiques liés aux systèmes dynamiques issus
de la géométrie algébrique.
Il se comporte de trois chapitres assez autonomes,
issus des trois exposés que j'avais faits
lors des \emph{États de la recherche} en mai~2006 à Rennes.

Le premier chapitre, \emph{Hauteurs sur l'espace projectif},
explique la théorie des hauteurs, initiée par \textsc{Weil}
et \textsc{Northcott}, et en démontre les principales
propriétés fondamentales, notamment
la variation de la hauteur sous l'effet d'un morphisme
et le théorème de finitude. La théorie est bien plus simple
lorsque l'on se borne au cas du corps des nombres rationnels,
car toutes les subtilités liées à la théorie algébrique des nombres
s'évanouissent alors. Nous nous limitons d'ailleurs à ce cas
dans le premier paragraphe, et en profitons pour donner un aperçu
de ce que cette théorie peut dire des systèmes dynamiques polynomiaux,
en particulier de leurs points prépériodiques. C'était
en effet la motivation initiale du théorème de finitude.
Une dernier paragraphe, d'esprit plus analytique, 
exprime la hauteur d'un point comme somme de termes locaux; le
formalisme des fonctions de Green que nous introduisons 
est inspiré du langage de la géométrie d'Arakelov.

Le second chapitre est consacré aux systèmes dynamiques polynomiaux
et, principalement, à ceux qui sont \emph{polarisés}
au sens de~\cite{zhang2006}. Nous expliquons un certain nombre
de conjectures arithmétiques les concernant, en tâchant de décrire
quelques exemples et quelques démonstrations. De fait, la résolution
de certaines de ces conjectures dans l'exemple du système
dynamique donné par la multiplication par~$2$ dans une variété abélienne
est l'une des grandes avancées du sujet dans les années 1980--2000.
Nous introduisons enfin le théorème d'équidistribution
de~\cite{szpiro-u-z97} et ébauchons son application à la
résolution des conjectures évoquées, ainsi que quelques généralisations.
Peu de temps avant que ce texte n'entre sous presse, j'ai appris
l'existence du contre-exemple de~\cite{ghioca-tucker2009}
à certaines de ces conjectures; nous en profitons pour le décrire.

La preuve de ce théorème d'équidistribution
requiert tout l'arsenal de la géométrie d'Arakelov 
et sort du cadre de ces notes.  Dans
un  dernier chapitre, nous le démontrons dans le cas particulier
d'un système dynamique associé à une fraction rationnelle
de degré~$\geq 2$ en une variable. Suivant
la méthode de~\cite{bilu97}, nous montrons enfin
comment un cas particulier implique certaines des conjectures
du chapitre~2 pour les systèmes dynamiques toriques.

Chacun des chapitres se termine par quelques exercices et compléments,
parfois issus de la littérature récente. 

Depuis une dizaine d'années, l'étude arithmétique des systèmes
dynamiques polynomiaux s'est considérablement développée.
L'objet initial de ces exposés était d'expliquer
à un public principalement issu de la dynamique holomorphe 
les théorèmes d'équidistribution en géométrie d'Arakelov.
Le lecteur interessé trouvera dans l'ouvrage~\cite{silverman2007}
de nombreux développements  qui n'ont pu trouver leur place
dans ce texte.

\medskip

Je remercie les organisateurs de m'avoir invité à donner ce cours,
et le public, nombreux, pour sa participation.
Je remercie aussi P.~\textsc{Autissier}, M.~\textsc{Baker}, S.~\textsc{Cantat}, T.-C.~\textsc{Dinh}, V.~\textsc{Guedj},
L.~\textsc{Moret-Bailly}, N.~\textsc{Sibony}, ainsi que le rapporteur,
pour leurs commentaires pendant la conférence
ou sur des versions préliminaires de ce texte. 
Je remercie enfin D.~\textsc{Ghioca} et T.~\textsc{Tucker} de m'avoir
autorisés à inclure leur contre-exemple, non encore publié.

% \tracingmacros1

\chapter{Hauteurs sur l'espace projectif}
\label{chap.hauteurs}

La m\'ethode de {\og descente infinie\fg}
initi\'ee par Pierre de \textsc{Fermat} (1601--1665)
dans l'\'etude des \'equations diophantiennes
repose sur trois piliers:
\begin{itemize}
\item une notion de \emph{taille}  d'une solution d'une telle \'equation; 
\item \`a partir d'une solution donn\'ee, la construction d'une solution
de taille moindre ;  
\item le fait que ces tailles --- usuellement des nombres entiers --- 
ne peuvent diminuer ind\'efiniment.
\end{itemize}
Cette m\'ethode a permis \`a \textsc{Fermat} d'\'etablir  
des r\'esultats n\'egatifs, 
par exemple l'inexistence de triangles rectangles \`a c\^ot\'es entiers dont
l'aire soit un carr\'e parfait, probl\`eme
qui se ram\`ene \`a {\og l'\'equation de Fermat\fg} de degr\'e~$4$.
Elle intervient aussi dans la d\'emonstration
par Ernst \textsc{Kummer} (1810--1893)
du grand th\'eor\`eme de \textsc{Fermat} 
pour les nombres premiers r\'eguliers\footnote{Ce sont les
nombres premiers~$p$ qui ne divisent pas le num\'erateur
d'un des nombres de Bernoulli~$B_2, B_4,\ldots, B_{p-3}$,
ou plus conceptuellement, les nombres premiers~$p$
tel que le groupe des classes d'id\'eaux du corps
cyclotomique~$\mathbf Q(\zeta_p)$ soit d'ordre premier \`a~$p$.}.
Plus remarquablement peut-\^etre,
elle a aussi permi de prouver des r\'esultats positifs;
citons par exemple la d\'emonstration (1747) de Leonhard \textsc{Euler}
(1707--1783)
que tout nombre premier congru \`a~$1$ modulo~$4$
est somme de deux carr\'es (un th\'eor\`eme annonc\'e par~\textsc{Fermat}
lui-m\^eme en 1640).

Cette m\'ethode a deux avatars modernes. Le premier, la \emph{th\'eorie des
hauteurs}, auxquelles ce texte est consacr\'e, est une version
g\'eom\'etris\'ee de la notion de taille d'une solution d'une
\'equation diophantienne. Elle fut d\'evelopp\'ee
\`a la fin des ann\'ees~40 par Andr\'e \textsc{Weil} (1906--1998)
et Douglas Geoffrey \textsc{Northcott} (1916--2005).
C'est en effet l'un des deux ingr\'edients de la preuve
du th\'eor\`eme de \textsc{Mordell}--\textsc{Weil} selon
lequel les points rationnels d'une vari\'et\'e ab\'elienne
d\'efinie sur un corps de nombres forment un groupe ab\'elien
de type fini (voir par exemple~\cite{serre1997}).
\textsc{Northcott} utilisa ce concept de hauteur pour \'etablir des
propri\'et\'es arithm\'etiques de certains syst\`emes dynamiques;
nous y reviendrons amplement.

L'autre ingr\'edient de la d\'emonstration du th\'eor\`eme de \textsc{Mordell--Weil}
r\'esulte du th\'eor\`eme de finitude de \textsc{Hermite--Minkowski}
en th\'eorie alg\'ebrique des nombres et d'un argument de cohomologie galoisienne. 
Son \'elaboration moderne, \emph{vari\'et\'es de descente},
lois de r\'eciprocit\'e et \emph{principes local-global}, 
sort largement du cadre de cet article. Je renvoie 
au survol~\citeyearpar{colliot-thelene1992}
de J.-L.~\textsc{Colliot-Th\'el\`ene}  pour une premi\`ere
introduction.

\medskip

Revenons aux hauteurs.
Dans les applications modernes \`a l'arithm\'etique, 
il est souvent utile, voire crucial, d'assouplir
le strict point de vue {\og \'equationnel\fg} des classiques
pour mieux exploiter les propri\'et\'es g\'eom\'etriques
des objets \'etudi\'es.
On est ainsi plut\^ot amen\'e \`a d\'efinir la hauteur 
d'un point de l'espace projectif, voire d'une vari\'et\'e
projective, et \`a \'etudier le comportement de cette hauteur
par des morphismes de vari\'et\'es alg\'ebriques.
Il faudra \'etendre les d\'efinitions na\"{\i}ves que l'on peut
adopter pour les nombres entiers au cas des nombres alg\'ebriques.
Nous commen\c{c}ons cependant ce cours par le cas des points rationnels:
il fait d\'ej\`a appara\^{\i}tre les id\'ees principales tout en \'evitant
les complications dues \`a la th\'eorie alg\'ebrique des nombres.

\section{Hauteur d'un point rationnel}
\label{sec.hauteur-Q}

\subsection{D\'efinition}

Soit donc $k$ un entier naturel et notons $\mathbf P^k$
l'espace projectif de dimension~$k$. Voyons-le comme \emph{sch\'ema},
c'est-\`a-dire comme la donn\'ee, pour tout anneau~$A$,
de l'ensemble $\mathbf P^k(A)$ de ses points \`a coordonn\'ees dans~$A$.
De fait, nous n'aurons besoin pour l'instant que du cas o\`u $A$
est un corps~$F$ : alors, $\mathbf P^k(F)$ n'est autre
que l'ensemble des droites vectorielles de l'espace~$F^{k+1}$.
Un point $x\in\mathbf P^k(F)$ poss\`ede ainsi 
$k+1$ coordonn\'ees $(x_0,\dots,x_k)$ non toutes nulles 
--- celles d'un vecteur directeur
quelconque de la droite correspondante --- bien d\'efinies \`a un
facteur multiplicatif non nul pr\`es.
Si $(x_0,\dots,x_k)$ est un \'el\'ement non nul de~$F^{k+1}$, on
notera $[x_0:\cdots:x_k]$ le point correspondant de~$\mathbf P^k(F)$;
nous dirons que $(x_0,\dots,x_k)$ en sont des \emph{coordonn\'ees homog\`enes}.

Consid\'erons dans ce num\'ero le cas du corps~$\Q$ des nombres rationnels.
Soit ainsi $x$ un point de~$\mathbf P^k(\Q)$ --- on parle
de \emph{point rationnel}.
Parmi la multiplicit\'e de ses coordonn\'ees homog\`enes, on peut
en choisir certaines plus particuli\`erement. Il est
en effet loisible de multiplier les~$x_i$ par un d\'enominateur
commun ; ce sont alors des entiers relatifs.
On les divise alors par leur plus grand diviseur commun,
de sorte \`a obtenir une famille $(x_0,\dots,x_k)$
de coordonn\'ees homog\`enes form\'ee d'entiers relatifs premiers
entre eux dans leur ensemble.
Observons alors que deux telles familles ne d\'efinissent le m\^eme point
que si elles diff\`erent l'une l'autre par multiplication par~$\pm 1$.

Cela montre la correction de la d\'efinition suivante:
\begin{defi}
Soit $x=[x_0:\cdots:x_k]$ un point de~$\mathbf P^k(\Q)$,
dont les coordonn\'ees homog\`enes sont des entiers premiers entre eux
dans leur ensemble.
On appelle \emph{hauteur exponentielle}  de~$x$ le nombre entier
$H(x)=\max(\abs{x_0},\dots,\abs{x_k})$.
La \emph{hauteur logarithmique} de~$x$ est d\'efinie par 
la formule
\[ h(x) = \log H(x) = \log \max(\abs{x_0},\dots,\abs{x_k}). \]
\end{defi}

Les deux notions de hauteurs, exponentielle et logarithmique,
ont leur int\'er\^et suivant les contextes. Dans ce texte,
nous appellerons tout simplement \emph{hauteur} la hauteur logarithmique.

De la d\'efinition r\'esulte imm\'ediatement une propri\'et\'e
de \emph{finitude}, facile mais fondamentale.

\begin{prop}\label{prop.finitude-Q}
Pour tout nombre r\'eel~$B$, l'ensemble des points de 
$\mathbf P^k(\Q)$ de hauteur au plus~$B$ est fini.
\end{prop}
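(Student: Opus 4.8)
The plan is to read finiteness straight off the definition. Fix a real number~$B$ and let $x=[x_0:\cdots:x_k]\in\mathbf{P}^k(\Q)$ satisfy $h(x)\le B$, where $(x_0,\dots,x_k)$ denotes the chosen system of homogeneous coordinates formed of integers that are coprime in their entirety. Then $\max_i\abs{x_i}=H(x)=e^{h(x)}\le e^B$, so each coordinate~$x_i$ lies in the finite set $\{n\in\mathbf{Z}:\abs n\le e^B\}$, which has $2\lfloor e^B\rfloor+1$ elements. Hence the $(k+1)$-tuple $(x_0,\dots,x_k)$ belongs to the finite set $S_B=\{(n_0,\dots,n_k)\in\mathbf{Z}^{k+1}:\max_i\abs{n_i}\le e^B\}\setminus\{0\}$.

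The one point requiring a word of care is that we have not attached to~$x$ an arbitrary element of~$S_B$, but one that is well defined up to multiplication by~$\pm1$; this is exactly the remark made just before the definition, namely that two systems of coprime integer homogeneous coordinates of the same point differ by a factor~$\pm1$. So $x\mapsto\{(x_0,\dots,x_k),\,-(x_0,\dots,x_k)\}$ is a well-defined injection from the set of points of height at most~$B$ into the set of two-element subsets of~$S_B$, which is finite. Therefore the set of points of $\mathbf{P}^k(\Q)$ of height at most~$B$ is finite, with at most $\frac12\bigl((2\lfloor e^B\rfloor+1)^{k+1}-1\bigr)$ elements. Alternatively, one may avoid tracking the sign entirely by simply observing that the image of~$S_B$ under the surjection $\mathbf{Z}^{k+1}\setminus\{0\}\to\mathbf{P}^k(\Q)$ is finite and contains every point of height at most~$B$.

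I do not expect any genuine obstacle here: the argument is elementary and the inequality $H(x)\le e^B$ does all the work. The only thing to keep straight is the distinction between a point and its coordinate representatives — a point corresponds a priori to infinitely many homogeneous coordinate tuples, and the force of the definition is that passing to coprime representatives cuts this down to a single pair~$\pm(x_0,\dots,x_k)$, which is what legitimises the counting above.
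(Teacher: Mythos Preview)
Your proof is correct and follows exactly the same approach as the paper's: choose coprime integer homogeneous coordinates, observe that each coordinate is bounded in absolute value by~$e^B$, and conclude finiteness from the finiteness of such integer tuples. The paper's version is terser (it does not bother with the explicit cardinality bound or the $\pm1$ discussion), but the argument is the same.
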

\begin{proof}
En effet, un tel point est d\'etermin\'e par le choix de~$k+1$ entiers
relatifs $(x_0,\dots,x_k)$, non tous nuls, et v\'erifiant
$\abs{x_i}\leq \mathrm e^B$ pour tout~$i$. Il n'y a qu'un nombre
fini de telles familles d'entiers, d'o\`u la proposition.
\end{proof}

Il est en fait possible, voir~\cite{schanuel79}, d'\'etablir
le comportement asymptotique du nombre~$N(B)$ de points
de~$\mathbf P^k(\Q)$ de hauteur au plus~$B$ --- il est commode ici
de consid\'erer la hauteur exponentielle. On trouve
\[ \Card \{x\in\mathbf P^k(\Q)\sozat H(x)\leq B\}
   \simeq \frac{2^k}{\zeta(k+1)} B^{k+1}, \]
o\`u $\zeta$ d\'esigne la fonction z\^eta de \textsc{Riemann}.
On doit \`a Yuri \textsc{Manin} d'avoir entrevu comment cette asymptotique
peut se g\'en\'eraliser \`a des vari\'et\'es plus g\'en\'erales. L'exposant~$k+1$
doit par exemple \^etre interpr\'et\'e comme l'ordre du p\^ole 
de la forme diff\'erentielle 
$d(x_1/x_0)\wedge \dots \wedge d(x_k/x_0)$ sur~$\mathbf P^k$
le long de l'hyperplan d'\'equation~$x_0=0$.
Je renvoie \`a~\cite{peyre2002} pour une introduction \`a ce th\`eme.

\subsection{Irrationalit\'e des points pr\'ep\'eriodiques}
\label{subsec.irrat-prep}

Venons-en maintenant au th\`eme de ces \emph{\'Etats de la recherche}
et consid\'erons un syst\`eme dynamique polynomial sur~$\P^k$,
c'est-\`a-dire une application~$f$ de~$\mathbf P^k$ dans lui-m\^eme qui
applique un point~$x$ de coordonn\'ees homog\`enes~$[x_0:\cdots:x_k]$
sur le point~$f(x)$ dont des coordonn\'ees homog\`enes sont
\[  [f_0(x_0,\dots,x_k):f_1(x_0,\dots,x_k):\cdots:f_k(x_0,\dots,x_k)], \]
o\`u les~$f_i$ sont des polyn\^omes \`a coefficients dans~$\Q$ (pour le moment).
Pour que la d\'efinition fasse sens et d\'efinisse
un \emph{endomorphisme de l'espace projectif},
il est n\'ecessaire et suffisant que les polyn\^omes~$f_i$
soient \emph{homog\`enes} de m\^eme degr\'e, disons~$d$, 
et qu'ils n'aient pas de z\'ero commun dans~$\P^k$.
Par l\`a, nous entendons sans z\'ero commun non seulement dans~$\P^k(\Q)$,
mais aussi dans~$\P^k(\C)$, condition bien plus forte.
Ils sont alors sans facteur commun.
Nous dirons pour abr\'eger que $f$ est un \emph{endomorphisme
de~$\P^k$} et que $d$ est son degr\'e.

\begin{prop}[Northcott]\label{prop.fonctorialite-Q}
Soit $f\colon\P^k\ra\P^k$ un endomorphisme de degr\'e~$d$ de~$\P^k$.
Il existe alors un nombre r\'eel~$c$ (ne d\'ependant que de~$f$)
tel que tout point $x\in\mathbf P^k (\Q)$ satisfasse les in\'egalit\'es
\[  d h(x) - c \leq h(f(x)) \leq d h(x)+c. \]
\end{prop}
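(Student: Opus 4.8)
The plan is to write $x=[x_0:\cdots:x_k]$ with $x_i\in\mathbf Z$ coprime, so that $H(x)=\max_i\abs{x_i}$, and to compare $H(f(x))=\max(\abs{x_0},\dots,\abs{x_k})$ computed with coprime coordinates against the unreduced coordinates $(f_0(x),\dots,f_k(x))$. The upper bound is the easy half and I would dispatch it first: since each $f_i$ is homogeneous of degree $d$ with rational coefficients, clearing denominators we may assume the $f_i$ have integer coefficients, and then the triangle inequality gives $\abs{f_i(x_0,\dots,x_k)}\leq C\,H(x)^d$ where $C$ counts the monomials times the largest coefficient. Dividing by the gcd of the $f_i(x)$ only decreases the max, so $H(f(x))\leq C\,H(x)^d$, i.e. $h(f(x))\leq d\,h(x)+\log C$. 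This already uses nothing beyond homogeneity.

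The lower bound is where the work lies, and it is the step I expect to be the main obstacle: we must show the integers $f_0(x),\dots,f_k(x)$ cannot \emph{all} be unexpectedly small, equivalently that their gcd is not too large. The geometric input is that the $f_i$ have no common zero on $\P^k(\C)$; I would extract the arithmetic consequence via an effective Nullstellensatz-type statement. Concretely, the homogeneous ideal $(f_0,\dots,f_k)$ contains a power of the irrelevant ideal, so there exist a positive integer $m$ (which one can take of the form $(k+1)(d-1)+1$) and homogeneous polynomials $g_{ij}\in\Q[x_0,\dots,x_k]$ of degree $m-d$ such that $x_i^m=\sum_{j=0}^k g_{ij}\,f_j$ for each $i$. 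After clearing a common denominator $\delta$, we get $\delta\,x_i^m=\sum_j \tilde g_{ij}\,f_j(x)$ with integer coefficients, for every choice of integers $x_0,\dots,x_k$.

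From this identity I would conclude: let $e=\gcd(f_0(x),\dots,f_k(x))$; then $e$ divides $\delta\,x_i^m$ for every $i$, hence $e\mid\delta\gcd(x_0^m,\dots,x_k^m)=\delta$ since the $x_i$ are coprime. Thus $e\leq\abs{\delta}$ is bounded independently of $x$. Now, using the triangle inequality on the same identity, $\abs{\delta}\,H(x)^m=\max_i\abs{\delta x_i^m}\leq C'\,\bigl(\max_i\abs{f_i(x)}\bigr)\,H(x)^{m-d}$ where $C'$ bounds the coefficient-sums of the $\tilde g_{ij}$; dividing by $H(x)^{m-d}$ (legitimate since $H(x)\geq 1$) yields $\max_i\abs{f_i(x)}\geq(\abs{\delta}/C')\,H(x)^d$. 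Combining with $H(f(x))=\max_i\abs{f_i(x)}/e\geq\max_i\abs{f_i(x)}/\abs{\delta}$ gives $H(f(x))\geq (1/C')\,H(x)^d$, that is $h(f(x))\geq d\,h(x)-\log C'$. Taking $c=\max(\log C,\log C')$ finishes the proof. The one genuinely delicate point is citing or sketching the effective Nullstellensatz producing the $g_{ij}$ of controlled degree; everything else is bookkeeping with the triangle inequality and divisibility.
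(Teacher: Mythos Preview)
Your proof is correct and follows essentially the same route as the paper: the triangle inequality for the upper bound, and for the lower bound the homogeneous Nullstellensatz identity $D x_i^t=\sum_j f_j(x)\,\tilde g_{ij}(x)$ (after clearing denominators) to obtain both the size estimate $\max_j\abs{f_j(x)}\gg H(x)^d$ and the divisibility $\gcd_j f_j(x)\mid D$. The only cosmetic difference is that you invoke an explicit degree bound $m=(k+1)(d-1)+1$, which is unnecessary here---the paper simply appeals to the existence of some exponent $t$ from the ordinary Nullstellensatz, and that is all the argument requires.
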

\begin{proof}
Il est loisible de multiplier les polyn\^omes~$f_i$ par un d\'enominateur
commun de leurs coefficients; ce sont alors des polyn\^omes
\`a coefficients dans~$\Z$.

Consid\'erons l'\'enonc\'e suivant:
pour tout polyn\^ome~$g\in\Z[X_0,\dots,X_k]$, suppos\'e homog\`ene de degr\'e~$d$,
il existe un nombre r\'eel~$c(g)$ tel que l'on ait
\begin{equation}
\label{evident}
 \abs{g(x_0,\dots,x_k)} \leq c(g) \max(\abs{x_0},\dots,\abs{x_k})^d
\end{equation}
pour tout $(x_0,\ldots,x_k)\in\R^{k+1}$.
Le cas d'un mon\^ome est \'evident; 
par r\'ecurrence sur le nombre de mon\^omes de~$g$,
le cas g\'en\'eral en r\'esulte gr\^ace \`a l'in\'egalit\'e triangulaire.

Soit $x$ un point de~$\P^k(\Q)$ de coordonn\'ees homog\`enes
$[x_0:\cdots:x_k]$, suppos\'ees enti\`eres et premi\`eres entre elles.
Le point~$f(x)$ a pour coordonn\'ees homog\`enes la famille
$[f_0(x):\cdots:f_k(x)]$. Celles-ci sont enti\`eres mais pas forc\'ement
premi\`eres entre elles ; notons donc~$\delta$ leur pgcd (suppos\'e~$\geq 1$).
On a alors
\begin{align*}
 h(f(x)) &=\log \max\left( \abs{f_0(x)/\delta},\dots,\abs{f_k(x)/\delta}\right)\\
& \leq \log \max(\abs{f_0(x)},\dots,\abs{f_k(x)}) \\
& \leq \log \max(c(f_0),\dots,c(f_k)) + d \log \max(\abs{x_0},\dots,\abs{x_k})\\
& \leq c + d h(x),
\end{align*}
o\`u $c=\log\max_i c(f_i)$.

L'autre in\'egalit\'e est plus subtile. Comme les~$f_i$ sont sans z\'ero
commun dans~$\P^k(\C)$, leur seul z\'ero commun  dans~$\C^{k+1}$
est $(0,\dots,0)$.
Un polyn\^ome~$g$ sur~$\C^{k+1}$ qui s'annule l\`a o\`u les~$f_i$ s'annulent
simultan\'ement n'est pas n\'ecessairement une combinaison des~$f_i$.
Toutefois, le th\'eor\`eme des z\'eros de \textsc{Hilbert} 
entra\^{\i}ne que c'est le cas d'une puissance de~$g$:
\begin{theo}[Th\'eor\`eme des z\'eros de Hilbert]
Soit $F$ un corps alg\'ebriquement clos, soit $P_1,\dots,P_n$ des polyn\^omes
en $k$ variables \`a coefficients dans~$F$.
Soit $P\in F[X_1,\dots,X_k]$ un polyn\^ome
qui s'annule en tout point $x=(x_1,\dots,x_k)$ de~$F^k$ 
tel que $P_1(x)=\cdots=P_n(x)$. Il existe alors un entier~$m\geq 1$
et des polyn\^omes $Q_1,\dots,Q_n\in F_0[X_1,\dots,X_k]$
tels que $P^m = P_1Q_1+\cdots+P_nQ_n$.

De plus, si les coefficients des polyn\^omes~$P$, $P_1,\dots,P_n$
appartiennent \`a un sous-corps~$F_0$ de~$F$,
on peut choisir les polyn\^omes~$Q_i$ de sorte que
leurs coefficients appartiennent \`a~$F_0$.

Enfin, si les polyn\^omes $P$ et~$P_1,\dots,P_n$ sont homog\`enes,
on peut choisir les polyn\^omes~$Q_i$ homog\`enes.
\end{theo}
\begin{proof}
La premi\`ere partie de l'\'enonc\'e est le th\'eor\`eme classique
dont le lecteur trouvera une d\'emonstration
dans tout livre d'alg\`ebre commutative \'el\'ementaire.

Les deux ajouts s'en d\'eduisent en consid\'erant 
une base de~$F$ comme $F_0$-espace vectoriel
et les composantes homog\`enes des polyn\^omes~$Q_i$.
\end{proof}

Appliquons ceci aux polyn\^omes~$X_j$, pour $0\leq j\leq k$.
Il existe donc un entier~$t$ et des polyn\^ome $g_{ij}$
\`a coefficients rationnels
tels que $X_i^t=\sum_{i=0}^k f_i g_{ij}$. Quitte \`a \^oter
de~$g_{ij}$ les termes de degr\'es autres que~$t-d$, on peut supposer
que chaque~$g_{ij}$ est homog\`ene de degr\'e~$t-d$. 
Soit $D$ un d\'enominateur commun des coefficients des polyn\^omes~$g_{ij}$.
La relation
\begin{equation} \label{nullstellensatz}
D x_i^t= \sum_{i=0}^k f_i(x) D g_{ij}(x) \end{equation}
et l'in\'egalit\'e~\eqref{evident} appliqu\'ee aux polyn\^omes~$D g_{ij}$
entra\^{\i}ne une majoration de la forme
\[ D \abs{x_i}^t \leq c_1 \max (\abs{f_0(x)},\dots,\abs{f_k(x)})
   \max (\abs{x_0},\dots,\abs{x_k})^{t-d}, \]
o\`u $c_1$ est un nombre entier.
De l\`a en resulte l'in\'egalit\'e
\[ \max(\abs{x_0},\dots,\abs{x_k})^t \leq c_2 \max (\abs{f_0(x)},\dots,\abs{f_k(x)}) \max(\abs{x_0},\dots,\abs{x_k})^{t-d} \]
puis
\[  d \log \max(\abs{x_0},\dots,\abs{x_k}) \leq c_3 +
  \log \max(\abs{f_0(x)},\dots,\abs{f_k(x)}). \]
Comme les~$x_i$ sont premiers entre eux, 
la relation~\eqref{nullstellensatz} implique que le pgcd des~$f_j(x)$
divise~$D$. A fortiori, $\delta\leq D$
et 
\[ d h(x) \leq c_3+\log D + \log \max(\abs{f_0(x)/\delta},\dots,\abs{f_k(x)/\delta}) \leq c_4+ h( f(x)), \]
ainsi qu'il fallait d\'emontrer.
\end{proof}

Apr\`es cette d\'emonstration, insistons sur le fait que l'in\'egalit\'e
de gauche --- la minoration de la hauteur de l'image de~$x$ ---
a requis le th\'eor\`eme des z\'eros de \textsc{Hilbert} et donc
l'hypoth\`ese  que les polyn\^omes~$f_i$ sont sans z\'ero commun dans~$\P^k(\C)$.
La seule absence de z\'ero commun dans~$\P^k(\Q)$ n'aurait pas une telle
cons\'equence alg\'ebrique et ne permettrait pas d'\'etablir la minoration 
voulue, voir par exemple l'exercice~\ref{exo.cremona.2}
du chapitre~\ref{chap.sysdyn} pour un contre-exemple explicite.

\medskip

On doit \`a~\cite{northcott1950} d'avoir mis en \'evidence
l'importante propri\'et\'e de finitude
\'enonc\'ee dans la prop.~\ref{prop.finitude-Q}, pr\'ecis\'ement en vue de la
cons\'equence suivante.

\'Etant donn\'e un syst\`eme dynamique $f\colon X\ra X$
d'un ensemble~$X$, nous dirons qu'un point $x\in X$
est \emph{p\'eriodique} pour~$f$ s'il existe
un entier~$n\geq 1$ tel que $f^n(x)=x$, o\`u $f^n$ d\'esigne
le $n$-i\`eme it\'er\'e $f\circ\cdots\circ f$ de~$f$.
Nous dirons qu'un point~$x$ est \emph{pr\'ep\'eriodique}
si l'un de ses it\'er\'es est p\'eriodique ; cela revient
exactement \`a dire que l'orbite $\{x,f(x),f^2(x),\ldots\}$
de~$x$ est un ensemble fini.

\begin{coro}[Northcott]
\label{coro.northcott-Q}
Soit $f\colon\P^k\ra\P^k$ un endomorphisme de degr\'e~$d$
\`a coefficients rationnels. Supposons $d\geq 2$.
Il existe un nombre r\'eel~$m(f)$ tel que pour tout 
point~$x$ de~$\P^k(\Q)$ qui est pr\'ep\'eriodique pour~$f$,
on ait $h(x)\leq m(f)$. 
En particulier, il n'y a dans~$\P^k(\Q)$ qu'un nombre fini de points
pr\'ep\'eriodiques pour~$f$.
\end{coro}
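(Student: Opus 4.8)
The plan is to combine the quasi-functoriality inequality of Proposition~\ref{prop.fonctorialite-Q} with the finiteness of Proposition~\ref{prop.finitude-Q}; the hypothesis $d\geq 2$ will enter exactly once, to ensure that $d-1$ is a positive quantity. Fix the constant~$c$ furnished by Proposition~\ref{prop.fonctorialite-Q} for~$f$, so that
\[ d\,h(y)-c \leq h(f(y)) \leq d\,h(y)+c, \qquad y\in\P^k(\Q). \]

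First I would analyse the shape of the orbit. Let $x\in\P^k(\Q)$ be preperiodic for~$f$; by the characterisation recalled above, its orbit $O(x)=\{x,f(x),f^2(x),\ldots\}$ is a \emph{finite} subset of $\P^k(\Q)$. The height therefore attains its maximum on $O(x)$, say at a point $y_0=f^j(x)$ with $h(y_0)=\max_{y\in O(x)} h(y)$. Since $f(y_0)$ again lies in $O(x)$, we have $h(f(y_0))\leq h(y_0)$; combined with the left-hand inequality $h(f(y_0))\geq d\,h(y_0)-c$ this yields $d\,h(y_0)-c\leq h(y_0)$, that is $(d-1)\,h(y_0)\leq c$. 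As $d\geq 2$ we may divide and obtain $h(y_0)\leq c/(d-1)$, and since $x\in O(x)$ we conclude $h(x)\leq h(y_0)\leq c/(d-1)$. Thus the first assertion holds with $m(f)=c/(d-1)$, which depends only on~$f$.

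For the last sentence it then suffices to note that the set of preperiodic points of~$f$ in $\P^k(\Q)$ is contained in the set of points of $\P^k(\Q)$ of height at most~$m(f)$, which is finite by Proposition~\ref{prop.finitude-Q}.

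I do not expect a genuine obstacle here; the only subtlety is the necessity of $d\geq 2$. For $d=1$ the inequality $(d-1)\,h(y_0)\leq c$ is vacuous and the argument collapses — as it must, since an automorphism of~$\P^k$ (the identity, or a diagonal automorphism of finite order) can have infinitely many periodic, hence preperiodic, points. A variant avoiding the ``maximum over the orbit'' step would be to iterate Proposition~\ref{prop.fonctorialite-Q} directly: for a point of period~$n$ one gets $h(x)=h(f^n(x))\geq d^n h(x)-c(d^n-1)/(d-1)$, whence again $h(x)\leq c/(d-1)$, and a second telescoping of the left inequality transports this bound from $f^m(x)$ back to~$x$ for a general preperiodic~$x$; but that requires bookkeeping with geometric sums, which the orbit argument sidesteps.
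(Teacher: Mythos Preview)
Your argument is correct and in fact cleaner than the paper's. The paper iterates Proposition~\ref{prop.fonctorialite-Q} to get
\[
d^n h(x) - c\,\frac{d^n-1}{d-1} \;\leq\; h(f^n(x)) \;\leq\; d^n h(x) + c\,\frac{d^n-1}{d-1},
\]
then, writing $f^{n+p}(x)=f^n(x)$ and $y=f^n(x)$, first bounds $h(y)$ by $c\,d/(d-1)^2$ from $h(f^p(y))=h(y)$, and finally transports this back to~$x$ via the upper inequality to obtain $h(x)\leq c(2d-1)/(d-1)^2$. This is exactly the ``variant'' you sketch in your last paragraph.

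Your route --- pick the point of maximal height in the finite orbit and apply the lower inequality of Proposition~\ref{prop.fonctorialite-Q} once --- sidesteps the geometric sums entirely and even yields the sharper constant $m(f)=c/(d-1)$. The only thing the paper's argument buys is an explicit iterated inequality, which it does not actually use elsewhere; your approach is both shorter and gives a better bound.
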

\begin{proof}
L'encadrement de la prop.~\ref{prop.fonctorialite-Q}
entra\^{\i}ne par r\'ecurrence l'in\'egalit\'e
\[   d^n h(x)- c \frac{d^n-1}{d-1} \leq 
h(f^n(x)) \leq d^n h(x) + c \frac{d^{n}-1}{d-1} , \]
valable pour tout entier~$n\geq 0$ et tout point $x\in\P^k(\Q)$.
Si un point $x$ est pr\'ep\'eriodique,
son orbite $\{x,f(x),f^2(x),\dots\}$ est finie,
donc il existe des entiers $n\geq 0$ et~$p\geq 1$ tels
que $f^n(x)=f^{n+p(x)}$.
Posons $y=f^n(x)$.  On a donc $f^p(y)=y$ et 
\[ h(x)  \leq \frac1{d^n}h(y) + c \frac1{d-1} \leq h(y)+ c\frac1{d-1}.\]
De m\^eme,
\[ h(y) \leq  \frac1{d^p}h(f^p(y))+c\frac1{d-1}
\leq \frac1{d} h(y) + c\frac1{d-1}, \]
d'o\`u 
\[ h(y)\leq  c \frac d{(d-1)^2}. \]
Finalement, la hauteur de tout point pr\'ep\'eriodique $x\in\P^k(\Q)$ v\'erifie
\[ h(x) \leq c \frac{2d-1}{(d-1)^2}, \]
ce qui d\'emontre le corollaire.
\end{proof}

On pourra observer que la borne obtenue pour la hauteur
d'un point pr\'ep\'eriodique est assez explicite. Outre le degr\'e~$d$,
elle fait intervenir les coefficients de l'endomorphisme~$f$ 
via la constante~$c$
de la proposition~\ref{prop.fonctorialite-Q}.
Expliciter cette derni\`ere constante  est possible,
mais subtil : s'il est \'evident d'expliciter
 la majoration fournie par cette proposition,
la minoration repose sur l'utilisation du th\'eor\`eme
des z\'eros de Hilbert dont les premi\`eres
versions effectives n'ont \'et\'e d\'emontr\'ees que dans
la fin des ann\'ees~1980 (voir~\cite{teissier1990} pour un survol
de ce probl\`eme).

\subsection{Hauteur normalis\'ee}

Bien d'autres fonctions que la fonction~$h$ introduite ici
sont d'une utilit\'e comparable pour l'arithm\'etique.
Notons par exemple que le choix d'un autre syst\`eme
de coordonn\'ees sur l'espace projectif (c'est-\`a-dire
la composition avec une homographie) fournirait une autre
fonction hauteur $h'$, certes telle que $h'-h$ est born\'ee
en vertu de la proposition~\ref{prop.fonctorialite-Q}.
La situation des syst\`emes dynamiques fournit 
une variante tr\`es commode de la hauteur,
syst\'ematis\'ee par~\cite{call-s93} mais dont le principe
remonte \`a \textsc{N\'eron} et \textsc{Tate}. 

Avant de continuer, observons l'exemple simple du syst\`eme
dynamique sur~$\P^1$ donn\'e par l'\'el\'evation des coordonn\'ees
homog\`enes \`a la puissance~$d$, autrement dit
$f([x_0:x_1])=[x_0^d:x_1^d]$,
soit encore $f_0=X_0^d$ et $f_1=X_1^d$.
Les points pr\'ep\'eriodiques dans~$\P^1(\C)$ sont les points $[0:1]$,
$[1:0]$, ainsi que les points de coordonn\'ees homog\`enes~$[1:\zeta]$,
o\`u $\zeta$ est une racine de l'unit\'e.
Parmi ces points, seuls $[0:1]$, $[1:0]$, $[1:1]$ et~$[1:-1]$
sont rationnels.
Observons aussi que l'in\'egalit\'e reliant $h(x)$ et $h(f(x))$
est dans ce cas une \emph{\'egalit\'e}:
\[ h([x_0^d:x_1^d])= d h([x_0:x_1]) .\]
En modifiant l\'eg\`erement la hauteur, 
nous allons g\'en\'eraliser cette \'egalit\'e.

\begin{prop}\label{prop.tate-Q}
Soit $f\colon\P^k\ra\P^k$ un endomorphisme de l'espace
projectif donn\'e par des polyn\^omes homog\`enes de degr\'e~$d$
sans z\'ero commun dans~$\P^k(\C)$.
Supposons $d\geq 2$.
Il existe alors une unique fonction $\hat h\colon\P^k(\Q)\ra\R$
telle que $\hat h-h$ soit born\'ee et telle que $\hat h(f(x))=dh(x)$
pour tout $x\in\P^k(\Q)$.
\end{prop}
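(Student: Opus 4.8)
The plan is to reproduce the classical limiting procedure of \textsc{Tate}. For $x\in\P^k(\Q)$ I would set
\[ \hat h(x) = \lim_{n\to\infty} \frac1{d^n} h(f^n(x)), \]
and then verify successively that this limit exists, that the resulting function enjoys the two announced properties, and that it is the only such function.

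First I would establish convergence. Put $a_n = d^{-n} h(f^n(x))$. Applying the estimate $\abs{h(f(y))-d\,h(y)}\leq c$ of Proposition~\ref{prop.fonctorialite-Q} to $y=f^n(x)$ gives
\[ \abs{a_{n+1}-a_n} = \frac1{d^{n+1}}\,\abs{h(f^{n+1}(x)) - d\,h(f^n(x))} \leq \frac{c}{d^{n+1}}. \]
Since $d\geq 2$, the series $\sum_n c\,d^{-n-1}$ converges; hence $(a_n)_n$ is a Cauchy sequence of real numbers, and it converges to a limit which I call $\hat h(x)$. Summing the preceding bound from $0$ to $n-1$ and letting $n\to\infty$ yields $\abs{\hat h(x)-h(x)}\leq c/(d-1)$, so $\hat h - h$ is bounded. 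This is exactly where the hypothesis $d\geq 2$ is used: for $d=1$ the geometric series diverges and no such normalisation need exist.

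The functional equation then drops out of the construction. Since $f^n(f(x)) = f^{n+1}(x)$,
\[ \hat h(f(x)) = \lim_{n\to\infty} \frac1{d^n} h\bigl(f^{n+1}(x)\bigr) = d\,\lim_{n\to\infty} \frac1{d^{n+1}} h\bigl(f^{n+1}(x)\bigr) = d\,\hat h(x). \]
For uniqueness, suppose $\hat h'$ is a second function with $\hat h'-h$ bounded and $\hat h'\circ f = d\,\hat h'$. Then $g = \hat h' - \hat h$ is bounded, say $\abs{g}\leq M$ on $\P^k(\Q)$, and satisfies $g(f(x)) = d\,g(x)$, whence $g(x) = d^{-n} g(f^n(x))$ for every $n\geq 0$; therefore $\abs{g(x)}\leq M/d^n$ for all $n$, which forces $g(x)=0$.

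Nothing here is genuinely hard once Proposition~\ref{prop.fonctorialite-Q} is in hand: the only step calling for a little care is the convergence of the defining limit, and the whole argument rests on the summability of $\sum_n d^{-n}$, that is, on the assumption $d\geq 2$.
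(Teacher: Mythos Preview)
Your proof is correct. The paper packages the same idea differently: it equips the affine space of functions~$\phi$ with $\phi-h$ bounded with the sup-norm distance, observes that $T\colon\phi\mapsto\frac1d\,\phi\circ f$ is a contraction of this complete metric space (Lipschitz constant at most $1/d<1$), and invokes the Banach fixed-point theorem. Your argument is precisely the iterative construction hidden inside that theorem, made explicit as the Tate limit $\hat h(x)=\lim_n d^{-n}h(f^n(x))$; the paper in fact records this formula immediately after its proof and remarks that the direct convergence argument gives a proof that is \emph{essentiellement identique}. The fixed-point formulation is slicker and delivers uniqueness for free, while your version is more constructive and produces the explicit bound $\abs{\hat h-h}\leq c/(d-1)$ along the way.
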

\begin{proof}
Munissons l'espace des fonctions born\'ees de~$\P^k(\Q)$ dans~$\R$
de la norme uniforme et munissons l'espace affine~$E$ des fonctions
telles que $\phi-h$ soit born\'ee de la distance induite.
C'est un espace m\'etrique complet.
L'application $T\colon \phi\mapsto \frac1d \phi\circ f$
est lin\'eaire et applique~$E$ dans lui-m\^eme, car $T(h)-h$ est born\'ee.
Cette application est contractante, de constante de Lipschitz au plus~$1/d<1$.
Elle poss\`ede donc un unique point fixe dans~$E$.
\end{proof}

La fonction~$\hat h$ est appel\'ee \emph{hauteur normalis\'ee}.
Notons la formule
\begin{equation}
\label{eq.normalisee-Q}
\hat h(x) = \lim_{n\ra\infty} \frac1{d^n} h(f^n(x)) .
 \end{equation}
La d\'emonstration directe de la convergence de la la limite
permet de donner une d\'emonstration d'apparence un peu plus constructive
de la proposition pr\'ec\'edente (mais essentiellement identique).
La hauteur normalis\'ee v\'erifie les propri\'et\'es suivantes:

\begin{prop}\label{prop.hauteur-normalisee-Q}
\begin{enumerate}
\item on a $\hat h(x)\geq 0$ pour tout $x\in\P^k(\Q)$;
\item un point $x\in\P^k(\Q)$ v\'erifie $\hat h(x)=0$
si et seulement s'il est pr\'ep\'eriodique ;
\item pour tout nombre r\'eel~$B$, l'ensemble des points $x\in\P^k(\Q)$
tels que $\hat h(x)\leq B$ est fini.
\end{enumerate}
\end{prop}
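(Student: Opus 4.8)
Le plan est de tout déduire de trois ingrédients déjà établis : la positivité de la hauteur naïve~$h$, le fait que $\hat h - h$ soit bornée (proposition~\ref{prop.tate-Q}), et la propriété de finitude de Northcott (proposition~\ref{prop.finitude-Q}). Je fixerais une constante $C$ telle que $\abs{\hat h(y) - h(y)} \leq C$ pour tout $y \in \P^k(\Q)$, et j'utiliserais librement l'équation fonctionnelle $\hat h \circ f = d\,\hat h$, conséquence immédiate de la formule~\eqref{eq.normalisee-Q}.

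Les assertions~1 et~3 sont alors des conséquences formelles. Pour la positivité, je partirais de ce que $h(y) \geq 0$ pour tout $y$ --- des coordonnées homogènes entières premières entre elles ne sont pas toutes nulles, donc le maximum de leurs valeurs absolues est un entier $\geq 1$, de logarithme positif --- puis la formule~\eqref{eq.normalisee-Q} exprime $\hat h(x)$ comme limite des quantités positives $d^{-n} h(f^n(x))$. Pour la finitude, l'inégalité $\hat h(x) \leq B$ entraîne $h(x) \leq \hat h(x) + C \leq B + C$, de sorte que l'ensemble considéré est contenu dans l'ensemble des points de $\P^k(\Q)$ de hauteur au plus $B + C$, fini d'après la proposition~\ref{prop.finitude-Q}.

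Reste l'assertion~2, qui est le cœur de l'énoncé. Si $x$ est prépériodique, son orbite $\{x, f(x), f^2(x), \dots\}$ est finie, donc la suite $\bigl(h(f^n(x))\bigr)_n$ ne prend qu'un nombre fini de valeurs, est bornée, et $d^{-n} h(f^n(x)) \to 0$ ; la formule~\eqref{eq.normalisee-Q} donne $\hat h(x) = 0$. Réciproquement, si $\hat h(x) = 0$, l'équation fonctionnelle donne $\hat h(f^n(x)) = d^n \hat h(x) = 0$ pour tout $n \geq 0$, d'où $h(f^n(x)) \leq \hat h(f^n(x)) + C = C$ ; toute l'orbite de $x$ est donc contenue dans l'ensemble fini des points de $\P^k(\Q)$ de hauteur au plus~$C$, si bien qu'elle est finie et que $x$ est prépériodique.

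Le point délicat est cette réciproque de l'assertion~2 : c'est là, et seulement là, qu'interviennent conjointement la proposition~\ref{prop.tate-Q} --- pour contrôler $h$ par $\hat h$ le long de l'orbite --- et le théorème de finitude --- pour conclure à la finitude de l'orbite. Tout le reste découle de la positivité de $h$ et de la formule~\eqref{eq.normalisee-Q}.
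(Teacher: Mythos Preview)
Your proof is correct and follows essentially the same approach as the paper's: finitude via the bounded difference $\hat h - h$ and Northcott, positivity via the limit formula~\eqref{eq.normalisee-Q}, and the characterization of prépériodiques by showing the orbit sits in a set of bounded height. The only cosmetic difference is in the forward direction of~2: the paper argues directly from the functional equation ($f^n(x)=f^{n+p}(x)$ gives $d^n\hat h(x)=d^{n+p}\hat h(x)$, hence $\hat h(x)=0$), whereas you pass through the limit formula; both are equally valid.
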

\begin{proof}
La propri\'et\'e~\emph c) r\'esulte imm\'ediatement de la propri\'et\'e
analogue pour~$h$ et de ce que $\hat h-h$ est born\'ee.
Comme $h$ est positive ou nulle,
la propri\'et\'e~\emph a) est manifeste sur la formule~\eqref{eq.normalisee-Q}
ci-dessus. Elle se d\'eduit aussi, ainsi que la propri\'et\'e~\emph b),
de l'assertion de finitude.

Soit en effet $x\in\P^k(\Q)$. On a $\hat h(f^n(x))=d^n \hat h(x)$.
Si $x$ est pr\'ep\'eriodique, il existe des entiers~$n\geq 0$ et~$p\geq 1$
tels que $f^n(x)=f^{n+p}(x)$. Par suite,
$d^n \hat h(x)=d^{n+p}\hat h(x)$, d'o\`u $\hat h(x)=0$ car $d\geq 2$.
Inversement, si $\hat h(x)\leq 0$, les termes de la suite $((f^n(x))$ 
forment un ensemble de points de hauteur normalis\'ee n\'egative
ou nulle,
donc fini d'apr\`es~\emph c). Il existe donc des entiers~$n\geq 0$
et~$p\geq 1$ tels que $f^n(x)=f^{n+p}(x)$.
Autrement dit, $x$ est pr\'ep\'eriodique et $\hat h(x)=0$.
\end{proof}

Tels qu'\'enonc\'es ci-dessus, c'est-\`a-dire restreints au cas
du corps des nombres rationnels, les r\'esultats pr\'ec\'edents
ne sont pas suffisants.  
Concernant par exemple les points pr\'ep\'eriodiques,
ils ne donnent des renseignements
que sur ceux qui poss\`edent un syst\`eme de coordonn\'ees homog\`enes rationnelles,
et ceux-ci sont en nombre fini d'apr\`es le th\'eor\`eme de finitude.
Pourtant, ainsi que le montre
l'exemple du syst\`eme dynamique sur~$\P^1$ donn\'e par l'\'el\'evation des coordonn\'ees 
\`a la puissance~$d$, l'ensemble des points pr\'ep\'eriodiques
(\`a coordonn\'ees homog\`enes complexes, mais en fait alg\'ebriques)
est infini. Ainsi que nous le verrons
plus bas, il est m\^eme dense pour la topologie de Zariski
(prop.~\ref{theo.per.dense} du chapitre~\ref{chap.sysdyn}).
De plus, les \'enonc\'es pr\'ec\'edents ne concernent que l'espace
projectif et il convient d'\'etablir les propri\'et\'es g\'en\'erales
des hauteurs et d'en d\'egager les applications aux syst\`emes dynamiques
polynomiaux d'une vari\'et\'e alg\'ebrique arbitraire.

% \clearpage
\section{Hauteur d'un point alg\'ebrique}

Dans ce paragraphe, j'explique comment d\'efinir la hauteur
d'un point de l'espace projectif dont un syst\`eme
de coordonn\'ees homog\`enes est form\'e de nombres alg\'ebriques. 
Il existe dans la litt\'erature
de nombreuses autres pr\'esentations, plus ou moins \'el\'ementaires,
citons notamment \cite{lang1983,serre1997,hindry-silverman2000,bombieri-gubler2006}.

\subsection{Quelques rappels de th\'eorie alg\'ebrique des nombres}

Notons $\bar\Q$ le corps des nombres alg\'ebriques: par d\'efinition,
c'est l'ensemble des nombres complexes qui sont annul\'es par un polyn\^ome
unitaire \`a coefficients rationnels. Un tel nombre complexe
est annul\'e par un polyn\^ome unitaire de degr\'e minimal : son polyn\^ome
minimal; ce polyn\^ome est irr\'eductible
et son degr\'e  est appel\'e degr\'e du nombre alg\'ebrique.

On appellera \emph{corps de nombres} un sous-corps~$K$ de~$\C$
qui est de fimension finie comme~$\Q$-espace vectoriel;
cette dimension est aussi appel\'ee degr\'e et not\'ee $[K:\Q]$.
Un tel corps~$K$ est en fait constitu\'e de nombres alg\'ebriques
(si $x\in K$, le polyn\^ome minimal de l'endomorphisme $\Q$-lin\'eaire
de multiplication par~$x$ dans~$K$ annule~$x$).

Si $a$ est un nombre alg\'ebrique de degr\'e~$d$, le sous-anneau 
$\Q[a]$ engendr\'e par~$a$  dans~$\C$ est une $\Q$-alg\`ebre
isomorphe \`a~$\Q[X]/(P)$, o\`u $P$ est le polyn\^ome minimal de~$a$.
Par suite, $\Q[a]$ est un $\Q$-espace vectoriel de dimension~$d$, 
et aussi un corps.  C'est donc un corps de nombres.
Plus g\'en\'eralement,
si $a_1,\dots,a_r$ sont des nombres alg\'ebriques,
le sous-corps de~$\C$, $\Q(a_1,\dots,a_r)$, qu'ils engendrent
est un corps de nombres. En fait, tout corps de nombres~$K$
est de la forme~$\Q(a)$ pour un certain \'el\'ement~$a$
(\emph{th\'eor\`eme de l'\'el\'ement primitif}).
Notons~$P$ le polyn\^ome minimal de~$a$; son degr\'e est le
degr\'e de~$K$. Comme $P$ est irr\'eductible (et~$\Q$ de caract\'eristique z\'ero),
ses racines complexes sont deux \`a deux distinctes ;
notons-les $a_1,\dots,a_D$. Ce sont les conjugu\'es de~$a$
(qui est l'un d'entre eux).

Pour tout~$i\in\{1,\dots,D\}$,
il existe un unique homomorphisme de corps $\sigma_i\colon K\ra\C$
qui applique $a$ sur~$a_i$. En outre, tout homomorphisme
de corps de~$K$ dans~$\C$ est de cette forme. Rappelons si besoin
est qu'un tel homomorphisme est injectif; on dira que c'est
un \emph{plongement} de~$K$ dans~$\C$.
Si $L$ est un sous-corps de~$K$ de degr\'e~$d$,
chacun des $d$ plongements de~$L$ dans~$\C$ se prolonge
en exactement $D/d$ plongements de~$K$ dans~$\C$.
(L'entier $D/d$ est \'egal \`a la dimension $[K:L]$ de~$K$ comme $L$-espace
vectoriel.)

Un entier alg\'ebrique est un nombre alg\'ebrique qui est annul\'e
par un polyn\^ome unitaire \`a coefficients entiers ; d'apr\`es
le \emph{lemme de Gau\ss{}}, il revient
au m\^eme d'exiger que son polyn\^ome minimal soit \`a coefficients entiers.
Notons que si $x$ est une racine du polyn\^ome \`a coefficients
entiers $P=a_0X^d+\dots+a_d$, avec $a_0\neq 0$, 
alors 
\[ 0=a_0^{d-1}P(x)=(a_0 x)^d + a_0a_1 (a_0 x)^{d-1}+\dots + a_0^{d-1}a_d, \]
ce qui montre que $a_0x$ est un entier alg\'ebrique.
L'ensemble des entiers alg\'ebriques est un sous-anneau de~$\bar\Q$
dont $\bar\Q$ est le corps des fractions.
Si $a_1,\dots,a_r$ sont des entiers alg\'ebriques,
le sous-anneau qu'ils engendrent dans~$\C$,
$\Z[a_1,\dots,a_r]$, est un $\Z$-module libre de rang fini,
\'egal  au degr\'e de~$\Q(a_1,\dots,a_r)$.
Si $K$ est un corps de nombres, l'ensemble des entiers alg\'ebriques
appartenant \`a~$K$ est de cette forme; on le notera $\Z_K$.

Soit $K$ un corps de nombres. La \emph{norme}
d'un \'el\'ement $a$ de~$K$, not\'ee $N_K(a)$, voire $N(a)$
s'il n'y a pas d'ambigu\"{\i}t\'e sur~$K$,
est d\'efinie comme le d\'eterminant
de l'endomorphisme~$\Q$-lin\'eaire de~$K$ donn\'e par la multiplication par~$a$.
Il y a plusieurs fa\c{c}ons de la calculer.

Soit d'abord $P$ le polyn\^ome minimal de~$a$ et~$d$ son degr\'e.
Soit $(x_1,\dots,x_r)$ une base de~$K$ vu comme $\Q(a)$-espace
vectoriel. Alors, la famille $(a^i x_j)$, pour $0\leq i\leq d-1$
et $1\leq j\leq r$, est une base de~$K$ sur~$\Q$. En particulier,
$D=dr$.  Dans cette base, la multiplication par~$a$ est diagonale
par blocs, chacun \'etant la matrice compagnon~$C_P$ du polyn\^ome~$P$.
On a ainsi $N(a)=(\det C_P)^r$.
Le d\'eterminant de~$C_P$ est \'egal au produit
des racines complexes de~$P$; ces racines ne sont autres que
les conjugu\'es $a_1,\dots,a_d$ de~$a$. Si $(\sigma_i)$, pour $1\leq i\leq d$,
d\'esigne la famille des plongements de~$\Q(a)$ dans~$\C$, on a donc
\[ N(a) = \big( a_1\cdots a_d \big)^r = \big ( \prod_{i=1}^d \sigma_i(a) \big)^r
          = \prod_{\sigma\colon K\hra\C} \sigma(a). \]
Supposons de plus que $a$ soit un entier alg\'ebrique non nul.
Alors, $a\Z_K$ est un sous-$\Z$-module de rang~$D$ de~$K$.
D'apr\`es le th\'eor\`eme des diviseurs \'el\'ementaires,
il existe une $\Z$-base  de~$\Z_K$, disons $(x_1,\dots,x_D)$,
et des nombres entiers $u_1,\dots,u_D$ tels que $(u_1 x_1,\dots,u_Dx_D)$
soit une $\Z$-base de~$a\Z_K$. Comme le d\'eterminant de la matrice
de passage d'une $\Z$-base de~$\Z_K$ \`a une autre est
\'egal \`a $\pm 1$, le d\'eterminant de la multiplication par~$a$
est \'egal au produit $u_1\dots u_D$, ou \`a l'oppos\'e.
D'autre part, $u_1\dots u_D$ appara\^{\i}t comme le cardinal
du $\Z$-module --- en fait de l'anneau --- quotient $\Z_K/a\Z_K$.
D'o\`u la seconde formule,
\[ N(a) =  \pm \Card (\Z_K/a\Z_K). \]
Si $I$ est un id\'eal non nul de~$\Z_K$, l'anneau quotient
$\Z_K/I$ est fini (si $a\in I$, l'anneau $\Z_K/I$ est un quotient
de l'anneau $\Z_K/(a)$); par d\'efinition, son cardinal
est appel\'ee la \emph{norme} de~$I$.
La formule pr\'ec\'edente pour $N(a)$ se g\'en\'eralise en 
$N(aI)=\abs{N(a)}N(I)$, pour $a\in \Z_K$ et $I$ un id\'eal de~$\Z_K$.

Si $L$ est un corps de nombres qui contient~$K$
et $a$ un \'el\'ement de~$K$, on a $N_L(a)=N_K(a)^\delta$,
o\`u $\delta=[L:K]$ d\'esigne le quotient du degr\'e de~$L$ par celui de~$K$.
Plus g\'en\'eralement, si $I$ est un id\'eal non nul de~$\Z_K$,
l'id\'eal $I\Z_L$ de~$\Z_L$ est de norme $N_K(I)^\delta$.
(Si $I=a_1\Z_K+\dots+a_r\Z_K$, notons que $I\Z_L=a_1\Z_L+\dots+a_r\Z_L$.)

\subsection{D\'efinition de la hauteur}

Soit $x$ un point de $\P^k(\bar\Q)$ et soit $[x_0:\cdots:x_k]$
des coordonn\'ees homog\`enes de~$x$, o\`u $x_i\in\bar\Q$ pour tout~$i$.
Soit $K$ un corps de nombres contenant les~$x_i$;
notons~$D$ le degr\'e de~$K$ et $\sigma_1,\dots,\sigma_D$
les $D$ homomorphismes de~$K$ dans~$\C$.
Quitte \`a multiplier les~$x_i$ par un m\^eme entier naturel non nul,
on peut supposer que ce sont des entiers alg\'ebriques,
donc des \'el\'ements de~$\Z_K$.
On d\'efinit alors la hauteur de~$x$ par l'expression
\begin{equation} \label{eq.def.hauteur}
  h(x)  = -\frac1D \log N_K(x_0\Z_K+\dots+x_k\Z_K)
  + \frac1D\sum_{j=1}^D \log \max(\abs{\sigma_j(x_0)},
\dots\abs{\sigma_j(x_k)}). \end{equation}
La correction de cette d\'efinition d\'epend de deux v\'erifications suppl\'ementaires:
l'ind\'ependance vis \`a vis du choix du corps de nombres~$K$
et des coordonn\'ees homog\`enes~$x_0,\dots,x_k$.
Notons $h_K(x_0,\dots,x_k)$ 
le membre de droite de l'\'equation~\eqref{eq.def.hauteur}
et commen\c{c}ons par d\'emontrer qu'il ne d\'epend pas
du choix d'un syst\`eme de coordonn\'ees homog\`enes.
Si $(x'_0,\dots,x'_k)$ est un second syst\`eme de coordonn\'ees homog\`enes
d\'efinissant le point~$x$, sujettes aux conditions $x'_i\in \Z_K$
pour tout~$i$,
il existe un \'el\'ement $u\in K^*$ tel que $x'_i=u x_i$
pour tout~$i$. \'Ecrivons $u$ comme une fraction $a/b$
d'\'el\'ements de~$\Z_K$, on voit que l'on a $bx'_i=ax_i$
pour tout~$i$.

Notons alors 
que 
\[ N_K(ax_0\Z_K+\dots+ax_k\Z_K)=\abs{N_K(a)} N_K(x_0\Z_K+\dots+x_k\Z_K) \]
et
\begin{align*}
 \sum_{j=1}^D \log \max(\abs{\sigma_j(ax_0)}, \dots\abs{\sigma_j(ax_k)})
 & = \sum_{j=1}^D \log\abs{\sigma_j(a)} + \sum_{j=1}^D \log \max(\abs{\sigma_j(x_0)}, \dots\abs{\sigma_j(x_k)}) \\
& = \log \abs{N_K(a)} + \sum_{j=1}^D \log \max(\abs{\sigma_j(x_0)},
\dots\abs{\sigma_j(x_k)}).
\end{align*}
Par suite,
$  h_K(ax_0,\dots,ax_k)=h_K(x_0,\dots,x_k)$.
De m\^eme, $h_K(bx'_0,\dots,bx'_k)=h_K(x'_0,\dots,x'_k)$,
d'o\`u l'ind\'ependance par rapport au choix des coordonn\'ees
homog\`enes.

La d\'emonstration de l'ind\'ependance de la hauteur par rapport au
choix du corps de nombres est plus simple.
Si $K$ et $K'$ sont des corps de nombres contenant chacun
un syst\`eme de coordonn\'ees homog\`enes du point~$x$,
le corps~$L$ engendr\'e par ces deux corps en est un \'egalement,
qui contient \`a la fois~$K$ et~$K'$.

Soit $E$ le degr\'e de~$L$; c'est un multiple de~$D$.
De fait, la restriction \`a~$K$ d'un homomorphisme $\sigma\colon L\ra\C$
est un plongement de~$K$ dans~$\C$ et chacun des~$D$ plongements
de~$K$ dans~$\C$ s'\'etend d'exactement $E/D$ mani\`eres distinctes
en un plongement de~$E$ dans~$\C$.
Par cons\'equent, dans la formule d\'efinissant $h_L(x_0,\dots,x_k)$,
la somme
\[  \sum_{\sigma\colon L\hra\C} \log\max(\abs{\sigma(x_0)},
\dots,\abs{\sigma(x_k)}) \]
reprend $E/D$ fois chaque terme de la somme correspondante
sur le corps~$K$.
Compte tenu des facteurs de normalisation $1/D$ et $1/E$,
ces parties des deux formules donnent le m\^eme r\'esultat.
La d\'emonstration de l'\'egalit\'e $h_K(x_0,\dots,x_k)
=h_L(x_0,\dots,x_k)$ est donc d\'etermin\'ee,
compte tenu de l'\'egalit\'e de normes d'id\'eaux
\[ N_L(x_0\Z_L+\dots+x_k\Z_L) = N_K(x_0\Z_K+\dots+x_k\Z_K)^{E/D} . \]

Une cons\'equence de la formule d\'efinissant la hauteur
est son invariance  sous l'action du groupe de Galois
$\Gal(\bar \Q/\Q)$. On d\'efinit en effet une action 
de $\Gal(\bar\Q/\Q)$ sur~$\P^n(\bar\Q)$ en posant
$\tau(x)=[\tau(x_0):\cdots:\tau(x_k)]$
pour tout point $x\in\P^k(\bar\Q)$, de coordonn\'ees
homoh\`enes $[x_0:\cdots:x_k]$ dans~$\bar\Q$,
et tout \'el\'ement $\tau\in\Gal(\bar\Q/\Q)$.

\begin{prop}\label{prop.h-galois}
Pour tout $x\in\P^k(\bar\Q)$
et tout automorphisme $\tau\in\Gal(\bar\Q/\Q)$, on a
$h(\tau(x))=h(x)$.
\end{prop}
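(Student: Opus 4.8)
The plan is to reduce everything to the invariance of the defining formula~\eqref{eq.def.hauteur} under the substitution $x_i\mapsto\tau(x_i)$, the only subtlety being that $\tau$ need not fix the chosen number field~$K$. So the first step is to enlarge~$K$: replace it by its Galois closure~$\tilde K$ over~$\Q$, which is still a number field and still contains homogeneous coordinates of~$x$. Since the height does not depend on the choice of number field (proved just above), we may compute $h(x)$ using~$\tilde K$, and now $\tau$ restricts to an automorphism $\tau|_{\tilde K}$ of~$\tilde K$; we may also assume, multiplying the $x_i$ by a suitable nonzero natural integer (which changes nothing, as already checked), that all $x_i\in\Z_{\tilde K}$, hence all $\tau(x_i)\in\Z_{\tilde K}$ as well.

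Next I would treat the two summands of~\eqref{eq.def.hauteur} separately. For the ideal-norm term: an automorphism $\sigma:=\tau|_{\tilde K}$ of~$\tilde K$ carries $\Z_{\tilde K}$ onto itself, so $\sigma(x_0\Z_{\tilde K}+\dots+x_k\Z_{\tilde K})=\sigma(x_0)\Z_{\tilde K}+\dots+\sigma(x_k)\Z_{\tilde K}$, and since $\sigma$ induces a ring isomorphism $\Z_{\tilde K}/I\xrightarrow{\sim}\Z_{\tilde K}/\sigma(I)$ for any nonzero ideal~$I$, the two quotients have the same cardinality; hence $N_{\tilde K}$ of the two ideals coincide. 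For the archimedean term: the set of embeddings $\tilde K\hra\C$ is permuted by precomposition with~$\sigma^{-1}$, i.e. the map $j\mapsto j\circ\sigma^{-1}$ is a bijection of $\{\text{embeddings }\tilde K\hra\C\}$; therefore
\[
 \sum_{j\colon\tilde K\hra\C}\log\max_i\abs{j(\sigma(x_i))}
 =\sum_{j'\colon\tilde K\hra\C}\log\max_i\abs{j'(x_i)},
\]
the reindexing being $j'=j\circ\sigma$. Adding the two contributions gives $h_{\tilde K}(\sigma(x_0),\dots,\sigma(x_k))=h_{\tilde K}(x_0,\dots,x_k)$, that is $h(\tau(x))=h(x)$.

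The only point that needs a little care — and I would call it the main (mild) obstacle — is the bookkeeping on embeddings: one must observe that $\tau$ acts on the $x_i$ purely through its restriction to a single number field containing them, so that the formula is genuinely being evaluated on the same field before and after, and that composing embeddings with an \emph{automorphism} of that field merely permutes them. Passing to the Galois closure is exactly what makes $\sigma=\tau|_{\tilde K}$ an automorphism rather than just a plongement into~$\C$, which is what the permutation argument requires; once that is in place both terms are manifestly invariant and the proof is complete.
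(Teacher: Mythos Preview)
Your proof is correct and follows essentially the same route as the paper: pass to a Galois extension of~$\Q$ containing homogeneous coordinates of~$x$, observe that $\tau$ restricts to an automorphism of that field and of its ring of integers (so the ideal-norm term is preserved), and note that precomposition with this automorphism permutes the complex embeddings (so the archimedean sum is preserved). The paper is slightly terser about the Galois-closure step and the norm argument, but the structure is identical.
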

\begin{proof}
Soit $x$ un point de $\P^k(\bar\Q)$
et soit $K$ une extension galoisienne de~$\Q$
contenant un syst\`eme $[x_0:\cdots:x_k]$ de coordonn\'ees
homog\`enes de~$x$.
Par d\'efinition, $K$ est stable par tout automorphisme 
$\tau\in\Gal(\bar\Q/\Q)$ et $\tau|_K$ est un automorphisme de~$K$,
ainsi que de son anneau d'entiers~$\Z_K$.
Il en r\'esulte que l'id\'eal $x_0\Z_K+\dots+x_k\Z_K$
a m\^eme norme que son image $\tau(x_0)\Z_K+\dots+\tau(x_k)\Z_K$
par~$\tau$.
De m\^eme, les plongements de~$K$ dans~$\C$ sont tous obtenus
par composition d'un plongement fixe par les \'el\'ements
de $\Gal(K/\Q)$. Par cons\'equent, les termes des sommes
\[ \sum_{\sigma\colon K\hra\C} \log \max(\abs{\sigma(x_0)},
\dots,\abs{\sigma(x_k)}) \quad\text{et}
\sum_{\sigma\colon K\hra\C} \log \max(\abs{\sigma(\tau(x_0))},
\dots,\abs{\sigma(\tau(x_k))}) \]
ne diff\`erent que par l'ordre.
La proposition en r\'esulte.
\end{proof}

Les paragraphes suivants donnent des variantes de la d\'efinition
de la hauteur, ainsi que des exemples.

\subsection{Propri\'et\'e de B\'ezout pour les nombres alg\'ebriques}

Il n'est pas vrai que l'anneau des nombres
alg\'ebriques $\bar\Z$ soit un anneau principal.
Ce n'est en effet m\^eme pas un anneau noeth\'erien:
l'id\'eal $I$ de~$\bar\Z$ form\'e des \'el\'ements dont
une puissance est un multiple de~$2$  n'est par exemple
pas de type fini. Montrons par l'absurde qu'il n'est
pas principal : soit $a\in I$ tel que $I=(a)$.
Comme $2^{1/m}\in I$, $2^{1/m}$ est multiple de~$a$.
Comme $2^{(m-1)/m}=2/2^{1/m}$ n'est pas multiple de~$2$ dans~$\bar\Z$,
$a^{m-1}$ n'est pas multiple de~$2$.
Comme $m$ est arbitraire, aucune puissance de~$a$ n'est multiple
de~$2$, ce qui est absurde.

En revanche, il est vrai que les id\'eaux de type fini 
de~$\bar\Z$ sont principaux (ce qu'on r\'esume parfois en
disant que $\bar\Z$ a la propri\'et\'e de B\'ezout). Cela se
d\'eduit facilement
de ce que le groupe des classes d'id\'eaux d'un anneau
d'entiers de corps de nombres est de torsion.
Par suite, quitte \`a consid\'erer une extension convenable
$L$ de~$K$, on peut choisir les coordonn\'ees homog\`enes
$[x_0:\cdots:x_k]$
d'un point~$x$ de~$\P^k(K)$ comme suit: ce sont
des \'el\'ements de~$\Z_L$ et l'id\'eal $x_0\Z_L+\dots+x_k\Z_L$
de~$\Z_L$ est \'egal \`a~$\Z_L$.
Dans la formule~\eqref{eq.def.hauteur}, cela fait dispara\^{\i}tre
le premier terme.

\subsection{Valuations et hauteurs}

Les id\'eaux d'un anneau d'entiers de corps de nombres
ne sont pas toujours faciles \`a manipuler, notamment
parce qu'ils ne sont pas n\'ecessairement principaux.
Le premier terme de  la hauteur qui met en jeu la norme d'un id\'eal
est parfois malcommode.
Dans ce paragraphe, nous l'\'ecrivons comme une somme
de termes formellement analogues au second terme,
mais o\`u apparaissent d'autres corps que le corps des nombres
complexes.

\begin{defi}
Soit $F$ un corps. Une \emph{valeur absolue}
sur~$F$ est une application $\abs{\cdot}\colon F\ra\R_+$
qui v\'erifie les propri\'et\'es suivantes: pour tous $a$ et~$b$ dans~$F$,
\begin{enumerate}
\item $\abs{ab}=\abs a \abs b$ (multiplicativit\'e); 
\item $\abs{a+b}\leq\abs a+\abs b$ (in\'egalit\'e triangulaire);
\item $\abs a=0$ \'equivaut \`a $a=0$.
\end{enumerate}
\end{defi}

Pour la th\'eorie alg\'ebrique des nombres, la classification
des valeurs absolues sur~$\Q$ est d'une importance capitale.
Donnons-en des exemples. Il y a d\'ej\`a la restriction \`a~$\Q$
de la valeur absolue usuelle.
qu'on note parfois $\abs{\cdot}_\infty$ pour la distinguer de
celles que je vais bient\^ot d\'efinir.
Notons que cette valeur absolue $\abs{\cdot}_\infty$ est \emph{archim\'edienne},
car pour tout $a\in\Q^*$ et tout $M>0$, il existe $n\in\N$ tel que $\abs{na}_\infty>M$
(axiome d'Archim\`ede).

D'autre part, soit $p$ un nombre premier.
Tout nombre rationnel non nul~$a$ s'\'ecrit de mani\`ere
unique sous la forme $p^n u $, avec $n\in\Z$,
et $u\in\Q$ le quotient de deux entiers relatifs premiers \`a~$p$.
Posons alors $\abs a_p=p^{-n}$. Posons aussi, comme il se doit, $\abs 0_p=0$.
Cela d\'efinit une valeur absolue sur~$\Q$ que l'on appelle
la \emph{valeur absolue $p$-adique}. Seule l'in\'egalit\'e triangulaire
n'est pas \'evidente ; \'ecrivons donc $a=p^n\frac{a'}{a''}$
et $b=p^m \frac{b'}{b''}$, avec $n$, $m\in\Z$, $a'$, $a''$,
$b'$, $b''$ des nombres entiers relatifs premiers \`a~$p$. Supposons aussi,
ce qui est loisible, $n\leq m$.
Alors, 
\[ a+b = p^n \left( \frac{a'}{a''}+ p^{m-n}\frac{b'}{b''}\right)
 = p^n \frac{a'b''+p^{m-n}b'a''}{a''b''}. \]
Dans cette derni\`ere fraction, le d\'enominateur est premier \`a~$p$,
mais pas forc\'ement le num\'erateur. Par suite,
$ a+b$ s'\'ecrit sous la forme $p^s \frac{c'}{c''}$
avec $s\geq n$ et 
\[  \abs{a+b}_p=p^{-s}\leq p^{-n}=\max(p^{-n},p^{-m})
 = \max(\abs{a}_p,\abs{b}_p). \]
Ainsi, $\abs{\cdot}_p$ v\'erifie une in\'egalit\'e plus
forte  que l'in\'egalit\'e triangulaire : on dit que
c'est une \emph{valeur absolue ultram\'etrique.}
En particulier, pour tout $a\in \Q$ et tout $n\in\N^*$,
$\abs{na}_p\leq \abs{a}_p$ : cete valuation ne v\'erifie pas
l'axiome d'Archim\`ede.

\begin{theo}[Ostrowski]
Les valeurs absolues sur le corps~$\Q$ des nombres rationnels
sont les suivantes:
\begin{itemize}
\item la valeur absolue dite triviale pour laquelle
$\abs 0=0$ et $\abs a=1$ si $a\neq 0$;
\item 
 pour tout nombre r\'eel~$s$ tel que $0<s\leq 1$,
la puissance $\abs{\cdot}_\infty ^s$
de la valeur absolue archim\'edienne;

\noindent
\hskip-\leftskip
et, pour tout nombre premier~$p$,

\item 
pour tout nombre r\'eel~$s>0$,
la puissance $\abs{\cdot}_p^s$
de la valeur absolue $p$-adique.
\end{itemize}
\end{theo}

Une valeur absolue sur un corps d\'efinit une distance
et donc une topologie. Deux valeurs absolues d\'efinissent
la m\^eme topologie si et seulement si l'une est une puissance
(non nulle) de l'autre. La valeur absolue triviale
fournit la topologie discr\`ete.
Par suite, dans la liste ci-dessus, on ne s'int\'eressera
qu'aux valeurs absolues $p$-adiques standard et \`a la valeur
absolue archim\'edienne standard. Elles sont reli\'ees
par la \emph{formule du produit,} qui n'est autre
qu'une reformulation de la d\'ecomposition en facteurs premiers.

\begin{prop}\label{prop.formule.produit}
Pour tout $a\in\Q^*$, $\abs{a}_\infty \prod_p \abs{a}_p=1$.
\end{prop}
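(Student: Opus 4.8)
Les deux membres de l'\'egalit\'e sont des fonctions multiplicatives de~$a$: si $a$ et~$b$ sont deux rationnels non nuls, la multiplicativit\'e de chaque valeur absolue donne $\abs{ab}_\infty\prod_p\abs{ab}_p=\big(\abs a_\infty\prod_p\abs a_p\big)\big(\abs b_\infty\prod_p\abs b_p\big)$, les produits infinis n'ayant de toute fa\c{c}on qu'un nombre fini de facteurs diff\'erents de~$1$. De plus, pour $a=-1$, toutes les valeurs absolues valent~$1$ et l'identit\'e est \'evidente. Il suffit donc d'\'etablir la formule lorsque $a$ est un nombre premier~$q$, ou plus simplement de la v\'erifier sur un syst\`eme de g\'en\'erateurs du groupe multiplicatif~$\Q^*$; nous la d\'emontrerons directement pour tout entier naturel non nul.

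Le plan est d'exploiter la d\'ecomposition en facteurs premiers. \'Ecrivons $a=\prod_p p^{n_p}$, o\`u les exposants $n_p\geq 0$ sont nuls sauf pour un nombre fini d'entre eux. Pour un nombre premier~$\ell$ fix\'e, l'unicit\'e de cette \'ecriture montre que, dans la repr\'esentation $a=\ell^{n}u$ avec $u$ premier \`a~$\ell$, on a $n=n_\ell$; par d\'efinition de la valeur absolue $\ell$-adique, $\abs a_\ell=\ell^{-n_\ell}$. En particulier $\abs a_\ell=1$ d\`es que $\ell$ ne divise pas~$a$, ce qui justifie que le produit $\prod_p\abs a_p$ est bien d\'efini (seuls les diviseurs premiers de~$a$ y contribuent). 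On obtient
\[ \prod_p \abs a_p = \prod_p \ell^{-n_\ell}\Big|_{\ell=p} = \prod_p p^{-n_p} = \frac1a = \frac1{\abs a_\infty}, \]
la derni\`ere \'egalit\'e valant puisque $a>0$. D'o\`u $\abs a_\infty\prod_p\abs a_p=1$.

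Il n'y a pas ici d'obstacle s\'erieux: le seul point demandant un mot est la finitude du produit, qui r\'esulte de ce qu'un entier n'a qu'un nombre fini de diviseurs premiers, et c'est pr\'ecis\'ement l'existence et l'unicit\'e de la factorisation qui transforment le produit en une somme t\'elescopique triviale. Comme annonc\'e, la formule du produit n'est rien d'autre qu'une reformulation de cette factorisation.
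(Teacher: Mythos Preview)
Your proof is correct and follows essentially the same route as the paper's: both rest on the prime factorization and the computation $\abs a_p=p^{-n_p}$. The paper does it in one stroke for an arbitrary $a\in\Q^*$ by writing $a=\eps\prod_p p^{n_p}$ with $\eps\in\{-1,+1\}$ and $n_p\in\Z$, whereas you first invoke multiplicativity to reduce to $-1$ and positive integers; this is a minor organizational difference, not a different idea.
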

\begin{proof}
Soit $a=\eps \prod_p p^{n_p}$ la d\'ecomposition en facteurs
premiers de~$a$, avec $\eps\in\{-1,+1\}$ et $n_p\in\Z$
pour tout nombre premier~$p$, presque tous \'etant nuls.
Alors, pour tout nombre premier~$p$, on a $\abs a_p=p^{-n_p}$,
tandis que $\abs a_\infty=\prod_p p^{n_p}$.
Le produit de toutes ces quantit\'es est bien \'egal \`a~$1$.
\end{proof}

Fixons une valeur absolue $\abs\cdot$ sur~$\Q$.
Si $K$ est un corps de nombres, il y a en g\'en\'eral
plusieurs fa\c{c}ons d'\'etendre la valeur absolue donn\'ee
en une valeur absolue sur~$\Q$. 
Prenons par exemple la valeur absolue archim\'edienne
et le corps $K=\Q(\sqrt 2)$. 
Du point de vue alg\'ebrique, ind\'ependamment
de l'ordre usuel sur les nombres r\'eels, 
$\sqrt 2$ et $-\sqrt 2$ sont indissociables;
on peut donc prolonger la valeur absolue de deux fa\c{c}ons diff\'erentes,
en posant $\abs{a+b\sqrt 2}_{\infty}' = \abs{a+b\sqrt 2}$
ou $\abs{a+b\sqrt 2}_\infty''=\abs{a-b\sqrt 2}$,
pour $a$ et~$b$ dans~$\Q$.

Toujours dans le cas de la valeur absolue archim\'edienne,
mais dans le cas d'un corps de nombres~$K$ g\'en\'eral,
les diff\'erentes extensions \`a~$K$ de la valeur absolue archim\'edienne
de~$\Q$ correspondent exactement aux valeurs absolues
$\abs{\cdot}_\sigma$ d\'efinies par $\abs{a}_\sigma=\abs{\sigma(a)}$,
o\`u $\sigma$ parcourt l'ensemble des plongements de $K$ dans~$\C$.
(Deux plongements conjugu\'es d\'efinissent la m\^eme valeur absolue.)

Dans le cas des valeurs absolues $p$-adiques,
un corps, que l'on note $\C_p$, joue le m\^eme r\^ole que~$\C$
pour la valeur absolue archim\'edienne.
Observons que $\R$ est le compl\'et\'e du groupe ab\'elien~$\Q$ pour la topologie
d\'efinie par la valeur absolue archim\'edienne,
qu'il est muni d'une valeur absolue (celle que tout le monde
conna\^{\i}t, la seule qui \'etend celle de~$\Q$)  et que $\C$
est sa cl\^oture alg\'ebrique, munie de l'\emph{unique} valeur absolue
qui \'etend la valeur absolue archim\'edienne sur~$\R$.

Si  $p$ est un nombre premier, on commence
par d\'efinir le compl\'et\'e $\Q_p$ de~$\Q$ pour la topologie
d\'efinie par la valeur absolue $p$-adique. C'est un corps et sa topologie
est compatible avec la structure de corps.
Ensuite, on consid\`ere une cl\^oture alg\'ebrique $\bar{\Q_p}$
de~$\Q_p$ : comme $\Q_p$ est complet, on d\'emontre
que $\bar{\Q_p}$ poss\`ede une \emph{unique} valeur absolue,
toujours not\'ee $\abs{\cdot}_p$ qui \'etend la valeur absolue $p$-adique
sur~$\Q$.
Ce corps n'est cependant pas complet pour la topologie
$p$-adique --- on note alors $\C_p$ son compl\'et\'e muni de la valeur
absolue $\abs{\cdot}_p$ qui \'etend la valeur absolue $p$-adique.
C'est un corps complet par construction, et alg\'ebriquement clos
par th\'eor\`eme.

Si $K$ est un corps de nombres, les extensions
\`a~$K$ de la valeur absolue $p$-adique sur~$\Q$
sont de la forme $a\mapsto \abs{\sigma(a)}_p$,
o\`u $\sigma$ d\'ecrit l'ensemble des plongements
de~$K$ dans~$\C_p$. (L'image d'un tel plongement est bien s\^ur contenue
dans~$\bar\Q_p$ et deux plongements qui diff\`erent
par composition d'un automorphisme de~$\bar{\Q_p}$
fournissent la m\^eme valeur absolue.)

\begin{prop}
Soit $K$ un corps de nombres et soit $x\in\P^k(K)$
un point de coordonn\'ees homog\`enes $[x_0:\cdots:x_k]$
(appartenant \`a~$K$).
Alors, 
\begin{equation}\label{def.hauteur-p}
 h(x) = \frac1{[K:\Q]} \sum_{p\leq \infty}
         \sum_{\sigma\colon K\hra\C_p}
               \log\max(\abs{\sigma(x_0)}_p,\dots,\abs{\sigma(x_k)}_p).
\end{equation}
\end{prop}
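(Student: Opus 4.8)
The plan is to unfold both sides down to the definition~\eqref{eq.def.hauteur} of the height and to see that the finite places of~$K$ account exactly for the ideal-norm term. First I would record that the right-hand side of~\eqref{def.hauteur-p} is unchanged if the~$x_i$ are all multiplied by one element $u\in K^*$: since $\C_p$ is algebraically closed there are exactly $[K:\Q]$ embeddings $K\hra\C_p$, and their product sends~$u$ to $N_K(u)$, so $\prod_{\sigma\colon K\hra\C_p}\abs{\sigma(u)}_p=\abs{N_K(u)}_p$ for every $p\leq\infty$; taking the product over all~$p$ and applying Proposition~\ref{prop.formule.produit} to $N_K(u)\in\Q^*$ gives
\[
  \sum_{p\leq\infty}\ \sum_{\sigma\colon K\hra\C_p}\log\abs{\sigma(u)}_p=0 .
\]
It therefore suffices to prove~\eqref{def.hauteur-p} for one convenient system of homogeneous coordinates, and I would take the~$x_i$ in~$\Z_K$ --- the normalization under which the left-hand side was defined. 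For such coordinates, after dividing the double sum in~\eqref{def.hauteur-p} by $D=[K:\Q]$, the part with $p=\infty$ --- a sum over the~$D$ embeddings of~$K$ into $\C_\infty=\C$ equipped with the usual absolute value --- is exactly the second summand of~\eqref{eq.def.hauteur}. So the statement reduces to checking that the part with $p<\infty$ contributes the first summand, that is,
\[
  \sum_{p<\infty}\ \sum_{\sigma\colon K\hra\C_p}\log\max_i\abs{\sigma(x_i)}_p=-\log N_K(I),\qquad I=x_0\Z_K+\dots+x_k\Z_K .
\]

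Next I would work one finite prime~$p$ at a time with the standard local--global dictionary. Each $\sigma\colon K\hra\C_p$ induces, by pulling back the maximal ideal of the valuation ring of~$\C_p$, a nonzero prime ideal~$\mathfrak p$ of~$\Z_K$ lying over~$p$; conversely, since $K\otimes_\Q\Q_p\cong\prod_{\mathfrak p\mid p}K_{\mathfrak p}$ and $\C_p$ is algebraically closed, exactly $[K_{\mathfrak p}:\Q_p]=e_{\mathfrak p}f_{\mathfrak p}$ of the embeddings induce a given~$\mathfrak p$, and they all assign to each $a\in K$ the same value $\abs{\sigma(a)}_p$. The normalization $\abs p_p=1/p$ together with $\ord_{\mathfrak p}(p)=e_{\mathfrak p}$ forces $\abs{\sigma(a)}_p=p^{-\ord_{\mathfrak p}(a)/e_{\mathfrak p}}$ for such~$\sigma$; and since $\ord_{\mathfrak p}(I)=\min_i\ord_{\mathfrak p}(x_i)$, this yields $\max_i\abs{\sigma(x_i)}_p=p^{-\ord_{\mathfrak p}(I)/e_{\mathfrak p}}$.

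It remains to sum these contributions:
\begin{align*}
  \sum_{p<\infty}\ \sum_{\sigma\colon K\hra\C_p}\log\max_i\abs{\sigma(x_i)}_p
  &=-\sum_p\ \sum_{\mathfrak p\mid p}e_{\mathfrak p}f_{\mathfrak p}\cdot\frac{\ord_{\mathfrak p}(I)}{e_{\mathfrak p}}\log p\\
  &=-\sum_{\mathfrak p}\ord_{\mathfrak p}(I)\,f_{\mathfrak p}\log p
   =-\sum_{\mathfrak p}\ord_{\mathfrak p}(I)\log N_K(\mathfrak p),
\end{align*}
where the last two sums run over all nonzero prime ideals~$\mathfrak p$ of~$\Z_K$ and $N_K(\mathfrak p)=\Card(\Z_K/\mathfrak p)=p^{f_{\mathfrak p}}$. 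By multiplicativity of the ideal norm and the factorization $I=\prod_{\mathfrak p}\mathfrak p^{\ord_{\mathfrak p}(I)}$, this last expression equals $-\log N_K(I)$. Dividing by~$D$ and restoring the archimedean term proves~\eqref{def.hauteur-p}. The one point that needs care is the bookkeeping in the local--global dictionary --- that there are exactly $e_{\mathfrak p}f_{\mathfrak p}$ embeddings $K\hra\C_p$ above~$\mathfrak p$ and that $\abs{\sigma(a)}_p=p^{-\ord_{\mathfrak p}(a)/e_{\mathfrak p}}$ with these precise normalizations; granting these standard facts, what is left is the computation above.
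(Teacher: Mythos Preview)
Your argument is correct and is precisely the unfolding the paper has in mind: the text does not give a formal proof of this proposition but only indicates the two ingredients you use --- the identity $\prod_{\sigma\colon K\hra\C_p}\abs{\sigma(a)}_p=\abs{N_K(a)}_p$ (stated just after the proposition) and the product formula in~$\Q$ --- leaving the reader to match the finite part with the ideal-norm term of~\eqref{eq.def.hauteur}. You have carried out that matching carefully via the local--global dictionary ($e_{\mathfrak p}f_{\mathfrak p}$ embeddings above each~$\mathfrak p$, $\abs{\sigma(a)}_p=p^{-\ord_{\mathfrak p}(a)/e_{\mathfrak p}}$, and $N_K(I)=\prod_{\mathfrak p}N_K(\mathfrak p)^{\ord_{\mathfrak p}(I)}$), which is exactly what is needed.
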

La sommation sur {\og $p\leq\infty$\fg} signifie que l'on somme
sur les nombres premiers~$p$ et sur un symbole suppl\'ementaire~$\infty$,
avec $\C_\infty=\C$.
Dans la formule pr\'ec\'edente, on peut aussi regrouper
les termes selon les valeurs absolues induites sur le corps~$K$
par les plongements
qui indexent cette somme.

Notons ainsi $M_K$ l'ensemble des valeurs absolues sur~$K$
qui \'etendent une des valeurs absolues $p$-adique ou archim\'edienne
sur~$K$. Un \'el\'ement de~$M_K$ est appel\'e \emph{place} du corps~$K$.
Pour chaque valeur absolue $\abs{\cdot}_v\in M_K$, soit $p_v$
le nombre premier ou l'infini correspondant \`a la valeur
absolue induite sur le sous-corps~$\Q$ de~$K$ et soit $\eps_v$
le nombre de plongements~$\sigma$ de~$K$ dans~$\C_{p_v}$ 
induisant cette valeur absolue sur~$K$, autrement dit
tels que l'on ait $\abs{x}_v=\abs{\sigma(x)}_{p_v}$ pour tout~$x\in K$.

Avec les notations de la proposition, on a ainsi
\begin{equation}\label{eq.def.hauteur-v}
 h(x) = \frac1{[K:\Q]} \sum_{v\in M_K} 
\eps_v \log \max(\abs{x_0}_v,\dots,\abs{x_k}_v).
\end{equation}

Sous-jacent \`a la v\'eracit\'e de la formule pr\'ec\'edente est le fait
que le membre de droite ne d\'epend pas du choix des coordonn\'ees
homog\`enes d\'efinissant le point. De m\^eme que l'\'enonc\'e
analogue avec la premi\`ere d\'efinition de la hauteur d\'ecoulait
d'une relation entre la norme d'un \'el\'ement
et ses valeurs absolues archim\'ediennes,
cela r\'esulte de la \emph{formule du produit} qui
relie toutes les valeurs absolues d'un \'el\'ement non nul d'un corps
de nombres:
pour tout $a\in K^*$,
\begin{equation} \prod_{v\in M_K} \abs{a}_v^{\eps_v} 
  = \prod_{p\leq\infty} \prod_{\sigma\colon K\hra\C_p}
  \abs{\sigma(a)}_p = 1. \end{equation}
En fait, le facteur index\'e par~$p$ dans l'\'equation
pr\'ec\'edente est donn\'e par
\[ \prod_{\sigma K\hra\C_p} \abs{\sigma(a)}_p = \abs{N_K(a)}_p, \]
o\`u $N_K(a)$ est la norme de~$a$.
Cela ram\`ene la formule du produit dans le corps de nombres~$K$ \`a celle, 
d\'ej\`a mentionn\'ee (prop.~\ref{prop.formule.produit}),
dans le corps des nombres rationnels.

\subsection{Mesure de Mahler et hauteur}

Consid\'erons ici le cas de la droite projective~$\P^1$.
Notons $\infty$ le  point de coordonn\'ees homog\`enes $[0:1]$.
Un point $x$ de~$\P^1(\bar\Q)$ a deux coordonn\'ees homog\`enes $[x_0:x_1]$;
si $x\neq\infty$, $x_0\neq 0$ et $\xi=x_1/x_0$ est un \'el\'ement
de~$\bar\Q$. Dire que $x\in\P^1(K)$, pour un sous-corps~$K$ de~$\C$,
signifie exactement que $\xi\in K$. 

La hauteur du point~$\infty$ est \'egale \`a~$0$.
Si $\xi$ est un nombre alg\'ebrique, on note (abusivement)
$h(\xi)$ la hauteur du point~$[1:\xi]$ et on dit
aussi que c'est la hauteur de~$\xi$.
Montrons comment elle est reli\'ee au polyn\^ome minimal de~$\xi$.
Il nous faut tout d'abord rappeler une d\'efinition:
La \emph{mesure de Mahler} d'un polyn\^ome~$P\in\C[X]$ est donn\'ee par
la formule
\begin{equation} 
\mathrm M(P) = \exp\left(\int_0^{1} \log \abs{P(\mathrm e^{2\mathrm i\pi t})}\,dt\right).
\end{equation}

\begin{prop}
Soit $\xi$ un nombre alg\'ebrique, soit $P$ son polyn\^ome minimal
et soit $d$ son degr\'e. On a 
\[ h(\xi) = \frac1d \log \mathrm M(P).\]
\end{prop}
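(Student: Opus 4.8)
The plan is to use the formula~\eqref{def.hauteur-p} for the height expressed as a sum of local terms over all places, and to compute each local contribution for the point~$[1:\xi]$ in terms of the roots of~$P$. Write $P(X)=a_0\prod_{i=1}^d(X-\xi_i)$ with $a_0\in\Z$, the $\xi_i$ being the conjugates of~$\xi=\xi_1$, and let $K=\Q(\xi_1,\dots,\xi_d)$ be the splitting field of~$P$ (a Galois extension of~$\Q$). For each place~$v$ of~$K$, the plongements $\sigma\colon K\hra\C_{p_v}$ permute the conjugates $\xi_i$, so summing $\log\max(1,\abs{\sigma(\xi)}_{p_v})$ over the plongements inducing a fixed~$v$ amounts, up to the multiplicity $\eps_v$ and a counting of orbits, to summing $\log^+\abs{\xi_i}_v$ over the conjugates, where $\log^+ t=\log\max(1,t)$. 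After rearranging one finds
\[
 h(\xi)=\frac1d\sum_{v\in M_{\Q}}\sum_{i=1}^d \log^+\abs{\xi_i}_v,
\]
the inner sum now taken over any fixed extension of~$v$ to~$K$ and all conjugates.

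\textbf{Key steps.} First I would treat the finite places. For a prime~$p$, the quantity $\prod_i\max(1,\abs{\xi_i}_p)$ (product over conjugates, with a fixed extension of~$\abs\cdot_p$ to~$K$) is exactly $\abs{a_0}_p^{-1}$ times the $p$-adic absolute value of the leading coefficient needed to clear denominators; more precisely, Gauss's lemma for the $p$-adic Mahler measure gives $\prod_i\max(1,\abs{\xi_i}_p)=\abs{a_0}_p^{-1}$, since the reciprocal of the content of~$P$ at~$p$ equals~$\abs{a_0}_p^{-1}$ and $P/a_0$ is monic with coefficients the elementary symmetric functions of the~$\xi_i$. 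Summing the logarithms over all primes~$p$ yields $-\log\abs{a_0}_\infty^{-1}=\log\abs{a_0}$ by the product formula in~$\Q$. Second, for the archimedean place, $\sum_i\log^+\abs{\xi_i}=\log\prod_i\max(1,\abs{\xi_i})$, and this is precisely $\log\mathrm M(P)-\log\abs{a_0}$ by Jensen's formula: $\int_0^1\log\abs{P(\mathrm e^{2\mathrm i\pi t})}\,dt=\log\abs{a_0}+\sum_i\log^+\abs{\xi_i}$, which is the classical evaluation of the Mahler measure. Third, I would add the contributions: the two occurrences of $\log\abs{a_0}$ cancel (one with a $+$ sign from the finite places, one with a $-$ sign inside the Mahler term), leaving $h(\xi)=\frac1d\bigl(\log\mathrm M(P)-\log\abs{a_0}+\log\abs{a_0}\bigr)=\frac1d\log\mathrm M(P)$, as claimed.

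\textbf{Main obstacle.} The delicate point is the bookkeeping of multiplicities when passing from the sum over plongements $\sigma\colon K\hra\C_p$ (which is how the height is defined in~\eqref{def.hauteur-p}) to a sum over the conjugates~$\xi_i$ with a single chosen extension of~$\abs\cdot_p$ to~$K$: each value $\log^+\abs{\xi_i}_v$ gets counted with a multiplicity equal to the number of plongements sending~$\xi$ to that particular conjugate and inducing the chosen absolute value, and one must check that, after dividing by $[K:\Q]$, this is the same as dividing the sum over conjugates by~$d$. This is a purely combinatorial verification: the $[K:\Q]$ plongements of~$K$ into~$\C_p$, restricted to~$\Q(\xi)$, hit each of the~$d$ conjugates equally often (namely $[K:\Q]/d$ times), and grouping by the induced place of~$K$ does not disturb this balance. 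Once this normalisation issue is dispatched, the remaining ingredients --- Gauss's lemma for $p$-adic Mahler measures and Jensen's formula --- are classical, and the product formula (prop.~\ref{prop.formule.produit}) closes the argument.
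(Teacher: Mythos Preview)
Your proposal is correct and follows essentially the same route as the paper: decompose $h(\xi)$ into local contributions, show that each $p$-adic term equals $-\frac1d\log\abs{a_0}_p$, use Jensen's formula for the archimedean term, and conclude by the product formula. Two remarks on execution. First, the ``main obstacle'' you identify is self-inflicted: the paper works directly in $K=\Q(\xi)$ rather than in the splitting field, and there the $d$ plongements of~$K$ into~$\C_p$ are exactly the maps $\xi\mapsto\xi_i$, so the passage from a sum over plongements to a sum over conjugates is immediate and no bookkeeping arises. Second, your appeal to ``Gauss's lemma for the $p$-adic Mahler measure'' is right in spirit but loosely phrased (the content of~$P$ at~$p$ is~$1$, not~$\abs{a_0}_p$, since~$P$ is primitive); the paper proves the needed identity $\max_k\abs{a_k}_p=\abs{a_0}_p\prod_i\max(1,\abs{\xi_i}_p)$ directly from the Vi\`ete relations and the ultrametric inequality, then uses primitivity to set the left side equal to~$1$.
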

\begin{proof}
Par d\'efinition
\[ h(\xi) = h([1:\xi])= \frac1d \sum_{p\leq\infty} \sum_{\sigma\colon K\hra\C_p}
        \log \max (1,\abs{\sigma(\xi)}_p), \]
o\`u la somme est sur l'ensemble des nombres premiers et le symbole~$\infty$;
notons $h_p(\xi)$ la somme correspondante,
de sorte que $h(\xi)=\sum_{p\leq\infty} h_p(\xi)$.
Posons aussi $P=a_0X^d+a_1X^{d-1}+\cdots+a_d$.

Nous allons d'abord montrer que pour tout nombre premier~$p$,
on a $h_p(\xi)=-d \log\abs{a_0}_p$.
Notons en effet $\xi_1,\dots,\xi_d$ les racines de~$P$
dans~$\C_p$. Ce ne sont autres que les images de~$\xi$
par les diff\'erents plongements du corps~$\Q(\xi)$ dans~$\C_p$.
Supposons, ce qui est loisible, que
$\abs{\xi_1}_p\geq \abs{\xi_2}_p \geq \dots \geq \abs{\xi_d}_p$
et soit $r$ le plus grand entier tel que $\abs{\xi_r}_p\geq 1$.
Les relations coefficients-racines s'\'ecrivent
\[ \frac{a_k}{a_0}
      = (-1)^k\sum_{i_1<\dots<i_k} \xi_{i_1}\dots\xi_{i_k} \]
et entra\^{\i}nent la majoration
\[ \abs{ \frac{a_k}{a_0} }_p \leq \abs{\xi_1}_p \dots \abs{\xi_k}_p
 \leq \prod_{i=1}^d \max(1,\abs{\xi_i}_p), \]
autrement dit
\[ \abs{a_k}_p \leq \abs{a_0}_p \prod_{i=1}^d \max(1,\abs{\xi_i}_p). \]
Pour $k=r$, remarquons que dans l'expression
\[ \frac{a_r}{a_0}
      = (-1)^r\sum_{i_1<\dots<i_r} \xi_{i_1}\dots\xi_{i_r}, \]
le terme $\xi_1\dots\xi_r$ est de valeur absolue strictement sup\'erieure
\`a tous les autres; l'in\'egalit\'e ultram\'etrique entra\^{\i}ne alors
\[  \abs{ \frac{a_r}{a_0}}_p = \abs{\xi_1}_p \cdots \abs{\xi_r}_p
 = \prod_{i=1}^d \max(1,\abs{\xi_i}_p). \]
Par suite,
\[  \max(\abs{a_0}_p, \dots,\abs{a_d}_p)
      = \max(\abs{a_0}_p,\abs{a_r}_p) = \abs{a_0}_p \prod_{i=1}^d \max(1,\abs{\xi_i}_p). \]
Enfin, comme les coefficients $a_0,\dots,a_d$ sont
premiers entre eux, l'un d'entre eux n'est pas multiple de~$p$
et le membre de gauche de l'\'egalit\'e pr\'ec\'edente
est \'egal \`a~$1$.  Ainsi,
\[ h_p(\xi)=\frac1d \sum_{i=1}^d \log \max(1,\abs{\xi_i}_p) =  - \frac1d \log\abs{a_0}_p. \]

Passons maintenant au terme~$h_\infty$.
D'apr\`es la formule de~\textsc{Jensen}, on a, pour tout $\alpha\in\C$,
\[ \int_0^{1} \log\abs{e^{2\mathrm i\pi t}-\alpha}_\infty\,dt
 = \log \max(1,\abs\alpha_\infty)
 = \begin{cases} 0 & \text{si $\abs\alpha_\infty\leq 1$;} \\
  \log\abs\alpha_\infty & \text{sinon.} \end{cases}
\]
Par cons\'equent, 
\[ \log\mathrm M(P) = \log \abs{a_0}_\infty+ \sum_{j=1}^d \log\max(1,\abs{\xi_j}_\infty)
 = \log\abs{a_0}_\infty+ d h_\infty(\xi) .\]

On a donc finalement
\[ d h(\xi) = - \sum_{p\leq \infty}  \log\abs{a_0}_p
 + \log\mathrm M(P) = \log\mathrm M(P) \]
compte tenu de la formule du produit (prop.~\ref{prop.formule.produit}).
\end{proof}

\section{Fonctorialit\'e}

Ce paragraphe explique le comportement de la hauteur sous l'effet
d'un morphisme.

\subsection{Exemples}

Commen\c{c}ons par des exemples tr\`es simples.

Soit $S\colon\P^k\times\P^m\ra \P^{km+k+m}$
le plongement de Segre, associant au couple $(x,y)$
de points de coordonn\'ees homog\`enes $[x_0:\cdots:x_k]$
et $[y_0:\cdots:y_m]$ le point de~$\P^{km+k+m}$ dont un
syst\`eme de coordonn\'ees homog\`enes
est la famille $(x_iy_j)$, index\'ee par l'ensemble des couples $(i,j)$
tels que $0\leq i\leq k$ et $0\leq j\leq m$,
disons ordonn\'e par l'ordre lexicographique.

Pour toute valeur absolue~$\abs{\cdot}$ sur un corps~$F$,
tout $(x_0,\dots,x_k)\in F^{k+1}$ et tout $(y_0,\dots,y_m)\in F^{m+1}$,
on a \'evidemment
\[ \max_{\substack{0\leq i\leq k\\ 0\leq j\leq m}} \abs{x_iy_j}
          = \max_{0\leq i\leq k} \abs{x_i} \max_{0\leq j\leq m} \abs{y_j}.
\]
La d\'efinition de la hauteur par la formule~\eqref{def.hauteur-p}
entra\^{\i}ne alors l'\'egalit\'e
\begin{equation}
 h(S(x,y))= h(x)+h(y), \quad \text{pour tout $x\in\P^k(\bar\Q)$
 et tout $y\in\P^m(\bar\Q)$.} 
\end{equation}

Le plongement de Veronese de degr\'e~$d$, $V_d\colon \P^k\ra\P^K$,
associe \`a un point~$x$ de coordonn\'ees homog\`enes $[x_0:\cdots:x_k]$
le point $V_d(x)$ dont un syst\`eme de coordonn\'ees homog\`enes 
est donn\'e par la famille des mon\^omes $x_0^{d_0}\dots x_n^{d_k}$, 
o\`u $(d_0,\dots,d_k)$
parcourt toutes les suites d'entiers naturels de somme~$d$.
Il y a $\binom{k+d}k$ telles suites, d'o\`u $K=\binom{k+d}k-1$.
Pour toute valeur absolue sur un corps~$F$ et
tout $(x_0,\ldots,x_k)\in F^{k+1}$,  on a 
\[ \abs{x_0^{d_0}\dots x_k^{d_k}}=\abs{x_0}^{d_0}\dots\abs{x_k}^{d_k}
 \leq \max(\abs{x_0},\dots,\abs{x_k})^{d}, \]
l'\'egalit\'e \'etant atteinte lorsque $d_r=d$ et $\abs{x_r}$ maximal.
On a ainsi
\begin{equation}
  h(V_d(x)) = d h(x) \quad\text{pour tout $x\in\P^k(\bar\Q)$.}
\end{equation}

Consid\'erons enfin la projection lin\'eaire
$p\colon \P^k \dashrightarrow \P^m $
qui applique un point $x$ de coordonn\'ees homog\`enes
$[x_0:\cdots:x_k]$ sur le point~$p(x)$
de coordonn\'ees homog\`enes $[x_0:\cdots:x_m]$, $m$ et~$k$
\'etant des entiers tels que $1\leq m < k$.
Son domaine de d\'efinition est exactement le compl\'ementaire du
sous-espace projectif~$E$
d\'efini par l'annulation des $m+1$ premi\`eres coordonn\'ees homog\`enes.
Pour $x\not\in E$, l'in\'egalit\'e \'evidente
\[ \max(\abs{x_0},\dots,\abs{x_m}) \leq \max(\abs{x_0},\dots,\abs{x_k})\]
entra\^{\i}ne l'in\'egalit\'e
\begin{equation}
 h(p(x)) \leq h(x) \quad\text{pour tout $x\in (\P^n\setminus E)(\bar\Q)$.}
\end{equation}
En revanche, notons qu'il n'y a pas d'in\'egalit\'e dans l'autre sens.
Par exemple, consid\'erons la projection~$p$ de~$\P^2$ dans~$\P^1$
donn\'ee par $p([x_0:x_1:x_2])=[x_0:x_1]$;
pour $x=[1:0:n]$,  avec $n\in\Z^*$, on a 
$h(x)=\log\abs n$ et $h(p(x))=h([1:0])=0$.

\subsection{Comportement par morphisme}

\begin{prop}\label{prop.fonctorialite}
Soit $f\colon \P^k\dashrightarrow\P^m$ une application rationnelle
d\'efinie par $m+1$ polyn\^omes 
$(f_0,\dots,f_m)$
en~$k+1$ variables, \`a coefficients dans~$\bar\Q$
homog\`enes de degr\'e~$d$
et sans facteur commun.
Notons $Z$ le lieu d'ind\'etermination de~$f$, 
lieu des z\'eros communs dans~$\P^k$ des polyn\^omes~$f_0,\dots,f_m$.

\begin{enumerate}
\item Il existe un nombre r\'eel~$c_1$
tel que pour tout $x\in(\P^k\setminus Z)(\bar\Q)$,
$h(f(x))\leq d h(x)+c_1$.
\item Pour toute sous-vari\'et\'e ferm\'ee~$X$ de~$\P^k$
qui ne rencontre pas~$Z$, il existe un nombre r\'eel~$c_X$
tel que $h(f(x))\geq d h(x)-c_X$ pour tout $x\in X(\bar\Q)$.
\end{enumerate}
\end{prop}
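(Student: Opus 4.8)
The plan is to imitate closely the proof of Proposition~\ref{prop.fonctorialite-Q}, but to work place by place using the adelic formula~\eqref{def.hauteur-p} for the height, so that the norm-of-an-ideal term never has to be manipulated directly. First I would reduce to a convenient situation: since $f$ is defined over $\bar\Q$, there is a number field $K_0$ containing all the coefficients of the $f_i$; after multiplying the $f_i$ by a suitable nonzero integer we may assume the coefficients lie in $\Z_{K_0}$. Given a point $x\in(\P^k\setminus Z)(\bar\Q)$, I work in a number field $K$ containing both $K_0$ and a system of homogeneous coordinates $[x_0:\cdots:x_k]$ of $x$, and for each place $v\in M_K$ I set $\|x\|_v=\max_i\abs{x_i}_v$ and $\|f(x)\|_v=\max_j\abs{f_j(x)}_v$. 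The key observation is the analogue of inequality~\eqref{evident} at each place: for a homogeneous polynomial $g$ of degree $d$ with coefficients in $\Z_{K_0}\subset\Z_K$, one has $\abs{g(x_0,\dots,x_k)}_v\le C_v\,\|x\|_v^{\,d}$ for every $v$, where $C_v$ is the maximum of the $v$-absolute values of the coefficients of $g$ (times the number of monomials, at an archimedean place), and $C_v=1$ for all but finitely many $v$. Summing $\log$ of this over $M_K$ with the weights $\eps_v/[K:\Q]$ yields assertion~(1) with $c_1=\frac1{[K_0:\Q]}\sum_v\eps_v\log C_v$, a quantity depending only on $f$ (it is insensitive to enlarging $K$, by the compatibility of~\eqref{def.hauteur-p} with field extension).

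For the lower bound~(2), the Nullstellensatz enters exactly as in the rational case, but now relativised to the closed subvariety $X$. Let $I_X\subset\bar\Q[X_0,\dots,X_k]$ be the homogeneous ideal of $X$. Since $X\cap Z=\emptyset$, the polynomials $f_0,\dots,f_m$ have no common zero on $X$, hence the homogeneous ideal $(f_0,\dots,f_m)+I_X$ contains a power of the irrelevant ideal; concretely, there is an integer $t\ge d$, polynomials $g_{ij}$ homogeneous of degree $t-d$, and polynomials $r_i\in I_X$ homogeneous of degree $t$, all with coefficients in a number field (which I absorb into $K_0$ after enlarging it, using the second clause of the Nullstellensatz), such that
\[
 X_i^{\,t}=\sum_{j=0}^m f_j\,g_{ij}+r_i,\qquad 0\le i\le k.
\]
Clearing denominators we may take all these coefficients in $\Z_{K_0}$. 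Now evaluate at a point $x\in X(K)$: the term $r_i(x)$ vanishes, so $x_i^{\,t}=\sum_j f_j(x)\,g_{ij}(x)$, and applying the place-by-place version of~\eqref{evident} to the $g_{ij}$ gives, for each $v\in M_K$,
\[
 \abs{x_i}_v^{\,t}\le C'_v\,\|f(x)\|_v\,\|x\|_v^{\,t-d},
\]
with $C'_v$ depending only on $f$ and $X$ and equal to $1$ for almost all $v$. Taking the maximum over $i$, dividing by $\|x\|_v^{\,t-d}$ (which is legitimate since $x\ne 0$), taking logarithms, and summing over $M_K$ with the weights $\eps_v/[K:\Q]$, the $\|x\|_v^{\,t-d}$ contributions recombine into $(t-d)\,h(x)$ and cancel against part of the left-hand side, leaving $d\,h(x)\le c_X+h(f(x))$ with $c_X=\frac1{[K_0:\Q]}\sum_v\eps_v\log C'_v$.

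The only genuinely delicate point is the one already flagged in the rational case: the passage from ``$f_0,\dots,f_m$ have no common zero on $X$'' to the Bézout-type identity $X_i^t=\sum f_j g_{ij}+r_i$ — that is, the correct homogeneous, relative form of the Nullstellensatz applied to the homogeneous coordinate ring $\bar\Q[X_0,\dots,X_k]/I_X$ of $X$. Here one must be a little careful that $X$ is a \emph{projective} variety, so ``no common zero'' means no common zero on the cone over $X$ other than the origin; the graded Nullstellensatz then gives that a power of each $X_i$ lies in $(f_0,\dots,f_m)+I_X$, which is what is needed. Everything else — the term-by-term estimate~\eqref{evident} at each place, the counting of monomials at archimedean places, the cancellation of the $\|x\|_v^{\,t-d}$ factors, and the verification that the constants $c_1$, $c_X$ depend only on $f$ and $X$ and not on $K$ — is routine bookkeeping with the formula~\eqref{def.hauteur-p}, entirely parallel to the proof of Proposition~\ref{prop.fonctorialite-Q}. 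Note finally that the archimedean part alone already recovers the proof over $\Q$, the non-archimedean places being precisely what replaces the $\gcd$/ideal-norm manipulations of that earlier argument.
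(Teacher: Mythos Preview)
Your proof is correct and follows essentially the same approach as the paper: both arguments clear denominators to get algebraic-integer coefficients, establish the place-by-place monomial estimate (with constant~$1$ at ultrametric places), and for part~(2) invoke the homogeneous Nullstellensatz for the ideal $(f_0,\dots,f_m)+I_X$ to obtain relations $X_i^t=\sum_j f_j\,g_{ij}+r_i$ that are then evaluated at points of~$X$ and summed over~$M_K$. The only cosmetic difference is that the paper writes out generators~$P_j$ of~$I_X$ explicitly in the B\'ezout identity, whereas you package their contribution into a single term $r_i\in I_X$.
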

\begin{proof}
D\'emontrons d'abord \emph a).

Observons d'abord le lemme suivant: soit $F$ un corps
muni d'une valeur absolue et soit $\phi$ un polyn\^ome homog\`ene de degr\'e~$d$ 
\`a coefficients dans~$F$. Il existe
un nombre r\'eel~$C$ tel que
pour tout corps valu\'e~$K$ contenant~$F$ et dont la valuation prolonge
celle de~$F$
et toute famille $(x_0,\dots,x_k)$ d'\'el\'ements de~$K$,
\[ \abs{\phi(x_0,\dots,x_k)} \leq C \max(\abs{x_0},\dots,\abs{x_k})^d. \]
Cela est \'evident pour un mon\^ome et s'en d\'eduit, gr\^ace \`a l'in\'egalit\'e
triangulaire, par
r\'ecurrence sur le nombre de mon\^omes de~$\phi$.
Si la valeur absolue~$\abs\cdot$ est ultram\'etrique,
on peut en outre choisir pour~$C$ le maximum des valeurs absolues
des coefficients des mon\^omes qui constituent~$\phi$.

Soit maintenant $F$ un corps de nombres contenant les coefficients
des polyn\^omes~$f_i$. Quitte \`a multiplier les polyn\^omes~$f_i$
par un m\^eme entier non nul, on peut en outre supposer que les
coefficients des~$f_i$ sont des entiers alg\'ebriques. 
Par cons\'equent, pour tout nombre premier~$p$,
les valeurs absolues $p$-adiques de ces coefficients sont au plus~$1$
et l'on aura, pour tout corps de nombres~$K$ contenant~$F$, tout nombre
premier~$p$,
tout plongement $\sigma\colon K\hra\C_p$
et tout $x=(x_0,\dots,x_k)\in K^{k+1}$, l'in\'egalit\'e
\[ \max(\abs{\sigma(f_0(x))}_p, \dots,\abs{\sigma(f_m(x))}_p)
 \leq \max(\abs{\sigma(x_0)}_p,\dots,\abs{\sigma(x_k)}_p)^d. \]

Pour les valeurs absolues archim\'ediennes,
il existe de m\^eme, pour tout plongement $\sigma_0\colon F\hra\C$
un nombre r\'eel~$C_{\sigma_0}>0$ tel que l'on ait
\[ \max(\abs{\sigma(f_0(x))}_\infty, \dots,\abs{\sigma(f_m(x))}_\infty)
 \leq C_{\sigma_0} \max(\abs{\sigma(x_0)}_\infty,\dots,\abs{\sigma(x_k)}_\infty)^d
\]
pour tout corps de nombres~$K$ qui contient~$F$, tout plongement $\sigma\colon K\hra\C$ qui prolonge~$\sigma_0$ et tout \'el\'ement $(x_0,\dots,x_k)\in K^{k+1}$.

Soit $K$ un corps de nombres contenant~$F$ et soit $x$
un point de~$\P^k(K)$; choisissons-lui un syst\`eme
de coordonn\'ees homog\`enes $[x_0:\cdots:x_k]$ dans~$K$.
Supposons que $f$ soit d\'efini en~$x$, c'est-\`a-dire
que $f_0(x),\dots,f_m(x)$ ne soient pas tous nuls.
La d\'efinition de la hauteur implique alors la majoration
\begin{align*}
 h(f(x)) & = h([f_0(x):\cdots:f_m(x)]) \\
&= \frac1{[K:\Q]}\sum_{p\leq\infty}  \sum_{\sigma\colon K\hra\C_p} \log \max(\abs{\sigma(f_0(x))}_p, \dots,\abs{\sigma(f_m(x))}_p)
 \\
&\leq \frac1{[K:\Q]}\sum_{p < \infty} \sum_{K\hra\C_p} d \log \max(\abs{\sigma(x_0)}_p,\dots,\abs{\sigma(x_k)}_p) \\
&\qquad
{} + \frac1{[K:\Q]} \sum_{\sigma\colon K\hra\C}
          \left(\log C_{\sigma|_F} + d \log \max(\abs{\sigma(x_0)}_\infty,\dots,\abs{\sigma(x_k)}_\infty) \right)\\
& \leq  d h([x_0:\cdots:x_k]) + \max_{\sigma\colon F\hra\C} \log C_\sigma .
\end{align*}

L'assertion \emph a) ainsi d\'emontr\'ee, passons \`a la seconde
partie de la proposition.
Soit $(P_j)$ une famille de polyn\^omes homog\`enes \`a coefficients
dans~$\bar\Q$ d\'efinissant la sous-vari\'et\'e~$X$.
La sous-vari\'et\'e de~$\P^k$ d\'efinie par
les polyn\^omes~$P_j$ d'une part et~$f_0,\dots,f_m$
d'autre part est \'egale \`a $X\cap Z$, donc est vide.
D'apr\`es le th\'eor\`eme des z\'eros de Hilbert, dans sa version homog\`ene,
l'id\'eal $(P_j,f_i)$ engendr\'e par ces polyn\^omes
contient une puissance de l'id\'eal $(X_0,\dots,X_k)$.
Il existe ainsi un entier~$t$ et, pour tout $n\in\{0,\dots,k\}$
des polyn\^omes $G_{jn}$ et $H_{in}$ tels que
\[ X_n^t = \sum_j P_j G_{jn} + \sum_{i=0}^m f_i H_{in}. \]
Il est loisible de supposer les polyn\^omes $G_{jn}$ et $H_{in}$
homog\`enes, quitte \`a ne conserver que les mon\^omes de~$G_{jn}$
de degr\'e $t-\deg(P_j)$ et ceux de~$H_{in}$ de degr\'e~$t-d$.

Pour tout point $x=[x_0:\cdots:x_k]$ de~$X(\bar\Q)$,
on a alors
\[ x_n^t= \sum_j P_j(x) G_{jn}(x) + \sum_{i=0}^m f_i(x) H_{in}(x)
   = \sum_{i=0}^m f_i(x) H_{in}(x) \]
puisque, par hypoth\`ese, $P_j(x)=0$ pour tout~$j$.
Soit $N$ un nombre entier non nul tel que les coefficients
des polyn\^omes $NH_{in}$ soient entiers alg\'ebriques,
\'el\'ements d'un corps de nombres~$F$ contenant aussi les coefficients
des~$f_i$.

Par un argument similaire au \emph a), on en d\'eduit
que pour tout corps de nombres $K$ contenant~$F$,
tout nombre premier~$p$,
tout plongement~$\sigma$ de~$K$ dans~$\C_p$,
et tout point $[x_0:\cdots:x_k]\in X(F)$, on ait la majoration
\[ \abs{N \sigma(x_n)}_p^t \leq  \max(\abs{\sigma(f_0(x))}_p,\dots,\abs{\sigma(f_m(x))}_p) \max_{i,n}(\abs{\sigma(H_{in}(x))}_p). \]
Alors,
\[ \abs{N}_p \abs{\sigma(x_n)}_p^t \leq \max(\abs{\sigma(f_0(x))}_p,\dots,\abs{\sigma(f_m(x))}_p) \max(\abs{\sigma(x_0)},\dots,\abs{\sigma(x_n)})^{t-d}, \]
d'o\`u une majoration
\[ \max(\abs{\sigma(x_0)}_p,\dots,\abs{\sigma(x_n)}_p)^d
 \leq \abs{N}_p^{-1} \max(\abs{\sigma(f_0(x))}_p,\dots,\abs{\sigma(f_m(x))}_p).
\]

Pour les valeurs absolues archim\'ediennes, il existe
de m\^eme un nombre r\'eel~$C>0$ tel que 
\[ \max(\abs{\sigma(x_0)}_\infty,\dots,\abs{\sigma(x_n)}_\infty)^d
 \leq C \abs{N}_\infty^{-1} \max(\abs{\sigma(f_0(x))}_\infty,\dots,\abs{\sigma(f_m(x))}_\infty),\]
pour tout corps de nombres $K$ contenant~$F$,
tout point $x=[x_0:\cdots:x_k]\in X(K)$ et
tout plongement de $K$ dans~$\C$

Mettant bout \`a bout ces in\'egalit\'es, on obtient
\begin{multline*} d h([x_0:\cdots:x_k])
        \leq \log C + \sum_{p\leq\infty} \log\abs{N}_p^{-1} \\
{}  + \frac1{[K:\Q]} \sum_{p\leq\infty}\sum_{\sigma\colon K\hra\C_p}
 \log \max(\abs{\sigma(f_0(x))}_p,\dots,\abs{\sigma(f_m(x))}_p). \end{multline*}
Autrement dit,
$dh(x)\leq \log C+ h(f(x))$, ainsi qu'il fallait d\'emontrer.
\end{proof}

\subsection{Hauteurs, plongements, fibr\'es en droites}

Consid\'erons maintenant une vari\'et\'e projective~$X$, d\'efinie
sur~$\bar\Q$. \`A tout plongement $\phi$ de~$X$ dans un espace
projectif~$\P^k$ est associ\'ee une fonction hauteur sur~$X(\bar\Q)$,
d\'efinie par $h_\phi(x)=h_{\P^k}(\phi(x))$ pour $x\in X(\bar\Q)$,
o\`u $h_{\P^k}$ d\'esigne la hauteur sur~$\P^k(\bar\Q)$ construite pr\'ec\'edemment.
Cette d\'efinition ne requiert que le fait que $\phi$ soit une application
r\'eguli\`ere et s'\'etend donc \emph{verbatim} aux morphismes $\phi$ 
de~$X$ dans un espace projectif.
En fait, nous allons voir que $h_\phi$ ne d\'epend essentiellement que  
du fibr\'e en droites $\phi^*\mathscr O(1)$
sur~$X$ d\'eduit du fibr\'e tautologique $\mathscr O(1)$ sur l'espace
projectif.

\begin{lemm}
Soit $X$ une vari\'et\'e projective d\'efinie sur~$\bar\Q$.
Soit $\phi\colon X\ra\P^k$ et~$\psi\colon X\ra\P^m$ 
des morphismes de~$X$ dans des espaces projectifs.
Si $\phi^*\mathscr O(1)$ et $\psi^*\mathscr O(1)$ sont
isomorphes, la diff\'erence $h_\phi-h_\psi$ des hauteurs associ\'ees
\`a~$\phi$ et~$\psi$ est une fonction \emph{born\'ee} sur $X(\bar\Q)$.
\end{lemm}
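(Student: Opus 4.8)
Le plan est de déduire le lemme de la proposition~\ref{prop.fonctorialite} appliquée à des projections linéaires: celles-ci sont données par des formes de degré~$1$, de sorte que l'encadrement $d\,h(x)-c\leq h(f(x))\leq d\,h(x)+c$ y devient, pour $d=1$, une \emph{différence bornée}; c'est précisément pour pouvoir se ramener au cas $d=1$ que l'hypothèse $\phi^*\mathscr O(1)\cong\psi^*\mathscr O(1)$ interviendra.

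Concrètement, fixons un isomorphisme $L:=\phi^*\mathscr O(1)\cong\psi^*\mathscr O(1)$ et notons $s_0,\dots,s_k\in\Gamma(X,L)$ (resp. $t_0,\dots,t_m\in\Gamma(X,L)$) les images par $\phi^*$ (resp. $\psi^*$) des sections coordonnées de~$\mathscr O(1)$; ainsi $\phi=[s_0:\cdots:s_k]$ et $\psi=[t_0:\cdots:t_m]$. Comme $\phi$ est un morphisme, les $s_i$ n'ont pas de zéro commun sur~$X$; a fortiori la famille $(s_0,\dots,s_k,t_0,\dots,t_m)$ n'en a pas, et définit donc un morphisme $\theta\colon X\ra\P^{k+m+1}$, de coordonnées homogènes $[s_0:\cdots:s_k:t_0:\cdots:t_m]$, vérifiant $\theta^*\mathscr O(1)\cong L$. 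Soit $p_1\colon\P^{k+m+1}\dashrightarrow\P^k$, $[z_0:\cdots:z_{k+m+1}]\mapsto[z_0:\cdots:z_k]$, et $p_2\colon\P^{k+m+1}\dashrightarrow\P^m$, $[z_0:\cdots:z_{k+m+1}]\mapsto[z_{k+1}:\cdots:z_{k+m+1}]$, les deux projections linéaires; leurs lieux d'indétermination sont respectivement le sous-espace $E_1$ d'équations $z_0=\cdots=z_k=0$ et le sous-espace $E_2$ d'équations $z_{k+1}=\cdots=z_{k+m+1}=0$. Puisque $X$ est propre, $Y:=\theta(X)$ est une sous-variété fermée de~$\P^{k+m+1}$; elle ne rencontre ni~$E_1$ (les $s_i$ sont sans zéro commun) ni~$E_2$ (les $t_j$ sont sans zéro commun), de sorte que $p_1$ et $p_2$ sont définies partout sur~$Y$, et l'on a les identités de morphismes $p_1\circ\theta=\phi$ et $p_2\circ\theta=\psi$.

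Appliquons alors la proposition~\ref{prop.fonctorialite} à l'application rationnelle~$p_1$, donnée par des formes linéaires (donc $d=1$) sans facteur commun et de lieu d'indétermination~$E_1$, et à la sous-variété fermée~$Y$ qui évite~$E_1$: ses deux parties fournissent une constante~$c$ telle que $\abs{h_{\P^{k+m+1}}(y)-h_{\P^k}(p_1(y))}\leq c$ pour tout $y\in Y(\bar\Q)$. En spécialisant à $y=\theta(x)$ et en utilisant $p_1\circ\theta=\phi$, il vient $\abs{h_\theta(x)-h_\phi(x)}\leq c$ pour tout $x\in X(\bar\Q)$. Le même argument avec~$p_2$ donne une constante~$c'$ telle que $\abs{h_\theta(x)-h_\psi(x)}\leq c'$. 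L'inégalité triangulaire entraîne enfin $\abs{h_\phi(x)-h_\psi(x)}\leq c+c'$ pour tout $x\in X(\bar\Q)$, c'est-à-dire que $h_\phi-h_\psi$ est bornée sur $X(\bar\Q)$.

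Le seul contenu véritable de la preuve est la construction du morphisme~$\theta$: superposer les deux systèmes de sections comme coordonnées d'un \emph{unique} morphisme ayant \emph{le même} fibré $\mathscr O(1)$ en image réciproque requiert l'hypothèse $\phi^*\mathscr O(1)\cong\psi^*\mathscr O(1)$, et le caractère sans point base de chacune des deux familles est exactement ce qui garantit que~$Y$ évite simultanément~$E_1$ et~$E_2$ — condition sous laquelle la seconde partie de la proposition~\ref{prop.fonctorialite} (celle qui repose sur le théorème des zéros de Hilbert) s'applique. Tout le reste se réduit à l'encadrement déjà établi, dans son cas trivial $d=1$.
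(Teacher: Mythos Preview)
Your proof is correct and follows essentially the same route as the paper: factor both $\phi$ and $\psi$ through a single morphism to a larger projective space associated to the common line bundle~$L$, then recover each as a linear projection defined on the image and invoke both halves of proposition~\ref{prop.fonctorialite} with $d=1$. The only cosmetic difference is that the paper takes the morphism $\alpha\colon X\to\P^s$ given by the \emph{complete} linear system $|L|$, whereas you take the more economical $\theta\colon X\to\P^{k+m+1}$ given by concatenating the two generating families of sections; either choice works for the same reason.
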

\begin{proof}
Notons $\mathscr L$ le fibr\'e en droites $\phi^*\mathscr O(1)$;
par construction, il est engendr\'e par ses sections globales
et d\'efinit un morphisme $\alpha$ de~$X$ dans l'espace
projectif~$\P^s$ associ\'e \`a une base de l'espace vectoriel 
de ses sections globales
--- c'est le morphisme fourni par le syst\`eme lin\'eaire complet
associ\'e \`a~$\mathscr L$.
Il existe alors des applications rationnelles d\'efinies
par des polyn\^omes homog\`enes de degr\'e~$1$, $\phi'\colon\P^s\dashrightarrow \P^k$
et $\psi'\colon \P^s\dashrightarrow\P^m$
telles que $\phi=\phi'\circ\alpha$ et $\psi=\psi'\circ\alpha$.
En outre, le lieu d'ind\'etermination de~$\phi'$ et~$\psi'$
ne rencontre pas~$\alpha(X)$. (De mani\`ere \'equivalente,
les images r\'eciproques par~$\phi$
et~$\psi$ du syst\`eme lin\'eaire des hyperplans de~$\P^k$, resp.~$\P^m$,
sont, par d\'efinition de~$\mathscr L$,
des sous-syst\`emes lin\'eaires de celui associ\'e \`a~$\mathscr L$,
et sont sans point-base.)
D'apr\`es la prop.~\ref{prop.fonctorialite}, 
la fonction $h_{\P^s}-h_{\P^k}\circ\phi'$ est born\'ee sur $\alpha(X)(\bar\Q)$,
de m\^eme que la fonction $h_{\P^s}-h_{\P^m}\circ\psi'$.
Par cons\'equent, 
\[ h_\phi-h_\psi = h_{\P^k}\circ\phi'\circ\alpha
 - h_{\P^m}\circ\psi'\circ\alpha \]
est born\'ee sur $X(\bar\Q)$.
\end{proof}

Soit $\mathscr F$ l'espace vectoriel des fonctions
de~$X(\bar\Q)$ dans~$\R$ et soit $\mathscr F_b$
son sous-espace vectoriel constitu\'e des fonctions born\'ees.
Notons $\mathscr M(X)$ l'ensemble des morphismes de~$X$
dans un espace projectif et $\Pic(X)$ le groupe ab\'elien
des classes d'isomorphisme de fibr\'es en droites sur~$X$.

\`A $\phi\in\mathscr M(X)$, nous pouvons associer
d'une part la fonction $h_\phi$, ou plut\^ot sa classe $[h_\phi]$
modulo~$\mathscr F_b$, 
et d'autre part la classe d'isomorphisme $[\phi^*\mathscr O(1)]$ du
fibr\'e en droites $\phi^*\mathscr O(1)$ sur~$X$.
D'apr\`es le lemme ci-dessus, $[h_\phi]$ ne d\'epend
que de~$[\phi^*\mathscr O(1)]$.
Plus pr\'ecis\'ement, on a le th\'eor\`eme:
\begin{theo}
Il existe un unique homomorphisme de groupes~$\eta$
de~$\Pic(X)$ dans~$\mathscr F/\mathscr F_b$,
not\'e $\mathscr L\mapsto \eta_{\mathscr L}$,
qui, pour tout morphisme $\phi$ de~$X$ dans un espace projectif,
applique la classe d'isomorphisme du fibr\'e en droites $\phi^*\mathscr O(1)]$ 
sur la classe de la fonction $h_\phi$ modulo l'espace~$\mathscr F_b$
des fonctions born\'ees sur~$X(\bar\Q)$.
\end{theo}
\begin{proof}
Un tel homomorphisme est prescrit sur les classes
de fibr\'es en droites de la forme $\phi^*\mathscr O(1)$,
o\`u $\phi$ est un morphisme,  \emph{a fortiori}
sur les fibr\'es en droites tr\`es amples qui, par d\'efinition m\^eme,
sont de cette forme en prenant pour~$\phi$ un plongement.
Comme tout \'el\'ement de~$\Pic(X)$ est la diff\'erence de deux classes
de fibr\'es en droites tr\`es amples, il n'y a au plus qu'un
tel homomorphisme.

Notons $\Picplus(X)$ le sous-mono\"{\i}de de~$\Pic(X)$
form\'e des classes de diviseurs engendr\'es par leurs
sections globales. Ce sont exactement les images r\'eciproques
du faisceau~$\mathscr O(1)$ par un morphisme de~$X$
dans un espace projectif.
D'apr\`es le lemme ci-dessus, il existe donc une unique application
de~$\Picplus(X)$ dans~$\mathscr F/\mathscr F_b$ qui associe
\`a la classe de~$\phi^*\mathscr O(1)$ celle 
de la fonction
$h_\phi$, pour tout morphisme  $\phi$ de~$X$ dans un espace projectif.
Notons $D\mapsto \eta_D$ cette application.

Montrons qu'elle est additive.
Soit donc $\phi$ et $\psi$ des morphismes de~$X$ dans des espaces
projectifs $\P^k$ et $\P^m$ et consid\'erons la composition
\[ \alpha\colon X\xrightarrow{(\phi,\psi)} \P^k\times\P^m \xrightarrow {S} \P^{km+k+m}, \]
o\`u $S$ d\'esigne le plongement de Segre.
Le comportement du morphisme de Segre vis \`a vis des hauteurs
implique l'\'egalit\'e $h_\alpha=h_\phi+h_\psi$.
D'autre part, le fait qu'il soit d\'efini par des polyn\^omes
homog\`enes de bidegr\'e~$(1,1)$ implique
que $S^*\mathscr O(1)$ est isomorphe au produit tensoriel
externe des deux fibr\'es~$\mathscr O(1)$. Par suite,
$\alpha^*\mathscr O(1)$ est isomorphe au produit
tensoriel $\phi^*\mathscr O(1)\otimes \psi^*\mathscr O(1)$.
Si $D$ et~$E$ d\'esignent les classes de~$\phi^*\mathscr O(1)$
et $\psi^*\mathscr O(1)$, 
$\eta_{D+E}$ est donc la classe de~$h_\alpha=h_\phi+h_\psi$,
laquelle est la somme des classes $\eta_D$ et $\eta_E$.

L'existence d'un homomorphisme de groupes
$\eta\colon\Pic(V)\ra\mathscr F/\mathscr F_b$
v\'erifiant les propri\'et\'es
exig\'ees par le th\'eor\`eme r\'esulte maintenant
d'un argument \'el\'ementaire de diff\'erence.
\end{proof}

Soit $X$ une vari\'et\'e projective d\'efinie sur~$\bar\Q$ et 
soit $\mathscr L$ un fibr\'e en droites sur~$X$.
On appellera \emph{hauteur relative \`a~$\mathscr L$}
toute fonction de~$X(\bar\Q)$ dans~$\R$
dont la classe modulo les fonctions born\'ees
est \'egale \`a~$\eta_{\mathscr L}$.
Deux telles fonctions diff\`erent d'une fonction born\'ee.
Si $\mathscr L$ est ample (ou plus g\'en\'eralement
engendr\'e par ses sections globales), 
toute hauteur relative \`a~$\mathscr L$ est minor\'ee.

Une cons\'equence de la construction est la propri\'et\'e suivante:
\begin{prop}\label{prop.hof}
Soit $f\colon X\ra Y$ un morphisme de vari\'et\'es alg\'ebriques
d\'efinies sur~$\bar\Q$, soit $\mathscr M$ un fibr\'e
en droites sur~$Y$. Pour toute hauteur $h_{\mathscr M}$
relative \`a~$\mathscr M$ sur~$Y$, $h_{\mathscr M}\circ f$
est une  hauteur relative \`a~$f^*\mathscr M$ sur~$X$.
\end{prop}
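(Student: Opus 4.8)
The idea is to reduce everything to the case of morphisms into projective space, which is the content of the construction of~$\eta$, and then to propagate the statement by the additivity of~$\eta$. First I would record two purely formal facts: that $f^*\colon\Pic(Y)\ra\Pic(X)$ is a homomorphism of groups, and that it is compatible with pullbacks of the tautological bundle, i.e.\ for any morphism $\psi\colon Y\ra\P^m$ one has a canonical isomorphism $(\psi\circ f)^*\mathscr O(1)\cong f^*(\psi^*\mathscr O(1))$. I would also note the obvious fact that if $g\colon Y(\bar\Q)\ra\R$ is bounded then $g\circ f$ is bounded on $X(\bar\Q)$, so that precomposition with~$f$ induces a well-defined additive map $\mathscr F/\mathscr F_b\ra\mathscr F/\mathscr F_b$ on both sides.

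Next I would treat the case $\mathscr M\in\Picplus(Y)$, say $\mathscr M\cong\psi^*\mathscr O(1)$ for some morphism $\psi\colon Y\ra\P^m$. By the very definition of~$\eta$ on~$Y$, the function $h_\psi=h_{\P^m}\circ\psi$ is a height relative to~$\mathscr M$; hence $h_{\mathscr M}-h_\psi$ is bounded, and so is $h_{\mathscr M}\circ f-h_\psi\circ f$. But $h_\psi\circ f=h_{\P^m}\circ(\psi\circ f)=h_{\psi\circ f}$, which by the definition of~$\eta$ on~$X$ is a height relative to $(\psi\circ f)^*\mathscr O(1)\cong f^*\mathscr M$. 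Therefore $h_{\mathscr M}\circ f$, differing from $h_{\psi\circ f}$ by a bounded function, is itself a height relative to~$f^*\mathscr M$. For the general case I would write $\mathscr M\cong\mathscr M_1\otimes\mathscr M_2^{\otimes-1}$ with $\mathscr M_1,\mathscr M_2$ very ample, choose heights $h_{\mathscr M_1},h_{\mathscr M_2}$ relative to them, and observe that $h_{\mathscr M_1}-h_{\mathscr M_2}$ represents $\eta^Y_{\mathscr M}$ by additivity of~$\eta$ on~$Y$, hence differs from $h_{\mathscr M}$ by a bounded function. Composing with~$f$ and applying the very ample case, $h_{\mathscr M_i}\circ f$ is a height relative to~$f^*\mathscr M_i$, so $h_{\mathscr M_1}\circ f-h_{\mathscr M_2}\circ f$ represents $\eta^X_{f^*\mathscr M_1}-\eta^X_{f^*\mathscr M_2}=\eta^X_{f^*\mathscr M}$, using additivity of~$\eta$ on~$X$ and the fact that $f^*$ is a group homomorphism. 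Since $h_{\mathscr M}\circ f$ differs from this by a bounded function, it is a height relative to~$f^*\mathscr M$.

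\textbf{Main obstacle.}
There is no serious difficulty here: the only thing requiring attention is the bookkeeping of the two distinct maps $\eta$ on~$X$ and on~$Y$ and the verification that they are intertwined by $f^*$ on line bundles and by $g\mapsto g\circ f$ modulo~$\mathscr F_b$. The geometric content collapses to the single compatibility $(\psi\circ f)^*\mathscr O(1)\cong f^*\psi^*\mathscr O(1)$, and the analytic content is nil beyond what already went into building~$\eta$ (that is, Proposition~\ref{prop.fonctorialite}); stability of boundedness under precomposition is immediate.
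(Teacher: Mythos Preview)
Your proposal is correct and follows essentially the same route as the paper: reduce to the case where $\mathscr M$ is (very) ample via a difference argument, then use that for $\psi\colon Y\ra\P^m$ one has $h_\psi\circ f=h_{\psi\circ f}$ and $(\psi\circ f)^*\mathscr O(1)\simeq f^*\mathscr M$. The paper's proof is simply a more compressed version of what you wrote, invoking the difference argument in one sentence rather than spelling it out.
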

\begin{proof}
Par un argument de diff\'erence,
il suffit de traiter le cas d'un fibr\'e tr\`es ample.
On peut donc supposer qu'il existe un plongement~$\phi$ 
de~$Y$ dans un espace projectif tel que $\mathscr M\simeq \phi^*\mathscr O(1)$.
Alors, $\psi=\phi\circ f$ est un morphisme
de~$X$ dans un espace projectif et l'on a 
$\psi^*\mathscr O(1)\simeq f^*\mathscr M$.
Par d\'efinition, $h_\psi=h_\phi\circ f$ est donc une hauteur
relative \`a $f^*\mathscr M$. Comme $h_{\mathscr M}$
est une hauteur relative \`a~$\mathscr M$, $h_{\mathscr M}-h_\phi$
est une fonction born\'ee, ce qui implique
que $h_{\mathscr M}\circ f-h_\psi$ est born\'ee sur $X(\bar\Q)$.
\end{proof}

\section{Finitude}

Le th\'eor\`eme principal de ce paragraphe
g\'en\'eralise la proposition~\ref{prop.finitude-Q}.

\begin{theo}\label{theo.finitude}
Pour tout entier~$d\geq 1$ et tout nombre
r\'eel~$B$, l'ensemble des points $x\in\P^k(\bar\Q)$
d\'efinis sur un corps de nombres de degr\'e au plus~$d$ 
et dont la hauteur est au plus~$B$ est fini.
\end{theo}

Quelques commentaires sur l'expression {\og d\'efinis
sur un corps de nombres de degr\'e au plus~$d$\fg}
et sur le corps de d\'efinition d'un point de l'espace
projectif.
Soit $x\in\P^k(\bar\Q)$ et soit $[x_0:\cdots:x_k]$
un syst\`eme de coordonn\'ees homog\`enes de~$x$, dans~$\bar\Q$.
Dire que $x$ est d\'efini sur un corps
de nombres~$K$ signifie que $x\in\P^k(K)$, autrement dit que
$x$ admet un syst\`eme $[\xi_0:\cdots:\xi_k]$ de coordonn\'ees homog\`enes dans~$K$.
De la proportionalit\'e des deux syst\`emes r\'esulte que
$x$ est d\'efini sur~$K$ si et seulement si $x_i/x_j\in K$
pour tout couple $(i,j)$ d'\'el\'ements de~$\{0,\dots,k\}$
tels que $x_j\neq 0$.
Dit autrement, si, disons $x_0\neq 0$, 
le corps $\Q(x_1/x_0,\dots,x_k/x_0)$ est le plus petit
corps de nombres sur lequel $x$ soit d\'efini. On l'appelle
le \emph{corps de d\'efinition} de~$x$ et on le note~$\Q(x)$.

La th\'eorie de Galois fournit une autre fa\c{c}on de voir ce corps.
Le groupe $\Gal(\bar\Q/\Q)$ des automorphismes de~$\bar\Q$
agit sur $\P^k(\bar\Q)$ via, rappelons-le, sur son action sur les coordonn\'ees
homog\`enes : 
si $\sigma\in\Gal(\bar\Q/\Q)$ et $x=[x_0:\cdots:x_k]\in\P^n(\bar\Q)$,
$\sigma(x)=[\sigma(x_0):\cdots:\sigma(x_k)]$.
Le stabilisateur du point~$x$ est exactement
le sous-groupe $\Gal(\bar\Q/\Q(x))$ de~$\Gal(\bar\Q/\Q)$.
Notons $d$ le degr\'e du corps~$\Q(x)$
et soit $\sigma_1,\dots,\sigma_d$ une famille de repr\'esentants
de $\Gal(\bar\Q/\Q)$ modulo $\Gal(\bar\Q/\Q(x))$.
(Leurs restrictions au corps~$\Q(x)$  sont exactement
les $d$ homomorphismes de corps de~$\Q(x)$ dans~$\bar\Q$.)
L'orbite de~$x$ sous $\Gal(\bar\Q/\Q)$ est alors form\'ee
des $d$ points $\sigma_1(x),\dots,\sigma_d(x)$,
que l'on appelle les conjugu\'es de~$x$.

\begin{proof}
% Soit $[x_0:\cdots:x_n]$ un tel point.
Pour tout couple $(i,j)$ d'\'el\'ements de~$\{0,\dots,k\}$,
notons $p_{ij}$ la projection de~$\P^k$
sur~$\P^1$ donn\'ee par $[x_0:\cdots:x_k]\mapsto [x_i:x_j]$;
elle est d\'efinie hors du sous-espace projectif $P_{ij}$ de codimension~$2$
d\'efini par l'annulation des coordonn\'ees homog\`enes~$x_i$ et~$x_j$.

Si $x\in (\P^n\setminus P_{ij})(\bar\Q)$, on a d\'emontr\'e
que $h(p_{ij}(x))\leq h(x)$. En outre, $p_{ij}(x)$ est
d\'efini sur $\Q(x)$, ainsi qu'il r\'esulte de la description
du corps de d\'efinition d'un point de l'espace projectif
rappel\'ee ci-dessus.

Supposons \'etabli le th\'eor\`eme lorsque $n=1$; alors,
l'ensemble des projections $p_{ij}(x)$, pour $x\in\P^k(\bar\Q)$
d\'efini sur un corps de degr\'e au plus~$d$ et de hauteur au plus~$B$,
est fini. Autrement dit, les quotients $x_i/x_j$
ne peuvent prendre qu'un nombre fini  de valeurs possibles,
d'o\`u la finitude annonc\'ee.

Il reste \`a montrer le cas $n=1$.
Introduisons un morphisme~$\theta$ de~$(\P^1)^d$ dans~$\P^d$
donn\'ee par 
\[  \theta ([x^{(1)}_0:x^{(1)}_1], \dots,[x^{(d)}_0:x^{(d)}_1])
 = [z_0:\cdots:z_d], \]
o\`u les~$z_j$ sont d\'efinis par la relation
\[ \prod_{i=1}^d (x^{(i)}_0 + T x^{(i)}_1) = \sum_{j=0}^d z_j T^j, \]
o\`u $T$ est une ind\'etermin\'ee.
En d'autres termes, on a, pour $j\in\{0,\dots,d\}$,
\[ z_j=\sum_{\substack{\eps\colon \{1,\dots,d\}\ra\{0,1\}\\ \eps_1+\dots+\eps_d=j}} x^{(1)}_{\eps_1} \dots x^{(d)}_{\eps_d}, \]
versions homog\`enes des fonctions sym\'etriques \'el\'ementaires
puisque si $x^{(i)}_0\neq 0$ pour tout~$i$, alors $z_0\neq 0$ et
\[ \frac{z_j}{z_0} = \sum_{i_1<\dots<i_j}
         \xi^{(i_1)} \dots \xi^{(i_j)}, \]
o\`u l'on a pos\'e $\xi^{(i)}=x^{(i)}_1/x^{(i)}_0$.

Le fibr\'e en droites $\theta^*\mathscr O(1)$
est isomorphe au produit tensoriel externe des
fibr\'es $\mathscr O(1)$ sur chacune des copies de~$\P^1$,
car $\theta$ est d\'efinie par des formes
de multidegr\'e~$(1,\dots,1)$.
Il en r\'esulte qu'\`a une fonction born\'ee pr\`es,
\[ \sum_{i=1}^d h(x^{(i)}) \approx h(\theta(x^{(1)},\dots,x^{(d)}))
 \]
pour tout $(x^{(1)},\dots,x^{(d)})\in\P^1(\bar\Q)^d$.

Soit maintenant $x\in \P^1(\bar\Q)$ dont le corps
de d\'efinition $\Q(x)$ est de degr\'e~$d$.
Notons $x^{(1)},\dots,x^{(d)}$ les \'el\'ements de son
orbite sous $\Gal(\bar\Q/\Q)$ et
appliquons la relation
au $d$-uplet $[x]=(x^{(1)},\dots,x^{(d)})$.
Comme la hauteur est invariante sous $\Gal(\bar\Q/\Q)$
(prop.~\ref{prop.h-galois})
il en r\'esulte l'exitence d'un nombre r\'eel~$c$
tel que
\[ dh(x) -c \leq  h(\theta([x]))\leq dh(x)+c , \]
pour $x\in\P^1(\bar\Q)$.

Le point important est que $\theta([x])$ est, par construction
m\^eme, invariant sous $\Gal(\bar\Q/\Q)$.
C'est donc un point de~$\P^d(\Q)$ de hauteur
au plus $dh(x)\leq dB+c$.
(De fait, lorsque $x_0\neq 0$,
$\theta([x])$ n'est autre que la collection
des coefficients du polyn\^ome minimal de~$x_1/x_0$.)

Comme l'ensemble des points de~$\P^d(\Q)$ de hauteur au plus~$dB+c$
est fini (proposition~\ref{prop.finitude-Q}),
l'ensemble des~$\theta([x])$,
pour $x\in\P^1(\bar\Q)$ de hauteur au plus~$B$
et d\'efinis sur une extension de degr\'e~$d$ de~$\Q$
est fini.
L'application $\theta$ n'est pas injective mais ses
fibres ont cardinal au plus~$d!$. En effet, la connaissance
de $(z_0,\dots,z_d)$ d\'etermine celle de $(\xi^{(1)},
\dots,\xi^{(d)})$ \`a l'ordre pr\`es :
ce sont les oppos\'es des inverses des racines du polyn\^ome
$\sum z_j T^j$.
Plus pr\'ecis\'ement, la connaissance de~$\theta([x])$,
c'est-\`a-dire, si $x_0\neq0$, du polyn\^ome minimal de~$x_1/x_0$, 
d\'etermine $x\in\P^1(\bar\Q)$ \`a un choix parmi~$d$ pr\`es:
celui d'une racine de ce polyn\^ome de degr\'e~$d$.

Par cons\'equent, l'ensemble des points~$x\in\P^1(\bar\Q)$
de  hauteur au plus~$B$
et d\'efinis sur une extension de degr\'e~$d$ de~$\Q$
est fini, ce qu'il fallait d\'emontrer.
\end{proof}

\begin{coro}\label{coro.finitude}
Soit $X$ une vari\'et\'e projective sur~$\bar\Q$
et soit $\mathscr L$ un fibr\'e en droites ample sur~$X$.
Notons $h_{\mathscr L}$ une hauteur relative \`a~$\mathscr L$
sur~$X$. Pour tout entier~$d$ et tout nombre r\'eel~$B$,
l'ensemble des points $x\in X(\bar\Q)$
d\'efinis sur une extension de degr\'e au plus~$d$
et de hauteur au plus~$B$ est fini.
\end{coro}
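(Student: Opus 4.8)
The plan is to pull this statement back to projective space, where Theorem~\ref{theo.finitude} already provides the finiteness, by means of a projective embedding attached to~$\mathscr L$.

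First I would reduce to the case where $\mathscr L$ is very ample. Replacing $\mathscr L$ by a power $\mathscr L^{\otimes n}$ (at the cost of enlarging~$B$) is harmless: since $\mathscr L\mapsto\eta_{\mathscr L}$ is a homomorphism of groups one has $\eta_{\mathscr L^{\otimes n}}=n\,\eta_{\mathscr L}$, so $n h_{\mathscr L}$ is a height relative to $\mathscr L^{\otimes n}$ and hence differs from any chosen $h_{\mathscr L^{\otimes n}}$ by a bounded function; consequently the set $\{x : h_{\mathscr L}(x)\le B\}$ is contained in a set of the form $\{x : h_{\mathscr L^{\otimes n}}(x)\le nB+C\}$, and it suffices to treat $\mathscr L^{\otimes n}$ in place of~$\mathscr L$. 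Taking $n$ large enough, I may thus assume $\mathscr L$ very ample and fix a closed immersion $\phi\colon X\hra\P^N$ with $\phi^*\mathscr O(1)\simeq\mathscr L$. Since only finitely many algebraic numbers occur in the equations of~$X$ and in the forms defining~$\phi$, both $X$ and~$\phi$ are defined over some number field~$K_0$.

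Next I would transport the problem to~$\P^N$. By Proposition~\ref{prop.hof}, applied to the morphism~$\phi$ and the bundle $\mathscr O(1)$, the function $h_{\P^N}\circ\phi$ is a height relative to $\phi^*\mathscr O(1)\simeq\mathscr L$, so there is a constant~$C$ with $\abs{h_{\P^N}(\phi(x))-h_{\mathscr L}(x)}\le C$ for every $x\in X(\bar\Q)$. Let $x\in X(\bar\Q)$ be defined over a number field~$K$ with $[K:\Q]\le d$ and satisfy $h_{\mathscr L}(x)\le B$. Then $h_{\P^N}(\phi(x))\le B+C$; moreover, because $\phi$ is given by forms with coefficients in~$K_0$, the point $\phi(x)$ has a system of homogeneous coordinates in the compositum~$KK_0$, hence is defined over a number field of degree at most $d\,[K_0:\Q]$. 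Finally, Theorem~\ref{theo.finitude}, applied in~$\P^N$ with height bound $B+C$ and degree bound $d\,[K_0:\Q]$, shows that the set of points $\phi(x)$ obtained in this way is finite; since $\phi$ is a closed immersion, hence injective on $\bar\Q$-points, the set of the corresponding~$x$ is finite as well.

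The one point that will require genuine care --- the main obstacle, such as it is --- is controlling the degree of $\phi(x)$: one must observe that $X$ and~$\phi$ descend to a fixed number field~$K_0$, so that composing with~$\phi$ multiplies the degree of the field of definition by at most the constant factor $[K_0:\Q]$. Without this remark there would be no a priori bound on the degree of~$\phi(x)$ and Theorem~\ref{theo.finitude} could not be invoked; everything else is formal, resting only on Proposition~\ref{prop.hof} and on the additivity of $\mathscr L\mapsto\eta_{\mathscr L}$.
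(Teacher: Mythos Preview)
Your proof is correct and takes the same route as the paper: replace $\mathscr L$ by a very ample power, embed $X$ in a projective space via the associated $\phi$, compare $h_{\mathscr L}$ with $h_{\P^N}\circ\phi$, and invoke Theorem~\ref{theo.finitude} together with the injectivity of~$\phi$. The paper's argument is a three-line version of yours and does not isolate the field-of-definition issue you call ``the main obstacle''; that $\phi$ descends to a fixed number field is a legitimate detail to check, but the paper leaves it implicit and does not regard it as the crux.
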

\begin{proof}
Par d\'efinition, il existe un entier~$m\geq 1$
tel que $\mathscr L^{\otimes m}$ soit tr\`es ample,
c'est-\`a-dire isomorphe au fibr\'e en droites $\phi^*\mathscr O(1)$,
o\`u $\phi$ est un plongement de~$X$ dans un espace projectif~$\P^k$.
Par construction de la machine des hauteurs,
$m h_{\mathscr L}-h_\phi$ est une fonction born\'ee
sur~$X(\bar\Q)$. Comme $\phi$ est injectif,
on voit que l'assertion r\'esulte 
de l'\'enonc\'e de finitude sur $\P^k$.
\end{proof}

\begin{rema}
Le corps $k(T)$ des fractions rationnelles en une variable \`a coefficients
dans un corps~$k$, et ses extensions finies, les corps
des fonctions de courbes alg\'ebriques projectives r\'eguli\`eres sur
une extension finie de~$k$,
jouissent de propri\'et\'es alg\'ebriques tr\`es semblables
\`a celles du corps~$\Q$, ou des corps de nombres.
On peut en particulier y d\'efinir une notion de hauteur;
par exemple, si $U_0,\ldots,U_n$ sont des polyn\^omes de~$k[T]$,
deux \`a deux sans facteur commun, la hauteur du point
de l'espace projectif~$\P^n(k(T))$
de coordonn\'ees homog\`enes $[U_0:\cdots:U_n]$ 
est d\'efinie comme le maximum des degr\'es des~$U_i$.
Si le corps~$k$ est infini, on observera que le th\'eor\`eme
de finitude ci-dessus n'est plus vrai en g\'en\'eral, pas plus que les
cons\'equences que nous en avons tir\'ees, 
par exemple la prop.~\ref{prop.hauteur-normalisee-Q}.
(On peut par exemple prendre~$k$ alg\'ebriquement clos
et consid\'erer un endomorphisme de~$\P^n$ \`a coefficients dans~$k$
comme un endomorphisme, {\og constant\fg}, 
de l'espace projectif sur le corps~$k(T)$.)
Dans la suite de ce texte, je me contenterai d'indiquer
quelques r\'ef\'erences bibliographiques \`a des extensions ou contre-exemples
des th\'eor\`emes valables sur les corps de nombres.
\end{rema}

\section{Hauteurs locales et fonctions de Green}
\label{sec.locales}

La formule~\eqref{def.hauteur-p}
qui d\'efinit  la hauteur d'un point de l'espace
projectif \`a coefficients dans un corps de nombres~$K$ 
est une somme index\'ee sur les
diff\'erents plongements de~$K$ dans les corps~$\C_p$,
o\`u $p$ parcourt l'ensemble des nombres premier et~$\infty$.
Cependant, chacun de ces termes n'est pas une fonction
sur l'espace projectif car ils d\'ependent
du choix des coordonn\'ees homog\`enes ; seule leur somme
n'en d\'epend plus, en vertu de la formule du produit.

La th\'eorie des fonctions de Green permet d'exprimer la hauteur
d'un point comme somme de termes locaux ({\og \emph{hauteurs locales}\fg})
bien d\'efinis.

\subsection{D\'efinition}

Soit $K$ un corps valu\'e complet et alg\'ebriquement clos,
soit $X$ une vari\'et\'e projective d\'efinie sur~$K$
et soit $D$ un diviseur de Cartier effectif sur~$X$.
(Rappelons qu'il s'agit d'un sous-sch\'ema ferm\'e qui 
est localement d\'efini par une \'equation non-diviseur de z\'ero.)

On appelle \emph{fonction de Green}
relativement \`a~$D$ sur~$X$ toute fonction continue
$\lambda_D \colon (X\setminus D)(K)\ra \R$ qui v\'erifie
les propri\'et\'es suivantes:
\begin{itemize}
\item
si $D$ est tr\`es ample, 
il existe un plongement de~$X$ dans un espace projectif~$\P^n$
tel que $D=X\cap H_0$, o\`u $H_0=\{x_0=0\}$, et tel que la fonction
donn\'ee par
\[ x\mapsto \lambda_D(x)  + \log \frac{\abs{x_0}}{\max(\abs{x_0},\dots,\abs{x_n})}
\]
s'\'etende (de mani\`ere unique car $X\setminus D(K)$ est dense
dans~$X(K)$) 
en une fonction continue et born\'ee sur~$X(K)$.
\item
si $D=E-F$ est la diff\'erence de deux diviseurs tr\`es amples, 
il existe des fonctions de Green~$\lambda_E$ et~$\lambda_F$ 
relativement \`a~$E$ et~$F$
comme ci-dessus telles que  $\lambda_D=\lambda_E-\lambda_F$
sur $(X\setminus (E\cup F))(K)$.
\end{itemize}

\begin{lemm}
L'ensemble des fonctions de Green relativement \`a un diviseur de
Cartier~$D$ est un espace affine sous l'espace vectoriel
des fonctions continues et born\'ees sur~$X(K)$.
\end{lemm}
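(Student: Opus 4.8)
The plan is to show two things: first, that any two Green functions relative to the same Cartier divisor $D$ differ by a continuous bounded function on $X(K)$; and second, that adding such a function to a Green function produces another Green function. Together these say precisely that the set of Green functions relative to $D$ is a torsor (affine space) under the vector space of continuous bounded functions on $X(K)$ — noting first that this set is nonempty, which follows by writing $D=E-F$ with $E,F$ very ample (possible since $\mathcal O(1)$ is ample after suitable twisting) and taking $\lambda_D=\lambda_E-\lambda_F$ for explicit very ample models.

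First I would treat the very ample case. Suppose $D$ is very ample and $\lambda_D,\lambda_D'$ are two Green functions. By definition there are two projective embeddings $\iota\colon X\hra\P^n$ and $\iota'\colon X\hra\P^{n'}$, with $D=X\cap\{x_0=0\}$ and $D=X\cap\{x_0'=0\}$ in the respective coordinates, such that $\lambda_D + \log\bigl(\abs{x_0}/\max_i\abs{x_i}\bigr)$ extends to a continuous bounded function $g$ on $X(K)$, and similarly $\lambda_D' + \log\bigl(\abs{x_0'}/\max_j\abs{x_j'}\bigr)$ extends to a continuous bounded $g'$. The difference $\lambda_D-\lambda_D' = g - g' + \log\bigl(\abs{x_0'}\max_i\abs{x_i}\bigr) - \log\bigl(\abs{x_0}\max_j\abs{x_j'}\bigr)$. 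Since both embeddings pull back $\mathcal O(1)$ to the line bundle $\mathcal O_X(D)$, the rational function $x_0'/x_0$ (more precisely the ratio of the two chosen sections cutting out $D$) is a nowhere-vanishing regular function on $X\setminus D$ whose divisor is trivial, hence — $X$ being projective and connected — a nonzero constant; so $\log\abs{x_0'/x_0}$ is constant. It remains to see that $\log\bigl(\max_i\abs{x_i}/\max_j\abs{x_j'}\bigr)$ is continuous and bounded on $X(K)$: continuity is clear away from the (empty) common base locus, and boundedness is exactly the statement, proved in the style of Proposition~\ref{prop.fonctorialite}, that two projective embeddings inducing isomorphic line bundles give heights (here: local heights) differing by a bounded amount. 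This is the step that carries the real content; everything hinges on the fact that both models realize the \emph{same} $\mathcal O_X(D)$, so one embedding factors through the other up to a base-point-free linear projection, and Proposition~\ref{prop.fonctorialite} applied locally over $K$ gives the bound.

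For a general $D=E-F$ with $E,F$ very ample, I would reduce to the above: given two Green functions $\lambda_D,\lambda_D'$, write each as $\lambda_E-\lambda_F$ and $\lambda_E'-\lambda_F'$ for very ample models; then $\lambda_D-\lambda_D' = (\lambda_E-\lambda_E') - (\lambda_F-\lambda_F')$, and each parenthesized difference is continuous and bounded by the very ample case, hence so is the total. Conversely, if $\lambda_D$ is a Green function and $b$ is continuous and bounded on $X(K)$, then $\lambda_D+b$ still satisfies the defining property: in the very ample case absorb $b$ into the bounded function $g$; in the general case write $\lambda_D+b = (\lambda_E+b)-\lambda_F$, which exhibits $\lambda_D+b$ as a difference of Green functions for $E$ and $F$. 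This shows the action is well-defined, and it is visibly free and transitive by the first part.

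The main obstacle is the boundedness assertion for $\log\bigl(\max_i\abs{x_i}/\max_j\abs{x_j'}\bigr)$ in the very ample step: one must know that when two very ample embeddings induce the same line bundle, the comparison between their ``sup-norm'' functions is bounded — at each place, uniformly. I expect to deduce this exactly as in the proof of Proposition~\ref{prop.fonctorialite} (and the lemma on pulling back $\mathcal O(1)$): factor both embeddings through the complete linear system of $\mathcal O_X(D)$ via base-point-free projections and apply the Nullstellensatz-type inequalities of that proposition to each projection, once over each complete valued field $K$. All remaining verifications — the constancy of $\log\abs{x_0'/x_0}$, the reductions via $D=E-F$, the absorption of $b$ — are formal.
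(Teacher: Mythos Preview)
Your treatment of the very ample case matches the paper's: factor both embeddings through the complete linear system $\alpha\colon X\to\P^s$ for $\mathscr O_X(D)$ via linear projections $\phi',\psi'$, note that $\phi'_0/\psi'_0$ is locally constant on $\alpha(X)$ (the paper says ``constante sur chacune des composantes connexes'' rather than assuming $X$ connected, but this changes nothing), and bound $\log\bigl(\max_i\abs{\phi'_i(y)}/\max_j\abs{\psi'_j(y)}\bigr)$ on $\alpha(X)$ via the Nullstellensatz, exactly as in the proof of Proposition~\ref{prop.fonctorialite}.

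The gap is in your reduction for general~$D$. You fix one decomposition $D=E-F$ and write both $\lambda_D=\lambda_E-\lambda_F$ and $\lambda_D'=\lambda'_E-\lambda'_F$ with Green functions for the \emph{same} $E$ and~$F$. But the definition only promises that $\lambda_D'$ admits \emph{some} decomposition $\lambda_D'=\lambda_{E'}-\lambda_{F'}$ for very ample $E',F'$ with $D=E'-F'$; nothing forces $E'=E$ and $F'=F$. So your claim that ``each parenthesized difference is continuous and bounded by the very ample case'' is unjustified as written: $\lambda_E-\lambda_{E'}$ is not a difference of Green functions for a single very ample divisor. The paper closes this by first establishing additivity via the Segre embedding: if $\lambda_E,\lambda_{F'}$ are Green functions of very-ample type realized by embeddings $\phi,\psi$, then $S\circ(\phi,\psi)$ realizes $\lambda_E+\lambda_{F'}$ (up to a continuous bounded function) as a Green function for $E+F'$. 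With this in hand the reduction goes through: from $E+F'=E'+F$ one rewrites $\lambda_D-\lambda_D'=(\lambda_E+\lambda_{F'})-(\lambda_{E'}+\lambda_F)$, a difference of two Green functions for the single very ample divisor $E+F'$, and your very ample case applies. This Segre step is the missing ingredient; it is not among the ``formal'' verifications.
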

\begin{proof}
% D=E-F=E'-F'; E+F'=F+E' tr\`es amples, 
% \lambda_E+\lambda_F' associ\'ee \`a un Segre(E,F')
Soit $D$ et $E$ des diviseurs tr\`es amples
et soit $\lambda_D$, $\lambda_E$ des fonctions de Green 
d\'efinies \`a l'aide de plongements $\phi$ et~$\psi$.
On v\'erifie que la composition de~$(\phi,\psi)$
et d'un plongement de Segre~$S$ d\'efinit la fonction~$\lambda_D+\lambda_E$,
\`a une fonction continue born\'ee pr\`es.

Par un argument \'el\'ementaire,
il suffit alors de d\'emontrer que la diff\'erence de deux hauteurs
locales associ\'ees \`a un diviseur tr\`es ample s'\'etend en une fonction
continue born\'ee sur $X(K)$.
Soit $\phi\colon X\ra\P^n$ et $\psi\colon X\ra\P^m$
des plongements tels que $D=\phi^*H_0=\psi^*H_0$,
o\`u $H_0$ est l'hyperplan d'\'equation $x_0=0$ dans~$\P^n$,
resp. d'\'equation $y_0=0$ dans~$\P^m$.
En particulier, ces deux plongements sont associ\'es \`a un plongement~$\alpha
\colon X\ra\P^s$
d\'efini par le syst\`eme lin\'eaire complet associ\'e \`a~$D$
compos\'es avec des projections lin\'eaires $\phi'$
et~$\psi'$
dont les centres ne rencontrent pas~$\alpha(X)$.
Par hypoth\`ese, 
les formes lin\'eaires $(\phi'_0,\dots,\phi'_n)$ d\'efinissant~$\phi'$,
resp. $[\psi'_0:\cdots:\psi'_m]$ d\'efinissant~$\psi'$
ne s'annulent pas simultan\'ement sur~$\alpha(X)$.
Comme les formes $\phi'_0$ et $\psi'_0$  sur~$\P^s$ d\'efinissent
toutes deux $\alpha(D)$, leur quotient est une
fonction r\'eguli\`ere sur~$\alpha(X)$, donc constante
sur chacune des composantes connexes de~$\alpha(X)$. Par cons\'equent,
l'application de $\alpha(X)$ dans~$\R$ d\'efinie par
\[ \mu\colon y=[y_0:\cdots:y_s]\mapsto \log \frac{\max(\abs{\phi'_0(y)},\dots,\abs{\phi'_n(y)})}{\max(\abs{\psi'_0(y)},\dots,\abs{\psi'_m(y)})} \]
est bien d\'efinie --- num\'erateur et d\'enominateur
sont homog\`enes --- et continue sur~$\alpha(X)$.
Si $K$ est localement compact, en l'occurrence si $K=\C$,
elle est alors born\'ee.
Dans le cas g\'en\'eral, il faut une fois de plus utiliser le th\'eor\`eme des z\'eros
de Hilbert.

L'existence d'une \emph{majoration}
\[ \max(\abs{\phi'_0(y)},\dots,\abs{\phi'_n(y)}) \leq c \max(\abs{y_0},\dots,\abs{y_s}) \]
est \'evidente.
Soit $(P_j)$ une famille de polyn\^omes homog\`enes d\'efinissant $\alpha(X)$.
La famille $(P_j,\psi'_i)$ n'ayant pas de z\'ero commun, il existe
des polyn\^omes $Q_{jk}$ et $H_{ik}$, et un entier~$t$, tels que
\[ Y_k^t = \sum_k  Q_{jk} P_j + \sum_{i=0}^m \psi'_i H_{ik}. \]
On peut de plus supposer ces polyn\^omes homog\`enes.
Puisque $P_j(y)=0$ pour tout~$j$,
il existe un nombre r\'eel~$c$ tel que
\[ \abs{y_k}^t \leq c \max(\abs{\psi'_0(y)},\dots,\abs{\psi'_m(y)})
 \max_{i,k} \abs{H_{ik}(y)}. \]
De plus, comme les polyn\^omes $H_{ik}$ sont de degr\'e~$t-1$,
on a une majoration $\abs{H_{ik}(y)}\ll \max(\abs{y_0},\dots,\abs{y_s})^{t-1}$.
Finalement, on en d\'eduit une \emph{minoration}
\[ \max(\abs{\psi'_0(y)},\dots,\abs{\psi'_m(y)}) \geq c' \max(\abs{y_0},\dots,\abs{y_s}). \]
Le fait que $\mu$ soit born\'ee est alors \'evident.
\end{proof}

\begin{prop}
Soit $X$ une vari\'et\'e projective sur~$K$, soit $D$ et~$E$ des diviseurs
de Cartier sur~$X$ et soit $\lambda_D$, $\lambda_{E}$
des fonctions de Green pour~$D$ et~$E$ respectivement.
Alors, les fonctions $\lambda_D+\lambda_E$ et $\lambda_D-\lambda_E$
sont les restrictions \`a $X\setminus(D\cup E)(K)$ de
fonctions de Green pour~$D+E$ et~$D-E$ respectivement.
\end{prop}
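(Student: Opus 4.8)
Le plan est de se ramener au cas de diviseurs très amples par un argument de différence, de traiter ce cas au moyen du plongement de Segre comme dans la démonstration du lemme précédent, puis de recoller. On commence par écrire $D=D_1-D_2$ et $E=E_1-E_2$ avec $D_1,D_2,E_1,E_2$ très amples : c'est loisible, tout diviseur de Cartier sur une variété projective étant différence de deux diviseurs très amples. Par définition d'une fonction de Green relative à une différence de diviseurs très amples, on peut choisir des fonctions de Green $\mu_1,\mu_2$ pour $D_1,D_2$ et $\nu_1,\nu_2$ pour $E_1,E_2$ telles que $\lambda_D=\mu_1-\mu_2$ sur $(X\setminus(D_1\cup D_2))(K)$ et $\lambda_E=\nu_1-\nu_2$ sur $(X\setminus(E_1\cup E_2))(K)$. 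Comme une somme de deux diviseurs très amples est encore très ample — via un plongement de $X$ dans un produit d'espaces projectifs suivi du plongement de Segre —, les égalités $D+E=(D_1+E_1)-(D_2+E_2)$ et $D-E=(D_1+E_2)-(D_2+E_1)$ réécrivent $D+E$ et $D-E$ comme différences de diviseurs très amples.

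L'ingrédient essentiel est l'assertion suivante : si $A$ et $B$ sont très amples et si $\mu$ (resp.\ $\nu$) est une fonction de Green pour $A$ (resp.\ $B$), alors $\mu+\nu$ en est une pour $A+B$. C'est le calcul déjà fait au début de la démonstration du lemme précédent. Au vu de ce lemme, on écrit $\mu=\log\bigl(\max_i\abs{\phi_i}/\abs{\phi_0}\bigr)+u$ et $\nu=\log\bigl(\max_j\abs{\psi_j}/\abs{\psi_0}\bigr)+v$, où $\phi\colon X\hra\P^n$ et $\psi\colon X\hra\P^m$ sont des plongements tels que $A=\phi^*H_0$, $B=\psi^*H_0$, et $u,v$ sont continues bornées sur $X(K)$ ; le plongement $\alpha$ obtenu en composant $(\phi,\psi)$ avec le plongement de Segre a pour coordonnées les $\phi_i\psi_j$, il vérifie $A+B=\alpha^*\{z_{00}=0\}$, et l'identité $\max_{i,j}\abs{\phi_i\psi_j}=\max_i\abs{\phi_i}\cdot\max_j\abs{\psi_j}$ fournit $\mu+\nu=\log\bigl(\max_{i,j}\abs{\phi_i\psi_j}/\abs{\phi_0\psi_0}\bigr)+(u+v)$, soit exactement la propriété définissant une fonction de Green pour $A+B$ via $\alpha$.

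Pour conclure, l'ingrédient ci-dessus montre que $\mu_1+\nu_1$ est une fonction de Green pour $D_1+E_1$ et $\mu_2+\nu_2$ une pour $D_2+E_2$, et de même $\mu_1+\nu_2$ pour $D_1+E_2$, $\mu_2+\nu_1$ pour $D_2+E_1$. Sur l'ouvert dense $U=(X\setminus(D_1\cup D_2\cup E_1\cup E_2))(K)$ on a $\lambda_D+\lambda_E=(\mu_1+\nu_1)-(\mu_2+\nu_2)$ et $\lambda_D-\lambda_E=(\mu_1+\nu_2)-(\mu_2+\nu_1)$. Il suffit donc de montrer que $\lambda_D+\lambda_E$, continue a priori sur $(X\setminus(D\cup E))(K)$ seulement, se prolonge continûment à $(X\setminus(D+E))(K)$ : ce prolongement coïncide alors avec $(\mu_1+\nu_1)-(\mu_2+\nu_2)$ sur $U$, et c'est bien une fonction de Green pour $D+E=(D_1+E_1)-(D_2+E_2)$ dont la restriction à $(X\setminus(D\cup E))(K)$ est $\lambda_D+\lambda_E$. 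Ce prolongement est la seule difficulté réelle — il faut contrôler la somme aux points où des composantes de $D$ et de $E$ se compensent, points hors de $X\setminus(D\cup E)$ mais dans $X\setminus(D+E)$ —, et on le règle localement : au voisinage d'un point $P$, en prenant des équations locales $f_D$ de $D$ et $f_E$ de $E$, les fonctions $\lambda_D+\log\abs{f_D}$ et $\lambda_E+\log\abs{f_E}$ se prolongent continûment près de $P$ (au voisinage de chaque point, une fonction de Green plus le logarithme d'une équation locale de son diviseur est continue, ce qui se lit sur la définition), donc $\lambda_D+\lambda_E+\log\abs{f_D f_E}$ aussi ; puisque $f_D f_E$ est une équation locale de $D+E$, on en déduit le prolongement voulu là où $P\notin D+E$. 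Le cas de $\lambda_D-\lambda_E$ et de $D-E$ est identique, en remplaçant $f_D f_E$ par $f_D/f_E$.
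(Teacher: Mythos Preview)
Your proof is correct and follows essentially the same approach as the paper: reduce to the case of very ample divisors and handle that case via the Segre embedding. The paper's proof is terser --- it dispatches the reduction with ``compte tenu de la d\'efinition, il suffit de traiter le cas de~$D+E$ sous l'hypoth\`ese que $D$ et~$E$ sont tr\`es amples'' and only writes out the Segre identity --- whereas you spell out the extension from $X\setminus(D\cup E)$ to $X\setminus(D+E)$ via local equations, which is the right level of care (and is essentially the content of the exercise the paper points to in the footnote of the next proposition).
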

\begin{proof} 
Compte tenu de la d\'efinition, il suffit de traiter le cas de~$D+E$
sous l'hypoth\`ese que $D$ et~$E$ sont tr\`es amples, associ\'es
\`a des plongements $\phi\colon X\hra\P^k$ et $\psi\colon X\hra\P^m$
tels que $D$ est d\'efini par $x_0=0$ dans~$\phi(X)$
et $E$ est d\'efini par $y_0=0$ dans~$\psi(X)$.
Consid\'erons la composition~$\alpha$ de~$(\phi,\psi)$ et du plongement de Segre 
$S\colon \P^k\times\P^m\hra \P^{km+k+m}$,
donn\'e par $S([x_0:\cdots:x_k],[y_0:\cdots:y_m])=[x_0y_0:\cdots:x_ky_m]$.
Dans $\alpha(X)$, l'\'equation $z_0=0$ d\'efinit le diviseur de Cartier
$D+E$. L'assertion r\'esulte alors de l'\'egalit\'e
\[ \log \frac{\abs{x_0y_0}}{\max(\abs{x_iy_j})}
= \log \frac{\abs{x_0}}{\max(\abs{x_i})} + \log \frac{\abs{y_0}}{\max(\abs{y_j})} ,\]
valable pour tout couple de points $(x,y)\in\P^k(K)\times\P^m(K)$
tels que $x_0\neq 0$ et $y_0\neq 0$.
\end{proof}

\begin{prop}
Soit $f\colon X\ra Y$ un morphisme entre vari\'et\'es
projectives sur~$K$, soit $D$ un diviseur de Cartier sur~$X$,
soit $E$ un diviseur de Cartier sur~$Y$. On suppose que 
$D=f^*E$. Si $\lambda_E$ est une fonction de Green relativement \`a~$E$ sur~$Y$,
alors $\lambda_E\circ f$ est une fonction de Green relativement \`a~$D$
sur~$X$.
\end{prop}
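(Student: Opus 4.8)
Le plan est de ramener l'\'enonc\'e \`a une affirmation sur les plongements, que le plongement de Segre permet de traiter. Commen\c{c}ons par \'ecrire, ce qui est licite par d\'efinition d'une fonction de Green, $E=E_1-E_2$ o\`u $E_1,E_2$ sont tr\`es amples sur~$Y$, donn\'es par des plongements $\psi_i\colon Y\hra\P^{n_i}$ avec $E_i=\psi_i^{-1}(\{y^{(i)}_0=0\})$, et $\lambda_E=\lambda_{E_1}-\lambda_{E_2}$, chaque $\lambda_{E_i}$ relevant du premier cas de la d\'efinition; on peut de surcro\^{\i}t s'arranger pour que $f^*E_1$ et $f^*E_2$ soient des diviseurs de Cartier sur~$X$ --- par exemple en prenant $E_1=E+mA$, $E_2=mA$, $\lambda_{E_1}=\lambda_E+m\lambda^{\natural}_A$, $\lambda_{E_2}=m\lambda^{\natural}_A$, avec $A$ tr\`es ample sur~$Y$ tel que $f^*A$ soit de Cartier, $m$ assez grand, et $\lambda^{\natural}_A$ la fonction de Green tautologique d'un plongement d\'efini par~$A$. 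La d\'efinition fournit alors des fonctions continues born\'ees $v_i$ sur~$Y(K)$ telles que $\lambda_{E_i}=\lambda^{\natural}_{E_i}+v_i$ sur $(Y\setminus E_i)(K)$, o\`u $\lambda^{\natural}_{E_i}(y)=\log\frac{\max_j\abs{y^{(i)}_j}}{\abs{y^{(i)}_0}}$ est la fonction de Green tautologique associ\'ee \`a~$\psi_i$.

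Le point central est le suivant. Fixons un plongement auxiliaire quelconque $\iota\colon X\hra\P^N$, de diviseur d'hyperplan~$H$ et de fonction de Green tautologique $\lambda^{\natural}_H(x)=\log\frac{\max_a\abs{w_a}}{\abs{w_0}}$. Pour chaque~$i$, je consid\'ererais le morphisme $\beta_i$ obtenu en composant $(\iota,\psi_i\circ f)\colon X\ra\P^N\times\P^{n_i}$ avec le plongement de Segre. Comme $\iota$ est une immersion ferm\'ee, $\beta_i\colon X\hra\P^{n_iN+n_i+N}$ en est une, son image r\'eciproque de~$\mathscr O(1)$ est $\mathscr O(H+f^*E_i)$, et la coordonn\'ee de Segre $w_0y^{(i)}_0$ y d\'ecoupe le diviseur $H+f^*E_i$; celui-ci est donc tr\`es ample. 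La multiplicativit\'e de la norme-max \`a travers le plongement de Segre --- celle qui donne $h(S(x,y))=h(x)+h(y)$ --- fournit de plus l'identit\'e
\[ \log\frac{\max_{a,j}\abs{w_ay^{(i)}_j}}{\abs{w_0y^{(i)}_0}}=\log\frac{\max_a\abs{w_a}}{\abs{w_0}}+\log\frac{\max_j\abs{y^{(i)}_j}}{\abs{y^{(i)}_0}}=\lambda^{\natural}_H(x)+\lambda^{\natural}_{E_i}(f(x)), \]
qui identifie la fonction de Green tautologique de~$H+f^*E_i$ fournie par~$\beta_i$ \`a $\lambda^{\natural}_H+\lambda^{\natural}_{E_i}\circ f$.

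Il reste \`a conclure. Sur le compl\'ementaire des supports de $H$, $f^*E_1$ et $f^*E_2$ --- qui n'est autre que $(X\setminus((H+f^*E_1)\cup(H+f^*E_2)))(K)$ --- on a
\[ \lambda_E\circ f-(v_1-v_2)\circ f=\lambda^{\natural}_{E_1}\circ f-\lambda^{\natural}_{E_2}\circ f=\lambda^{\natural}_{H+f^*E_1}-\lambda^{\natural}_{H+f^*E_2}. \]
Puisque $D=f^*E=(H+f^*E_1)-(H+f^*E_2)$ est diff\'erence de deux diviseurs tr\`es amples dont $\lambda^{\natural}_{H+f^*E_1}$ et $\lambda^{\natural}_{H+f^*E_2}$ sont des fonctions de Green, et puisque $\lambda_E\circ f-(v_1-v_2)\circ f$ est continue sur $(X\setminus D)(K)$, le second cas de la d\'efinition montre que $\lambda_E\circ f-(v_1-v_2)\circ f$ est une fonction de Green relativement \`a~$D$. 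En lui ajoutant la fonction continue born\'ee $(v_1-v_2)\circ f$ et en invoquant le lemme sur la structure affine des fonctions de Green, on conclut que $\lambda_E\circ f$ est une fonction de Green relativement \`a~$D$.

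L'obstacle principal est que le morphisme $\psi_i\circ f$ n'est pas un plongement, de sorte qu'on ne saurait lui appliquer directement le premier cas de la d\'efinition pour exhiber une fonction de Green relativement \`a~$f^*E_i$; c'est pr\'ecis\'ement l'astuce de Segre avec un plongement auxiliaire de~$X$ qui l\`eve cette difficult\'e. Le reste est de la v\'erification de routine: que num\'erateur et d\'enominateur des fractions \'ecrites ont m\^eme bidegr\'e sur le produit (d'o\`u le quotient bien d\'efini), que $w_0y^{(i)}_0$ d\'efinit sch\'ematiquement $H+f^*E_i$ sur~$\beta_i(X)$ --- ce qui d\'ecoule de l'isomorphisme $\beta_i^*\mathscr O(1)\simeq\mathscr O(H+f^*E_i)$ ---, et que la d\'ecomposition initiale $\lambda_E=\lambda_{E_1}-\lambda_{E_2}$ avec $f^*E_i$ de Cartier est effectivement possible.
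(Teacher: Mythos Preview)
Your proof is correct and takes essentially the same approach as the paper: both use the Segre embedding with an auxiliary very ample divisor on~$X$ to convert the composition $\psi\circ f$ (which need not be an embedding) into a genuine embedding of~$X$, and then read off the Green-function identity from the multiplicativity of the max-norm under Segre. The only organizational difference is that the paper first reduces to the case where~$E$ itself is tr\`es ample and applies the trick once (subtracting the auxiliary~$\lambda_{D'}$ afterwards), whereas you carry the decomposition $E=E_1-E_2$ throughout and apply the trick to both pieces, so that the final difference fits the second clause of the definition directly.
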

\begin{proof}
Il suffit de traiter le cas o\`u $E$ est tr\`es ample, associ\'e
\`a un plongement~$\psi$ de~$Y$ dans~$\P^m$
et tel que $E$ soit d\'efini par l'\'equation $y_0=0$ dans~$\psi(Y)$.
Soit $D'$ un diviseur de Cartier tr\`es ample sur~$X$,
associ\'e \`a un plongement $\phi$ de~$X$ dans~$\P^k$
de sorte que $D'$ soit d\'efini par l'\'equation $x_0=0$ dans~$\phi(X)$.
Introduisons alors la composition 
$\alpha$ de $(\phi,\psi\circ f)\colon X\ra \P^k\times\P^m$
et du plongement de Segre vers $\P^{km+k+m}$.
C'est un plongement et l'\'equation $z_0=0$ d\'efinit $D'+f^*E=D'+D$
dans~$\alpha(X)$. Pour $x\in X$ de coordonn\'ees
homog\`enes~$[x_0:\cdots:x_k]$ et d'images
$y=f(x)=[y_0:\cdots:y_m]$ dans~$Y$ et $z=\alpha(x)=[z_0:\cdots:z_{km+k+m}]$
dans~$\P^{km+k+m}$, on a alors 
\[ \log\frac{\abs{z_0}}{\max(\abs{z_i})}
= \log \frac{\abs{x_0}}{\max(\abs{x_i})} + \log \frac{\abs{y_0}}{\max(\abs{y_j})}.
\]
Si $\lambda_{D'}$ est une fonction de Green pour~$D'$,
il en r\'esulte que $\lambda_{D'}+\lambda_E\circ f$ est une fonction
de Green pour $D'+f^*E$; par cons\'equent, $\lambda_E\circ f$
est une fonction de Green pour~$E$.\footnote{%
Voir l'exercice~\ref{exer.implicite}
pour un point de d\'etail n\'ecessaire \`a une preuve compl\`ete.}
\end{proof}

\subsection{Cas du corps~$\C$}

Le fibr\'e en droites $\mathscr O(D)$ a pour sections 
les fonctions m\'eromorphes~$f$ ayant au plus~$D$ comme p\^ole,
c'est-\`a-dire telles que $\div(f)+D$ soit effectif.
Si $D$ est effectif,
la fonction r\'eguli\`ere constante~$1$ d\'efinit en particulier
une section globale de ce fibr\'e en droites que l'on
note~$\mathbf 1_D$ et dont le diviseur (en tant
que section de~$\mathscr O(D)$) n'est autre que~$D$.

Supposons que le corps valu\'e soit le corps des nombres complexes.
Soit $X$ une vari\'et\'e projective complexe, soit $D$ un diviseur
de Cartier de~$X$ et soit $\lambda_D\colon(X\setminus D)(\C)\ra\R$
une fonction continue. Pour que $\lambda_D$ soit une hauteur
locale relativement \`a~$D$, il faut et il suffit
qu'il existe un recouvrement ouvert fini $(U_1,\dots,U_k)$
tel que $D$ soit d\'efini par une \'equation $f_i$ sur~$U_i$
et que la fonction $\lambda_D+\log\abs{f_i}$
s'\'etende en une fonction continue
sur~$U_i$.
Il revient ainsi au m\^eme d'exiger qu'il existe une \emph{m\'etrique hermitienne
continue} sur le fibr\'e en droites $\mathscr O(D)$
telle que $\log\norm{\mathbf 1_D}=\lambda_{D}$.

Notons par exemple 
que la m\'etrique hermitienne de r\'ef\'erence que nous avons choisie
sur le fibr\'e $\mathscr O(1)$ de l'espace projectif~$\P^k$
est donn\'ee par la formule
\[ \norm{a_0X_0+\dots+a_kX_k} ([x_0:\cdots:x_k])
   = \frac{\abs{a_0 x_0+\dots+a_k x_k}}{\max(\abs{x_0},\dots,\abs{x_k})}. \]
Ce n'est pas tout \`a fait la m\'etrique de Fubini-Study,
laquelle est donn\'ee par 
\[ \norm{a_0X_0+\dots+a_kX_k} ([x_0:\cdots:x_k])
   = \frac{\abs{a_0 x_0+\dots+a_k x_k}}{(\abs{x_0}^2+\dots+\abs{x_k}^2)^{1/2}}. \]
On voit que l'on peut ainsi d\'efinir la notion
de fonction de Green $\mathscr C^\infty$, parall\`element \`a
celle de m\'etrique hermitienne $\mathscr C^\infty$: par d\'efinition,
une fonction de Green est dite~$\mathscr C^\infty$
si c'est le logarithme de la norme de la section~$\mathbf 1_D$
pour une m\'etrique hermitienne $\mathscr C^\infty$.

La formule classique $\ddc \log \abs{z}^{-2}+\delta_0=0$
en une variable et, plus g\'en\'eralement, la formule
de Poincar\'e-Lelong $\ddc \log\abs{f}^{-2}+\delta_{\div(f)}=0$
entra\^{\i}nent que $\ddc  g_D+\delta_D$ est une forme diff\'erentielle
de type~$(1,1)$, lisse,
pourvu que $g_D$ soit une fonction de Green $\mathscr C^\infty$.
On a not\'e $\delta_D$ le courant d'int\'egration sur~$D$,
d\'efini, au choix, par int\'egration des formes sur la partie
lisse de~$D$ ou par r\'esolution des singularit\'es. C'est
un courant positif ferm\'e sur~$X(\C)$, de bidegr\'e~$(1,1)$.

On retrouve alors la d\'efinition standard en g\'eom\'etrie
d'Arakelov telle que pos\'ee par~\cite{gillet-s90}.

\begin{defi}
Soit $X$ une vari\'et\'e projective complexe (lisse)
et soit $Z$ une sous-vari\'et\'e int\`egre de~$X$ de codimension~$p$.
On appelle courant de Green pour~$Z$ tout courant $g_Z$
sur~$X(\C)$ tel que 
$ \ddc  g_Z + \delta_Z $ soit une forme lisse de type~$(p,p)$.
\end{defi}

Revenons aux fonctions de Green associ\'ees \`a un diviseur~$D$.
Soit $g_D$ une telle fonction de Green, supposons-la lisse,
ou au moins de classe~$\mathscr C^2$,
de sorte qu'est d\'efinie la forme diff\'erentielle
$\omega_D=\ddc  g_D+\delta_D$.

Alors, pour tout entier~$k\geq 1$,
$\bigwedge^k\omega_D$ est une forme de type~$(k,k)$
sur~$X(\C)$. Prenons en particulier $k=\dim X$;
on associe classiquement \`a cette forme une
mesure sur~$X(\C)$: en coordonn\'ees locales holomorphes $z_1=x_1+iy_1,\ldots,z_k=x_k+iy_k$, 
si $\bigwedge^k\omega_D=f(z) \mathrm dz_1\wedge \cdots \wedge \mathrm dz_k
\wedge d\bar z_1\wedge\cdots \wedge\mathrm d\bar z_k$, la mesure
correspondante est
$\abs{f(z)} 2^k \mathrm dx_1 \mathrm dy_1\cdots\mathrm dx_k\mathrm dy_k$.
(Observer que $dz_j\wedge d\bar z_j=-2i dx_j\wedge dy_j$.)
Que cela soit bien d\'efini r\'esulte de la formule du changement de variables
dans les int\'egrales multiples.

On v\'erifie par une nouvelle application de la formule de Poincar\'e-Lelong
que cette forme~$\omega_D$ et cette mesure~$\bigwedge^k\omega_D$
ne d\'ependent du couple~$(D,g_D)$ que par l'interm\'ediaire
du fibr\'e hermitien qu'il d\'efinit. Elles co\"{\i}ncident d'ailleurs
avec la forme de Chern de ce fibr\'e hermitien 
et sa puissance ext\'erieure maximale.

\medskip

Dans le cas d'un corps $p$-adique, la situation est plus d\'elicate
pour un certain nombre de raisons:
\begin{itemize}
\item $\C_p$ n'est pas localement compact;
\item l'op\'erateur aux d\'eriv\'ees partielles $\ddc $ n'existe pas, pas
plus d'ailleurs que les courants ;
\item les mesures naturelles n'existent pas.
\end{itemize}
La th\'eorie de~\cite{zhang95b} fournit n\'eanmoins une bonne notion
de m\'etrique $p$-adique.
On r\'esout (simultan\'ement) ces trois probl\`emes \`a l'aide
de la th\'eorie des espaces analytiques introduits par~\cite{berkovich1990},
cf. \cite{gubler1998,gubler2003} et~\cite{chambert-loir2006}.

\subsection{D\'ecomposition de la hauteur en termes locaux}

Soit $X$ une vari\'et\'e projective sur un corps
de nombres~$K$ et soit $D$ un diviseur de Cartier sur~$X$.
Il s'agit d'\'ecrire la hauteur d'un point de~$X(\bar\Q)$
qui n'appartient pas \`a~$D$, relativement au fibr\'e $\mathscr O(D)$,
comme une somme de termes locaux index\'ee par l'ensemble~$M_K$
des places de~$K$.

Si $v$ est une place de~$K$, nous noterons $\C_v$
le corps valu\'e complet alg\'ebriquement clos $\C$ ou~$\C_p$
correspondant et $\eps_v$ le nombre de plongements de~$K$
dans~$\C_v$ qui induisent cette valeur absolue.
L'adh\'erence~$K_v$ de~$K$ dans~$\C_v$ est un corps valu\'e complet
(\'egal \`a~$\R$, $\Q_p$ pour $p$ un nombre premier, ou une
extension finie d'un de ces corps).
Nous noterons $\bar{K_v}$ la cl\^oture alg\'ebrique
de~$K_v$ dans~$\C_v$.

Soit $v$ une place de~$K$.
Soit $g_v$ une fonction de Green sur $X(\C_v)$ relativement \`a~$D$
(nous dirons aussi que $g_v$  est une fonction de Green $v$-adique).
On d\'efinit une \emph{hauteur locale} relativement \`a~$D$ 
de la fa\c{c}on suivante.
Soit $P\in (X\setminus D)(\bar {K_v})$; soit  $n$ son degr\'e sur~$K$
et soit $P_1,\dots,P_n$ ses conjugu\'es dans~$X(\C_v)$.
On pose
\[ h_v (P) = \frac1n \sum_{i=1}^n g_v(P_i). \]

Si $D$ est effectif et tr\`es ample,
nous dirons qu'une famille $(g_v)$ de fonctions de Green $v$-adiques
relativement \`a~$D$, index\'ee par l'ensemble~$M_K$ des places
de~$K$ est \emph{\'el\'ementairement admissible}
si pour presque toute place~$v$, $g_v$ est d\'efinie
par un m\^eme plongement~$f=[f_0:\cdots:f_s]$ (d\'efini sur~$K$) 
de~$X$ dans un espace projectif~$\P^s$
dont $D$ est la section hyperplane $\{x_0=0\}$: pour toute place~$v$,
sauf pour un nombre fini d'entre elles, on a ainsi
\[ g_v (x) = -\log \frac{ \abs{f_0(x)}_v }{ \max(\abs{f_0(x)}_v,\ldots,\abs{f_s(x)}_v)}. \]
Dans le cas g\'en\'eral, $D$ est la diff\'erence~$E-F$ 
de deux diviseurs effectifs tr\`es amples et nous dirons qu'une famille
$(g_v)$ est \emph{admissible} si l'on a $g_v=g_{E,v} -g_{F,v}$
pour tout~$v$, o\`u $(g_{E,v})_v$ et $(g_{F,v})_v$
sont des familles \'el\'ementairement admissibles de fonctions  de Green pour~$E$
et~$F$ respectivement.

Le lemme suivant,
dont la d\'emonstration est laiss\'ee au lecteur,
montre que l'ensemble des familles admissibles de fonctions de Green
est stable par les op\'erations standard de la g\'eom\'etrie alg\'ebrique.
\begin{lemm}\label{fonctorialite.admissible}
Soit $X$ une vari\'et\'e alg\'ebrique projective sur  un corps de nombres~$K$.

\begin{enumerate}
\item Soit $(g_v)$ et $(g'_v)$ des familles de fonctions
de Green (\'el\'ementairement) admissibles relativement \`a des diviseurs~$D$ et~$D'$.
La famille $(g_v+g'_v)$ est  une fonction de Green (\'el\'ementairement)
admissible
relativement \`a~$D+D'$.

\item Soit $X'$ une vari\'et\'e alg\'ebrique projective, int\`egre,
d\'efinie sur~$K$, soit $f\colon X'\ra X$ un morphisme et soit $D$ un diviseur
sur~$X$ tel que $f(X')\not\subset D$.
Si $(g_v)$ est une famille de fonctions de Green
(\'el\'ementairement) admissible relativement \`a~$D$, $(g_v\circ f)$ est
une famille de fonctions de Green (\'el\'ementairement) admissible relativement
au diviseur $f^*D$.
\end{enumerate}
\end{lemm}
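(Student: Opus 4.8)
Le plan est de ramener les deux assertions au cas {\og élémentaire\fg} où tous les diviseurs en jeu sont effectifs et très amples, cas que l'on traite à l'aide du plongement de Segre et de la proposition déjà établie selon laquelle la somme de deux fonctions de Green pour~$D$ et~$E$ en est une pour~$D+E$ (ainsi que de son analogue pour l'image réciproque). L'ingrédient central est le fait suivant, qui contient déjà l'essentiel de~(1): si $(g_v)$ et $(g'_v)$ sont élémentairement admissibles relativement à des diviseurs \emph{très amples} $A$ et~$B$, alors $(g_v+g'_v)$ est élémentairement admissible relativement à~$A+B$, lequel est automatiquement très ample. Pour le voir, on note que pour presque toute place~$v$ on a $g_v(x)=-\log\bigl(\abs{\phi_0(x)}_v/\max_i\abs{\phi_i(x)}_v\bigr)$ et $g'_v(x)=-\log\bigl(\abs{\psi_0(x)}_v/\max_j\abs{\psi_j(x)}_v\bigr)$ pour deux plongements $\phi=[\phi_0:\cdots:\phi_s]$, $\psi=[\psi_0:\cdots:\psi_t]$ définis sur~$K$ avec $A=\phi^*H_0$ et $B=\psi^*H_0$; en composant $(\phi,\psi)$ avec le plongement de Segre, on obtient un plongement~$\Phi$ de~$X$, de coordonnées homogènes les produits $\phi_i\psi_j$, tel que $\Phi^*H_0=A+B$. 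L'identité $\max_{i,j}\abs{\phi_i(x)\psi_j(x)}_v=\max_i\abs{\phi_i(x)}_v\cdot\max_j\abs{\psi_j(x)}_v$ donne alors $g_v+g'_v=g_{\Phi,v}$ pour ces places, et pour les places exceptionnelles, en nombre fini, $g_v+g'_v$ reste une fonction de Green pour~$A+B$ d'après la proposition rappelée. D'où le fait.

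L'assertion~(1) en découle: écrivant $D=E-F$ et $D'=E'-F'$ (diviseurs effectifs très amples) avec $g_v=g_{E,v}-g_{F,v}$, $g'_v=g_{E',v}-g_{F',v}$, on a $g_v+g'_v=(g_{E,v}+g_{E',v})-(g_{F,v}+g_{F',v})$ et $D+D'=(E+E')-(F+F')$, et il suffit d'appliquer le fait ci-dessus aux couples $(E,E')$ et~$(F,F')$ --- le cas où $D$ et~$D'$ sont eux-m\^emes très amples \emph{étant} ce fait. Un calcul analogue montre plus généralement que la \emph{différence} de deux familles admissibles est admissible: $g_v-g'_v=(g_{E,v}+g_{F',v})-(g_{F,v}+g_{E',v})$ avec $D-D'=(E+F')-(F+E')$.

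Pour~(2), j'écris de nouveau $D=E-F$; alors $f^*D=f^*E-f^*F$ et $g_v\circ f=(g_{E,v}\circ f)-(g_{F,v}\circ f)$, de sorte que, par ce qui précède, on se ramène au cas où $D$ est effectif très ample, disons $D=\phi^*H_0$ avec $g_v=g_{\phi,v}$ pour presque tout~$v$. Comme $X'$ est projective, je fixe un plongement $\psi=[\psi_0:\cdots:\psi_t]$ de~$X'$ dans un espace projectif, je pose $D'=\psi^*H_0$ et $g_{D',v}(x')=-\log\bigl(\abs{\psi_0(x')}_v/\max_j\abs{\psi_j(x')}_v\bigr)$ pour \emph{toute} place~$v$ --- famille manifestement élémentairement admissible relativement à~$D'$. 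Comme $f(X')\not\subset D$, le morphisme $\phi\circ f\colon X'\ra\P^s$ n'est pas à valeurs dans~$H_0$; et comme $\psi$ est un plongement, la composée de $(\phi\circ f,\psi)$ avec le plongement de Segre est encore un \emph{plongement}~$\Phi$ de~$X'$, avec $\Phi^*H_0=f^*D+D'$ (ce diviseur est donc très ample). Pour presque tout~$v$, la m\^eme identité de maximums donne $g_v\circ f+g_{D',v}=g_{\Phi,v}$; pour les places restantes, $g_v\circ f$ est une fonction de Green pour~$f^*D$ (proposition sur l'image réciproque), donc $g_v\circ f+g_{D',v}$ en est une pour~$f^*D+D'$. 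Ainsi $(g_v\circ f+g_{D',v})$ est élémentairement admissible relativement à~$f^*D+D'$, et l'égalité $g_v\circ f=(g_v\circ f+g_{D',v})-g_{D',v}$ exhibe $(g_v\circ f)$ comme admissible relativement à~$(f^*D+D')-D'=f^*D$.

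La partie routinière --- calculs de valeurs absolues le long du plongement de Segre, traitement des places exceptionnelles au moyen des propositions déjà démontrées --- ne pose pas de difficulté; l'obstacle principal est l'organisation de la réduction dans~(2): on ne peut pas se contenter de composer des plongements, car l'image réciproque $f^*E$ d'un diviseur très ample n'est en général pas très ample sur~$X'$, et c'est précisément l'introduction du diviseur auxiliaire~$D'$ sur~$X'$ --- qui fait de $f^*D+D'$ la section hyperplane d'un \emph{authentique} plongement de~$X'$, à savoir la composée de Segre --- qui permet de conclure. Un dernier point de détail, analogue à celui signalé plus haut à propos de l'image réciproque des fonctions de Green, oblige à choisir les décompositions $D=E-F$ de sorte que $f(X')$ ne soit contenu dans le support ni de~$E$ ni de~$F$; c'est toujours loisible, quitte à remplacer $(E,F)$ par $(E+G,F+G)$ pour un diviseur très ample~$G$ convenable.
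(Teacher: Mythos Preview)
The paper explicitly leaves this proof to the reader (\og dont la d\'emonstration est laiss\'ee au lecteur\fg), so there is no proof to compare against. Your argument is correct and follows precisely the route the paper sets up: you globalize, via the Segre embedding and the identity $\max_{i,j}\abs{\phi_i\psi_j}_v=\max_i\abs{\phi_i}_v\cdot\max_j\abs{\psi_j}_v$, the two single-place propositions already established (additivity of Green functions, and functoriality under pullback with the auxiliary very-ample divisor on the source). The only point worth flagging is that in~(2) your argument exhibits $(g_v\circ f)$ as \emph{admissible} --- a difference of two elementarily admissible families --- rather than elementarily admissible, which is indeed all one can expect since $f^*D$ need not be very ample on~$X'$; this matches how the lemma is actually used downstream.
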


\begin{prop}\label{prop.decomposition}
Soit $D$ un diviseur de Cartier sur~$X$, soit
$h_D$ une hauteur pour~$D$ et soit
$(g_v)$ une famille admissible de fonctions de Green pour
le diviseur~$D$. Alors, la s\'erie
$ h(x)=\sum_{v\in M_K} \eps_v h_v(x)$  
est une somme finie pour tout $x\in  (X\setminus D)(\bar\Q)$;
de plus,  $h-h_D$ est born\'ee sur $(X\setminus D)(\bar\Q)$.
\end{prop}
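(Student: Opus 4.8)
Le plan est de se ramener au cas d'un diviseur effectif et très ample, puis d'identifier la série $\sum_{v}\eps_v h_v$ à la hauteur associée à un plongement projectif, l'annulation d'un terme parasite provenant de la formule du produit. Écrivons d'abord $D=E-F$ comme différence de deux diviseurs effectifs très amples et choisissons des familles élémentairement admissibles $(g_{E,v})$, $(g_{F,v})$ avec $g_v=g_{E,v}-g_{F,v}$ pour tout~$v$. D'après le lemme~\ref{fonctorialite.admissible}, l'additivité de la machine des hauteurs ($h_E-h_F$ est une hauteur pour~$D$) et la relation $h_v=h_{E,v}-h_{F,v}$ (valable là où $x$ et ses conjugués évitent $E\cup F$, puis étendue par continuité), il suffit de traiter les cas de~$E$ et de~$F$. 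On peut donc supposer $D$ effectif et très ample, donné par un plongement $f=[f_0:\cdots:f_s]\colon X\hra\P^s$ défini sur~$K$ avec $D=f^{*}\{x_0=0\}$, et tel que, pour toute place~$v$ hors d'un ensemble fini~$S$, on ait $g_v(y)=-\log\bigl(\abs{f_0(y)}_v\big/\max_{j}\abs{f_j(y)}_v\bigr)$. Comme $\mathscr O(D)\simeq f^{*}\mathscr O(1)$, la fonction $x\mapsto h_{\P^s}(f(x))$ est une hauteur pour~$D$ (prop.~\ref{prop.hof}), et il reste à la comparer à $\sum_{v}\eps_v h_v$ modulo les fonctions bornées.

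Pour les places $v\in S$, la fonction $g_v$ reste une fonction de Green relativement à~$D$ sur~$X(\C_v)$; comme au paragraphe~\ref{sec.locales}, sa différence avec la fonction de Green {\og standard\fg} $g_v^{\mathrm{st}}$ s'étend en une fonction \emph{continue et bornée} sur~$X(\C_v)$. C'est là le point le plus délicat: dans le cas ultramétrique, $X(\C_v)$ n'est pas localement compact et cette bornitude repose, comme dans le lemme correspondant du paragraphe~\ref{sec.locales}, sur le théorème des zéros de Hilbert. Notant $C_v$ une borne de $g_v-g_v^{\mathrm{st}}$, on a $\abs{h_v(x)-h_v^{\mathrm{st}}(x)}\leq C_v$ pour tout~$x$, et $\sum_{v\in S}\eps_v C_v$ est une constante; on est ainsi ramené au cas où $g_v=g_v^{\mathrm{st}}$ pour \emph{toutes} les places~$v$.

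Soit $x\in(X\setminus D)(\bar\Q)$, soit $L$ un corps de nombres contenant~$K$ sur lequel $x$ est défini, et $[\xi_0:\cdots:\xi_N]$ un système de coordonnées homogènes de~$x$ dans~$L$, que l'on peut supposer formé d'entiers algébriques, de même que les coefficients des~$f_j$; comme $x\notin D$, on a $f_0(\xi)\neq 0$. Pour une place~$v$ de~$K$, les conjugués de~$x$ dans~$X(\C_v)$ sont les $\tau(x)$, où $\tau$ parcourt les prolongements à~$L$ d'un plongement fixé de~$K$ dans~$\C_v$ induisant~$v$, de sorte que $h_v(x)$ est la moyenne des
\[ g_v^{\mathrm{st}}(\tau(x)) = \log\max_{0\leq j\leq s}\abs{\tau(f_j(\xi))}_v - \log\abs{\tau(f_0(\xi))}_v . \]
Les $\eps_v$ plongements de~$K$ dans~$\C_{p_v}$ induisant~$v$ se déduisant les uns des autres par composition d'un automorphisme de~$\bar{\Q_{p_v}}$ (ou de la conjugaison complexe si $p_v=\infty$), lequel préserve $\abs\cdot_{p_v}$, et les~$f_j$ ayant leurs coefficients dans~$K$, la somme double $\sum_{v\in M_K}\eps_v h_v(x)$ se réécrit comme une somme portant sur l'ensemble de tous les plongements de~$L$ dans les corps~$\C_p$ ($p\leq\infty$).

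Dans cette dernière somme, les termes $\log\max_j\abs{\tau(f_j(\xi))}_v$ reconstituent, via la formule~\eqref{def.hauteur-p}, la hauteur $h_{\P^s}\bigl([f_0(\xi):\cdots:f_s(\xi)]\bigr)=h_{\P^s}(f(x))$, tandis que les termes $\log\abs{\tau(f_0(\xi))}_v$ se somment en zéro d'après la formule du produit pour le corps~$L$ (cf. prop.~\ref{prop.formule.produit}), $f_0(\xi)$ étant un élément non nul de~$L$. Enfin, la somme $\sum_{v}\eps_v h_v(x)$ n'a qu'un nombre fini de termes non nuls: pour presque toute place~$v$, on a $\abs{\tau(f_0(\xi))}_v=1$ (un élément non nul de~$L$ est une unité en presque toute place) et $\max_j\abs{\tau(f_j(\xi))}_v=1$ (l'idéal de~$\Z_L$ engendré par les $f_j(\xi)$ n'ayant qu'un support fini), d'où $h_v(x)=0$. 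Il en résulte que $\sum_{v}\eps_v h_v-h_{\P^s}\circ f$, donc aussi $\sum_{v}\eps_v h_v-h_D$, est bornée sur $(X\setminus D)(\bar\Q)$.
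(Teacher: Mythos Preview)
Your proof is correct and follows essentially the same approach as the paper: reduce by linearity to the case of a very ample divisor with an elementarily admissible family, then compare each $g_v$ to the standard Green function coming from the embedding (bounded difference at the finitely many exceptional places, equality elsewhere), and conclude via the height formula~\eqref{def.hauteur-p} together with the product formula. The only organizational difference is that the paper packages the comparison into the auxiliary function $g_v^0=g_v+\log\bigl(\abs{x_0}_v/\max_j\abs{x_j}_v\bigr)$ and carries it through a single computation, whereas you first reduce to $g_v=g_v^{\mathrm{st}}$ for all~$v$ and then compute exactly.
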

\begin{proof}
Par lin\'earit\'e, on se ram\`ene au cas o\`u le diviseur~$D$
est tr\`es ample et o\`u $(g_v)$ est une famille
\'el\'ementairement admissible de fonctions de Green.
Soit $\phi\colon X\hra\P^k$ un plongement de~$X$ dans un espace
projectif dont~$D$ est la section hyperplane~$\{X_0=0\}$
et d\'efinissant presque toutes les fonctions de Green.
Pour toute place~$v$ et tout point~$x\in (X\setminus D)(\C_v)$,
le point $\phi(x)$ a des coordonn\'ees homog\`enes $[x_0:\dots:x_k]$
avec $x_0\neq 0$; posons alors
\[ g^0_v(x) = g_v(x) + \log\frac{\abs{x_0}_v}{\max(\abs{x_0}_v,\dots,\abs{x_k}_v}. \]
Par d\'efinition, pour toute place~$v$, la fonction $g^0_v$ est born\'ee,
et est identiquement nulle pour presque toute place~$v$.

Soit alors $K'$ un corps de nombres contenant~$K$
et soit $x\in (X\setminus D)(K')$,
de conjugu\'es $x^{(1)},\dots,x^{(n)}$ sur~$K$. 
Si $\phi(x)$ a pour coordonn\'ees homog\`enes $[x_0:\dots:x_k]$,
on a donc $x_0\neq 0$ et
\begin{align*}
h_\phi(x) & = h_{\P^k}( \phi(x) )  
   = \frac1{[K':\Q]} \sum_{p\leq\infty}
  \sum_{\sigma\colon K'\hra\C_p} \log\max\left(\abs{\sigma(x_0)},\dots,
  \abs{\sigma(x_k)}_p \right) \\
 & = - \frac1{[K':\Q]} \sum_{p\leq\infty}
  \sum_{\sigma\colon K'\hra\C_p} \log\frac{\abs{\sigma(x_0)}}{\max\left(\abs{\sigma(x_0)},\dots, \abs{\sigma(x_k)}_p \right)} \\
& = - \frac1{n[K:\Q]}\sum_{i=1}^n \sum_{v\in M_K} \eps_v (g^0_v(x^{(i)})-g_v(x^{(i)})\\
&= -\frac1{n[K:\Q]}\sum_{i=1}^n \sum_{v\in M_K}  \eps_vg^0_v(x^{(i)})
+ \frac1{n[K:\Q]}\sum_{v\in M_K}  \eps_v\sum_{i=1}^n g_v(x^{(i)}) \\
&= -\frac1{n[K:\Q]}\sum_{i=1}^n \sum_{v\in M_K}  \eps_vg^0_v(x^{(i)})
+ \frac1{[K:\Q]}\sum_{v\in M_K}  \eps_v\hat h_v(x) .
\end{align*}
La proposition r\'esulte alors de ce que la premi\`ere somme est born\'ee
ind\'ependamment de~$x$.
\end{proof}

\subsection{La hauteur d\'etermine les fonctions de Green}

Soit $X$ une vari\'et\'e alg\'ebrique projective sur un corps de nombres,
soit $D$ un diviseur de Cartier sur~$X$ et soit $h_D$
une hauteur pour~$D$.
Il est naturel de se demander dans quelle mesure~$h_D$
d\'etermine~$D$. Suivant qu'on se donne~$h_D$
exactement, ou \`a~$\mathrm O(1)$ pr\`es, la r\'eponse est fournie par
le r\'esultat suivant.

\begin{theo}
Soit $X$ une vari\'et\'e projective lisse sur un corps de nombres~$K$;
soit $D$ un diviseur de Cartier sur~$X$ et soit $h_D$
une hauteur pour~$D$.
\begin{enumerate}\item
Supposons que $h_D$ soit born\'ee ; alors la classe de~$D$ est de torsion 
dans le groupe de Picard de~$X$ :
il existe un entier~$n$ et une fonction rationnelle~$f$ sur~$X$
telle que $nD=\div(f)$.
\item
Supposons que $h_D$ poss\`ede une d\'ecomposition en somme de
termes locaux, donn\'es par une famille admissible~$(g_v)$
de fonctions de Green pour le 
diviseur~$D$. Si $h_D$ est constante, chacune des fonctions~$g_v$
est constante.
\end{enumerate}
\end{theo}

Ce th\'eor\`eme
est d\^u \`a A.~\textsc{N\'eron} pour la premi\`ere partie, voir~\cite{serre1997},
\S 2.9 et~3.11, et \`a~\cite{agboola-pappas2000} pour la seconde.
(Dans cet article, il est d'ailleurs observ\'e  
qu'il suffit de supposer~$X$ normale dans l'\'enonc\'e du th\'eor\`eme).
La seconde partie est tout particuli\`erement int\'eressante dans les
contextes
o\`u l'on dispose de familles admissibles canoniques de fonctions
de Green, en particulier celui des syst\`emes dynamiques
(voir~\cite{kawaguchi-silverman2007a}).

\section{Exercices}

\begin{exer}
Soit $\xi$ un nombre alg\'ebrique, 
soit $d$ son degr\'e et soit $P=a_0X^d+\cdots + a_d$ le polyn\^ome minimal.
on note $\mathrm H(P)=\max(\abs{a_0},\ldots,\abs{a_d})$.
On rappelle que la hauteur~$h(\xi)$ de~$\xi$ est par
d\'efinition celle du point de coordonn\'ees homog\`enes~$[1:\xi]$
de~$\P^1$. 

Soit $P=a_0X^d+\cdots+a_d$ un polyn\^ome \`a coefficients complexes
de degr\'e~$d$.
On note $\mathrm H(P)=\max(\abs{a_0},\ldots,\abs{a_d})$.

a) Montrer que l'on a l'in\'egalit\'e
\[ 2^{-d} \mathrm H(P) \leq \mathrm M(P) \leq \sqrt{d+1} \mathrm H(P). \]

b) En d\'eduire que pour deux polyn\^omes~$P_1$ et~$P_2$, \`a coefficients
complexes, on a 
\begin{gather}
 \mathrm H(P_1)\mathrm H(P_2) \leq 2^d\sqrt{d+1} \mathrm H(P_1P_2) \\
 \mathrm H(P_1P_2) \leq 2^d \big(1+\frac d2\big) \mathrm H(P_1)\mathrm H(P_2),
\end{gather}
o\`u $d=\deg(P_1P_2)$.

c) Si $P$ est le polyn\^ome minimal d'un nombre alg\'ebrique~$\xi$,
montrer que la hauteur~$h(\xi)$ est encadr\'ee comme suit:
\[ \frac1d \log H(P) - \log 2 \leq h(\xi) \leq \frac1d \log H(P) + \frac1{2d}\log(d+1). \]
\end{exer}

\begin{exer}
Soit $f\in\bar\Q(t)$ 
une fraction rationnelle non constante, \'ecrite sous la forme~$P/Q$
d'un quotient de polyn\^omes $P$ et~$Q\in \bar\Q[t]$, premiers entre eux;
on pose $d=\max(\deg P,\deg Q)$.
Montrer que l'on a
\[ \lim_{h(\xi)\ra\infty} \frac{h(f(\xi))}{h(\xi)}= d . \]
\end{exer}

\begin{exer}\label{exer.lehmer1}
Soit $\xi$ un entier alg\'ebrique de degr\'e~$d$; on note
$\xi_1,\ldots,\xi_d$ ses conjugu\'es dans~$\C$
et, pour $n\in\N$, $S_n=\sum_{j=1}^d\xi_j^n$
la $n$-i\`eme somme de Newton. On pose aussi
$\mu(\xi)=\max(\abs{\xi_1},\ldots,\abs{\xi_d})$ 

a) Montrer que pour tout entier~$n$, $S_{n}$ est un entier
relatif qui v\'erifie $\abs{S_n}\leq d\mu(\xi)^n$.
(Utiliser le th\'eor\`eme sur les fonctions sym\'etriques.)

b) Soit $p$ un nombre entier tel que $S_n=S_{np}$ pour
$1\leq n\leq d$; montrer que $\xi$ est une racine de l'unit\'e
(ou $\xi=0$).

c) Montrer que pour tout nombre premier~$p$, $S_{np}$
et $S_n^p$ sont congrus \`a~$S_n$ modulo~$p$.
(Observer que les coefficients du polyn\^ome
sym\'etrique $X_1^p+\cdots+X_d^p-(X_1+\cdots+X_d)^p$
sont multiples de~$p$.)

d) On suppose que $\xi\neq 0$ et que $\xi$ n'est pas une racine
de l'unit\'e; on va montrer que $\mu(\xi)\geq e^{1/(4ed^2)}$.
Supposons par l'absurde que l'ingalit\'e inverse soit vraie
et soit $p$ un nombre preier tel que $2ed<p<4ed$ (il en existe
d'apr\`es le th\'eor\`eme de Tch\'ebitcheff --- postulat de \textsc{Bertrand}).
Montrer que $\abs{S_{np}-S_n}<p$ pour $1\leq n\leq d$,
puis que $S_{np}=S_n$ pour $1\leq n\leq d$.
En d\'eduire que $\xi$ est une racine de l'unit\'e.

e) Sous la m\^eme hypoth\`ese que d), montrer que 
$h(\xi)\geq 1/(4ed^3)$.
Pour d'autres r\'esultats dans la m\^eme veine,
voir l'exercice~\ref{exer.lehmer2} du chapitre~\ref{chap.sysdyn},
ainsi par exemple que le livre de~\cite{waldschmidt2000}
o\`u est d\'emontr\'e le meilleur r\'esultat connu, d\^u \`a~\cite{dobrowolski1979},
et repris dans la prop.~\ref{prop.w-dobrowolski} du chapitre~\ref{chap.sysdyn}.
\end{exer}

\begin{exer}
Soit $p\colon\P^2\dashrightarrow\P^1$ la projection lin\'eaire donn\'ee
par $p([x_0:x_1:x_2])=[x_0:x_1]$, d'unique point
d'ind\'etermination~$Q=[0:0:1]$.
Soit $X$ une courbe de~$\P^2$, d'\'equation homog\`ene $f(x_0,x_1,x_2)=0$.
On suppose que $X$ ne contient pas le point~$Q$.
D\'eterminez (en fonctions des coefficients de~$f$)
un nombre r\'eel~$c_X$ tel que $\abs{h(p(x))-h(x)}\leq c_X$
pour tout $x\in X(\bar\Q)$.
\end{exer}

\begin{exer}
Soit $X$ une vari\'et\'e alg\'ebrique projective int\`egre  d\'efinie sur
un corps de nombres~$F$
et soit $k$ sa dimension. 

a) D\'emontrer qu'il existe un morphisme fini $f\colon X\ra P^k$.
(Plonger $X$ dans un espace projectif~$\P^n$ et v\'erifier
qu'une projection g\'en\'erique convient.)

b) Soit $\mathscr L$ un fibr\'e en droites ample sur~$X$.
D\'emontrer qu'il existe un entier $n\geq 1$ tel que $\mathscr L^n\otimes f^*\mathscr O(-1)$ soit ample. En d\'eduire que pour toute fonction
hauteur sur~$X$ associ\'ee \`a~$\mathscr L$,
il existe des nombres r\'eels $a>0$ et $b$
tels que $h_{\mathscr L}(x)\geq a h(f(x))-b$
pour tout $x\in\bar\Q$.

c) D\'emontrer que pour tout $x\in X(\bar\Q)$,
le degr\'e de l'extension $F(x)/F(f(x))$ est major\'e par le degr\'e de~$f$.

d) Observer que la hauteur n'est pas major\'ee sur $\P^k(F)$
(il suffit de traiter le cas o\`u $F=\Q$).
\`A l'aide du th\'eor\`eme de finitude (cor.~\ref{coro.finitude}),
en d\'eduire que la hauteur~$h_{\mathscr L}$
n'est pas major\'ee sur~$X(\bar\Q)$.

e) D\'emontrer qu'il existe un nombre r\'eel~$c$ tel que l'ensemble
des points de $X(\bar\Q)$  de hauteurs~$\leq c$ soit dense dans~$X$
pour la topologie de Zariski. (Consid\'erer des points~$x\in  X(\bar\Q)$
tels que les coordonn\'ees homog\`enes de~$f(x)$ soient des racines de l'unit\'e.)
\end{exer}

\begin{exer}
Soit $f\colon\P^2\dashrightarrow\P^1$ une application rationnelle
donn\'ee par trois polyn\^omes $(f_0,f_1,f_2)$ de m\^eme degr\'e~$d$
et sans facteur commun.
Son lieu d'ind\'etermination~$Z$ est un ensemble fini de points.
Si $X$ est une courbe de~$\P^2$, suppos\'ee lisse ou au moins lisse en tout point
de~$X\cap Z$, la restriction de~$f$ \`a~$X\setminus (X\cap Z)$
s'\'etend en un unique morphisme~$\tilde f$ de~$X$ dans~$\P^1$.

a) Le degr\'e~$\tilde d$ de~$\tilde f$ est inf\'erieur ou \'egal \`a~$d$,
en fait \'egal \`a $d-\Card(X\cap Z)$ (cardinal compt\'e avec multiplicit\'es).
Pour que $\tilde d=d$, il faut et il suffit que $X$ ne rencontre pas~$Z$.

b) Il existe un nombre r\'eel~$c_X$ tel que pour tout $x\in X(\bar\Q)$,
$\abs{\tilde d h(x)-h(\tilde f(x))}\leq c_X$.
\end{exer}
\begin{exer}
Soit $X$ une vari\'et\'e projective et soit $D$ un diviseur
de Cartier effectif sur~$X$. Soit $h_D$ une hauteur
relative au fibr\'e en droites $\mathscr O(D)$ sur~$X$.
Il existe un nombre r\'eel~$c$ tel que $h_D(x)\geq c$
pour tout $x\in X(\bar\Q)$ tel que $x\not\in D$.
\end{exer}

\begin{exer}
Soit $X$ une vari\'et\'e projective et soit $\mathscr L$
un fibr\'e en droites sur~$X$. Soit $B$ le lieu des z\'eros
communs de toutes les sections des puissances de~$\mathscr L$.
(Dire que $B=\emptyset$ signifie donc qu'une puissance de~$\mathscr L$
est engendr\'ee par ses sections globales.)
Soit $h_{\mathscr L}$ une hauteur relative \`a~$\mathscr L$ sur~$X$.
Il existe un nombre r\'eel~$c$ tel que $h_{\mathscr L}(x)\geq c$
pour tout $x\in X(\bar\Q)$ qui n'appartient pas \`a~$B$.
\end{exer}

\begin{exer}
Soit $\mathscr L$ un fibr\'e en droites sur~$X$
tel qu'il existe un fibr\'e en droites ample~$\mathscr M$
de sorte que $\mathscr L\otimes\mathscr M^{-1}$
soit effectif (en d'autres termes, $\mathscr L$ est \emph{gros}).
Il existe un ferm\'e de Zariski strict~$Z$ de~$X$
hors duquel la propri\'et\'e de finitude pour la hauteur $h_{\mathscr L}$
est v\'erifi\'ee.
\end{exer}

\begin{exer}\label{exer.implicite}
Le r\'esultat suivant \'etait implicite dans la d\'emonstration
des \'enonc\'es de base sur les fonctions de Green.

Soit $X$ une vari\'et\'e projective sur un corps~$K$, valu\'e et
alg\'ebriquement clos et soit $D$ un diviseur de Cartier sur~$X$.
Soit $\lambda$ une fonction d'un ouvert $U$ de~$X\setminus D(K)$
dans~$\R$; on suppose que $U$ est dense dans~$X(K)$.

On suppose que pour tout point~$x\in X(K)$, il existe un diviseur
de Cartier tr\`es ample~$E$ ne contenant pas~$x$ et
une fonction de Green~$\lambda_E$ pour~$E$
tels que la fonction $\lambda+\lambda_E$ s'\'etende (n\'ecessairement
uniquement, car $U$ est dense) en une fonction
de Green pour~$D+E$. Alors $\lambda$ s'\'etend uniquement
en une fonction de Green pour~$D$.
\end{exer}

\chapter{Systèmes dynamiques d'origine arithmétique}
\label{chap.sysdyn}

\section{Systèmes dynamiques polarisés}
\label{sec.sysdyn}

\subsection{La définition et quelques exemples}

Par définition, un \emph{système dynamique polarisé}
est la donnée d'une variété projective~$X$ (disons
intègre, mais pas forcément lisse),
d'un endomorphisme~$f$ de~$X$
et d'un fibré en droites ample~$\mathscr L$ sur~$X$
tel que $f^*\mathscr L$ soit isomorphe à une puissance~$\mathscr L^d$
de~$\mathscr L$, où $d$ est un entier \emph{supérieur ou égal à~$2$}.

L'entier~$d$, que \cite{zhang2006} propose d'appeler le \emph{poids}
de~$(X,f,\mathscr L)$,
est relié au \emph{degré de~$f$} par la formule
$\deg(f)=d^{\dim X}$. On a en effet
\[ c_1(f^*\mathscr L)^{\dim X} = d^{\dim X} c_1(\mathscr L)^{\dim X}
 = \deg(f) c_1(\mathscr L)^{\dim X}, \]
d'où l'assertion en divisant par $c_1(\mathscr L)^{\dim X}$
qui n'est pas nul puisque $\mathscr L$ est ample.

Comme l'a noté~\cite{serre60b}, l'action d'un tel endomorphisme
sur la cohomologie de~$X$ obéit à {\og l'analogue kählérien
des conjectures de Weil\fg}.\footnote{%
La conjecture de \textsc{Weil} en question est l'hypothèse 
de \textsc{Riemann} pour les variétés algébriques sur les corps finis :
elle concerne le cas où $X$ est une variété algébrique projective
lisse sur un corps fini de cardinal~$q$ et $f$ est
l'endomorphisme de Frobenius donné par l'élévation des coordonnées
à la puissance~$q$. Son poids est~$q$.}
Supposant~$X$ lisse et de dimension~$k$, l'endomorphisme~$f^*$
du groupe de cohomologie singulière~$H^j(X,\C)$  est diagonalisable
et ses valeurs propres sont toutes de valeur absolue archimédienne~$d^{j/2}$.
En particulier, tous les \emph{degrés dynamiques} de~$f$
(définis comme les rayons spectraux de~$f^*$ agissant 
sur la cohomologie singulière, voir les articles de~\textsc{Cantat}
et~\textsc{Guedj}) sont strictement dominés par le dernier, égal à~$d^k$.
L'étude des systèmes dynamiques polarisés apparaît ainsi 
comme un cas particulier des études plus spécifiquement dynamiques
exposées dans ce volume.

\medskip

Nous avons déjà donné au paragraphe~\ref{sec.hauteur-Q}/\ref{subsec.irrat-prep}
l'exemple de l'espace projectif~$\P^k$
et d'un endomorphisme~$f$ défini par une famille $(f_0,\dots,f_k)$
de polynômes homogènes de degré~$d$ sans zéro commun
autre que $(0,\dots,0)$.
On a en effet $f^*\mathscr O(1)\simeq\mathscr O(d)$.
Les sous-variétés de~$\P^k$ qui sont stables par~$f$
fournissent de même un système dynamique polarisé.
Par un résultat de~\cite{fakhruddin2003}, c'est en fait le
cas général:

\begin{prop}[Fakhruddin] \label{theo.fakhruddin}
Soit $(X,f,\mathscr L)$ un système dynamique polarisé défini
sur un corps infini~$C$.\footnote{Cette hypothèse, reprise de~\cite{fakhruddin2003},
n'est probablement pas nécessaire pour le présent énoncé.}
Il existe un plongement~$\iota$ de~$X$
dans un espace projectif~$\P^k$ tel que $\iota^*\mathscr O(1)$
soit isomorphe à une puissance de~$\mathscr L$,
et un endomorphisme~$F$ de degré~$d$ de~$\P^k$
tels que $F\circ\iota=\iota\circ f$.
\end{prop}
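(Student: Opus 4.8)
The plan is to take for $\iota$ the embedding attached to a sufficiently divisible power $\mathscr M:=\mathscr L^{\otimes m}$ of $\mathscr L$, and to build $F$ out of the linear map that $f$ induces on global sections. I would first fix, once and for all, an isomorphism $f^{*}\mathscr L\simeq\mathscr L^{\otimes d}$; its $m$-th tensor power is an isomorphism $f^{*}\mathscr M\simeq\mathscr M^{\otimes d}$, whence a $C$-linear map
\[
  u\colon V:=H^{0}(X,\mathscr M)\xrightarrow{\;f^{*}\;}H^{0}(X,f^{*}\mathscr M)\xrightarrow{\;\sim\;}H^{0}(X,\mathscr M^{\otimes d}).
\]
I would then choose $m$ large enough that three things hold at once, each valid for $m\gg0$ by Serre's vanishing theorem (equivalently, by Castelnuovo--Mumford regularity): first, $\mathscr M$ is very ample, so that $\iota:=\iota_{\mathscr M}\colon X\hra\P^{k}=\P(V^{*})$ is a closed embedding with $\iota^{*}\mathscr O(1)\simeq\mathscr M=\mathscr L^{m}$ --- this is already the first assertion of the proposition; second, the multiplication map $\rho\colon\operatorname{Sym}^{d}V\to H^{0}(X,\mathscr M^{\otimes d})$ is surjective (normal generation of $\mathscr M$); and third, the ideal sheaf $\mathscr I$ of $\iota(X)$ in $\P^{k}$ is such that $\mathscr I(j)$ is generated by its global sections for all $j\geq2$ (for $m\gg0$ the embedding is even cut out by quadrics), so in particular, as $d\geq2$, $\mathscr I(d)$ is globally generated.

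Since $\rho$ is a surjection of $C$-vector spaces it has a $C$-linear section $\sigma$, and $v:=\sigma\circ u\colon V\to\operatorname{Sym}^{d}V$ satisfies $\rho\circ v=u$. Identifying $\operatorname{Sym}^{d}V=H^{0}(\P^{k},\mathscr O(d))$ and $V=H^{0}(\P^{k},\mathscr O(1))$, the map $v$ amounts to a family $(v_{0},\dots,v_{k})$ of homogeneous forms of degree~$d$ on $\P^{k}$ --- the images of the coordinate forms $x_{0},\dots,x_{k}$ --- hence to a rational map $F_{0}\colon\P^{k}\dashrightarrow\P^{k}$ of degree~$d$. By construction the restriction of $v_{i}$ to $\iota(X)$ equals $\rho(v(x_{i}))=u(x_{i})$, i.e. the image of $f^{*}x_{i}$ under the fixed isomorphism (the restriction map $H^{0}(\P^{k},\mathscr O(d))\to H^{0}(X,\mathscr M^{\otimes d})$ being identified with $\rho$). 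Since $f$ is a morphism and the $x_{i}$ have no common zero on $\iota(X)$, the $v_{i}$ have no common zero on $\iota(X)$ either; so $F_{0}$ is already regular along $\iota(X)$ and restricts there to $\iota\circ f\circ\iota^{-1}$. Equivalently $F_{0}\circ\iota=\iota\circ f$, and the indeterminacy locus $B$ of $F_{0}$ is disjoint from $\iota(X)$.

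The remaining task --- removing the base points of $F_{0}$ lying off $\iota(X)$ --- is the step I expect to be the main obstacle, and it is precisely here that the hypothesis that $C$ be infinite is used, as the footnote suggests. Set $W:=H^{0}(\P^{k},\mathscr I(d))$, the space of degree-$d$ forms vanishing on $\iota(X)$; replacing each $v_{i}$ by $v_{i}+g_{i}$ with $g_{i}\in W$ leaves $F_{0}$ unchanged along $\iota(X)$. I would then study the incidence scheme
\[
  Z=\bigl\{\,(p,g_{0},\dots,g_{k})\in\P^{k}\times W^{k+1}\ :\ (v_{i}+g_{i})(p)=0\ \text{for all }i\,\bigr\}.
\]
Over a point $p\in\iota(X)$ the fibre is empty, since it would force $p\in B$; thus $Z$ lies over the open set $\P^{k}\setminus\iota(X)$, of dimension~$k$. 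Over $p\notin\iota(X)$ the global generation of $\mathscr I(d)$ makes the evaluation $W\to\mathscr O(d)\otimes\kappa(p)$ surjective, so each equation $(v_{i}+g_{i})(p)=0$ cuts $W$ in codimension exactly~$1$, and the fibre of $Z$ over $p$ has dimension $(k+1)(\dim W-1)$. Hence $\dim Z=k+(k+1)(\dim W-1)=\dim W^{k+1}-1<\dim W^{k+1}$, so the second projection $Z\to W^{k+1}$ is not dominant; since $C$ is infinite, the complement of the closure of its image contains a $C$-rational point $(g_{0},\dots,g_{k})$. For this choice the forms $w_{i}:=v_{i}+g_{i}$ have no common zero on $\P^{k}$, so $F:=[w_{0}:\cdots:w_{k}]$ is an endomorphism of $\P^{k}$ of degree~$d$ that agrees with $F_{0}$, hence with $\iota\circ f\circ\iota^{-1}$, on $\iota(X)$; that is, $F\circ\iota=\iota\circ f$, as required. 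Besides this dimension count, the two points deserving care in a full write-up are the surjectivity of $\rho$ and the global generation of $\mathscr I(d)$ for the \emph{fixed} degree~$d$; both are what forces the passage to a high power of $\mathscr L$.
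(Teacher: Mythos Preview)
Your argument is correct and follows the same overall strategy as the paper: pass to a high power of $\mathscr L$ so that the embedding is very ample, normally generated, and has ideal sheaf generated in degree $\leq d$; lift $f$ to a rational self-map of $\P^k$ via a section of $\operatorname{Sym}^d V\twoheadrightarrow H^0(X,\mathscr M^{\otimes d})$; then perturb by forms in $H^0(\P^k,\mathscr I(d))$ to clear the base locus off $\iota(X)$. The one genuine difference is in how the last step is carried out. The paper does it inductively: it first chooses the basis so that $s_0,\dots,s_{\dim X}$ already have no common zero on $X$, observes that any component of $\mathbf V(F_0,\dots,F_{\dim X})$ then has codimension $>\dim X$ (since it cannot meet $X$), and for each subsequent index picks $F_j$ in its affine coset of $\mathscr I_d$ avoiding the finitely many linear subspaces $\{G:G|_Z\equiv 0\}$ attached to the components $Z$ of $\mathbf V(F_0,\dots,F_{j-1})$; the infinite-field hypothesis enters as ``a finite union of proper subspaces does not fill the space''. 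Your incidence-variety dimension count achieves the same end in one shot and is arguably cleaner, though it trades the elementary avoidance lemma for a constructibility/dimension argument. Both use exactly the same ingredients (global generation of $\mathscr I(d)$ at points off $X$, and $|C|=\infty$), so neither is more general than the other.
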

(Dans ce qui suit, remplacer $\mathscr L$ par une puissance
sera souvent inoffensif.)
\begin{proof}
Quitte à remplacer~$\mathscr L$ par une de ses puissances~$\mathscr L^d$,
on peut supposer que $\mathscr L$ est très ample
et que, pour tout entier~$m>0$, 
d'une part $H^i(X,\mathscr L^m)=0$ pour tout entier~$i>0$,
et d'autre part 
l'homomorphisme de multiplication
\[ H^0(X,\mathscr L^m)\otimes H^0(X,\mathscr L)\ra H^0(X,\mathscr L^{m+1})\]
est surjectif.
Notons alors $\iota\colon X\hra\P^k$ le plongement de~$X$
dans un espace projectif défini par~$\mathscr L$; 
par construction, on a $\iota^*\mathscr O_{\P^k}(1)=\mathscr L$
et, si $m \geq 1$, l'homomorphisme naturel de $H^0(X,L)^{\otimes m}$
dans $H^0(X, L^m)$ est surjectif.
En outre, par un raisonnement élémentaire
de régularité de Castelnuovo--Mumford,
le faisceau d'idéaux de~$X$ dans~$\P^k$ est de régularité au plus~$2$;
en particulier, l'idéal homogène de~$X$ dans~$\P^k$
est engendré par des polynômes de degré~$d$
(on a supposé $d\geq 2$).

Pour $j\in\{0,1,\ldots,\dim X\}$,
choisissons par récurrence une section~$s_j$ dans~$H^0(X,\mathscr L)$
qui n'est identiquement nulle sur aucune composante
irréductible du lieu d'annulation
commun de~$s_0,\ldots,s_{j-1}$. Alors, toute composante
irréductible du lieu d'annulation commun de~$s_0,\ldots,s_{j}$
est de codimension~$>j$ dans~$X$. En particulier, $s_0,\ldots,s_{\dim X}$
n'ont pas de zéro commun.
On peut en outre supposer ces sections linéairement indépendantes.
Complétons-les alors en une base $(s_0,\ldots,s_k)$ de~$H^0(X,\mathscr L)$.

Soit $j\in\{0,\ldots,k\}$; alors, $f^*s_j$ est une section
de $H^0(X,f^*\mathscr L)=H^0(X,\mathscr L^d)$ et,
par l'hypothèse faite sur~$\mathscr L$, il existe
un polynôme~$F_j\in C[X_0,\ldots,X_k]$, homogène de degré~$d$
tel que $f^*s_j=F_j(s_0,\ldots,s_k)$.
Les polynômes $F_0,\ldots,F_k$ définissent une \emph{application rationnelle}
$F\colon [X_0:\cdots:X_k]\mapsto [F_0:\cdots:F_k]$
de~$\P^k$ dans lui-même, définie là où les~$F_j$ ne s'annulent
pas simultanément.
Nous allons démontrer qu'il est possible de modifier les~$F_j$
de sorte que $F$ soit définie partout.

Plus précisément, nous allons démontrer qu'il existe
des polynômes $F_0,\ldots,F_k$ dans~$C[X_0,\ldots,X_k]$,
homogènes de degré~$d$, vérifiant $f^*s_j=F_j(s_0,\ldots,s_k)$ et tels que 
pour tout entier $s\in\{\dim X,\ldots,k\}$ et toute composante
irréductible~$Z$ de $\mathbf V(F_0,\ldots,F_j)\subset\P^k$,
$\codim(Z,\P^k)> j$. (On note $\mathbf V(\cdots)$ le lieu
de~$\P^k$ défini par une famille de polynômes homogènes.)

Observons que l'on peut conserver le choix précédent 
pour $F_0,\ldots,F_{\dim X}$. En effet, si $Z$ est une composante
irréductible de~$\mathbf V(F_0,\ldots,F_{\dim X})$,
$Z\cap X=\emptyset$ (un point $x$ de~$X\cap Z$ vérifierait
$f^*s_j(x)=0$ pour $0\leq j\leq \dim X$
et $f(x)$ serait un zéro commun de $s_0,\ldots,s_{\dim X}$).
En particulier, 
$\codim(Z,\P^k)>\dim X$ car
deux sous-variétés de~$\P^k$ de dimensions {\og complémentaires\fg} ont un point
d'intersection.

Supposons maintenant $j>\dim X$ et $F_0,\ldots,F_{j-1}$ construits.
Notons $\mathscr P_d$ l'espace vectoriel des polynômes de~$C[X_0,\ldots,X_k]$
qui sont homogènes de degré~$d$
et $\mathscr I_d$ le sous-espace vectoriel formé des polynômes
qui sont identiquement nuls sur~$X$. Pour $G\in\mathscr P_d$,
la condition $f^*s_j=G(s_0,\ldots,s_k)$ définit un sous-espace
affine~$A$ de~$\mathscr P_d$, d'espace vectoriel associé~$\mathscr I_d$.
Toujours pour $G\in\mathscr P_d$, et pour $Z$ une composante
irréductible de~$\mathbf V(F_0,\ldots,F_{j-1})$, la
condition que $G$ soit identiquement nul sur~$Z$
définit un sous-espace vectoriel~$V_Z$ de~$\mathscr P_d$.
Il s'agit de trouver un élément de~$A$ qui n'appartienne à
aucun des sous-espaces~$V_Z$. Comme le corps~$C$ est infini,
c'est possible dès lors qu'aucun des sous-espaces~$V_Z$ ne contient~$A$.

En initialisant la récurrence, on a démontré 
que ces composantes~$Z$ sont disjointes de~$X$.
Or l'hypothèse que l'idéal homogène de~$X$ dans~$\P^k$ est
engendré par des polynômes de degré~$d$ implique précisément
qu'il existe, pour toute composante~$Z$ comme ci-dessus,
un polynôme $H\in\mathscr I_d$  qui n'est pas identiquement nul
sur~$Z$. Autrement dit, $\mathscr I_d$ n'est pas contenu
dans~$V_Z$ et \emph{a fortiori}, le sous-espace affine~$A$
dirigé par~$\mathscr I_d$ n'est pas contenu dans~$V_Z$.

Cela conclut par récurrence la construction des polynômes~$F_0,\ldots,F_k$.
En particulier,
les composantes irréductibles de~$\mathbf V(F_0,\ldots,F_k)$
sont de codimension~$>k$ dans~$\P^k$ : cela entraîne
que $\mathbf V(F_0,\ldots,F_k)=\emptyset$
et conclut, du même coup, la démonstration de
la proposition.
\end{proof}

\paragraph{Systèmes dynamiques abéliens}
Les variétés abéliennes fournissent des exemples fondamentaux
de systèmes dynamiques polarisés.
Considérons une variété abélienne~$X$ sur un corps~$F$,
c'est-à-dire une variété projective munie d'une structure
de groupe algébrique.  Pour tout entier~$n$, la multiplication par~$n$,
notée $[n]$, définit un endomorphisme de~$X$.
Le théorème du cube, voir par exemple~\cite{mumford74},
entraîne que pour tout fibré en droites
symétrique~$\mathscr L$, le fibré en droites $[n]^*\mathscr L$ 
est isomorphe à~$\mathscr L^{n^2}$.

Les points prépériodiques d'un tel système dynamique
sont les points de torsion de~$X$. L'intérêt 
de cette remarque est double: si elle suggère 
d'employer des méthodes de systèmes dynamiques 
pour aborder des questions arithmétiques ou géométriques
concernant les points de torsion d'une variété
abélienne, elle permet aussi d'envisager
la théorie des systèmes dynamiques polarisés
comme une \emph{extension} de la théorie arithmético-géométrique
des variétés abéliennes.

Plus généralement, on appellera \emph{système dynamique abélien}
un système dynamique de la forme~$(X,f)$, où $X$
est une variété abélienne et $f\colon X\ra X$ un 
endomorphisme de la variété algébrique~$X$,
c'est-à-dire la composition d'un endomorphisme~$\phi$
de~$X$ comme variété abélienne et d'une translation par
un point~$x_0$ de~$X$.
Lorsque $\phi-\id$ est surjectif, ce qui sera le cas
si $X$ est simple et $\phi\neq\id$ (ou, plus généralement
si pour toute sous-variété abélienne~$Y\neq 0$ de~$X$,
$\alpha|_Y\neq\id_Y$),  alors le système dynamique
$(X,f)$ est conjugué au système~$(X,\phi)$.

Il convient d'observer
qu'un système dynamique abélien n'est en général pas polarisé,
par exemple lorsque $X$ est un produit $X_1\times X_2$ et 
que $f=f_1\times f_2$ est donné par la multiplication
par deux entiers~$n_1$ et~$n_2$ distincts sur chacun des facteurs.

\paragraph{Systèmes dynamiques toriques}
Soit $d$ un entier tel que $d\geq 2$. On définit
un système dynamique sur l'espace projectif~$\P^k$
en posant $f([x_0:\cdots:x_k])=(x_0^d:\cdots:x_k^d)$.
C'est un système dynamique polarisé car $f^*\mathscr O(1)=\mathscr O(d)$;
il est de poids~$d$. Soit $G$ l'ouvert de~$\P^k$
où aucune coordonnée homogène ne s'annule ;
si l'on fixe la coordonnée homogène~$x_0$ égale à~$1$,
on voit que $G$ est isomorphe au tore algébrique $(\gm)^k$,
où $\gm=\A^1\setminus\{0\}$ est le groupe multiplicatif.

De la sorte, $\P^k$ apparaît comme une \emph{compactification}
de~$G$, compactification qui n'est pas du tout arbitraire
car la multiplication de~$G$, à savoir l'application
$m\colon G\times G\ra G$ qui définit la structure
de groupe sur $(\gm^k)$, s'étend en une action de~$G$
sur~$\P^k$, donnée par
$ (u,x)\mapsto [x_0:u_1x_1:\cdots:u_k x_k]$
si $u=(u_1,\dots,u_k)\in G$ et $x=[x_0:\cdots:x_k]\in \P^k$.

Plus généralement, on appelle \emph{variété torique}
une variété~$X$,  disons projective et lisse, contenant un
tore algébrique~$G$ comme ouvert dense, de sorte
que la multiplication de~$G$ s'étende en un morphisme
de~$G\times X$ dans~$X$.
Le complémentaire de~$G$ dans~$X$ est alors un diviseur~$D$,
d'ailleurs un diviseur canonique\footnote{C'est-à-dire
le diviseur d'une~$k$-forme différentielle méromorphe,
où $k$ est la dimension de~$X$, supposée lisse.} de~$X$.
Pour tout entier~$d\geq 2$, l'endomorphisme
$u\mapsto u^d$ de~$G$ s'étend en un morphisme $f\colon X\ra X$
pour lequel $f^*\mathscr O_X(D)\simeq \mathscr O_X(dD)$.
On obtient de la sorte un système dynamique polarisé
de poids~$d$, si ce n'est que $\mathscr O_X(D)$
n'est pas forcément ample --- il appartient néanmoins
à l'intérieur du cône effectif de~$X$.\footnote{%
De toutes façons, la théorie des variétés
toriques montre que l'image inverse par~$f$
de tout diviseur~$E$  est linéairement équivalente
à $dE$; les constructions ci-dessous ne dépendent
pas substantiellement du choix d'un diviseur ample~$E$.}
De tels systèmes dynamiques seront appelés \emph{toriques}.

Observons que pour $u\in\C$,
la suite $(u^{d^n})_n$ ne prend qu'au plus une fois chaque
valeur si $u$ n'est pas une racine de l'unité,
et ne prend qu'un nombre fini de valeurs sinon.
Autrement dit, les points prépériodiques de ces
systèmes dynamiques qui sont contenus dans~$G$
s'identifient aux $k$-uplets $(u_1,\dots,u_k)$ de racines
de l'unité.

Lorsque $X=\P^k$, observons d'ores et déjà
une remarquable propriété de la hauteur naturelle
vis à vis de ces endomorphismes:
\begin{lemm}
Pour $x=[x_0:\cdots:x_k]\in\P^k(\bar\Q)$ et $d\in\N$,
on a
\[ h([x_0^d:\cdots:x_k^d])= d\, h([x_0:\cdots:x_k]). \]
\end{lemm}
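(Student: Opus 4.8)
\emph{Proof proposal.} The plan is to read off the identity directly from the place-by-place formula \eqref{def.hauteur-p} for the height, reducing everything to a one-line elementary statement about absolute values. First I would dispose of the degenerate case: if $d=0$ then $[x_0^d:\cdots:x_k^d]=[1:\cdots:1]$ and both sides vanish, so assume $d\geq 1$. Since the $x_i$ are not all zero and $\bar\Q$ is a field, the elements $x_i^d$ are not all zero either, so $[x_0^d:\cdots:x_k^d]$ is a genuine point of $\P^k(\bar\Q)$. Next I would fix a number field $K$ containing all the $x_i$; it then also contains every $x_i^d$, so both heights can be computed with this same $K$ via \eqref{def.hauteur-p}.

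The heart of the argument is the elementary observation that for any field $F$ equipped with an absolute value $\abs{\cdot}$ and any $a_0,\dots,a_k\in F$ one has
\[ \max\bigl(\abs{a_0^{d}},\dots,\abs{a_k^{d}}\bigr)=\max\bigl(\abs{a_0},\dots,\abs{a_k}\bigr)^{d}, \]
because $\abs{\cdot}$ is multiplicative, so $\abs{a_i^{d}}=\abs{a_i}^{d}$, and because $t\mapsto t^{d}$ is nondecreasing on $\R_{+}$ and therefore commutes with taking a maximum. Applying this with $a_i=\sigma(x_i)$ for an embedding $\sigma\colon K\hra\C_p$, and using that $\sigma$ is a ring homomorphism so that $\sigma(x_i^{d})=\sigma(x_i)^{d}$, I obtain
\[ \log\max_i\abs{\sigma(x_i^{d})}_p = d\,\log\max_i\abs{\sigma(x_i)}_p \]
for every $p\leq\infty$ and every such embedding $\sigma$.

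To finish, I would sum these equalities over all primes $p$ together with the symbol $\infty$, over all embeddings $\sigma\colon K\hra\C_p$, and divide by $[K:\Q]$: by \eqref{def.hauteur-p} the left-hand side is exactly $h([x_0^d:\cdots:x_k^d])$ and the right-hand side is exactly $d\,h([x_0:\cdots:x_k])$, which is the claim. I do not expect any real obstacle here: the only mathematical content is the displayed identity on absolute values, and the well-definedness of the right-hand side of \eqref{def.hauteur-p} (independence of the chosen homogeneous coordinates) has already been established. One may also note that this is nothing but the computation carried out earlier for the Veronese embedding $V_d$, restricted to the pure-power monomials $x_0^d,\dots,x_k^d$, which are among the coordinates of $V_d$ and for which the inequality $\abs{x_i^d}\leq\max_j\abs{x_j}^{d}$ is an equality at the index realising the maximum.
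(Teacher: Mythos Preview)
Your proof is correct and follows exactly the same route as the paper: both reduce the identity to the elementary fact that $\max(\abs{x_0}^d,\dots,\abs{x_k}^d)=\max(\abs{x_0},\dots,\abs{x_k})^d$ for any absolute value, and then invoke the place-by-place formula~\eqref{def.hauteur-p}. You have simply written out in detail (the degenerate case $d=0$, the choice of~$K$, the role of~$\sigma$) what the paper compresses into a single sentence.
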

\begin{proof}
Cela résulte immédiatement de la définition
de la hauteur, compte-tenu du fait que pour tout corps valué $K$
et toute famille $(x_0,\dots,x_k)$ d'éléments de~$K$,
\[  \max(\abs{x_0}^d,\dots,\abs{x_k}^d)=\max(\abs{x_0},\dots,\abs{x_k})^d.\]
\end{proof}
C'est un premier exemple, d'ailleurs fondamental,
de hauteur \emph{normalisée} par rapport à un système dynamique.

Les systèmes dynamiques toriques ou abéliens
ne sont pas les plus intéressants
du strict point de vue  de la théorie des systèmes dynamiques;
en revanche, les questions arithmétiques qu'ils suscitent sont souvent
fondamentales.

\bigskip

\paragraph{\'Eléments de classification}
Soit $(X,f,\mathscr L)$ un système dynamique polarisé.
Supposons que $X$ soit lisse et géométriquement connexe.
D'après le lemme~4.1 de~\cite{fakhruddin2003},
voir aussi~\cite{cantat2003},
la dimension de Kodaira d'une telle variété~$X$
est négative ou nulle. 
Supposons que $\kod(X)=0$.
Alors, une puissance du fibré canonique de~$X$
est trivial,
et en caractéristique~$0$,
on peut démontrer 
que $(X,f)$ est déduit d'un système dynamique abélien~$(X',f')$
par passage au quotient par l'action d'un groupe fini
agissant sans point fixe sur~$X'$.
La démonstration utilise  un théorème
de~\cite{beauville1983}, 
voir aussi~\cite{bogomolov1974a,bogomolov1974b}, 
reposant sur la   solution de~\cite{yau1978} à la conjecture de~\textsc{Calabi},
à savoir l'existence d'une métrique kählérienne Ricci plate sur~$X$.

La classification en dimension~$2$ 
est essentiellement due 
à \cite{nakayama2002} (voir aussi~\cite{fujimoto2002},
ainsi que \cite{fujimoto-n2005}
pour l'analogue non kählérien) et
fait l'objet d'une proposition
(prop.~2.3.1) de~\cite{zhang2006}, article de synthèse sur le thème
de cette conférence et dont je recommande la lecture.
Les surfaces qui portent un système dynamique polarisé sont:
\begin{itemize}
\item les surfaces abéliennes ;
\item les surfaces hyperelliptiques possédant une revêtement
étale par le  produit de deux courbes elliptiques ;
\item les surfaces toriques ;
\item les surfaces réglées sur une courbe elliptique
associées, soit à un fibré de rang~$2$ de la 
forme~$\mathscr O\oplus\mathscr M$,
où $\mathscr M$  soit, ou bien de torsion, ou bien de degré non nul,
soit à un fibré indécomposable de degré impair.
\end{itemize}

Citons enfin un résultat de~\cite{beauville2001},
reposant sur des idées de~\cite{amerik-r-v1999},
selon lequel une hypersurface lisse de degré au moins~$3$
de l'espace projectif de dimension au moins~$3$ 
n'admet aucun endomorphisme de degré~$>1$.

Pour plus de détails, je renvoie à l'article de Serge~\textsc{Cantat}
dans ce volume.

\subsection{Hauteur normalisée}

Soit $(X,f,\mathscr L)$ un système dynamique polarisé
défini sur~$\bar\Q$. Notons $d$ son poids;
c'est l'unique entier~$\geq 2$
tel que $f^*\mathscr L\simeq\mathscr L^{d}$.

\`A la suite de~\cite{call-s93},
eux-mêmes inspirés par la normalisation
de~\cite{neron1965} et \textsc{Tate} (voir~\cite{lang1964})
dans le cas où $X$ est une variété abélienne,
le but de ce paragraphe est de définir,
une \emph{hauteur normalisée}
relative au fibré en droites~$\mathscr L$. 

Les résultats qui suivent sont des généralisations
directes des propositions correspondants du~\S\ref{sec.hauteur-Q}.
Leur démonstration est identique.
\begin{prop}\label{prop.tate}
\emph{(Rappelons que $d\geq 2$.)}
Il existe une unique hauteur relative à~$\mathscr L$,
$\hat h_{\mathscr L}\colon X(\bar\Q)\ra\R$
telle que $\hat h_{\mathscr L}(f(x))=d\hat h_{\mathscr L}(x)$ 
pour tout $x\in X(\bar\Q)$.
\end{prop}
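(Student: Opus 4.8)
The plan is to mimic exactly the proof of Proposition~\ref{prop.tate-Q}, replacing the naive height~$h$ on~$\P^k(\Q)$ by a height~$h_{\mathscr L}$ relative to~$\mathscr L$ on~$X(\bar\Q)$, and using the functoriality of the height machine instead of Proposition~\ref{prop.fonctorialite-Q}. First I would fix an arbitrary representative~$h_{\mathscr L}$ of the class~$\eta_{\mathscr L}$ in~$\mathscr F/\mathscr F_b$; any two such differ by a bounded function. The key observation is that $h_{\mathscr L}\circ f$ is a height relative to~$f^*\mathscr L$ by Proposition~\ref{prop.hof}, and since $f^*\mathscr L\simeq\mathscr L^{\otimes d}$, the function $\frac1d\, h_{\mathscr L}\circ f$ is a height relative to~$\mathscr L$. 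Hence $\frac1d\, h_{\mathscr L}\circ f - h_{\mathscr L}$ is a bounded function on~$X(\bar\Q)$; call it~$\epsilon$, with $\norm{\epsilon}_\infty<\infty$ for the sup norm.

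Next I would run the contraction argument. Let $E$ be the affine space of functions $\phi\colon X(\bar\Q)\ra\R$ such that $\phi-h_{\mathscr L}$ is bounded, equipped with the metric induced by the sup norm on $\mathscr F_b$; this is a complete metric space. The map $T\colon\phi\mapsto\frac1d\,\phi\circ f$ sends~$E$ into itself, because $T(h_{\mathscr L})-h_{\mathscr L}=\epsilon$ is bounded and $T(\phi)-T(h_{\mathscr L})=\frac1d(\phi-h_{\mathscr L})\circ f$ is bounded whenever $\phi-h_{\mathscr L}$ is. Moreover $T$ is Lipschitz with constant $1/d\leq 1/2<1$. By the Banach fixed point theorem, $T$ has a unique fixed point $\hat h_{\mathscr L}\in E$. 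By construction $\hat h_{\mathscr L}-h_{\mathscr L}$ is bounded, so $\hat h_{\mathscr L}$ is a height relative to~$\mathscr L$; and $\hat h_{\mathscr L}=T(\hat h_{\mathscr L})$ means $\hat h_{\mathscr L}(f(x))=d\,\hat h_{\mathscr L}(x)$ for all~$x$. For uniqueness, if $\hat h'$ is another height relative to~$\mathscr L$ satisfying the same functional equation, then $\hat h'-\hat h_{\mathscr L}$ is bounded and fixed by~$T$, hence zero by the uniqueness part of the fixed point theorem (or directly: $\norm{\hat h'-\hat h_{\mathscr L}}_\infty = \frac1d\norm{\hat h'-\hat h_{\mathscr L}}_\infty$ forces it to vanish).

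One should also record the explicit formula $\hat h_{\mathscr L}(x)=\lim_{n\ra\infty}\frac1{d^n}h_{\mathscr L}(f^n(x))$, which follows from the standard estimate: iterating $\abs{\frac1d h_{\mathscr L}(f(y))-h_{\mathscr L}(y)}\leq\norm{\epsilon}_\infty$ gives that $\frac1{d^n}h_{\mathscr L}(f^n(x))$ is a Cauchy sequence, with limit independent of the chosen representative~$h_{\mathscr L}$ and satisfying the two required properties. I do not anticipate a genuine obstacle here; the only point requiring a little care — and it is the one place where the argument is not literally identical to the rational case — is the passage from $f^*\mathscr L\simeq\mathscr L^{\otimes d}$ to the statement that $\frac1d h_{\mathscr L}\circ f$ is again a height relative to~$\mathscr L$, which rests on the fact that the height machine~$\eta$ is a genuine group homomorphism $\Pic(X)\ra\mathscr F/\mathscr F_b$ compatible with pullback (Proposition~\ref{prop.hof}), so that $[h_{\mathscr L}\circ f]=\eta_{f^*\mathscr L}=d\,\eta_{\mathscr L}$ in $\mathscr F/\mathscr F_b$.
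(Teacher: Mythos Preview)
Your proposal is correct and follows essentially the same approach as the paper: the paper defines $E$ as the affine space of heights relative to~$\mathscr L$ with the metric induced by the sup norm on~$\mathscr F_b$, invokes Proposition~\ref{prop.hof} (together with $f^*\mathscr L\simeq\mathscr L^{\otimes d}$) to see that $T\colon\phi\mapsto\frac1d\phi\circ f$ maps~$E$ to itself, and applies the contraction mapping theorem with Lipschitz constant~$1/d<1$. Your additional remarks on the explicit limit formula and the direct uniqueness argument are in the spirit of the paper's treatment of the rational case and of equation~\eqref{eq.normalisee}, which the paper records immediately after the proof.
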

\begin{proof}
Notons $E$ l'espace affine des hauteurs relatives à~$\mathscr L$
sur $X(\bar\Q)$ ; son espace vectoriel directeur est
l'espace~$\mathscr F_b$ des fonctions bornées de~$X(\bar\Q)$ dans~$\R$.
Munissons $\mathscr F_b$ de la norme uniforme et 
l'espace affine~$E$ de la distance induite.
C'est un espace métrique complet.

L'application $T\colon \phi\mapsto \frac1d \phi\circ f$
est linéaire et applique~$E$ dans lui-même.
En effet, si $h$ est une hauteur relative à~$\mathscr L$,
$h\circ f$ est une hauteur relative à~$f^*\mathscr L$
(prop.~\ref{prop.hof}) donc $h\circ f - d h$ est bornée.
Par suite, $T(f)=\frac1d h\circ f$ est une hauteur relative à~$\mathscr L$
sur~$X$.

Cette application~$T$ est contractante, de constante de Lipschitz 
au plus~$1/d<1$.
Elle possède donc un unique point fixe dans~$E$.
\end{proof}

La fonction~$\hat h_{\mathscr L}$ dont
la proposition précédente affirme l'existence et l'unicité
est appelée \emph{hauteur normalisée}, ou hauteur canonique.
On a aussi la formule
\begin{equation}
\label{eq.normalisee}
\hat h_{\mathscr L}(x) = \lim_{n\ra\infty} \frac1{d^n} h(f^n(x)) ,
 \end{equation}
pour toute hauteur~$h$ relative à~$\mathscr L$ sur~$X$.
La hauteur normalisée vérifie les propriétés suivantes:

\begin{prop}\label{prop.hauteur-normalisee-Qbarre}
\begin{enumerate}
\item On a $\hat h_{\mathscr L}(x)\geq 0$ pour tout $x\in X(\bar\Q)$;
\item un point $x\in X(\bar\Q)$ vérifie $\hat h_{\mathscr L}(x)=0$
si et seulement s'il est prépériodique ;
\item pour tout nombre entier~$D$
et tout nombre réel~$B$, l'ensemble de points $x\in X(\bar\Q)$
de degré au plus~$D$ et tels que $\hat h_{\mathscr L}(x)\leq B$ est fini.
\end{enumerate}
\end{prop}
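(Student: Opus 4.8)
The plan is to follow, almost word for word, the proof of Proposition~\ref{prop.hauteur-normalisee-Q}, the only genuinely new ingredient being that the naive finiteness statement for $\P^k(\Q)$ must be replaced by Corollary~\ref{coro.finitude}, and that one must keep track of degrees along orbits. To that end, fix once and for all a number field~$K$ over which $X$, $f$ and~$\mathscr L$ are all defined, a height $h_{\mathscr L}$ relative to~$\mathscr L$, and a real number~$C$ such that $\abs{\hat h_{\mathscr L}-h_{\mathscr L}}\leq C$ on $X(\bar\Q)$ (such a~$C$ exists because $\hat h_{\mathscr L}$ is, by Proposition~\ref{prop.tate}, a height relative to~$\mathscr L$, and any two such differ by a bounded function).

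First I would prove~(c). Since $\mathscr L$ is ample, Corollary~\ref{coro.finitude} guarantees that for every integer~$D$ and every real~$B'$ the set of $x\in X(\bar\Q)$ of degree at most~$D$ with $h_{\mathscr L}(x)\leq B'$ is finite. As the condition $\hat h_{\mathscr L}(x)\leq B$ forces $h_{\mathscr L}(x)\leq B+C$, the set considered in~(c) is contained in such a finite set, hence is finite.

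For~(a), I would invoke the fact recalled after the construction of the height machine: an ample (or merely globally generated) line bundle has all of its relative heights bounded below, so $h_{\mathscr L}\geq -C'$ on $X(\bar\Q)$ for some~$C'$; the formula~\eqref{eq.normalisee}, namely $\hat h_{\mathscr L}(x)=\lim_{n\to\infty} d^{-n}h_{\mathscr L}(f^n(x))$, then yields $\hat h_{\mathscr L}(x)\geq\lim_n(-C'/d^n)=0$. For the forward implication in~(b): if $x$ is preperiodic there are integers $n\geq 0$ and $p\geq 1$ with $f^n(x)=f^{n+p}(x)$; applying the relation $\hat h_{\mathscr L}\circ f=d\,\hat h_{\mathscr L}$ respectively $n$ and $n+p$ times gives $d^n\hat h_{\mathscr L}(x)=d^{n+p}\hat h_{\mathscr L}(x)$, whence $\hat h_{\mathscr L}(x)=0$ since $d\geq 2$.

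It remains to prove the converse in~(b), and this is the one step where the passage from~$\Q$ to~$\bar\Q$ must be handled with care: one needs the orbit of an algebraic point to stay in bounded degree. If $x\in X(\bar\Q)$ has degree~$D$ over~$\Q$, then, $f$ being defined over~$K$, every iterate $f^n(x)$ lies in $X(K\cdot\Q(x))$ and hence has degree at most $D\,[K:\Q]$. Assuming $\hat h_{\mathscr L}(x)=0$, we get $\hat h_{\mathscr L}(f^n(x))=d^n\cdot 0=0$ for all~$n$, so the orbit $\{f^n(x)\,;\,n\geq 0\}$ is a set of points of bounded degree and bounded normalized height, hence finite by~(c); thus $x$ is preperiodic. (Incidentally, this gives a second proof of~(a): if $\hat h_{\mathscr L}(x)<0$ then every iterate satisfies $\hat h_{\mathscr L}(f^n(x))=d^n\hat h_{\mathscr L}(x)\leq\hat h_{\mathscr L}(x)<0$, so the orbit is finite, $x$ is preperiodic, and $\hat h_{\mathscr L}(x)=0$, a contradiction.) I do not expect any real obstacle beyond this degree bookkeeping, since everything else is purely formal once Corollary~\ref{coro.finitude} is available; accordingly, the single point to be careful about is to fix at the outset a number field of definition for the polarized dynamical system.
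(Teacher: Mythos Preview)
Your proof is correct and follows essentially the same approach as the paper's: part~(c) via Corollary~\ref{coro.finitude} and the bounded difference $\hat h_{\mathscr L}-h_{\mathscr L}$, part~(a) via formula~\eqref{eq.normalisee} and the lower bound for ample heights, and part~(b) via the functional equation together with~(c). Your explicit bookkeeping of the field of definition (noting that the orbit of~$x$ stays in $X(K\cdot\Q(x))$, of degree at most $[K:\Q]\cdot[\Q(x):\Q]$) is in fact slightly more careful than the paper, which simply asserts the iterates are ``tous définis sur le corps~$\Q(x)$''.
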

\begin{proof}
La propriété~\emph c) résulte immédiatement de ce
que $\hat h_{\mathscr L}$ est une hauteur relative à un fibré
en droites ample sur~$X$ et du corollaire~\ref{coro.finitude}.
Comme les hauteurs relatives à un fibré en droites ample
sont minorées, 
la propriété~\emph a) est manifeste sur la formule~\eqref{eq.normalisee}
ci-dessus. Comme au \S\ref{sec.hauteur-Q},
elle se déduit aussi, ainsi que la propriété~\emph b),
de l'assertion de finitude.

Soit en effet $x\in X (\bar\Q)$. On a $\hat h(f^n(x))=d^n \hat h(x)$.
Si $x$ est prépériodique, il existe des entiers~$n\geq 0$ et~$p\geq 1$
tels que $f^n(x)=f^{n+p}(x)$. Par suite,
$d^n \hat h_{\mathscr L}(x)=d^{n+p}\hat h_{\mathscr L}(x)$, d'où $\hat h_{\mathscr L}(x)$ car $d\geq 2$.
Inversement, si $\hat h_{\mathscr L}(x)\leq 0$, les termes de la suite $(f^n(x))$ 
forment un ensemble de points de hauteur normalisée au plus~$B$,
tous définis sur le corps~$\Q(x)$; un tel ensemble est fini d'après~\emph c). 
Il existe donc des entiers~$n\geq 0$
et~$p\geq 1$ tels que $f^n(x)=f^{n+p}(x)$.
Autrement dit, $x$ est prépériodique et $\hat h_{\mathscr L}(x)=0$.
\end{proof}

Voici, d'après~\cite{northcott1950},
une conséquence remarquable de la proposition précédente.
\begin{coro}\label{theo.northcott}
Soit $(X,f,\mathscr L)$ un système dynamique polarisé
défini sur un corps de nombres~$K$.
Pour tout entier~$D$, les points de~$X(\bar\Q)$
qui sont prépériodiques et dont le degré est au plus~$D$
forment un ensemble fini.
\end{coro}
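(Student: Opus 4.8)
This corollary is a direct consequence of Proposition~\ref{prop.hauteur-normalisee-Qbarre}, and the plan is simply to combine its parts~\emph b) and~\emph c). The polarization hypothesis guarantees that $\mathscr L$ is ample and that $f^*\mathscr L\simeq\mathscr L^d$ with $d\geq 2$, so that the normalized height $\hat h_{\mathscr L}\colon X(\bar\Q)\ra\R$ supplied by Proposition~\ref{prop.tate} is available; I view $X$ here as a variety over~$\bar\Q$, which is harmless since the base change from~$K$ to~$\bar\Q$ affects neither the dynamics nor the set of preperiodic points of bounded degree.

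First I would invoke part~\emph b): a point $x\in X(\bar\Q)$ is preperiodic for~$f$ if and only if $\hat h_{\mathscr L}(x)=0$; in particular every such point satisfies $\hat h_{\mathscr L}(x)\leq 0$. Then I would apply part~\emph c) with the bound $B=0$: the set of points $x\in X(\bar\Q)$ of degree at most~$D$ with $\hat h_{\mathscr L}(x)\leq 0$ is finite. Since the preperiodic points of degree at most~$D$ form a subset of this finite set, they are finite in number, which is exactly the assertion of the corollary.

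There is no real obstacle: the whole of the argument has been packaged into Proposition~\ref{prop.hauteur-normalisee-Qbarre}, whose part~\emph c) --- the genuinely substantial ingredient --- is Northcott's finiteness property for heights relative to an ample line bundle (Corollary~\ref{coro.finitude}), while part~\emph b) rests only on the functional equation $\hat h_{\mathscr L}(f(x))=d\,\hat h_{\mathscr L}(x)$ together with that same finiteness. It is worth stressing that the degree bound cannot be dropped: over~$\bar\Q$ the set of all preperiodic points is in general infinite --- already for~$\P^1$ with $f([x_0:x_1])=[x_0^d:x_1^d]$ it consists of the two coordinate points and all points $[1:\zeta]$ with $\zeta$ a root of unity --- so the hypothesis $[\Q(x):\Q]\leq D$ is essential, and the corollary should be read as a finiteness statement for each fixed degree.
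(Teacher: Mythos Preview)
Your proof is correct and is exactly the argument the paper intends: the corollary is stated immediately after Proposition~\ref{prop.hauteur-normalisee-Qbarre} as a ``cons\'equence remarquable de la proposition pr\'ec\'edente'' with no further proof, and you have spelled out the combination of parts~\emph b) and~\emph c) that yields it. Your additional remark on the necessity of the degree bound is apt and in the spirit of the surrounding text.
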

Il convient de remarquer ici
que, sous les hypothèses du théorème,
l'ensemble des points de~$X(\bar\Q)$ qui sont périodiques
est dense dans~$X$ pour la topologie de Zariski
(proposition~\ref{theo.per.dense}). Il y en est \emph{a fortiori}
de même de l'ensemble des points prépériodiques.

Lorsque, de plus, $X$ est une variété abélienne,
la hauteur normalisée obtenue est appelée \emph{hauteur de N\'eron--Tate}.\label{hNT}
La compatibilité d'une telle hauteur avec la loi de groupe
de~$X$ est assez remarquable : en effet, cette hauteur $\hat h_{\mathscr L}$
est une fonction de degré~$\leq 2$ sur le groupe~$X(\bar\Q)$,
au sens où l'identité suivante est vérifiée:
\begin{equation}
\label{eq.hNT}
  \hat h_{\mathscr L}(x+y+z) - \hat h_{\mathscr L}(y+z)
-  \hat h_{\mathscr L}(z+x) -  \hat h_{\mathscr L}(x+y) 
+ \hat h_{\mathscr L}(x)
+ \hat h_{\mathscr L}(y)
+ \hat h_{\mathscr L}(z)
-\hat h_{\mathscr L}(0) = 0 \end{equation}
pour tout triplet $(x,y,z)$ de points de~$X(\bar\Q)$.
Une telle fonction est la somme d'une  forme quadratique $q_{\mathscr L}$,
d'une forme linéaire $\ell_{\mathscr L}$ et d'une constante
(exercice~\ref{exo.hNT}).
Dans le cas où les fibrés en droites $\mathscr L$ 
et $[-1]^*\mathscr L$ sont isomorphes, on dit
que $\mathscr L$ est symétrique et l'on a $\ell_{\mathscr L}=0$;
dans le cas antisymétrique où $\mathscr L$ et $[-1]^*\mathscr L$
sont inverses l'un de l'autre, on a $q_{\mathscr L}=0$.

D'autre part, lorsque $X=\P^1$ et $f$ est l'endomorphisme
$[x:y]\mapsto [x^d:y^d]$ pour $d$ un entier~$\geq 2$,
la hauteur normalisée n'est autre que la hauteur
naturelle sur~$\P^1$. Les points prépériodiques
pour ce système dynamique sont $[0:1]$, $[1:0]$
et les points~$[1:\xi]$ où $\xi\in\C$ est une racine
de l'unité.
Modulo l'identification
entre hauteur d'un point $[1:\xi]$ 
et mesure de Mahler~$\mathrm M(P)$ du polynôme minimal~$P$ de~$\xi$,
on en déduit le théorème suivant, dont
la première partie est essentiellement due à \textsc{Kronecker.}
Voir aussi l'exercice~\ref{exer.lehmer1} pour une version
effective.
\begin{coro}
Soit $P$ un polynôme irréductible à coefficient entiers.

\begin{enumerate}
\item Si $\mathrm M(P)=0$, les racines de~$P$ sont
des racines de l'unité ou~$0$.
\item Lorsque $n\ra\infty$, le degré du corps engendré
sur~$\Q$ par une racine primitive $n$-ième de l'unité
tend vers l'infini.
\end{enumerate}
\end{coro}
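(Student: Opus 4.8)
Le plan est de déduire les deux assertions de la théorie de la hauteur normalisée appliquée au système dynamique polarisé $(\P^1,f,\mathscr O(1))$, où $f\colon[x:y]\mapsto[x^d:y^d]$ pour un entier $d\ge 2$ quelconque. Trois ingrédients déjà en place suffisent : l'identification de la hauteur naturelle de~$\P^1$ avec la hauteur normalisée de ce système ; la description de ses points prépériodiques, à savoir $[1:0]$, $[0:1]$ et les points $[1:\zeta]$ avec $\zeta$ racine de l'unité ; et la proposition exprimant $h(\xi)=\frac1{\deg P}\log\mathrm M(P)$ lorsque $P$ est le polynôme minimal, à coefficients entiers premiers entre eux, d'un nombre algébrique~$\xi$.

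Pour la première partie, je commencerais par remarquer que, $P$ étant irréductible et non constant, il est primitif, donc coïncide au signe près avec le polynôme minimal de chacune de ses racines~$\xi$ ; la formule ci-dessus montre alors que l'hypothèse $\mathrm M(P)=1$ se traduit en $h(\xi)=0$. Comme $h([1:\xi])$ est la hauteur normalisée du point $[1:\xi]$ relativement à~$f$, la caractérisation des points de hauteur normalisée nulle (proposition~\ref{prop.hauteur-normalisee-Qbarre}) entraîne que $[1:\xi]$ est prépériodique pour~$f$, donc figure dans la liste rappelée plus haut : $\xi$ est nulle, ou bien une racine de l'unité. Cet argument s'appliquant à chacune des racines de~$P$, la première assertion en résulte.

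Pour la seconde partie, je raisonnerais par l'absurde. Si le degré sur~$\Q$ d'une racine primitive $n$-ième de l'unité~$\zeta_n$ ne tendait pas vers l'infini, il existerait un entier~$D$ et un ensemble infini d'entiers~$n$ tels que $[\Q(\zeta_n):\Q]\le D$ ; cet ensemble étant infini, les ordres~$n$ en jeu sont arbitrairement grands, de sorte que les points $[1:\zeta_n]$ correspondants sont deux à deux distincts et forment un ensemble infini de points de~$\P^1(\bar\Q)$, tous de degré au plus~$D$ et de hauteur normalisée nulle puisque prépériodiques. Ceci contredit la finitude énoncée au dernier point de la proposition~\ref{prop.hauteur-normalisee-Qbarre} (prise avec $B=0$), ou, ce qui revient au même, le corollaire~\ref{theo.northcott} appliqué au système $(\P^1,f,\mathscr O(1))$ ; on conclut donc que $[\Q(\zeta_n):\Q]\to\infty$.

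Je ne vois pas d'obstacle réel dans cette démonstration : tout le poids repose sur des résultats déjà établis, au premier chef le théorème de finitude de Northcott. Le seul point demandant un minimum de soin est, dans la première partie, le passage d'une racine de~$P$ à l'ensemble de ses racines, qui utilise l'irréductibilité de~$P$ — un conjugué galoisien d'une racine de l'unité, ou de~$0$, en est encore une (de façon équivalente, le polynôme minimal d'une racine de l'unité est cyclotomique, donc à racines toutes racines de l'unité). On comparera utilement ce raisonnement avec l'argument élémentaire de \textsc{Kronecker} esquissé à l'exercice~\ref{exer.lehmer1}.
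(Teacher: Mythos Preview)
Your argument is correct and is exactly the one the paper has in mind: the corollary is stated without a separate proof, the surrounding text deriving it from the identification $h(\xi)=\frac1{\deg P}\log\mathrm M(P)$, the description of the prépériodiques of $[x:y]\mapsto[x^d:y^d]$, and proposition~\ref{prop.hauteur-normalisee-Qbarre}/corollaire~\ref{theo.northcott} for the finiteness in part~(2). You have also silently repaired the typo in the statement (the intended hypothesis is $\mathrm M(P)=1$, i.e.\ $\log\mathrm M(P)=0$, not $\mathrm M(P)=0$), which is the only way the assertion makes sense given the paper's definition $\mathrm M(P)=\exp(\int\cdots)$.
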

En fait, toutes les racines primitives $n$-ièmes
de l'unité sont conjuguées sur~$\Q$
(\textsc{Gauß}) puisque le polynôme cyclotomique~$\Phi_n$
est irréductible. Comme l'indicatrice d'\textsc{Euler}
$\phi(n)$ tend vers l'infini avec~$n$,
cela redonne la seconde assertion.
La première assertion peut également
être précisée en disant qu'un polynôme irréductible $P\in\Z[X]$
tel que $\mathrm M(P)=0$ est, au signe près, ou bien égal à~$X$,
ou bien égal à un polynôme cyclotomique.

\subsection{Normalisation des hauteurs locales}

On a étudié au paragraphe~\ref{sec.locales}
les décompositions de la hauteur d'un point en somme
de termes locaux, indexés par les places de~$K$.
Dans le cas d'un système dynamique polarisé,
il est naturel de se demander si la hauteur normalisée
est justiciable d'une telle décomposition  dont chaque
terme serait plus ou moins canonique.
Dans le cas des variétés abéliennes, la normalisation
obtenue remonte à~\cite{neron1965}.

Revenons donc à la théorie des hauteurs locales
en nous plaçant sur un corps~$K$, supposé valué complet et algébriquement clos.
Soit $X$ une variété projective sur~$K$,
$f\colon X\ra K$ un endomorphisme de~$X$
et soit $D$ un diviseur de Cartier sur~$X$
tel que $f^*D$ soit linéairement équivalent à~$dD$,
où $d$ est un entier~$\geq 2$.
Autrement dit, dans le cas où $D$ est ample,
$(X,f,\mathscr O(D))$ est un système dynamique polarisé.

Il y a deux façons pour définir une fonction de Green
canonique relativement à~$D$.
La première, due à~\cite{call-s93},
procède d'un raisonnement au niveau des fonctions ;
la seconde, due à~\cite{zhang95b}, construit
des métriques canoniques.
Je mélange ici les deux points de vue, 
les espaces affines des fonctions de Green pour un diviseur~$D$
n'étant que l'image par $\log\abs{\cdot}$ du torseur
des métriques hermitiennes continues sur $\mathscr O(D)$.

\begin{prop}
Soit $\alpha\in K(X)$ tel que $f^*D=dD+\div(\alpha)$.
Il existe une unique fonction de Green $\hat g_D$
relativement à~$D$ telle que
\[ \hat g_D (f(x)) = d \hat g_D(x) - \log \abs{\alpha(x)} \]
pour tout $x\in X(K)$ qui n'appartient ni à~$D$
ni à~$f^*D$.
\end{prop}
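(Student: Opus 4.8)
Le plan est de reproduire au niveau des fonctions de Green l'argument de point fixe de la proposition~\ref{prop.tate}. D'après le lemme ci-dessus, l'ensemble~$E$ des fonctions de Green relativement à~$D$ sur~$X(K)$ est un espace affine sous l'espace des fonctions continues et bornées sur~$X(K)$, que l'on munit de la norme uniforme ; cet espace directeur étant complet, $E$ muni de la distance induite est un espace métrique complet.

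On introduit alors l'application $T\colon E\ra E$ donnée, pour une fonction de Green~$g$ relativement à~$D$, par
\[ T(g)(x) = \frac1d\bigl( g(f(x)) + \log\abs{\alpha(x)}\bigr). \]
Le point essentiel --- et l'obstacle principal --- est de vérifier que $T$ est bien à valeurs dans~$E$. D'une part, par la fonctorialité des fonctions de Green (proposition affirmant que $\lambda_E\circ f$ est une fonction de Green pour~$f^*E$), $g\circ f$ est une fonction de Green pour~$f^*D$. D'autre part, $\alpha$ étant une équation globale du diviseur principal~$\div(\alpha)$, la fonction $-\log\abs{\alpha}$ est une fonction de Green pour~$\div(\alpha)$, et donc $\log\abs{\alpha}$ en est une pour~$-\div(\alpha)$. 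Par la proposition d'additivité des fonctions de Green, $g\circ f+\log\abs{\alpha}$ est donc une fonction de Green pour $f^*D-\div(\alpha)=dD$ ; enfin, diviser par~$d$ une fonction de Green pour~$dD$ fournit une fonction de Green pour~$D$ --- cela se ramène, via la définition même et le fait que $f_i^d$ est une équation locale de~$dD$ dès que $f_i$ en est une de~$D$, à l'énoncé élémentaire correspondant pour les hauteurs locales sur l'espace projectif. Ainsi $T(g)\in E$.

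Il reste à observer que $T$ est contractante : elle est affine, de partie linéaire $\phi\mapsto\frac1d\,\phi\circ f$ sur l'espace des fonctions continues bornées, dont la constante de Lipschitz vaut au plus $1/d<1$ puisque la composition avec~$f$ n'augmente pas la norme uniforme. Le théorème du point fixe de Banach fournit alors un unique point fixe $\hat g_D\in E$, qui est exactement l'unique fonction de Green relativement à~$D$ vérifiant $\hat g_D(f(x))=d\hat g_D(x)-\log\abs{\alpha(x)}$ en dehors de $D\cup f^*D$ (là où les deux membres sont définis). Comme dans le cas global, l'itération de~$T$ donne au passage la formule explicite
\[ \hat g_D(x) = \lim_{n\ra\infty}\Bigl( \frac1{d^n}\, g_D(f^n(x)) + \sum_{j=1}^{n}\frac1{d^j}\log\abs{\alpha(f^{j-1}(x))}\Bigr), \]
valable pour toute fonction de Green~$g_D$ relativement à~$D$, dont on peut se servir pour une démonstration d'allure plus constructive. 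La seule subtilité réelle est la vérification que $T$ préserve~$E$, c'est-à-dire le maniement de la relation $f^*D=dD+\div(\alpha)$ et des énoncés de fonctorialité et d'additivité des fonctions de Green ; la partie contraction est routinière.
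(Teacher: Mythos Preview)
Your proof is correct and follows essentially the same approach as the paper: set up the space of Green functions for~$D$ as a complete metric space under the sup norm, define the same operator $T(g)=\frac1d(g\circ f+\log\abs\alpha)$, and apply the Banach fixed-point theorem. The paper's version is terser --- it simply asserts that $T(g)$ is again a Green function for~$D$ --- whereas you spell out the verification via functoriality, additivity, and the division by~$d$; but the argument is the same.
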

\begin{proof}
L'ensemble des fonctions de Green pour~$D$ est un espace affine
de direction l'espace vectoriel des fonctions continues bornées
sur~$X(K)$. La norme sup de la différence de deux fonctions
de Green définit alors une distance sur cet espace qui 
en fait un espace métrique complet.
Si $g$ est une fonction de Green pour~$D$,
$T(g) = \frac1d (g\circ f+\log\abs\alpha)$ en est une autre.
L'application $g\mapsto T(g)$ est contractante,
de constante de Lipschitz au plus~$1/d<1$.
Elle admet donc un unique point fixe dans l'ensemble
des fonctions de Green pour~$D$.
\end{proof}

\begin{exem}
Supposons que~$D$ soit la section hyperplane~$X_0=0$
de l'espace projectif $X=\P^k$.
D'après la structure des endomorphismes de l'espace projectif,
il existe des polynômes~$(F_0,\ldots,F_k)$ de~$K[X_0,\dots,X_k]$,
homogènes de degré~$d$
et sans zéro commun non trivial dans~$\bar K$, tels que
$f([x_0:\cdots:x_k])=[F_0(x):\cdots:F_k(x)]$, pour tout
point~$[x_0:\cdots:x_k]$ de~$\P^k$.
Notons $F=(F_0,\ldots,F_k)$ l'endomorphisme de l'espace affine
de dimension~$k+1$ qui relève~$f$. Pour $n\geq 0$,
notons $F^{n}=(F_0^{n},\ldots,F_k^{n})$ le $n$-ième itéré de~$f$;
il relève~$f^n$. En outre, les polynômes $F_j^n$ (pour $0\leq j\leq j$)
sont de degré~$d^n$ et sans zéro commun non trivial.

Comme $D$ est le diviseur des zéros de la {\og fonction\fg}~$x_0$,
$f^*D$ est celui de~$F_0$ ; notant $\alpha$ la fraction
rationnelle~$F_0(x)/x_0^d$, on a $f^* D = dD+\div(\alpha)$.
La fonction $g=-\log \left(\abs{x_0}/\max(\abs{x_0},\ldots,\abs{x_k})\right)$
est une fonction de Green pour~$D$; toute autre
fonction de Green en diffère d'une fonction continue.
La démonstration itérative du théorème du point fixe 
montre que la fonction de Green canonique pour~$g$
est la limite des fonctions de Green
\begin{align*}
 g_n(x) & = - \frac1{d^n} g\circ f^n(x) + \frac1{d^{n}} \log\abs{\alpha\circ f^{n-1}}+\cdots + \frac 1d \log\abs{\alpha(x) } \\
& = - \frac1{d^n} \log \frac{\abs{F_0^n(x)}}{\max(\abs{F_0^n(x)},\ldots,\abs{F_k^n(x)})}
+ \sum_{m=1}^n \frac1{d^m} \log \frac{ \abs{F_0^{m}(x)}}{\abs{F_0^{m-1}(x)}^d} \\
& = \frac1{d^n} \log \max\big(\abs{F_0^n(x)},\ldots,\abs{F_k^n(x)}\big)
 - \log \abs{x_0}. 
\end{align*}
Pour tout $x=(x_0,\ldots,x_k)$ non nul, posons
\[ G_n(x) = \frac1{d^n} \log \max(\abs{F_0^n(x)},\ldots,\abs{F_k^n(x)}). \]
Les calculs qui précèdent entraînent que $G_n$ converge
vers une fonction~$G$ définie sur le complémentaire de l'origine dans l'espace
affine et vérifiant les propriétés suivantes :
\[ G(\lambda x) = \log \abs\lambda + G(x) ,
\qquad
  G(F(x))= d G(x).
\]
On l'appelle la \emph{fonction de Green homogène} ;
elle est reliée à la fonction de Green canonique pour le diviseur~$D$
par la relation $g_D(x)=G(x)-\log\abs{x_0}$.

Supposons $K=\C$. Alors, $G$ est plurisousharmonique
dans~$\C^{k+1}\setminus\{0\}$
et est pluriharmonique sur l'image réciproque de l'ensemble
de Fatou de~$f$; voir~\cite{sibony1999}, \S1.6.
Lorsque $K$ est un corps ultramétrique, 
\cite{kawaguchi-silverman2007b} démontrent
que $G$ est localement constante sur l'image réciproque
de l'ensemble de Fatou de~$f$.
\end{exem}

Le théorème suivant affirme que ces fonctions de Green normalisées
forment une famille admissible, donc donnent lieu comme anoncé
à une décomposition de la hauteur normalisée d'un point en
somme de termes locaux.

\begin{theo}
Soit $(X,f,\mathscr L)$ un système dynamique polarisé défini
sur un corps de nombres~$K$.
Soit $D$ un diviseur tel que $\mathscr L\simeq\mathscr O(D)$
et soit $\alpha\in K(X)$ tel que $f^*D =dD+\div(\alpha)$.
Pour toute place~$v$ de~$K$, notons
$\hat g_v$ la fonction de Green sur le corps~$\C_v$
pour le diviseur~$D$, normalisée
relativement à~$\alpha$, ainsi que $\hat h_v$ la hauteur locale associée. 

Alors, la famille $(\hat g_v)$ est admissible et,
pour tout $x\in (X\setminus D)(\bar\Q)$, on a
\[ \hat h_{\mathscr L}(x) = \sum_{v\in M_K} \eps_v \hat h_v(x). \]
\end{theo}
(Comme au paragraphe~\ref{sec.locales}, si $v$ est une place de~$K$, 
on note $\eps_v$ le nombre de plongements 
du corps~$K$ dans~$\C_v$ qui induisent la valeur absolue correspondant
à~$v$.)

\begin{proof}
La famille~$(\hat g_v)$
de fonctions de Green normalisées est obtenue en itérant
à partir d'une famille admissible $(g^0_v)$ fixée
 l'opérateur~$T\colon (g_v)\mapsto (\frac1d(g_v\circ f+\log\abs\alpha_v)$.
D'après le lemme~\ref{fonctorialite.admissible},
cet opérateur applique une famille admissible de fonctions
de Green sur une autre famille admissible.
Plus précisément, d'après cette proposition, cet opérateur 
fixe presque toutes les composantes~$g_v$.
Autrement dit, on a $\hat g_v=g^0_v$ pour presque toute place~$v$.
Cela démontre précisément que la famille~$(\hat g_v)$ est admissible.

Notons $h'$ la somme des hauteurs locales associées ; 
d'après la proposition~\ref{prop.decomposition},
c'est la restriction au complémentaire de~$D$ d'une hauteur
pour ce diviseur.
Pour montrer que c'est bien la hauteur normalisée, il suffit
donc de vérifier l'équation  fonctionnelle qui la caractérise.

Soit $v$ une place de~$K$.
Pour tout point $x\in X(\C_v)$ tel que $x\not\in D$ et $f(x)\not\in D$,
on a $\hat g_v(f(x))=d \hat g_v(x)-\log\abs{\alpha(x)}_v$.
Par suite, si un point $P\in X(\bar K)$ 
est tel que $P\not\in D$ et $f(P)\not\in D$, 
on a les égalités
\[ \hat h_v(f(P))=d\hat h_v(P)-\frac1{[K(P):K]} \log \abs{N_{K(P)/K}(\alpha(x))}_v.\]
En sommant ces égalités et en utilisant la formule du produit,
on en déduit $h'(f(P))=d h'(P)$. Pour traiter le cas général,
on considère un diviseur~$E$ linéairement équivalent à~$D$
qui ne contienne ni~$P$ ni~$f(P)$
auquel on applique l'analyse précédente.
\end{proof}

\subsection{Hauteurs locales sur~$\C$ et mesures canoniques}
\label{subsec.mesures-canoniques}

Conservons les notations du paragraphe précédent
en supposant de plus que $K=\C$ et que $D$ est un diviseur ample.

Il possède alors
une fonction de Green de classe~$\mathscr C^\infty$,
$g_D$, dont la forme
associée $\omega_D=\ddc  g_D+\delta_D$ est une forme de Kähler.
D'après le théorème de \textsc{Wirtinger},
la mesure $(\omega_D)^k$ est positive, de masse
totale égale à l'auto-intersection $(D)^k$ de~$D$
(si $D$ est une section hyperplane, ce n'est autre que le degré de~$X$,
c'est-à-dire le nombre de points d'intersection avec~$X$ d'une
droite assez générale de~$\P^k$.).

La démonstration du théorème du point fixe de Picard
fait intervenir les itérés $T^n(g_D)$ de~$g_D$
sous l'opérateur~$T$ et montre leur convergence
vers la fonction de Green~$\hat g_D$,
unique solution de $T(\hat g_D)=\hat g_D$.
Notons que l'on a 
\[ \ddc  T(g_D)  +\delta_D = \frac1d \ddc (g_D\circ f) +\delta_D + \frac 1d \delta_{\div(\alpha)}  = \frac1d f^*\omega _D .\]
Autrement dit, la forme associée à~$T(g_D)$ est encore de Kähler.
Un argument standard de courants positifs de masse bornée
entraîne que $\ddc \hat g_D+\delta_D$ est un courant positif
fermé sur~$X$, qu'on note $\hat\omega_D$.
C'est la limite des formes positives de type~$(1,1)$,
$\frac1{d^n} (f^n)^*\omega_D$.
En outre, la suite des mesures $(d^{-n} (f^n)^*\omega_D^k)_n$
converge vers une mesure, notée $(\hat\omega_D)^k$ sur
$X(\C)$. 
Cette dernière écriture $(\hat\omega_D)^k$ 
est rendue licite par la théorie des produits de courants
positifs fermés localement donnés par le $\ddc $ d'une fonction psh
continue, telle que décrite dans~\cite{demailly93}.

On appelle \emph{mesure canonique} associée
au système dynamique polarisé~$(X,f,\mathscr L)$ la
mesure de probabilité $(\hat\omega_D)^k/(D)^k$.
Dans le cas des variétés abéliennes, cette mesure canonique est la
mesure de Haar normalisée.
Dans le cas d'un système dynamique torique, par
exemple~$\P^k$ muni de l'endomorphisme donné
par l'élévation des coordonnées homogènes à une même
puissance~$d\geq 2$, il s'agit
de même de la mesure de Haar normalisée du sous-groupe
compact maximal~$(\mathbf S^1)^k$ de~$(\C^*)^k$.

Pour plus de détails et des exemples
plus intéressants du point de vue ergodique,
je renvoie aux articles de~\textsc{Cantat}
et~\textsc{Guedj} dans ce volume.

Une propriété topologique de ces mesures aura des conséquences
importantes plus bas : elles ne chargent pas les sous-ensembles
algébriques stricts
(et plus généralement les sous-ensembles pluripolaires),
\emph{cf.} l'article de~\textsc{Guedj}, théorème~3.1. En particulier,
leur support est dense pour la topologie  de Zariski.
Ce dernier fait est facile à constater dans les deux exemples
(abéliens et toriques) explicités ci-dessus.

\subsection{Extensions}

La construction de hauteurs normalisées peut être
étendue de plusieurs manières.

\medskip

1) Par additivité des hauteurs relativement à des fibrés
en droites, on peut définir une hauteur relativement
à un élément de $\Pic(X)\otimes_\Z\R$. L'intérêt est
que cet ensemble peut contenir de nouvelles classes vérifiant
des propriétés permettant l'application des idées
évoquées ci-dessus.

C'est ainsi que sur certaines surfaces K3, \cite{silverman1991},
\cite{call-s93},
et, à leur suite, \cite{billard1997},
construisent une hauteur normalisée pour l'action d'automorphismes.
Leur construction a été généralisée dans~\cite{kawaguchi2008}
au cas des automorphismes d'une surface projective~$X$
dont le premier degré dynamique~$\lambda$ est~$>1$
(c'est-à-dire, d'après~\cite{gromov2003}\footnote{Bien
que publié en~2003, cet article fondamental date de~1977.} 
et~\cite{yomdin1987},
dont l'entropie topologique est strictement positive).
Dans tous les cas, il s'agit d'établir l'existence 
de deux diviseurs à coefficients rationnels $D_+$
et~$D_-$ vérifiant $f^*(D_+)\sim \lambda D_+$
et $f^*(D_-)\sim \lambda^{-1} D_-$. 
Suivant~\cite{cantat2001},
cela se démontre en considérant l'action de~$f$
sur les espaces vectoriels $H^{1,1}(X,\R)$ et $\Pic(X)_\R$, en
notant que cette action préserve $\Pic(X)$,
la forme d'intersection 
et le cône des diviseurs nef (l'adhérence du cône ample).
On démontre (voir~\cite{mcmullen2002}) que les
valeurs propres sont des nombres de Salem:
$\lambda$ est un nombre algébrique, $\lambda^{-1}$
en est un conjugué et tous les autres conjugués
sont de module~$1$.

Ils en déduisent des théorèmes de finitude pour les points
prépériodiques de degré donné sur~$X$.
Dans le cas général considéré par~\cite{kawaguchi2008},
il s'agit seulement d'une hauteur relative à un fibré en droites
\emph{big} et~\emph{nef};
il apparaît ainsi une sous-variété exceptionnelle,
les points prépériodiques de laquelle on ne peut rien dire.
\footnote{\'Etant donné un endomorphisme~$f$ de~$\P^2$ de degré~$d\geq 2$,
éclatons un point fixe de~$f$ en lequel la différentielle de~$f$ 
est l'identité.
On obtient une surface~$X'$ sur laquelle~$f$ s'étend en un endomorphisme
qui laisse invariant point par point le diviseur exceptionnel.}

\medskip

2) En général, il n'y a pas de hauteur normalisée pour l'action
d'une application rationnelle, pas plus que
de théorème de finitude pour les points prépériodiques,
cf. l'exercice~\ref{ex-involution}.
Un certain nombre d'auteurs, \cite{silverman1994}, \cite{denis1995b},
\cite{marcello2003},  \cite{kawaguchi2006}, ont étudié tout particulièrement
le cas des automorphsimes polynomiaux de l'espace affine.
(Ceux-ci définissent des automorphismes birationnels
de l'espace projectif auxquels la théorie précédente
ne s'étend pas \emph{a priori}.)
Considérons un automorphisme~$f$ du plan affine~$\A^2$.
Il est donné par deux polynômes ; notons $d(f)$ le maximum
de leurs degrés. Le $n$-ième itéré $f^n$ de~$f$
possède de même  un degré et l'on définit le \emph{degré dynamique}
$\delta(f)$ de~$f$ comme la limite de $d(f^n)^{1/n}$.
Cette limite existe et est un entier naturel
qui vérifie $1\leq \delta(f)\leq d(f)$
(cf.~\cite{kawaguchi2006}, prop.~3.2).

Le cas $\delta(f)=d(f)$ correspond aux \emph{automorphismes réguliers}
pour lesquels les lieux d'indétermination dans~$\P^2$
de~$f$ et de~$f^{-1}$ ne se rencontrent pas.
Lorsque $\delta(f)\geq 2$, \cite{kawaguchi2006} montre
l'existence d'une fonction $\hat h$ sur~$\A^2(\bar\Q)$
vérifiant les propriétés suivantes, où $h$ désigne la restriction à~$\A^2$
de la hauteur sur~$\P^2$.
\begin{enumerate}
\item il existe des constantes $a>1$ et $b>0$ telles que pour tout $x\in\A^2(\bar\Q)$,
 $\frac 1a h(x)-b \leq \hat h(x) \leq a h(x)+b$ ;
\item  pour tout $x\in\A^2(\bar\Q)$, $\hat h(f(x))+\hat h(f^{-1}(x))
= \big(\delta+\frac1\delta) \hat h(x)$.
\end{enumerate}
La première propriété implique que $\A^2(\bar\Q)$ 
n'a qu'un nombre fini de points de degré borné et de \^hauteur bornée.
La seconde propriété entraîne que les points prépériodiques
sont exactement les points de \^hauteur nulle.

\medskip

3) Plus généralement, le contexte des correspondances
peut donner lieu à des variations intéressantes.

\section{Quelques conjectures}
\label{sec.conjectures}

Soit $(X,f,\mathscr L)$ un système dynamique polarisé
défini sur un corps de nombres~$K$. Soit $d\geq 2$
l'entier tel que $f^*\mathscr L\simeq\mathscr L^{\otimes d}$. 
Soit $\hat h_{\mathscr L}$ la hauteur normalisée 
sur $X(\bar\Q)$ associée à~$(X,f,\mathscr L)$.

Je décris dans ce paragraphe quelques unes des conjectures 
concernant l'arithmétique des systèmes dynamiques polynomiaux.
La plupart de ces énoncés ont fait l'objet d'une étude
approfondie depuis les années 1970, au moins dans le cas
des systèmes dynamiques abéliens.

\subsection{Rareté des points prépériodiques dans une sous-variété}

\begin{prop}\label{theo.per.dense}
L'ensemble des points de~$X$ qui sont périodiques pour~$f$ est
dense dans~$X$ pour la topologie de Zariski.
\end{prop}
\begin{proof}
Lorsque $X$ est une variété complexe lisse,
on peut en donner une démonstration 
en utilisant les résultats de théorie ergodique
démontrés dans les notes de~\textsc{Guedj} de ce volume.\footnote
{Comme nous l'avons rappelé au début de ce chapitre,
les différents degrés dynamiques d'un système dynamique polarisé 
sont dominés par le dernier,
\emph{cf.}~\cite{serre60b}.}

Soit $V$ l'adhérence de l'ensemble des points périodiques de~$f$
pour la topologie de Zariski, c'est-à-dire le plus petit
ensemble algébrique de~$X$ qui contient les points périodiques.
L'ensemble $V(\C)$ est fermé dans~$X(\C)$ et contient
les points périodiques ; il contient donc l'adhérence
de ces points pour la topologie usuelle. 
D'après le théorème~3.3 de cet article,
les points périodiques (répulsifs) de~$f$ s'équidistribuent
selon la mesure canonique~$\mu_f$ sur~$X(\C)$ associée à~$f$.
\emph{A fortiori}, $V(\C)$ contient le support de la mesure~$\mu_f$.
Comme cette mesure ne charge pas les parties fermées 
strictes pour la topologie de Zariski (\emph{loc. cit.}, théorème~3.1),
$V(\C)=X(\C)$ et $V=X$.

\cite{fakhruddin2003}, faisant usage 
de résultats fondamentaux de~\cite{hrushovski2004},
en donne une démonstration algébrique par réduction
au cas où le corps de base est la clôture algébrique
d'un corps fini. Il s'agit de montrer
que le complémentaire d'une hypersurface~$Y$ 
de~$X$ contient un point prépériodique, voire périodique.
Voici le principe de la démonstration.

Considérons un {\og modèle\fg} du système dynamique
polarisé~$(X,f,\mathscr L)$ 
et de~$Y$ sur un anneau~$A$ qui est une~$\Z$-algèbre
de type fini. Une façon de procéder est 
d'utiliser la prop.~\ref{theo.fakhruddin},
c'est-à-dire de considérer~$X$ comme une sous-variété
invariante par un système dynamique d'un espace
projectif~$\P^N$ 
et de définir~$A$ comme l'anneau engendré par les
coefficients, d'une part des polynômes $(f_0,\ldots,f_N)$ qui définissent
ce système dynamique et d'autre part 
d'un système d'équations des variétés~$X$ et~$Y$. 
Il convient d'adjoindre à cet anneau l'inverse
du résultant des polynômes homogènes~$f_0,\ldots,f_N$, pour
que ces polynômes définissent bien un endomorphisme
de l'espace projectif~$\P^N_A$ sur~$A$.
On peut supposer que~$\mathbf Y$ est la trace
d'une section hyperplane de~$\P^N_A$, quitte à agrandir~$Y$
de sorte qu'il est défini par une forme linéaire,
et à ajoindre à~$A$ l'inverse d'un coefficient non nul
de cette forme.

On obtient de la sorte un système dynamique polarisé
$(\mathbf X,\mathbf f)$ sur~$A$
ainsi qu'une section hyperplane~$\mathbf Y$ de~$\mathbf X$.

Soit $\mathfrak m$ un idéal maximal de~$A$ 
et soit $\kappa $ son corps résiduel.
D'après le théorème des zéros de \textsc{Hilbert},
$\kappa$ est un corps fini.
Notons $X_\kappa$, $f_\kappa$, $Y_\kappa$ les objets sur le corps~$\kappa$
déduits de~$\mathbf X$, $\mathbf f$ et~$\mathbf Y$
par réduction  modulo~$\mathfrak m$.
Si $x$ est un point de~$X_\kappa(\bar\kappa)$, défini sur
le corps fini $\kappa(x)$, tous les itérés de~$x$
sont aussi définis sur~$\kappa(x)$. Par conséquent, l'orbite
de~$x$ est finie et $x$ est prépériodique.
Il existe en particulier des points prépériodiques de~$X_\kappa(\bar\kappa)$
qui n'appartiennent pas à~$Y_\kappa$.
(C'est ici qu'intervient éventuellement le théorème de~\cite{hrushovski2004}:
il affirme que cet ensemble contient des points \emph{périodiques}
pourvu que le cardinal de~$\kappa$ soit choisi assez grand.)
Soit $x$ un tel point ; supposons $f^n(x)=f^{n+p}(x)$
pour $p>n\geq 0$, et soit $Z$ le sous-schéma de~$\mathbf X$
défini par la coïncidence de~$f^n$ et~$f^{n+p}$.

Nous allons démontrer que \emph{chaque composante irréductible de~$Z$
se surjecte sur~$\Spec A$.} (Intuitivement, cela signifie
que les ensembles de coïncidence se comportent suffisamment
bien dans la {\og famille de systèmes dynamiques\fg} paramétrée
par~$\Spec A$.) Admettons pour l'instant ce fait.
Il existe alors un point~$\xi$ dans~$Z$
dont l'image est le point générique de~$\Spec A$ et dont
$x$ est une spécialisation. Un tel point n'appartient pas
à~$\mathbf Y$ et définit un point prépériodique de~$X$,
voire périodique si l'on peut prendre $n=0$, d'où la proposition.

Pour démontrer le résultat admis, commençons par observer
que $Z$ est fini sur~$\Spec A$.
En effet, sur $Z$, les fibrés en droites 
$(f^{n+p})*\mathscr O(1)$ et~$(f^{n})^*\mathscr O(1)$
sont isomorphes, donc $\mathscr O(d^{n+p})\simeq \mathscr O(d^{n})$,
ce qui entraîne que le fibré en droites $\mathscr O(d^n(d^p-1))|_Z$
est trivial. Comme il est relativement très ample sur~$A$, $Z$ est affine
sur~$A$, c'est donc un schéma fini sur~$\Spec A$.
Mais $Z$, égal à l'intersection de la diagonale de~$\mathbf X\times_A\mathbf X$
et de l'image de~$\mathbf X$ par le couple~$(f^n,f^{n+p})$, 
est l'intersection de deux sous-schémas de dimension~$\dim X+\dim A$
dans un schéma de dimension $2\dim X+\dim A$.
Par conséquent, ses composantes irréductibles sont de dimension
au moins~$\dim A$ (appliquer le théorème~3, p.~110, de~\cite{serre2000}
après avoir plongé le tout dans un schéma régulier sur~$A$).
Comme le morphisme $Z\ra\Spec A$ est fini, elles sont de dimension
exactement $\dim A$. D'après le théorème
de constructibilité de~\textsc{Chevalley},
l'image d'une telle composante irréductible 
domine~$\Spec A$ ; par propreté des morphismes finis,
l'image d'une telle composante irréductible est  fermée dans~$\Spec A$,
donc égale à~$\Spec A$.
\end{proof}

Plus généralement, soit $Y$ une sous-variété de~$X$
qui est prépériodique, c'est-à-dire telle que la suite
de sous-variétés $(f^n(Y))$ n'ait qu'un nombre fini de termes
distincts. Soit $n$ et $p$ des entiers, avec $p>0$,
tels que $f^n(Y)=f^{n+p}(Y)$. Alors la sous-variété~$Y_n=f^n(Y)$
est invariante par $f^p$, et $(Y_n,f^p,\mathscr L|_{Y_n})$
est un système dynamique polarisé, de poids~$d^k$.
L'ensemble des points périodiques de~$f$ contenus dans~$Y_n$
est donc dense dans~$Y_n$ pour la topologie de Zariski.
Comme $f^n\colon Y\ra Y_n$ est fini et surjectif, l'ensemble des points
périodiques de~$f$ contenus dans~$Y$ est aussi dense
dans~$Y$ pour la topologie de Zariski.

\medskip

Il était tentant de faire la conjecture inverse,
d'autant plus que de nombreux résultats non-triviaux
concernant les variétés abéliennes et les tores la rendaient plausible.
Compte-tenu de contre-exemples récents, nous l'énonçons sous forme
interrogative.
\begin{conj}\label{conj.manin-mumford}
Soit $(X,f,\mathscr L)$ un système dynamique polarisé sur
un corps algébriquement clos de caractéristique zéro.
Soit $Y$ une sous-variété irréductible de~$X$
et soit $Y_0$ l'adhérence, pour la topologie de Zariski,
de l'ensemble des points prépériodiques de~$X$ qui appartiennent à~$Y$.
Est-il vrai que les composantes irréductibles de~$Y_0$ sont
des sous-variétés prépériodiques ?
\end{conj}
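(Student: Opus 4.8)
Disons-le tout de suite : l'énoncé est formulé comme une question parce que la réponse est \emph{négative} en général --- voir le contre-exemple de~\cite{ghioca-tucker2009} évoqué dans l'introduction. Ce qui suit n'est donc pas une preuve, mais l'ébauche de la stratégie naturelle, celle qui, sous des hypothèses de rigidité supplémentaires, fournit les résultats partiels connus ; on indiquera l'endroit précis où elle achoppe.

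D'abord, une réduction. Chaque composante irréductible de~$Y_0$ est encore l'adhérence de Zariski de l'ensemble des points prépériodiques qu'elle contient ; on peut donc supposer d'emblée que $Y=Y_0$ est irréductible et contient un ensemble Zariski-dense de points prépériodiques, et il s'agit de montrer que $Y$ est prépériodique. Un argument d'étalement et de spécialisation --- dans l'esprit de la réduction employée dans la preuve de la prop.~\ref{theo.per.dense}, en plongeant le système polarisé dans un espace projectif grâce à la prop.~\ref{theo.fakhruddin} --- ramène au cas où $(X,f,\mathscr L)$ et~$Y$ sont définis sur un corps de nombres~$K$, de sorte que l'ensemble pertinent est~$Y(\bar\Q)$.

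Ensuite, les hauteurs. D'après la prop.~\ref{prop.hauteur-normalisee-Qbarre}, un point est prépériodique si et seulement si $\hat h_{\mathscr L}$ s'y annule ; ainsi $Y(\bar\Q)$ contient une suite \emph{générique} --- au sens où aucune sous-suite infinie n'est contenue dans une sous-variété stricte --- de points~$x_m$ avec $\hat h_{\mathscr L}(x_m)=0$, \emph{a fortiori} $\hat h_{\mathscr L}(x_m)\to 0$. On invoque alors le théorème d'équidistribution arithmétique de~\cite{szpiro-u-z97} et ses généralisations aux sous-variétés : les orbites galoisiennes d'une telle suite s'équidistribuent, place par place, selon la mesure canonique du système dynamique porté par~$Y$ --- pourvu que $Y$ soit elle-même périodique, ce qui est justement la question. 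Le contenu effectif que l'on obtient sans peine est la nullité de la hauteur canonique de la sous-variété, $\hat h_{\mathscr L}(Y)=0$ (inégalité de \textsc{Zhang} sur le minimum essentiel). L'étape décisive consiste à déduire de l'égalité $\hat h_{\mathscr L}(Y)=0$ que $Y$ est prépériodique : suivant la méthode de~\cite{bilu97}, on comparerait~$Y$ à ses itérés~$f^n(Y)$, l'égalité des hauteurs canoniques et des mesures canoniques devant forcer, \emph{via} un argument de rigidité, l'identité $f^n(Y)=f^{n+p}(Y)$ pour certains entiers $n,p\geq 0$.

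Le principal obstacle est précisément cet argument de rigidité, qui n'a pas d'analogue pour un système dynamique polarisé général : une sous-variété de hauteur canonique nulle n'a aucune raison d'être prépériodique. Il est disponible --- et fournit une réponse positive --- lorsque le système est \emph{torique} (rigidité des sous-tores ; c'est la méthode de~\cite{bilu97}, mise en œuvre sur~$\P^1$ au dernier chapitre) ou \emph{abélien} (théorème de Manin--Mumford de \textsc{Raynaud} sur les points de torsion, dans la reformulation dynamique du~\S\ref{sec.sysdyn}) ; mais le contre-exemple de~\cite{ghioca-tucker2009} montre qu'il tombe en défaut en général. L'esquisse ci-dessus ne peut donc se compléter qu'au prix d'hypothèses supplémentaires garantissant la rigidité requise --- $X$ abélienne ou torique, ou bien $Y$ une courbe dans un produit de systèmes sur~$\P^1$ sous des conditions arithmétiques convenables --- et c'est dans ce cadre que se placent les résultats partiels connus.
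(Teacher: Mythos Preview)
You are right that this is a conjecture posed as a question, not a theorem, and that the answer is negative in general by the Ghioca--Tucker counterexample; the paper offers no proof either, only a discussion of the known cases and then the counterexample. Your reduction to a number field matches the paper's remark that standard specialization arguments allow this, and your observation that a Zariski-dense set of preperiodic points forces $\hat h_{\mathscr L}(Y)=0$ is in line with the paper's later discussion of the equivalence between conj.~\ref{conj.bogomolov2} and conj.~\ref{conj.philippon}.

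Where your sketch drifts from the paper is in the description of the \emph{strategies} for the known cases. For the abelian case of this conjecture, the paper cites \textsc{Raynaud}'s $p$-adic proof and \textsc{Hindry}'s Galois-theoretic approach --- neither passes through heights or equidistribution at all. For the toric case, the paper presents the \textsc{Ihara}--\textsc{Serre}--\textsc{Tate} argument (prop.~\ref{prop.ihara}), again purely Galois-theoretic. The equidistribution machinery of \textsc{Szpiro}--\textsc{Ullmo}--\textsc{Zhang} that you invoke is discussed in the paper as a route to the \emph{stronger} Bogomolov-type conjecture~\ref{conj.bogomolov2}, and the decisive step there --- the \textsc{Ullmo}--\textsc{Zhang} trick of \S\ref{sec.equidis}/\ref{subsec.ullmo-zhang} --- compares the equidistribution measure on a power~$Y^m$ with its image under a non-smooth difference morphism into~$X^{m-1}$, not $Y$ with its iterates~$f^n(Y)$ as you write. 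Likewise \textsc{Bilu}'s method (\S\ref{sec.bilu}) proceeds by projecting via characters onto~$\gm$ and invoking the one-dimensional equidistribution, not by iterating~$f$. So your diagnosis of the obstacle --- the absence of a general rigidity step turning $\hat h_{\mathscr L}(Y)=0$ into preperiodicity --- is the right one, and is exactly the content of conj.~\ref{conj.philippon}; but the specific mechanisms you attribute to \textsc{Bilu} and \textsc{Raynaud} are not those the paper describes.
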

\removelastskip
En d'autres termes, si $Y$ n'est pas elle-même prépériodique,
est-il vrai que ses points prépériodiques 
sont contenus dans une réunion finie de sous-variétés
strictes de~$Y$.

Remarquons aussi qu'il s'agit d'une conjecture géométrique.
Toutefois, des arguments de spécialisation standard
permettent de supposer que ce système dynamique est défini
sur un corps de nombres.

L'ensemble des cas connus est mince, essentiellement
les systèmes toriques et abéliens, mais à chaque
fois la démonstration des théorèmes fut spectaculaire.
Nous allons décrire ces exemples importants ci-dessous,
puis nous expliquerons les contre-exemples
proposés par \cite{ghioca-tucker2009}.

\paragraph{Systèmes dynamiques abéliens}
Le cas où $X$ est un système dynamique abélien,
$f$ étant l'endomorphisme
de multiplication par un entier~$\geq 2$,
a été conjecturé par~\textsc{Manin} et~\textsc{Mumford},
semble-t-il motivés par l'ex-conjecture de~\textsc{Mordell}.
Elle a été démontrée pour la première fois par~\cite{raynaud83,raynaud83c}
(démonstration de nature arithmétique, $p$-adique).
Dans ce cas, les variétés prépériodiques sont plutôt appelées
{\og sous-variétés de torsion\fg} : il s'agit en effet
des translatées des sous-variétés abéliennes de~$X$ par un point de torsion,
\emph{cf.} par exemple~\cite{hindry1988}, lemme~10
(et, dans un cas voisin, le lemme~\ref{lemm.ssvartor}  ci-dessous).
Citons aussi  la solution donnée par~\cite{hindry1988},
suivant une stratégie de S.~\textsc{Lang}, stratégie qui requiert
des énoncés arithmétiques difficiles sur
l'image des représentations galoisiennes associée à la
variété abélienne~$X$, dus à~\cite{serre-85-86} 
et qu'il avait exposés dans un cours au Collège de France.
(Essentiellement, si $X$ est définie sur un corps
de nombres~$K$, il faut savoir que l'action du groupe
de Galois~$\Gal(\bar\Q/K)$ sur les points de $n$-torsion de~$X$
est assez grosse; \cite{hindry1988} utilise par exemple
qu'il existe~$c>0$ de sorte que lorsque $n$ tend vers l'infini, 
un point d'ordre~$n$ a au moins~$n^c$ conjugués distincts.
Voir aussi la démonstration de la proposition~\ref{prop.ihara}.)
Les démonstrations plus récentes de~\cite{pink-roessler2002,pink-roessler2004}
utilisent aussi cet ingrédient, mais pas  celle de~\cite{roessler2005}.
Ces trois derniers articles ont été inspirés 
par la démonstration de~\cite{hrushovski2001}  dans le
cas des corps de fonctions ;  la 
théorie des modèles y était un outil important.

\paragraph{Systèmes dynamiques toriques}
Le cas des systèmes dynamiques toriques est assez similaire
(certaines des références ci-dessus traitent d'ailleurs
simultanément les deux cas).
Commençons par énumérer les sous-variétés invariantes
dans le cas où $X=\P^k$ et $f([x_0:\cdots:x_k])=[x_0^d:\cdots:x_k^d]$.
Le cas général s'y ramène via la géométrie des variétés toriques.
Rappelons que le tore~$\gm^k$ 
agit sur~$X$ par 
$(u_1,\dots,u_k)\cdot [x_0:\cdots:x_k]=[x_0:u_1x_1:\cdots:u_kx_k]$.

\begin{lemm}\label{lemm.ssvartor}
Soit $V$ une sous-variété irréductible de~$\P^k$,
rencontrant~$\gm^k$,
telle qu'il existe un point $\alpha\in\gm^k$
telle que $f(V)\subset \alpha\cdot V$.
Alors $V$ est un translaté d'un sous-tore de~$\gm^k$,
par un point de torsion dans le cas particulier où~$\alpha=1$.
\end{lemm}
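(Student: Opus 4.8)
The plan is to pass to the open torus, reduce the twist $\alpha$ to the trivial one by extracting a $(d-1)$-th root, and then use the density of periodic points together with the Manin--Mumford theorem for~$\gm^k$. Set $W=V\cap\gm^k$: it is non-empty by hypothesis and, $V$ being irreducible, it is a dense open subset of~$V$, so that $V$ is the closure of~$W$ in~$\P^k$ and it suffices to prove the statement for~$W$. The endomorphism~$f$ of~$\P^k$ satisfies $f^{-1}(\gm^k)=\gm^k$ and induces on~$\gm^k$ the $d$-th power map, written~$[d]$; hence the hypothesis $f(V)\subset\alpha V$ restricts to $[d](W)\subset\alpha W$. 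Since $[d]$ is a finite surjective morphism of~$\gm^k$, the subvariety $[d](W)$ is closed, irreducible and of the same dimension as~$W$, so in fact $[d](W)=\alpha W$.

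I then normalise~$\alpha$. The base field being algebraically closed, one may choose $\beta\in\gm^k$ with $\beta^{d-1}=\alpha$ (coordinate by coordinate). Replacing $W$ by $W'=\beta^{-1}W$ one computes $[d](W')=\beta^{-d}[d](W)=\beta^{-d}\alpha\,W=\beta^{1-d}\alpha\,W'=W'$, so $W'$ is stable under~$[d]$. If one knows that $W'=\gamma T'$ for a subtorus~$T'$ and a torsion point~$\gamma$, then $W=\beta\gamma T'$ is a translate of~$T'$; and when $\alpha=1$ the element~$\beta$ is a $(d-1)$-th root of unity, so $\beta\gamma$ is again torsion, which is exactly the claimed dichotomy. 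Thus the key point is the following: \emph{an irreducible subvariety $W'\subset\gm^k$ with $[d](W')=W'$ is a torsion translate of a subtorus.}

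To prove this I show that $W'$ contains a Zariski-dense set of torsion points, and then invoke \textsc{Laurent}'s theorem (the Manin--Mumford conjecture for~$\gm^k$). Let $Z$ be the closure of~$W'$ in~$\P^k$: by continuity and properness of~$f$ one has $f(Z)\subset Z$, and since $f$ is finite this image has the same dimension as~$Z$, so $f(Z)=Z$; moreover $f^*(\mathscr O(1)|_Z)\simeq(\mathscr O(1)|_Z)^{\otimes d}$ with $\mathscr O(1)|_Z$ ample, so $(Z,f|_Z,\mathscr O(1)|_Z)$ is a polarised dynamical system of weight~$d\geq 2$. By Proposition~\ref{theo.per.dense} its periodic points are Zariski-dense in~$Z$. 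A periodic point of~$f$ lying in~$\gm^k$ satisfies $x^{d^n}=x$ for some~$n\geq 1$, hence is a torsion point, whereas the periodic points of~$Z$ outside~$\gm^k$ lie in the proper closed subset $Z\setminus W'$; as $Z$ is irreducible, the torsion points of~$W'$ are already dense in~$Z$, \emph{a fortiori} in~$W'$. \textsc{Laurent}'s theorem then gives $W'=\gamma T'$, and one concludes by undoing the normalisation above.

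The main obstacle is precisely this last ingredient: the implication ``dense torsion points $\Rightarrow$ torsion translate of a subtorus'' in~$\gm^k$, i.e. \textsc{Laurent}'s theorem; citing it here is legitimate, this lemma being exactly the reduction by which the toric case of Conjecture~\ref{conj.manin-mumford} follows from the classical result. If one insists on a self-contained argument, one can instead proceed by induction on~$\dim W'$, dividing out the identity component of the stabiliser $\{t\in\gm^k\mid tW'=W'\}$ --- which is respected by~$[d]$ --- so as to reduce to the case of trivial identity component, where it remains to rule out the existence of a $[d]$-stable subvariety of positive dimension with finite stabiliser; but this last step amounts essentially to reproving \textsc{Laurent}'s theorem and stays the delicate point in either case.
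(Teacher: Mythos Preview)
Your argument is correct, but it takes a much heavier route than the paper's, and your closing paragraph misjudges the difficulty of the direct approach.

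The paper's proof (attributed to \textsc{Pink}--\textsc{Roessler}) is entirely elementary and self-contained. One lets $G\subset\gm^k$ be the \emph{full} stabiliser of~$V$ (not just its identity component). If $G=\{1\}$, then the $d^k$ translates $\zeta\cdot V$ for $\zeta\in\mu_d^k$ are pairwise distinct irreducible components of $f^{-1}(\alpha V)$, so this preimage has degree $d^k\deg(V)$; on the other hand intersection theory for the weight-$d$ endomorphism~$f$ gives $\deg\bigl(f^*(\alpha V)\bigr)=d^{\operatorname{codim} V}\deg(V)$. Comparing forces $\dim V=0$, so $V$ is a single point~$\{v\}$, and when $\alpha=1$ the relation $v^{d-1}=1$ makes it torsion. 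In the general case one passes to the quotient torus $\gm^k/G$, where the image of~$V$ has trivial stabiliser; the previous case shows this image is a point, hence $V=\beta G$ is a coset, and since $V$ is irreducible $G$ is connected, i.e.\ a subtorus. A short splitting argument then shows~$\beta$ can be taken torsion when $\alpha=1$.

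So your assertion that handling the finite-stabiliser case ``amounts essentially to reproving \textsc{Laurent}'s theorem'' is wrong: it is a two-line degree count, and quotienting by the whole stabiliser (rather than just its identity component) avoids the finite-but-nontrivial case entirely. Your proof is logically valid --- Proposition~\ref{theo.per.dense} applied to the closure of~$W'$ does give dense torsion points, and \textsc{Laurent}'s theorem then finishes --- but it invokes two substantial results to prove what is, in the paper's treatment, an elementary lemma. In the paper's narrative this lemma is a preliminary \emph{towards} the toric Manin--Mumford discussion (the Ihara--Serre--Tate argument follows immediately after), so assuming \textsc{Laurent} here, while not circular, inverts the intended flow.
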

\begin{proof}[Démonstration, d'après~\cite{pink-roessler2002}]
Soit $G$ le stabilisateur de~$V$; c'est le sous-groupe de~$\gm^k$
formé des $u=(u_1,\ldots,u_k)\in\gm^k$ tels
que $u\cdot x\in V$ pour tout $x\in V$.

Supposons d'abord que $G=\{1\}$. Alors,
les $d^k$ sous-variétés $u\cdot V$, où $u$ parcourt les points de~$\gm^k$
tels que $u^d=\alpha^{-1}$, sont disjointes et contenues dans $f^{-1}(V)$.
Comme le degré de $f$ est~$d^k$, ces sous-variétés décrivent exactement
les composantes irréductibles de~$f^{-1}(V)$.
Le cycle $f^*(V)$ est somme des cycles~$u\cdot V$;
si $\deg$ désigne le degré d'une sous-variété de l'espace projectif,
on a alors $\deg(f^{-1}(V))=d^k\deg(V)$. D'autre part,
comme $f\colon\P^k\ra\P^k$ est de poids~$d$, le degré du cycle~$f^*(V)$
est égal à~$d^{\dim V} \deg(D)$. On a donc $\dim V=k$
et $V=\P^k$. Comme le stabilisateur de~$V$ est trivial,
$k=0$ et $V=\{1\}$ est un point de torsion.

Dans le cas général, $\gm^k/G$ est un tore $\gm^{k'}$.
Par construction,
l'image de~$(V\cap\gm^k)/G$ dans ce tore~$\gm^{k'}$
a un stabilisateur trivial et est stable par l'élévation à la puissance~$d$.
Son adhérence~$V'$ dans~$\P^{k'}$ vérifie les hypothèses
du lemme. Par suite, $V/G$ est un point, et $V$
est un translaté de~$G$, c'est-à-dire $V=\beta \cdot G$.

Alors, $f(V)=\beta^d\cdot G=\alpha\beta\cdot G$.
Si $\alpha=1$, $\beta^{d-1}\in G$. Par {\og complète
réductibilité des tores\fg}, on peut alors
écrire $\gm^k=G\cdot G'$, où $G'$ est un sous-tore;
écrivons $\beta=\gamma\gamma'$  avec $\gamma\in G$
et $\gamma'\in G'$. On a $V=\gamma' G$
et l'égalité $\beta^{d-1}=\gamma^{d-1}(\gamma')^{d-1}$
entraîne que $\gamma'$ est un point de torsion.
\end{proof}

Comme on l'a mentionné dans le cas des systèmes dynamiques
abéliens, une des approches possibles utilise
des renseignements de nature galoisienne. Ceux-ci
sont notablement pour les tores, car ils réduisent essentiellement
à l'irréductibilité des polynômes cyclotomiques.
Pour en donner une idée,
nous exposons maintenant la démonstration,
due à~\textsc{Ihara}, \textsc{Serre} et~\textsc{Tate},
du cas d'une courbe
dans~$\P^2$. Notre exposition reprend celle de~\cite{lang1983}, p.~160,
ainsi que la présentation d'\cite{hindry1988}.

\begin{prop}\label{prop.ihara}
Soit $V$ une courbe irréductible du plan projectif complexe
qui contient une infinité de points de torsion de~$\gm^2$.
Alors $V$ est l'adhérence dans~$\P^2$ d'un translaté
d'un sous-tore de~$\gm^2$ par un point de torsion de~$\gm^2$.
\end{prop}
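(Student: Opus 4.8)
The plan is to use only elementary ingredients: the fact that $V$ is defined over a number field, the large size of the Galois orbit of a torsion point of order $N$ (of cardinality $N^{1-o(1)}$), and the observation that this whole orbit lies on a monomial curve of degree $O(\sqrt N)$; Bézout's theorem and the structure of monomial curves then finish the argument. First I would note that $V$ is defined over a number field. Since roots of unity are algebraic, $V\cap\gm^2$ contains infinitely many $\bar\Q$-points, and these are Zariski-dense in the irreducible curve $V$. As $V\subset\P^2$ is a hypersurface with equation $F$, the space of forms of degree $\deg V$ vanishing at a finite subset of these points decreases, as one enlarges the subset, to the line $\mathbf C\cdot F$; that line is cut out by $\bar\Q$-linear conditions, hence contains a form with coefficients in $\bar\Q$, so $V$ is defined over $\bar\Q$ and therefore over some number field~$K$. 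We may and do assume $V$ defined over~$K$.

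Next, let $Q=(\zeta,\xi)\in V$ be a torsion point of order $N$, and fix a primitive $N$-th root of unity $\omega$, so that $\zeta=\omega^{c}$, $\xi=\omega^{d}$ with $(c,d)\not\equiv(0,0)\pmod N$ (for $N>1$). Every $\sigma\in\Gal(\bar\Q/K)$ acts on $\mu_N$ through the cyclotomic character, so $\sigma(Q)=(\zeta^{t},\xi^{t})$ for some $t=t(\sigma)\in(\Z/N\Z)^{*}$, and $\sigma(Q)\in V$ because $V$ is defined over~$K$. The image of $\Gal(\bar\Q/K)$ in $(\Z/N\Z)^{*}$ has index at most $[K:\Q]$ — this is where the near-irreducibility of the cyclotomic polynomials enters — and the stabiliser of $Q$ under $t\mapsto(\zeta^{t},\xi^{t})$ is trivial, so $V$ contains at least $\phi(N)/[K:\Q]$ distinct points of the form $(\zeta^{t},\xi^{t})$. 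Now the sublattice $\{(a,b)\in\Z^{2}:ac\equiv bd\pmod N\}$ has index dividing~$N$, so by Minkowski's theorem it contains a nonzero vector $(a,b)$ with $\max(\abs a,\abs b)\leq\sqrt N$; every point $(\zeta^{t},\xi^{t})$ then satisfies $X^{a}=Y^{b}$. Hence $V$ meets the curve $D=\{X^{a}=Y^{b}\}$, whose degree in $\P^2$ is at most $2\sqrt N$, in at least $\phi(N)/[K:\Q]$ points.

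Finally I would apply this to a sequence of torsion points of $V$ whose orders $N$ tend to infinity. Since $\phi(N)\gg N/\log\log N$ grows faster than $\sqrt N$, for $N$ large enough the number of common points of $V$ and $D$ exceeds $(\deg V)(\deg D)$, so by Bézout's theorem $V$ must be an irreducible component of the corresponding $D=\{X^{a}=Y^{b}\}$. Writing $a=ga'$, $b=gb'$ with $g=\gcd(a,b)$ and $\gcd(a',b')=1$, the affine curve $\{X^{a}=Y^{b}\}$ is the union over $\epsilon\in\mu_{g}$ of the curves $\{X^{a'}=\epsilon Y^{b'}\}$ (with the evident modification if $a$ or $b$ vanishes), and each of these is the translate of the subtorus $\{X^{a'}=Y^{b'}\}$ by a torsion point, because $\epsilon$ is a root of unity. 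Since $V$ is irreducible it coincides with one such component, which is the assertion. The delicate point is the arithmetic input of the second paragraph: one must obtain $\phi(N)$, and not merely $\operatorname{ord}_{N}(p)$, Galois conjugates, since it is exactly this $N^{1-o(1)}$ lower bound on the orbit that makes the elementary geometry — Minkowski together with Bézout — win.
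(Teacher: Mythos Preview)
Your proof is correct and takes a genuinely different route from the paper's.

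The paper follows the Ihara--Serre--Tate argument: after reducing to $V$ defined over a cyclotomic field $K=\Q(\alpha)$ of degree~$m$, it picks a small prime $p\nmid n$ with $p\ll\log n$, finds an automorphism $\sigma\in\Aut(\C/K)$ acting on $n$-th roots of unity by $\zeta\mapsto\zeta^{d}$ with $d=p^{m}$, and observes that $u$ and all its conjugates lie on $V\cap V_d$, where $V_d$ is the pullback of $V$ by the $d$-th power map. Bézout then yields $\phi(n)/m\le d^{2}(\deg V)^{2}\ll(\log n)^{2m}$, a contradiction for large $n$ if $V$ is not a torsion coset (this last point being handled via the preceding lemma on invariant subvarieties).

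You replace the Frobenius trick by geometry of numbers: Minkowski produces a low-degree monomial relation $X^{a}=Y^{b}$ satisfied by $Q$ and hence by its whole Galois orbit, and Bézout compares $\phi(N)/[K:\Q]$ with $2\sqrt N\cdot\deg V$. Your auxiliary curve has degree $O(\sqrt N)$ rather than polylogarithmic, but this still beats $\phi(N)\gg N/\log\log N$. An advantage of your route is that the irreducible components of $\{X^{a}=Y^{b}\}$ are visibly torsion cosets, so Bézout yields the conclusion directly rather than by contradiction; it also generalises cleanly to higher-dimensional tori (this is essentially Laurent's approach). The paper's approach, on the other hand, makes transparent the role of Frobenius and is the template for the abelian-variety case discussed just before.
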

De manière plus explicite,
$V\cap\gm^2$ possède alors une équation de la forme $X^aY^b=u$,
où $(a,b)$ est un couple d'entiers relatifs premiers entre eux
et $u$ est une racine de l'unité. 
\begin{proof}
Si $V$ a une équation~$F=0$ dans~$\P^2$ et $\sigma$ est
un automorphisme de~$\C$, on note $V^\sigma$
la courbe de~$\P^2$ d'équation $F^\sigma=0$
obtenue en appliquant $\sigma$ aux coefficients de~$F$.
Si $x=[x_0:x_1:x_2]\in V$, 
le point~$\sigma(x)=[\sigma(x_0):\sigma(x_1):\sigma(x_2)]$
appartient à~$V^\sigma$.

Soit $K_1$ le sous-corps de~$\C$ engendré par les racines de l'unité;
montrons que $V$ est définie sur~$K_1$.
Soit $F\in\C[X_0,X_1,X_2]$ une équation homogène de~$V$ 
dont un des coefficients est égal à~$1$;
montrons que tous les autres coefficients de~$F$ appartiennent à~$K_1$.

Soit $\sigma$ un automorphisme de~$\C$ qui fixe~$K_1$.
Pour tout point de torsion $u=[1:u_1:u_2]$ appartenant à~$V$,
$\sigma(u)=[1:\sigma(u_1):\sigma(u_2)]$ appartient
à $V^\sigma$; comme $\sigma(u)=u$, \mbox{$u\in V^\sigma\cap V$}.
Par hypothèse, $V$ et~$V^\sigma$ ont une infinité de points
d'intersection. D'après le théorème de Bézout,
$V^\sigma=V$. Les formes $F$ et~$F^\sigma$ sont alors proportionnelles,
donc égales car un des coefficients de~$F$ est égal à~$1$.
Par suite, $F\in K_1[X_0,X_1,X_2]$.

Il existe donc une racine de l'unité~$\alpha$
tel que les coefficients de~$F$ appartiennent à~$\Q(\alpha)$ ;
posons $K=\Q(\alpha)$ et $m=[\Q(\alpha):\Q]$.
Pour la fin de la démonstration, nous allons supposer par l'absurde
que $V$ n'est pas un translaté d'un sous-tore 
et majorer l'ordre d'un point de torsion appartenant à~$V$.

Soit $u$ un point de torsion de~$V$ et soit~$n$ son ordre.
Soit $\xi$ une racine de l'unité d'ordre~$n$; comme $[\Q(\xi):\Q]=\phi(n)$,
$\xi$ possède au moins~$\phi(n)/m$ conjugués sur~$K$,
donc $u$ possède au moins $\phi(n)/m$ conjugués sur~$K$
qui sont autant de points de torsion d'ordre~$n$ situés sur~$V$.

D'après le théorème des nombres premiers, il existe un nombre réel~$c_1>0$
(indépendant de~$n$)
et un nombre premier~$p$ ne divisant pas~$n$ tel
que $p\leq c_1\log n$. 
Soit $p$ un tel nombre premier.
Il existe un automorphisme~$\sigma_1$
de~$\C$ tel que $\sigma_1(\xi)=\xi^p$; alors $\sigma=\sigma_1^{m}$
est un automorphisme de~$\C$ tel que $\sigma(\xi)=\xi$
et $\sigma(u)=u^d$, où $d=p^m$. Par suite, $\sigma\in\Aut(\C/K)$.

Notons $V_d$ l'image réciproque de~$V$ par
l'élévation des coordonnées à la puissance~$d$;
on a $u\in V_d$ : si $F\in K[X_0,X_1,X_2]$
est une forme qui définit~$V$ et $u=[1:u_1:u_2]$,
\[ F(u^d)=F(1,u_1^d,u_2^d)=F(1,\sigma(u_1),\sigma(u_2))
=F^\sigma(1,\sigma(u_1),\sigma(u_2))=\sigma(F(u))=0, \]
donc $u\in V_d\cap V$.
Cela vaut aussi des~$\phi(n)/m$ conjugués qu'on en avait déduit.
Comme $V$ est supposé ne pas être une sous-variété
de torsion, $V$ n'est pas une composante irréductible
de~$V_d$. D'après le théorème de Bézout, on a donc
\[ \frac{\phi(n)}m \leq \Card(V_d\cap V) \leq (\deg V_d)(\deg V)
  \leq d^2 (\deg V)^2. \]
Par suite, 
\[ \phi(n)\leq md^2(\deg V)^2\leq m c_1^2 (\deg V)^2 (\log n)^{2m} = c_2(\log n)^{2m}.\]
Comme $\phi(n)\gg n^{1/2}$ lorsque $n$ tend vers l'infini,
cette inégalité implique que l'ordre d'un point de torsion situé
sur~$V$ est borné.
Il n'y a donc qu'un nombre fini de tels points de torsion.
\end{proof}

\paragraph{Contre-exemples}
Comme je l'ai dit plus haut, la conjecture~\ref{conj.manin-mumford}
n'est pas vraie en toute généralité. En voici
un contre-exemple, dû à \cite{ghioca-tucker2009}. 
Prenons pour $X$ le carré~$E\times E$ d'une courbe elliptique~$E$
possédant de la multiplication complexe. Pour fixer les idées,
supposons que $E$ soit la courbe d'invariant~1728,
décrite analytiquement comme le quotient~$\C/\Z[i]$,
ou par l'équation $y^2=x^3-x$.
L'intérêt de cette courbe est de disposer de plus d'endomorphismes
que les simples multiplications par un entier; précisément,
l'anneau de ses endomorphismes est l'anneau~$\Z[i]$ des entiers de
Gauß. (La {\og multiplication par~$i$\fg} est
donnée par $(x,y)\mapsto (-x,iy)$.) Si $a\in\Z[i]$ n'est
pas nul, l'endomorphisme~$[a]$ de~$E$ est fini
et son degré est égal à~$\abs a^2$ (la norme de~$a$).
En outre, si $\mathscr L=\mathscr O(0)$ est le fibré ample
sur~$E$ correspondant au diviseur~$0$,
$[a]^*\mathscr L$ est isomorphe à~$\mathscr L^{\abs a^2}$.
(Lorsque $\abs a^2$ est pair, 
il faut éventuellement remplacer~$\mathscr L$ par son carré;
nous négligeons ce point.)

Pour tout couple~$(a_1,a_2)$ d'entiers de Gauß, non nuls,
un endomorphisme de la surface~$X$ est l'application~$f=([a_1],[a_2])$
qui agit comme la multiplication par~$a_1$ sur la première
composante, et par~$a_2$ sur la seconde.
Munissons~$X$ du fibré en droites ample
$\mathscr L=\mathscr O_X(0\times E+E\times 0)$.
On a $f^*\mathscr L=\mathscr O_X(\abs a_1^2 0\times E
+ \abs a_2^2 E\times 0)$. Pour que $(X,f,\mathscr L)$ 
soit un système dynamique polarisé, il suffit donc que $\abs a_1^2=\abs a_2^2$.
Prenons par exemple $a_1=3+4i$ et $a_2=5$; on a en effet $3^2+4^2=5^2$.

Avec ces choix, on a le contre-exemple suivant
à la conjecture~\ref{conj.manin-mumford}:
\begin{prop}
La diagonale de~$X$ contient une infinité
de points prépériodiques
pour le système dynamique polarisé~$(X,f,\mathscr L)$,
mais n'est pas elle-même une sous-variété prépériodique.
\end{prop}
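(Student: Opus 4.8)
Le plan est de traiter s\'epar\'ement les deux assertions: la premi\`ere r\'esulte de la description des points pr\'ep\'eriodiques d'un syst\`eme dynamique ab\'elien, la seconde d'une description des it\'er\'es $f^n(\Delta)$ comme sous-vari\'et\'es ab\'eliennes de~$X=E\times E$, ramen\'ee \`a l'arithm\'etique de l'anneau~$\Z[i]$.

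Pour la premi\`ere assertion, on observe que, puisque $f^n(P,Q)=([a_1^n]P,[a_2^n]Q)$, un point $(P,Q)\in X(\bar\Q)$ est pr\'ep\'eriodique pour~$f$ si et seulement si $P$ et~$Q$ le sont respectivement pour $[a_1]$ et~$[a_2]$. Or, pour $a\in\Z[i]$ avec $\abs a>1$, un point $P\in E(\bar\Q)$ est pr\'ep\'eriodique pour~$[a]$ si et seulement s'il est de torsion. En effet, si $P$ est de torsion, les points $[a^n]P$ restent dans le groupe fini engendr\'e par~$P$; r\'eciproquement, si $[a^n]P=[a^m]P$ avec $n>m$, alors $[\mu]P=0$ pour $\mu=a^m(a^{n-m}-1)$, lequel est non nul car $a^{n-m}\neq 1$ (son module valant $\abs a^{n-m}>1$); donc $[\mu\bar\mu]P=0$, et comme $\mu\bar\mu$ est un entier relatif non nul, $P$ est de torsion. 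L'ensemble des points pr\'ep\'eriodiques de~$f$ est donc $E_{\mathrm{tors}}\times E_{\mathrm{tors}}$; son intersection avec la diagonale est $\{(P,P)\,;\,P\in E_{\mathrm{tors}}\}$, ensemble infini.

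Pour la seconde, on note $\mathcal O=\Z[i]=\operatorname{End}(E)$, $K=\Q(i)$, et, pour $(\alpha,\beta)\in\mathcal O^2\setminus\{0\}$, $Z(\alpha,\beta)=\{([\alpha]P,[\beta]P)\,;\,P\in E\}$: c'est une courbe elliptique de~$X$ passant par l'origine, et $f^n(\Delta)=Z(a_1^n,a_2^n)$. Le point-cl\'e, dont on n'aura besoin que dans un sens, est l'implication
\[ Z(\alpha,\beta)=Z(\alpha',\beta') \ \Longrightarrow\ \alpha\beta'=\alpha'\beta \quad\text{dans }\mathcal O \]
(lorsque $\alpha,\beta,\alpha',\beta'$ sont non nuls, les autres cas \'etant imm\'ediats). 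Pour l'\'etablir, on prend un point $([\alpha]P,[\beta]P)$ de~$Z(\alpha,\beta)=Z(\alpha',\beta')$; il s'\'ecrit aussi $([\alpha']P',[\beta']P')$, d'o\`u $[\alpha]P=[\alpha']P'$ et $[\beta]P=[\beta']P'$, puis, en multipliant respectivement par $\beta'$ et~$\alpha'$, $[\alpha\beta']P=[\alpha'\beta']P'=[\alpha'\beta]P$. Ceci valant pour tout $P\in E$, l'endomorphisme $[\alpha\beta'-\alpha'\beta]$ est nul, et comme $\operatorname{End}(E)$ agit fid\`element, $\alpha\beta'=\alpha'\beta$.

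Il ne reste plus qu'\`a factoriser $a_1=3+4i=(2+i)^2$ et $a_2=5=(2+i)(2-i)$. Si $f^n(\Delta)=f^m(\Delta)$ avec $n\geq m$, l'implication pr\'ec\'edente donne $a_1^n a_2^m=a_1^m a_2^n$, d'o\`u $a_1^{n-m}=a_2^{n-m}$ apr\`es division par $a_1^m a_2^m\neq 0$, c'est-\`a-dire $(2+i)^{n-m}=(2-i)^{n-m}$. Comme $2+i$ et $2-i$ sont des \'el\'ements premiers non associ\'es de l'anneau factoriel~$\Z[i]$, ceci force $n=m$. Les sous-vari\'et\'es $f^n(\Delta)$, pour $n\geq 0$, sont donc deux \`a deux distinctes, la suite $(f^n(\Delta))_n$ prend une infinit\'e de valeurs, et $\Delta$ n'est pas pr\'ep\'eriodique.

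La difficult\'e principale est de rep\'erer que l'it\'er\'e $f^n(\Delta)$ ne d\'epend que de la classe $(a_1^n:a_2^n)\in\P^1(K)$ et que celle-ci est un invariant complet de la courbe $Z(a_1^n,a_2^n)$; une fois ce point acquis --- ce qui, comme on vient de le voir, est \'el\'ementaire --- tout se ram\`ene \`a la remarque que $(3+4i)/5$, de module~$1$, n'est pas une racine de l'unit\'e. (Pour une r\'edaction compl\`ete de la r\'eciproque de l'implication ci-dessus, on utiliserait que $Z(\alpha,\beta)$ est la composante neutre du noyau de l'homomorphisme $(R,S)\mapsto[\alpha]S-[\beta]R$ de~$X$ dans~$E$, mais cela n'est pas n\'ecessaire ici.)
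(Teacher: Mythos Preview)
Your proof is correct and follows essentially the same architecture as the paper's: preperiodic points of~$f$ are the torsion points of~$E\times E$, hence infinitely many lie on the diagonal; and the iterates $f^n(\Delta)$ are pairwise distinct because $a_1/a_2=(3+4i)/5$ is not a root of unity.

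The one noteworthy difference is in how you extract the invariant distinguishing the curves $f^n(\Delta)$. The paper uses the tangent line at the origin: the differential of $[a]$ at~$0$ is multiplication by~$a$ on $T_0E$, so the tangent to $f^n(\Delta)$ at~$(0,0)$ has direction $(a_1^n,a_2^n)$ in $T_0E\times T_0E\simeq\C^2$, and equality of two such curves forces collinearity of these vectors. You instead argue purely through the $\Z[i]$-module structure, showing directly that $Z(\alpha,\beta)=Z(\alpha',\beta')$ implies $\alpha\beta'=\alpha'\beta$. Both routes are elementary and land on the same arithmetic conclusion; the tangent-space argument is perhaps more immediately visual, while yours avoids any analytic input. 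Your final step via unique factorization ($2+i$ and $2-i$ non-associate primes) is equivalent to the paper's observation that $(3+4i)/5$ is not an algebraic integer, hence not a root of unity.
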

\begin{proof}
Les systèmes dynamiques $(E,[a])$ sont très semblables
et leurs points prépériodiques sont les mêmes: les points
de torsion de~$E$. En particulier, la diagonale~$Y$
de~$X=E\times E$, qui contient une infinité de points
torsion de la surface abélienne~$X$, contient une infinité
de points prépériodiques pour~$(X,f)$. Puisque cette diagonale
est de dimension~$1$, les points prépériodiques y sont alors
denses pour la topologie de Zariski.

Si la conjecture~\ref{conj.manin-mumford} est vraie, cette diagonale~$Y$
doit être une sous-variété prépériodique.
Démontrons que ce n'est pas le cas.
Pour tout entier~$p\geq 1$,
$f^p(Y)=([a_1^p],[a_2^p])(Y)$ n'est égal à~$Y$
que si $a_1^p=a_2^p$. En effet, la différentielle
de~$[a_1]$ en l'origine est la multiplication par~$a_1$
dans l'espace tangent~$T_0E$, donc l'espace tangent à~$f^p(Y)$
est la droite de vecteur directeur~$(a_1^p,a_2^p)$
dans~$T_0X=T_0E\times T_0E\simeq\C^2$.
Plus généralement, si $n$ et~$p$ sont des entiers~$\geq 1$,
l'égalité $f^{n+p}(Y)=f^n(Y)$
impose que les vecteurs~$(a_1^{n+p},a_2^{n+p})$
et $(a_1^n,a_2^n)$ soient colinéaires,
ce qui n'arrive que si $(a_1/a_2)^p=1$.

L'affirmation résulte ainsi de ce que 
le nombre complexe~$a_1/a_2=(3+4i)/5$ n'est pas une racine
de l'unité. (Dans~$\Z[i]$, les racines de l'unité sont~$\pm 1$
et~$\pm i$; on peut aussi utiliser le fait que
le polynôme minimal de~$a_1/a_2$, égal à~$5X^2-6X+5$,
n'est pas unitaire, donc $a_1/a_2$ n'est même pas un entier algébrique.)
\end{proof}

On peut bien sûr construire d'autres exemples dans la même veine,
voir par exemple~\cite{pazuki2009} pour des produits
de surfaces abéliennes.
Ces exemples peuvent paraître artificiels : ce ne sont,
après tout, que des systèmes dynamiques produits.  Ils n'en obligent
pas moins à imaginer une nouvelle formulation 
de la conjecture~\ref{conj.manin-mumford}, d'autant plus 
qu'ils infirment aussitôt les conjectures~\ref{conj.bogomolov2}
et~\ref{conj.equidis} qui vont suivre, puisqu'elles
renforcent la conjecture~\ref{conj.manin-mumford}.

L'idée qui semble
prévaloir est que la source de contre-exemples dénichée par
\textsc{Ghioca} et~\textsc{Tucker} est effectivement la seule obstruction.
Dans un travail en cours, \textsc{Ghioca}, \textsc{Tucker}
et~\textsc{Zhang} vérifient que  c'est le cas pour
les endomorphismes de~$\P^1\times\P^1$ qui sont de la forme
$(f_1,f_2)$; s'ils violent la conjecture~\ref{conj.manin-mumford},
ce sont des endomorphismes de Lattès, quotients
d'un exemple comme celui que nous avons expliqué.

\subsection{Minoration de la hauteur d'un point qui n'est pas prépériodique}

Les points prépériodiques sont de hauteur nulle, et inversement.
Mais que peut valoir la hauteur d'un point qui n'est pas
prépériodique ? D'après le théorème de finitude de Northcott,
elle est strictement positive, et peut être arbitrairement
grande. Elle peut aussi être arbitrairement petite.
Soit $x\in X(\bar\Q)$ un point qui n'est pas prépériodique;
définissons une suite de points~$(x_n)$
en posant $x_0=x$ et où, pour tout $n$,
$x_{n+1}\in X(\bar\Q)$ est un antécédent de~$x_n$ par~$f$.
Cette suite est bien définie car l'endomorphisme~$f$
est fini et surjectif.
Pour tout~$n$, on a $\hat h_{\mathscr L}(f(x_n))
 = d h_{\mathscr L}(x_n)$, d'où
\[ \hat h_{\mathscr L}(x_n) = \frac1{d^n} \hat h_{\mathscr L}(x). \]

Cependant, en même temps que la hauteur de~$x_n$
décroît, le corps sur lequel ce point est défini
est susceptible de croître. En effet, comme $f$
est de degré~$d^{\dim X}$, il faut, pour déterminer~$x_{n+1}$,
résoudre une équation de degré~$d^{\dim X}$.
Cette équation n'est peut-être pas irréductible,
autrement dit le degré de l'extension $[K(x_{n+1}):K(x_n)]$
n'est pas forcément égal à~$d^{\dim X}$.
C'est par exemple le cas si $X$ est un produit~$X'\times X''$,
$f=(f',f'')$ et $\mathscr L=\mathscr L'\boxtimes\mathscr L''$,
et que la première coordonnée~$x'$ du point~$x$ est un point fixe
de~$f'$. On peut alors choisir $x_n$ de la forme~$(x'_0,x''_n)$ 
et le degré de l'extension~$[K(x_{n+1}):K(x_n)]$ est au plus
égal à~$d^{\dim X''}$.

On espère néanmoins qu'il est toujours au moins égal à~$d$,
du moins en moyenne.
\begin{conj}\label{conj.lehmer}
Existe-t-il un nombre réel~$c>0$ tel que pour tout point
$x\in X(\bar\Q)$ qui n'est pas prépériodique,
\[ \hat h_{\mathscr L}(x) \geq \frac c{[K(x):K]}. \]
\end{conj}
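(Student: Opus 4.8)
\end{conj}

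This statement is a conjecture, and a proof is not within reach: its special case $X=\P^1$, $f\colon[x:y]\mapsto[x^d:y^d]$ --- where $\hat h_{\mathscr L}$ is the ordinary height $h$ and the preperiodic points are the roots of unity --- is precisely \textsc{Lehmer}'s problem on Mahler measures, which is open. The plan is therefore not to prove the conjecture but to describe the one line of attack that yields a nontrivial lower bound, and to locate the obstruction. Two preliminary remarks are in order: by Proposition~\ref{prop.hauteur-normalisee-Qbarre} a point is preperiodic exactly when $\hat h_{\mathscr L}(x)=0$, so the issue is a \emph{quantitative} separation from zero; and by Northcott's finiteness (Corollary~\ref{theo.northcott}) such a separation is already known for points of each \emph{fixed} degree, so the whole content of the conjecture is the uniformity of the constant in $[K(x):K]$.

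The approach I would pursue is the \emph{Frobenius descent} underlying \textsc{Dobrowolski}'s theorem, a weak instance of which is worked out in Exercise~\ref{exer.lehmer1}. First, fix a model of the data $(X,f,\mathscr L)$ over a finitely generated $\Z$-algebra~$A$, exactly as in the proof of Proposition~\ref{theo.per.dense}, so that for every maximal ideal~$\mathfrak p$ of~$A$, of residue characteristic~$p$ and finite residue field $\kappa(\mathfrak p)$, the reduction $f_{\mathfrak p}$ is an endomorphism over $\kappa(\mathfrak p)$; the key structural input is that a suitable power of the Frobenius of $\kappa(\mathfrak p)$ is compatible, up to a bounded twist, with $f_{\mathfrak p}$ --- the geometric incarnation of the congruence $S_{np}\equiv S_n\pmod p$ --- so that the reduction modulo~$\mathfrak p$ of any point of small height has a \emph{collapsing} forward orbit. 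One then constructs, by a Siegel-lemma/pigeonhole argument, an auxiliary global section of a large tensor power $\mathscr L^{\otimes N}$ vanishing to high order along the orbit of~$x$; the smallness of $\hat h_{\mathscr L}(x)$ bounds the arithmetic size of this section through an arithmetic Bézout (or slope) estimate, while its forced vanishing modulo every~$\mathfrak p$ in a suitable range of primes makes it vanish identically, whence $x$ is preperiodic unless $\hat h_{\mathscr L}(x)$ is bounded below. Balancing the range of primes against the prime number theorem gives, in the toric case (the classical setting of \cite{dobrowolski1979}) and conjecturally in general, a bound of the shape
\[ \hat h_{\mathscr L}(x)\;\gg\;\frac{1}{[K(x):K]}\left(\frac{\log\log[K(x):K]}{\log[K(x):K]}\right)^{3}. \]

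The decisive obstacle is precisely the parasitic factor $(\log\log/\log)^{3}$. The interpolation-determinant estimate only controls the collapse of the orbit of~$x$ modulo primes $p\le T$ with $T$ a fixed power of $\log[K(x):K]$; beyond this range the accumulated congruence conditions outnumber what an auxiliary section of controlled arithmetic size can absorb, and the argument breaks down. Removing this loss \emph{is} Lehmer's conjecture, and it is inaccessible by present technology, whether in the toric, the abelian, or the general polarized case. The alternative of deducing the bound from the equidistribution theorem of \cite{szpiro-u-z97}, applied to the fibres $(f^{n})^{-1}(x)$, runs into the same wall: an effective equidistribution statement with error term strong enough to beat the conjectural bound is itself unknown. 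I would therefore present the statement as a conjecture, record the Frobenius-descent estimate above as the best unconditional partial result, and note separately the cases --- toric, via irreducibility of the cyclotomic polynomials, and abelian, via Galois-image estimates after \textsc{Serre} --- where the literature establishes not merely this lower bound but much stronger rigidity statements of Manin--Mumford type (the content of Conjecture~\ref{conj.manin-mumford} in those cases).
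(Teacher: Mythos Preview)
You correctly recognize that this is an open conjecture and that no proof is possible; the paper likewise states it as a question and then surveys what is known rather than proving anything. Your sketch of the Dobrowolski/Frobenius-descent mechanism is accurate for the toric case and matches the paper's account of why the method works there.

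However, you miss the paper's central structural observation. The paper points out that the Frobenius approach \emph{cannot} extend to a general polarized dynamical system, even on~$\P^1$: the method requires, for many primes~$p$, an endomorphism of the system congruent modulo~$p$ to the $p$-th power map and commuting with~$f$, and by the theorem of \cite{ritt1923} and \cite{eremenko1989} two commuting rational maps of degree~$\geq 2$ with no common iterate must already be of the special type (power, Chebyshev, or Lattès). So the obstruction in the general case is not the parasitic $(\log\log/\log)^3$ loss you emphasize --- it is that there is no Frobenius-like endomorphism along which to descend at all. Your phrase ``conjecturally in general'' for the Dobrowolski bound is therefore too optimistic: the paper's position is that the method is intrinsically limited to those special families. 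The paper also records \cite{masser1984}'s weaker bound $\hat h_{\mathscr L}(x)\geq c/[K(x):\Q]^\kappa$ for general abelian varieties, and stresses that even such a polynomial-in-degree lower bound is not known for an arbitrary polarized system, already in dimension~$1$; this is a sharper statement of the current ignorance than what you give.

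Finally, your last sentence conflates two distinct problems. The Manin--Mumford-type rigidity you mention (Conjecture~\ref{conj.manin-mumford}) concerns the structure of the preperiodic locus inside a subvariety, whereas Conjecture~\ref{conj.lehmer} asks for a quantitative height gap at non-preperiodic points; the paper treats them in separate sections and the Galois-image input plays rather different roles in each.
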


La première apparition de cette conjecture concerne 
la hauteur naturelle sur~$\P^1$ sous la forme d'un
\emph{problème} dans~\cite{lehmer1933}: Pour $\eps>0$, trouver
un polynôme unitaire $P$ à coefficients entiers dont la mesure de Mahler
vérifie $1<\exp(\deg(P)\mathrm M(P)) <1+\eps$;
il ajoute : {\og Whether or not the problem has a solution for $\eps<0{,}176$,
we do not know.\fg} Et nous ne savons toujours pas !
Le meilleur résultat connu dans ce cas est dû à~\cite{dobrowolski1979};
citons-en une version un peu affaiblie:
\begin{prop}\label{prop.w-dobrowolski}
Pour tout~$\eps>0$, 
il existe un nombre réel~$c>0$ tel que pour tout nombre algébrique~$\xi$
qui n'est ni nul ni une racine de l'unité, on ait
\[ h(\xi) \geq \frac c{[\Q(\xi):\Q]^{1+\eps}}. \]
\end{prop}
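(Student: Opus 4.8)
The statement is Dobrowolski's theorem (in a form weakened so that only the exponent $1+\eps$ is claimed), and the plan is to reproduce his auxiliary‑polynomial argument; since the sharp exponent is not needed, the parameters may be chosen somewhat crudely. Since $h$ is invariant under $\Gal(\bar\Q/\Q)$ (Prop.~\ref{prop.h-galois}) and $h(\xi)=\tfrac1d\log\mathrm M(P)$ for the minimal polynomial $P=a_0X^d+\cdots$, if $\xi$ is not an algebraic integer then $|a_0|\geq 2$, so $h(\xi)\geq\tfrac1d\log 2\geq \log 2\cdot d^{-1-\eps}$ and we are done. One may therefore assume $\xi=\alpha$ is a nonzero algebraic integer of degree $d$, not a root of unity, with monic integral minimal polynomial $f$ and conjugates $\alpha_1,\dots,\alpha_d$; the goal becomes $\log\mathrm M(f)\geq c\,d^{-\eps}$. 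For $d$ bounded by some $d_0(\eps)$ this follows from the crude estimate $h(\alpha)\geq 1/(4ed^3)$ of Exercise~\ref{exer.lehmer1}(e) (which already gives $h(\alpha)\geq c\,d^{-1-\eps}$ for a suitable $c$; one could instead invoke the Blanksby--Montgomery bound $h(\alpha)\gg 1/(d\log d)$). So the substance is the range $d\geq d_0(\eps)$.

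\textbf{The main argument (Dobrowolski's method).} Set $K=\Q(\alpha)$, fix parameters $T\geq 1$ and $L\approx 2Td$, and consider primes $p$ in a dyadic interval $(N,2N]$, with $N$ large enough that this interval contains more than $L-Td$ primes. Using Siegel's lemma in its absolute (number‑field) form, construct a nonzero $g\in\Z[X]$ of degree $\leq L$ vanishing to order $\geq T$ at $\alpha$ — equivalently $f^T\mid g$, so $g=f^Tg_1$ with $\deg g_1\leq L-Td$ — and satisfying $\log H(g)\ll T\log(Td)$ (when $\log\mathrm M(f)$ is small); the point of the absolute formulation is that the powers $\alpha^m$ enter the height estimate only through $\mathrm M(f)=\prod_i\max(1,|\alpha_i|)$, not through the possibly enormous integer coefficients of $f$. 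The Frobenius congruence $f(X)^p\equiv f(X^p)\pmod{p\Z[X]}$ gives $f(\alpha^p)=f(\alpha^p)-f(\alpha)^p\in p\,\Z_K$, hence $g(\alpha^p)=f(\alpha^p)^Tg_1(\alpha^p)\in p^T\Z_K$. Moreover $g(\alpha^p)\neq 0$ for at least one $p$ in the interval: $f(\alpha^p)=0$ would give $\rho(\alpha)=\alpha^p$ for some $\rho\in\Gal(\bar\Q/\Q)$, whence $\alpha^{p^n}=\alpha$ with $n$ the order of $\rho$, so $\alpha$ would be a root of unity; and if $g_1(\alpha^{p})=g_1(\alpha^{p'})=0$ with $p\neq p'$ then $\alpha^{p}\neq\alpha^{p'}$ are two roots of $g_1$, so at most $\deg g_1$ primes of the interval are excluded. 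For such a good $p$, comparing the two estimates for $N_{K/\Q}(g(\alpha^p))$ — from below $p^{Td}$ divides it and it is nonzero, from above $\bigl|N_{K/\Q}(g(\alpha^p))\bigr|=\prod_i|g(\alpha_i^p)|\leq\bigl((L+1)H(g)\bigr)^{d}\,\mathrm M(f)^{pL}$ — yields
\[
 Td\log p\;\leq\; d\log\bigl((L+1)H(g)\bigr)+pL\log\mathrm M(f).
\]
Dividing by $Td$ and inserting $\log H(g)\ll T\log(Td)$, $L\leq 2Td$ gives $\log p\leq C\log(Td)+2p\log\mathrm M(f)$ for an absolute $C$, hence $\log\mathrm M(f)\gg\bigl(\log p-C\log(Td)\bigr)/p$. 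Taking $T$ of modest size and a good prime $p$ slightly larger than $(Td)^{C}$ already produces a polynomial lower bound $\log\mathrm M(f)\gg d^{-C'}$; running the estimate with \emph{all} primes $\leq N$ simultaneously (discarding the $O(\deg g_1)$ bad ones) and optimising $T$, $L$, $N$ — this is where three independent logarithmic savings appear — improves this to $\log\mathrm M(f)\gg\bigl(\log\log d/\log d\bigr)^3$, which for $d\geq d_0(\eps)$ exceeds $d^{-\eps}$, completing the proof.

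\textbf{Main obstacle.} The delicate point is the height bound for the auxiliary polynomial: a naive Siegel's lemma carried out in the $\Z$‑basis $1,\alpha,\dots,\alpha^{d-1}$ reintroduces the coefficients of $f$ (of size up to $2^d\mathrm M(f)$) and only gives a bound that is exponentially weak in $d$, which would ruin everything; one must instead use the absolute form, so that each coefficient vector $\bigl(\binom{n}{k}k!\,\alpha^{n-k}\bigr)_{n}$ has Weil height at most $2^LL!\,\mathrm M(f)^{L/d}$ — the powers of $\alpha$ then contribute only $\mathrm M(f)$ raised to a power $\leq L/d$. The second point requiring care is the non‑vanishing $g(\alpha^p)\neq 0$ together with the bookkeeping of the excluded primes, which for the sharp exponent must be done without losing more than a constant factor in the prime counts. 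Once these are in place the remainder is the parameter optimisation. (Alternatively, one may simply cite \cite{dobrowolski1979}, or \cite{waldschmidt2000}, where the sharp statement is established.)
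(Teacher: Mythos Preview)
The paper does not prove this proposition: it is merely quoted as Dobrowolski's result, with a pointer to \cite{dobrowolski1979} and to \cite{waldschmidt2000} for a full proof. So there is no ``paper's own proof'' to compare your attempt against; what you have written is an outline of Dobrowolski's argument itself, and as such it is broadly correct.

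A few comments on the sketch. Your non-vanishing step can be said more cleanly: if $f(\alpha^{p})=0$ then $\alpha^{p}$ is a conjugate of~$\alpha$, hence $h(\alpha^{p})=h(\alpha)$ by Galois invariance; but $h(\alpha^{p})=p\,h(\alpha)$, so $h(\alpha)=0$ and $\alpha$ is a root of unity by Kronecker. This avoids the detour through the order of~$\rho$. Your identification of the ``main obstacle'' is accurate, but the remedy you name (an absolute Siegel's lemma) is anachronistic relative to Dobrowolski's 1979 paper; he handles the height of the auxiliary polynomial by a direct argument in which the archimedean contributions of the vectors $\bigl(\binom{j}{t}\alpha_i^{j-t}\bigr)_j$ are bounded through $\prod_i\max(1,|\alpha_i|)^{L}=\mathrm M(f)^{L}$, which is harmless when $\mathrm M(f)$ is near~$1$. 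Either route works, and for the weakened exponent $1+\eps$ the parameter choices are quite forgiving.

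The final paragraph is too compressed to count as a proof: you assert the bound $\log\mathrm M(f)\gg(\log\log d/\log d)^{3}$ but do not display the optimisation (the choices are roughly $T\asymp\log d/\log\log d$, $L\asymp Td$, $N\asymp d(\log d/\log\log d)^{2}$, and one sums the inequality over all good primes $p\leq N$ rather than using a single one). Since the paper is content to cite the result, either completing this bookkeeping or replacing the whole thing by a reference to \cite{dobrowolski1979} or \cite{waldschmidt2000} would be in keeping with its treatment.
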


Dans le cas de l'espace projectif~$\P^k$
et de l'endomorphisme $f\colon [x_0:\cdots:x_k]\mapsto
[x_0^d:\cdots:x_k^d]$, pour lequel la hauteur naturelle
est une hauteur canonique, \cite{amoroso-david2003}
ont étendu les méthodes de~\cite{dobrowolski1979}.
Par ailleurs, \cite{david-hindry2000} ont traité
le cas des variétés abéliennes à multiplication  complexe,
généralisant un théorème de~\cite{laurent1983}
qui concernait les courbes elliptiques à multiplication complexe.
Ces résultats impliquent immédiatement des théorèmes analogues
pour leurs quotients, comme les endomorphismes
de Lattès ou de Tchébychev de~$\P^1$.

Tant~\cite{dobrowolski1979}
que les extensions  qui s'en inspirent
utilisent de manière cruciale que pour beaucoup de nombres
premiers~$p$, on peut trouver 
un endomorphisme
du système dynamique qui est {\og congru modulo~$p$\fg}
à l'application polynomiale donnée par l'élévation des coordonnées
à la puissance~$p$ (ou à un de ses itérés). 
Dans le cas du problème de \textsc{Lehmer} classique,
cet endomorphisme n'est autre que l'application 
$[x_0:x_1]\mapsto [x_0^p:x_1^p]$, cette congruence
apparaissant par le biais du petit théorème de~\textsc{Fermat}.

En revanche, cette méthode ne peut pas s'étendre à 
un système dynamique général, même sur $\P^1$.
En effet,  si $f$ et $g$ sont deux fractions rationnelles
de degrés~$\geq 2$ qui commutent, 
et telles qu'aucun itéré de~$f$ n'est égal à aucun itéré de~$g$,
alors $(\P^1,f)$ est un endomorphisme déjà traité,
c'est-à-dire l'élévation à la puissance~$d$, un endomorphisme
de Tchébychev, ou un endomorphisme de Lattès.
C'est, à la suite de premiers
travaux de~\textsc{Fatou} et~\textsc{Julia},
le théorème de~\cite{ritt1923} et \cite{eremenko1989},
cf. le~\S7 des notes de~\cite{milnor2006}.

Dans le cas d'un système dynamique abélien général, 
défini sur un corps de nombres~$K$,
\cite{masser1984}
a démontré  une inégalité plus faible, du type: si $x\in X(\bar\Q)$ 
n'est pas prépériodique,
\[ \hat h_{\mathscr L} (x) \geq \frac c{[K(x):\Q]^{\kappa}}, \]
où $c$ et~$\kappa$ sont des constantes explicites.
De telles estimations, pourtant bien plus faibles
que celle prédite par la conjecture~\ref{conj.lehmer},
ne semblent pas connues pour un système dynamique polarisé arbitraire,
même en dimension~$1$.

\subsection{Dénombrement des points prépériodiques}

Supposons encore que le système dynamique~$(X,f,\mathscr L)$
soit défini sur un corps de nombres~$K$.
Les points prépériodiques de~$X$ qui sont définis sur~$K$
sont en nombre fini, d'après le cor.~\ref{theo.northcott}.
Est-il possible de contrôler effectivement ce nombre de points?
Précisément :

\begin{conj}\label{conj.morton-silverman}
Est-il possible de borner le nombre de points $K$-rationnels
de~$X$ qui sont prépériodiques en termes uniquement
de~$[K:\Q]$, de~$\dim X$, de~$d$ et de~$c_1(\mathscr L)^{\dim X}$ ?
\end{conj}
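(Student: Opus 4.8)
Cet énoncé est une conjecture ouverte --- l'analogue dynamique du théorème de borne uniforme de \textsc{Mazur} et~\textsc{Merel} pour la torsion des courbes elliptiques ---, de sorte qu'on ne peut ici qu'esquisser une stratégie plausible. Le plan est d'abord de se ramener, grâce au théorème de \textsc{Fakhruddin} (prop.~\ref{theo.fakhruddin}), au cas où $X$ est une sous-variété invariante d'un espace projectif~$\P^N$ et où $f$ est la restriction d'un endomorphisme~$F$ de degré~$d$ de~$\P^N$. Le nombre $c_1(\mathscr L)^{\dim X}$ gouverne alors le degré de l'image de~$X$, et les données combinatoires du problème --- dimension ambiante, degré de~$X$, degré dynamique~$d$ --- sont essentiellement contrôlées par $\dim X$, $d$ et $c_1(\mathscr L)^{\dim X}$. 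On cherche ainsi une majoration du cardinal de l'ensemble des points préperiodiques $K$-rationnels de~$X$ ne dépendant que de ces invariants et de~$[K:\Q]$.

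Une première étape, classique lorsque $X=\P^1$ (méthode de \textsc{Morton} et~\textsc{Silverman}), consiste à majorer les \emph{périodes} possibles. Pour un idéal premier~$\mathfrak p$ de bonne réduction, la réduction modulo~$\mathfrak p$ envoie tout point préperiodique $K$-rationnel sur un point périodique de la réduction~$f_{\mathfrak p}$, et la période de ce dernier contraint fortement celle du point de départ; en confrontant les réductions en deux premiers de bonne réduction au-dessus de nombres premiers distincts, on borne la période, puis la pré-période par examen du graphe fonctionnel de~$f_{\mathfrak p}$. Combinée à la finitude de \textsc{Northcott} (cor.~\ref{theo.northcott}) et à l'encadrement fonctoriel des hauteurs (prop.~\ref{prop.fonctorialite}), cette analyse fournit une borne \emph{effective} pour chaque système donné, mais qui dépend du lieu de mauvaise réduction, donc des coefficients de~$F$, et non des seuls invariants voulus.

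Le cœur de la conjecture est donc le passage à l'\emph{uniformité}, pour lequel la voie naturelle est modulaire. Pour des entiers $(m,n)$ fixés, on considérerait l'espace de modules des systèmes dynamiques polarisés du type combinatoire retenu, muni d'un point marqué de pré-période~$m$ et de période~$n$ (les \og courbes dynatomiques\fg{} de \textsc{Morton} et leurs avatars en dimension supérieure), et l'on chercherait à démontrer que, dès que $m+n$ excède une borne ne dépendant que de $N$, $\deg X$, $d$ et~$[K:\Q]$, ces espaces n'ont aucun point rationnel --- typiquement en exhibant dans leur jacobienne un quotient de rang de \textsc{Mordell}--\textsc{Weil} nul, ou en invoquant un énoncé de type \textsc{Faltings} uniforme sur les courbes de grand genre. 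La conjonction de cette borne sur les périodes avec un décompte des points marquables pour $(m,n)$ petit donnerait l'énoncé.

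Le principal obstacle est que rien de cela n'est disponible, même dans le cas le plus simple : pour les polynômes quadratiques $z\mapsto z^2+c$ sur~$\Q$, l'absence de points périodiques rationnels de période~$\geq 4$ n'est connue que cas par cas, au prix de descentes explicites délicates sur les courbes dynatomiques, et aucun mécanisme général ne produit la finitude \emph{uniforme} du nombre de points rationnels de ces courbes lorsque $n$ croît. La conjecture~\ref{conj.morton-silverman} est, à ce jour, entièrement ouverte : les seuls résultats partiels connus concernent la bornitude des périodes --- et non du nombre total de points préperiodiques --- et seulement pour des familles restreintes de systèmes dynamiques.
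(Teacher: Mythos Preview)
L'énoncé~\ref{conj.morton-silverman} est une conjecture, pas un théorème; le texte ne la démontre pas non plus. Ta {\og proposition de preuve\fg} le reconnaît d'emblée et, sur ce point essentiel, elle est correcte : il n'y a pas de preuve à comparer.

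Sur le plan du commentaire, tu vas un peu plus loin que le texte, et dans une direction compatible. Le texte se borne à deux remarques: d'une part la réduction au cas $X=\P^N$, $\mathscr L=\mathscr O(1)$ via la proposition~\ref{theo.fakhruddin} --- que tu reprends; d'autre part l'interprétation du cas abélien comme le problème de la borne uniforme pour la torsion, résolu en dimension~$1$ par \textsc{Mazur} puis \textsc{Merel} et ouvert au-delà. Tu ajoutes une esquisse de stratégie (bornes locales sur les périodes à la \textsc{Morton}--\textsc{Silverman}, espaces de modules dynatomiques) qui n'est pas dans le texte mais qui reflète fidèlement l'état de l'art; ton diagnostic final --- que la conjecture est ouverte même pour $z\mapsto z^2+c$ sur~$\Q$ --- est exact. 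Une nuance: le texte signale aussi des résultats partiels de type \cite{morton-silverman1994,call-goldstine1997,benedetto2007} où la borne dépend du corps~$K$ et non seulement de~$[K:\Q]$; tu pourrais les mentionner pour situer précisément où l'uniformité fait défaut.
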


D'après le théorème de plongement de \textsc{Fakhruddin}
(prop.~\ref{theo.fakhruddin}), il suffit en fait
de démontrer  le cas où $X$ est un espace projectif
et $\mathscr L=\mathscr O(1)$. C'est d'ailleurs dans ce cas
particulier que \cite{morton-silverman1994} avaient
énoncé cette conjecture.  

Commençons par donner quelques cas particuliers
qui l'ont motivée.

\paragraph{Systèmes dynamiques abéliens}
Le cas des systèmes dynamiques toriques
étant un exercice facile (exercice~\ref{exo.morton-silv}),
supposons que $X$ est une variété  abélienne principalement polarisée
et $f$ la multiplication par~$2$ dans~$X$.
\emph{Les points prépériodiques pour~$f$ sont alors les points
de torsion de~$X$, c'est-à-dire ceux dont un multiple
non nul est nul.} En effet, $x\in X(\bar\Q)$
est prépériodique, c'est-à-dire si $f^n(x)=f^{n+p}(x)$,
avec $n\geq 0$ et $p>0$, on a $2^n x=2^{n+p}x$,
d'où $2^n(2^p-1)x=0$ et $x$ est de torsion.
Inversement, si $x$ est annulé par un entier~$N>0$,
écrivons $N=2^nm$, où $m$ est impair. Alors, $2$ est inversible
dans l'anneau fini~$\Z/m\Z$, et il existe  un entier~$p>0$
tel que $2^p\equiv 1\pmod m$. Alors, $2^{n+p}x=2^n x$
et $x$ est prépériodique.

Dire que $X$ est principalement polarisée par~$\mathscr L$
signifie exactement que $c_1(\mathscr L)^{\dim X}=(\dim X)!$.
Si de plus $\mathscr L$ est symétrique, ce qu'on peut supposer,
alors $f^*\mathscr L\simeq\mathscr L^{\otimes 4}$,
donc $(X,f,\mathscr L)$ est de poids~$4$.
La conjecture revient donc à savoir si le nombre
de points de torsion de~$X$ qui sont $K$-rationnels
peut être majoré par une constante qui ne dépende que de~$k$
et du degré~$[K:\Q]$.

La question est déjà remarquablement difficile !
Lorsque $k=1$, c'est-à-dire pour les courbes elliptiques,
elle a été résolue par~\cite{mazur1977} lorsque $K=\Q$
et en général, après quelques résultats intermédiaires, par~\cite{merel96} ; 
pour $k\geq 2$, elle est encore ouverte.

De manière analogue aux 
résultats partiels de~\cite{flexor-o1990}
en direction  du théorème de~\cite{merel96},
il y a des résultats partiels, citons notamment
\cite{morton-silverman1994,call-goldstine1997,benedetto2007},
où la constante~$c$ obtenue
dépend du corps~$K$, et non seulement de son degré~$[K:\Q]$.
Leur démonstration fait intervenir des rudiments
de dynamique $p$-adique.
% Je renvoie à~\cite{morton-silverman1994}.
% \footnote{Biblio: trouver les successeurs...}

\subsection{Discrétion des points d'une sous-variété qui ne sont pas prépériodiques}

La conjecture suivante fait intervenir la hauteur normalisée.
On suppose donc que le système dynamique polarisé $(X,f,\mathscr L)$
est défini sur un corps de nombres.

Si $Y$ est une sous-variété prépériodique de~$X$, on a vu
que $Y$ contient des points prépériodiques, et même qu'ils
sont denses pour la topologie de Zariski. La hauteur
normalisée prend donc la valeur~$0$ sur~$Y(\bar\Q)$.
Même si l'on exclut les points prépériodiques, elle
prend des valeurs arbitrairement petites car on
peut toujours considérer la pré-orbite
d'un point, qui est formée de points de hauteurs
tendant vers~$0$. Une question naturelle,
posée par~\cite{bogomolov80b} lorsque $X$ est la jacobienne
d'une courbe de genre au moins~$2$ et $Y$ cette courbe,
plongée de façon standard, consiste à se demander
si ce phénomène est la seule raison pour laquelle
la hauteur normalisée peut prendre des valeurs
arbitrairement petites sur~$Y$.

% \begin{conj}\label{conj.bogomolov}
% Soit $Y$ une sous-variété irréductible de~$X$ qui n'est
% pas prépériodique.
% Est-il vrai qu'il existe une partie fermée~$Z$ de~$Y$,
% distincte de~$Y$ et un nombre réel~$c>0$
% tel que pour tout point $x\in Y(\bar\Q)$ qui n'appartient pas à~$Z$,
% on ait $\hat h_{\mathscr L}(x)>c$.
% \end{conj}
% 
% Cette conjecture implique la précision
% suivante sur la conjecture~\ref{conj.manin-mumford}:

\begin{conj}\label{conj.bogomolov2}
Soit $Y$ une sous-variété de~$X$.
L'ensemble des sous-variétés prépériodiques de~$(X,f)$
contenues dans~$Y$ n'a qu'un nombre fini d'éléments maximaux;
notons $Y^*$ leur complémentaire dans~$Y$.
De plus, $\inf\limits_{x\in Y^*(\bar\Q)} \hat h_{\mathscr L}(x)>0$.
\end{conj}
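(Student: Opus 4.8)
La démonstration envisagée est l'analogue dynamique de la preuve de la conjecture de \textsc{Bogomolov} pour les variétés abéliennes, due à \textsc{Ullmo} et \textsc{Zhang}, et repose sur le théorème d'équidistribution de~\cite{szpiro-u-z97} et ses généralisations. On raisonne par l'absurde : supposons qu'il existe une suite $(x_n)$ de points de~$Y^*(\bar\Q)$ telle que $\hat h_{\mathscr L}(x_n)\to 0$. Quitte à extraire, on peut supposer que l'adhérence de Zariski~$Z$ des ensembles $\{x_m\,;\,m\geq n\}$ ne dépend pas de~$n$ ; en la remplaçant par une composante irréductible contenant une infinité de~$x_n$, on se ramène au cas où $Z$ est irréductible, où $(x_n)$ est \emph{générique} dans~$Z$, et où $Z$ n'est \emph{pas} prépériodique — sinon $Z$ serait une sous-variété prépériodique de~$Y$, donc contenue dans l'un des éléments maximaux dont $Y^*$ est le complémentaire, contredisant $x_n\in Y^*$. (La finitude de l'ensemble des sous-variétés prépériodiques maximales de~$Y$, première assertion de l'énoncé, relèverait du même cercle d'idées via le cor.~\ref{theo.northcott} et le théorème de plongement de \textsc{Fakhruddin} (prop.~\ref{theo.fakhruddin}) ; c'est un point délicat que je laisse de côté.)

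On applique alors le théorème d'équidistribution arithmétique — celui de~\cite{szpiro-u-z97} et ses extensions aux systèmes dynamiques polarisés, aux fibrés gros, et aux places ultramétriques par la théorie des espaces de \textsc{Berkovich} (\emph{cf.}~\cite{chambert-loir2006}). Comme $\hat h_{\mathscr L}\geq 0$ (prop.~\ref{prop.hauteur-normalisee-Qbarre}) et que le minimum essentiel de~$\hat h_{\mathscr L}$ sur~$Z$ est nul par hypothèse, on en déduit que pour toute place $v\in M_K$ les orbites galoisiennes $O(x_n)\subset Z(\C_v)$ s'équidistribuent vers la \emph{mesure canonique} $\mu_{Z,v}$ : la puissance extérieure maximale, normalisée, de la forme de courbure de la métrique canonique de~$\mathscr L|_Z$ construite au~\S\ref{subsec.mesures-canoniques}, et son analogue $v$-adique.

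Il reste à en tirer une contradiction, et c'est là que réside la difficulté principale. L'idée est d'utiliser la structure dynamique : puisque $\hat h_{\mathscr L}(f(x))=d\,\hat h_{\mathscr L}(x)$, les orbites des $f^j(x_n)$ restent petites et génériques dans~$f^j(Z)$, tandis que la fonctorialité des métriques canoniques identifie $f_*\mu_{Z,v}$ à la mesure canonique de~$f(Z)$. Comme $Z$ n'est pas prépériodique, les sous-variétés $f^j(Z)$ sont deux à deux distinctes ; on voudrait en conclure que $\mu_{Z,v}$ ne peut satisfaire les relations d'auto-cohérence imposées par l'équidistribution — suivant par exemple le procédé d'\textsc{Ullmo}, en passant à une construction produit ou différence et en contredisant la stricte positivité d'une fonctionnelle d'énergie, ou en invoquant le fait que les mesures canoniques ne chargent pas les sous-ensembles algébriques stricts. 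C'est exactement cette étape de \emph{rigidité} qui fait défaut en toute généralité : les contre-exemples de~\cite{ghioca-tucker2009} montrent que la conjecture~\ref{conj.manin-mumford}, donc \emph{a fortiori} celle-ci, est fausse pour certains systèmes de type produit ou de \textsc{Lattès}, la mesure canonique pouvant se reproduire sous les opérations pertinentes sans que $Z$ soit prépériodique. Autrement dit, l'équidistribution ramène l'énoncé à une assertion purement mesure-théorique de rigidité des mesures canoniques sur les sous-variétés non prépériodiques, et l'obstacle véritable est d'isoler l'hypothèse supplémentaire — excluant la source de contre-exemples de \textsc{Ghioca} et \textsc{Tucker} — sous laquelle cette rigidité subsiste.
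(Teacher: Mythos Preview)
Your proposal is not a proof, and appropriately so: the statement is a \emph{conjecture} in the paper, and the paper explicitly remarks that it is \emph{false} in general, the Ghioca--Tucker counterexamples to conjecture~\ref{conj.manin-mumford} refuting it as well. There is thus no ``paper's own proof'' to compare against; what the paper offers is a discussion of the known cases and of the Ullmo--Zhang mechanism (\S\ref{sec.equidis}/\ref{subsec.ullmo-zhang}).

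Your sketch correctly captures that mechanism and its limits. A few points of comparison with the paper's treatment. First, the paper's account of the Ullmo--Zhang argument is more concrete than yours: the ``construction produit ou différence'' you allude to is, in Zhang's version, the morphism $Y^m\to X^{m-1}$, $(y_1,\ldots,y_m)\mapsto(y_2-y_1,\ldots,y_m-y_1)$, which is generically finite but has a fibre of excessive dimension over the origin; the contradiction comes from comparing two measures on the target, one smooth (the canonical measure of the ambient abelian variety) and one singular (the pushforward of the canonical measure on $Y^m$), via an explicit volume/jacobian argument. Your phrase ``contredisant la stricte positivité d'une fonctionnelle d'énergie'' is not quite the mechanism used. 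Second, the finiteness of maximal prépériodic subvarieties, which you set aside, is handled in the paper not via Northcott or Fakhruddin but as a consequence of the equidistribution conjecture itself (see the proof that conjecture~\ref{conj.equidis} implies conjecture~\ref{conj.bogomolov2}): one argues by contradiction, picking prépériodic points $y_n$ on an infinite sequence of distinct maximal prépériodic subvarieties and applying equidistribution to conclude that the support of the canonical measure lies in $Y(\C)$, contradicting the fact that this measure does not charge proper Zariski-closed subsets. Third, your diagnosis of the obstruction---that the rigidity step fails precisely for the Ghioca--Tucker-type examples, and that the open problem is to formulate the right exclusion---matches the paper's own assessment.
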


% Raisonnons en effet par récurrence sur la dimension de~$Y$.
% Soit $Z$ et~$c$ comme dans l'énoncé de la conjecture
% et décomposons~$Z$ en~$Z_1\cup Y_2$, où $Z_1$ est la réunion des composantes
% irréductibles de~$Z$ qui sont prépériodiques
% et $Z_2$ la réunion des autres composantes irréductibles.
% Comme une sous-variété prépériodique de~$Y$ contient des points
% de hauteurs arbitrairement petites, les sous-variétés prépériodiques
% de~$Y$ sont contenues dans~$Z$. Par récurrence, la réunion
% de celles qui sont contenues dans~$Z_2$ est une
% partie fermée~$Z_3$ de~$Z_2$ pour la topologie de Zariski,
% tandis que la réunion des autres est égale à~$Z_1$.
% Toujours par récurrence, il existe un nombre réel~$c'>0$
% tel que $\hat h_{\mathscr L}(x)>c'$ pour $x\in Z_2(\bar\Q)$
% mais $x\not\in Z_3$.
% Ainsi, la réunion~$Y_1$ des sous-variétés prépériodiques de~$Y$
% est  égale à~$Z_1\cup Z_3$ ; pour $x\in Y(\bar\Q)$
% mais $x\not\in Y_1$, on a $\hat h_{\mathscr L}(x)>\min(c,c')>0$.
% 
% \medskip
 
Cette conjecture renforce la conjecture~\ref{conj.manin-mumford};
les contre-exemples à cette dernière entraînent
qu'elle n'est pas vraie en général. Il est à espérer
que l'on parviendra à la reformuler convenablement.
Remarquablement, les cas particuliers dans lesquels
l'une ou l'autre des conjectures~\ref{conj.manin-mumford}
et~\ref{conj.bogomolov2} sont prouvées
sont précisément les mêmes.

Le cas des variétés toriques fut
démontré d'abord par~\cite{zhang95} par des techniques
de géométrie d'Arakelov combinées au résultat d'\textsc{Ihara},
\textsc{Serre} et~\textsc{Tate} expliqué plus haut.
Il fut reprouvé par~\cite{bilu97} comme corollaire d'un
théorème d'équidistribution. Nous expliquerons sa
démonstration au~\S\ref{sec.bilu} du chapitre~\ref{chap.equip1}.

Le cas d'un système dynamique abélien est un théorème
de~\cite{zhang98}, après que \cite{ullmo98} eut
traité la conjecture de~\cite{bogomolov80b},
c'est-à-dire le cas où $Y$ est une courbe algébrique plongée
naturellement dans sa jacobienne~$X$. 
La preuve utilise un théorème d'équidistribution 
tel que le théorème~\ref{theo.equidis}
ainsi qu'un joli argument de théorie de la mesure,
voir le paragraphe~\ref{sec.equidis}/\ref{subsec.ullmo-zhang}.
Un peu avant, \cite{philippon95} avait traité le cas 
d'une sous-variété d'un produit de courbes elliptiques.

La conjecture~\ref{conj.bogomolov2}
est aussi vraie pour les produits de tels systèmes
dynamiques d'après~\cite{chambert-loir2000b}
({\og variétés semi-abéliennes isotriviales\fg}).
Voir aussi~\cite{david-p98,david-philippon1999,david-p2000,david-p2002,amoroso-david2003,amoroso-david2004,amoroso-david2006}
pour une
nouvelle démonstration des cas précédents, ainsi que
celui des variétés semi-abéliennes générales qui ne rentre
pas tout à fait dans le contexte des systèmes dynamiques polarisés.
Notons que ces derniers auteurs démontrent un énoncé \emph{effectif}:
pourvu que $Y$ ne soit pas translaté d'une sous-variété semi-abélienne,
ils fournissent une valeur explicite pour~$c$, 
ne dépendant que de la géométrie de~$X$ et~$Y$.

Elle est aussi valable pour les quotients de tels systèmes
dynamiques (Lattès et autres).
Enfin, \cite{mimar1997} a traité quelques cas de systèmes
dynamiques à variables séparées dans~$\P^1\times\P^1$,
c'est-à-dire de la forme $(x,y)\mapsto (f(x),g(y))$,
où $f$ et~$g$ sont deux fractions rationnelles de même degré.

\subsection{\'Equidistribution vers la mesure canonique des suites de points 
dont la hauteur normalisée tend vers~$0$}

Soit $K$ un corps de nombres sur lequel le 
système dynamique polarisé $(X,f,\mathscr L)$
est défini.
Tout point $x\in X(\bar\Q)$ définit
une mesure de probabilité discrète sur~$X(\C)$,
notée~$\delta_x$ : c'est la moyenne des masses
de Dirac aux $[K(x):K]$ conjugués de~$x$.

\begin{conj}\label{conj.equidis}
Soit $(x_n)$ une suite de points de $X(\bar\Q)$
vérifiant les deux hypothèses suivantes:
\begin{enumerate}\def\theenumi{\roman{enumi}}\def\labelenumi{(\theenumi)}
\item pour toute sous-variété prépériodique $Y\subsetneq X$,
il n'existe qu'un nombre fini d'entiers~$n$ tels que $x_n\in Y$ ;
\item la suite $(\hat h_{\mathscr L}(x_n))$ des hauteurs normalisées
des~$x_n$ tend vers~$0$.
\end{enumerate}
Est-il vrai que la suite $(\delta_{x_n})$ converge
vers la mesure canonique $\hat\mu_{\mathscr L}$ sur~$X(\C)$ ?
\end{conj}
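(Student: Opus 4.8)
The plan is to prove the statement in the affirmative by the \emph{variational method} of \textsc{Szpiro}, \textsc{Ullmo} and \textsc{Zhang}~\cite{szpiro-u-z97}, in the form later completed by \textsc{Autissier} and \textsc{Yuan}. Using the embedding theorem of \textsc{Fakhruddin} (prop.~\ref{theo.fakhruddin}) I may assume $(X,f,\mathscr L)$ is a closed $f$-invariant subvariety of $(\P^N,F,\mathscr O(1)^{\otimes r})$, so that the canonical adelic metric on~$\mathscr L$, its canonical curvature form~$\hat\omega$ at the archimedean place (whence $\hat\mu_{\mathscr L}=\hat\omega^{k}/\deg_{\mathscr L}(X)$ on~$X(\C)$, with $k=\dim X$, as in~§\ref{subsec.mesures-canoniques}), and — by prop.~\ref{prop.decomposition} — the decomposition of $\hat h_{\mathscr L}$ into canonical local heights are all at hand. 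Since $X(\C)$ is compact it suffices, after passing to a weakly convergent subsequence $\delta_{x_{n_j}}\to\mu$, to show that $\int\varphi\,d\mu=\int\varphi\,d\hat\mu_{\mathscr L}$ for every real $\mathscr C^\infty$ function~$\varphi$ on~$X(\C)$ invariant under complex conjugation: then every subsequential limit is $\hat\mu_{\mathscr L}$ and the whole sequence converges. I present the argument when $(x_n)$ is \emph{generic} --- eventually outside every proper Zariski-closed subset; hypothesis~(i) is formally weaker, and the reduction of~(i) to the generic case uses the Bogomolov-type statement (conj.~\ref{conj.bogomolov2}) together with an induction on~$\dim X$, which I suppress.

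Fix such a~$\varphi$. For real $t$ near~$0$ let $\overline{\mathscr L}(t\varphi)$ be the adelically metrized line bundle obtained from the canonical metric on~$\mathscr L$ by multiplying the archimedean metric by~$e^{-t\varphi}$, keeping the canonical metrics at the finite places. Its height satisfies the \emph{exact} identity
\[ \hat h_{t}(x) \;=\; \hat h_{\mathscr L}(x) \;+\; t\int_{X(\C)}\varphi\,d\delta_{x}, \qquad x\in X(\bar\Q), \]
with $\delta_x$ as in the conjecture. Since $\mathscr L$ is ample, $\overline{\mathscr L}(t\varphi)$ is \emph{arithmetically big} for $|t|$ small; and because any $\mathscr C^\infty$ function is arbitrarily close to a difference $\varphi_1-\varphi_2$ of Kähler potentials for the class~$c_1(\mathscr L)$, one may even arrange the curvature to remain semipositive near~$t=0$ --- though dispensing with any positivity constraint on~$\varphi$ is exactly what \textsc{Yuan}'s arithmetic Hilbert--Samuel theorem for big line bundles permits.

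Now \textsc{Zhang}'s inequality bounds the essential minimum $e_1$ of an arithmetically big metrized line bundle below by its normalized arithmetic volume:
\[ e_{1}\big(\overline{\mathscr L}(t\varphi)\big) \;\geq\; \frac{\hvol\big(\overline{\mathscr L}(t\varphi)\big)}{(k+1)\,\deg_{\mathscr L}(X)}. \]
By genericity of~$(x_n)$ one has $\liminf_{n}\hat h_{t}(x_n)\geq e_{1}(\overline{\mathscr L}(t\varphi))$, while along the subsequence $\hat h_{t}(x_{n_j})\to t\int\varphi\,d\mu$ by hypothesis~(ii) and weak convergence. On the other hand $\hvol$ is differentiable at~$t=0$, with derivative $(k+1)\int_{X(\C)}\varphi\,\hat\omega^{k}=(k+1)\deg_{\mathscr L}(X)\int\varphi\,d\hat\mu_{\mathscr L}$, and value at $t=0$ equal to $\widehat{c_{1}}(\overline{\mathscr L})^{k+1}=0$ --- the vanishing coming from the fixed-point argument applied to self-intersection numbers, since $f^{*}\overline{\mathscr L}\simeq\overline{\mathscr L}^{\otimes d}$ and $\deg f=d^{k}$ force $d^{k+1}\,\widehat{c_{1}}(\overline{\mathscr L})^{k+1}=d^{k}\,\widehat{c_{1}}(\overline{\mathscr L})^{k+1}$, exactly as in the proof of prop.~\ref{prop.hauteur-normalisee-Qbarre}. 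Hence $\hvol(\overline{\mathscr L}(t\varphi))=(k+1)\deg_{\mathscr L}(X)\,t\int\varphi\,d\hat\mu_{\mathscr L}+o(t)$; dividing the inequality $t\int\varphi\,d\mu\geq\hvol(\overline{\mathscr L}(t\varphi))/\big((k+1)\deg_{\mathscr L}(X)\big)$ by $t>0$ and letting $t\to0^{+}$ gives $\int\varphi\,d\mu\geq\int\varphi\,d\hat\mu_{\mathscr L}$; the same with~$-\varphi$ yields the opposite inequality, so $\mu=\hat\mu_{\mathscr L}$.

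The routine parts are the exact height identity, the fixed-point computation of $\widehat{c_1}(\overline{\mathscr L})^{k+1}$, and the reduction to one test function. The real content --- and the main obstacle --- lies entirely in Arakelov theory: making $\hvol(\overline{\mathscr L}(t\varphi))$ meaningful and \emph{differentiable} at~$t=0$ for an arbitrary smooth perturbation, and \textsc{Zhang}'s inequality $e_1\geq\hvol/\big((k+1)\deg\big)$ in the \emph{big} (not merely nef) case. Both follow from the arithmetic Hilbert--Samuel theorem for big line bundles (\textsc{Yuan}, after \textsc{Gillet}--\textsc{Soulé}, \textsc{Abbes}--\textsc{Bouche}, \textsc{Zhang}, \textsc{Autissier}); in~\cite{szpiro-u-z97} only the semipositive case was available, hence the detour through differences of potentials, at the cost of extra bookkeeping. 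When $k=1$ the whole scheme can instead be run by hand through potential theory (energy and capacity functionals) on the Berkovich line, bypassing the heavy machinery --- this is the route taken in the final chapter for a rational map of degree~$\geq 2$. Finally, the same argument carried out simultaneously at every place~$v$, with \textsc{Chambert-Loir}'s canonical $v$-adic measures in place of~$\hat\mu_{\mathscr L}$, yields the sharper adelic equidistribution; the statement as posed retains only the archimedean place.
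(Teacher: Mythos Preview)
The statement you attempt to prove is not a theorem in the paper: it is posed as an open \emph{question} (``Est-il vrai que\dots ?''), and the paper explicitly records that, as stated, it is \emph{false} in general. The Ghioca--Tucker endomorphism $([3+4i],[5])$ of $E\times E$ for a CM elliptic curve~$E$ yields a non-preperiodic curve --- the diagonal --- carrying infinitely many preperiodic points; any sequence of distinct torsion points on this diagonal satisfies both hypotheses~(i) and~(ii), yet the measures $\delta_{x_n}$ are supported on a proper curve and cannot converge to the canonical measure on the surface. The paper says so in as many words right after the counterexample: ``ils infirment aussit\^ot les conjectures~\ref{conj.bogomolov2} et~\ref{conj.equidis}''. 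There is therefore no proof in the paper to compare yours with.

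Your argument is in fact a correct sketch of \textsc{Yuan}'s theorem (th\'eor\`eme~\ref{theo.equidis}), which carries the \emph{stronger} hypothesis that $(x_n)$ is generic --- it must eventually avoid every proper subvariety, not merely the preperiodic ones. The gap is precisely the step you ``suppress'': the reduction from hypothesis~(i) to genericity. You invoke conj.~\ref{conj.bogomolov2} for this, but that conjecture is itself refuted by the same Ghioca--Tucker example. The paper makes the logical structure explicit: conj.~\ref{conj.bogomolov2} together with th\'eor\`eme~\ref{theo.equidis} would imply conj.~\ref{conj.equidis}, but both conjectures fail for polarized dynamical systems in general. What you have written is thus not a proof of the conjecture; it is a (correct) outline of the variational proof of the unconditional theorem, plus the (correct but conditional) remark that Bogomolov would close the remaining gap --- exactly the state of affairs the paper describes, not a resolution of it.
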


Faisons quelques commentaires au travers de deux exemples.

\begin{enumerate}
\item
Supposons par exemple que $X=\P^1$ et que $f(x)=x^2$;
alors les points prépériodiques sont $0$, $\infty$ et les racines
de l'unité,
la hauteur normalisée~$\hat h_{\mathscr L}$ n'est autre que la hauteur usuelle
et la mesure canonique~$\hat\mu_{\mathscr L}$  est la mesure d'intégration sur
le cercle unité.
La condition $\hat h_{\mathscr L}(x_n)\ra 0$ est en particulier vérifiée
si l'on prend pour $x_n$ une racine de l'unité, toutes distinctes
entre elles.
Si $x_n$ est d'ordre~$d_n$, c'est une racine du polynôme
cyclotomique $\Phi_{d_n}$, dont on sait qu'il est irréductible
de degré~$\phi(d_n)$ (nombre d'entiers entre~$1$ et~$d_n$
qui sont premiers à~$d_n$). Les conjugués  de~$x_n$
sont ainsi les autres racines primitives $d_n$-ièmes de l'unité.
Si par exemple $d_n$ est un nombre premier, les conjugués 
de~$x_n$ sont les $\exp(2\mathrm ik\pi/d_n)$, pour $1\leq k\leq d_n-1$.
Lorsque $d_n$ tend vers l'infini (toujours en étant un nombre premier)
la mesure~$\delta_{x_n}$ correspond à des sommes de Riemann 
sur le cercle (à l'oubli négligeable
près d'une masse de Dirac en~$1$ divisée par~$d_n$)
et converge ainsi vers la mesure de Haar normalisée
sur~$\mathrm S^1$.

Inversement, la conjecture~\ref{conj.equidis} appliquée
à cet exemple indique que les racines de l'unité
sont de grand degré, sorte de version quantitative
du théorème affirmant l'irréductibilité des polynômes
cyclotomiques.

\item
Prenant toujours des points périodiques sur $X=\P^1$, $f$ étant alors
arbitraire, cette conjecture est un analogue arithmétique
d'un énoncé d'équidistribution classique en théorie
des systèmes  dynamiques dû à~\cite{brolin1965} et~\cite{lyubich1983},
où l'on remplace la mesure~$\delta_{x_n}$
par la moyenne des masses de Dirac aux points périodiques
répulsifs d'ordre~$d_n$ tendant vers l'infini.
Comme pour les racines de l'unité, la conjecture~\ref{conj.equidis}
implique que ces points périodiques répulsifs se répartissent
en relativement peu d'orbites galoisiennes, toutes
proches de la mesure d'équilibre.

\item
En théorie des systèmes dynamiques, on peut aussi
considérer, au lieu de~$\delta_{x_n}$, le nuage des
itérés inverses $f^{-n}(x_0)$ d'un point initial~$x_0$.
De fait, si $f(x_{n+1})=x_n$ pour tout~$n$,
on a $\hat h_{\mathscr L}(x_n)=\hat h_{\mathscr L}(x_0)/d^n$,
donc la condition~(ii) de la conjecture~\ref{conj.equidis}
est satisfaite.
Là encore, cette conjecture tend à affirmer que
le nuage $f^{-n}(x_0)$ se répartit en relativement peu
d'orbites galoisiennes, toutes proches de la mesure
d'équilibre.

\item
Incidemment, il ne semble pas facile de produire
une suite $(x_n)$ de points vérifiant la condition~$\hat h_{\mathscr L}(x_n)\ra 0$ sans combiner les deux approches précédentes --- points périodiques
et préimages itérées.
\end{enumerate}

\begin{prop}
La conjecture~\ref{conj.equidis} implique la conjecture~\ref{conj.bogomolov2}.
\end{prop}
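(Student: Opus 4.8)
The plan is to imitate the \textsc{Ullmo}--\textsc{Zhang} argument sketched at \S\ref{sec.equidis}/\ref{subsec.ullmo-zhang} for the abelian case: Conjecture~\ref{conj.equidis} will be used to show that a non-preperiodic subvariety cannot carry a Zariski-dense sequence of points whose normalized height tends to~$0$, and both assertions of Conjecture~\ref{conj.bogomolov2} follow from this. By the usual specialization argument one may assume $X$, $f$, $\mathscr L$ and $Y$ defined over a number field~$K$, and by treating irreducible components separately one may assume $Y$ irreducible. If $Y$ is preperiodic, then $Y$ is the unique maximal preperiodic subvariety contained in~$Y$, so $Y^*=\emptyset$ and there is nothing to prove; hence assume $Y$ non-preperiodic.

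The key step is the following: \emph{if $Y'$ is an irreducible non-preperiodic subvariety of~$X$, there is no sequence $(x_n)$ in $Y'(\bar\Q)$ that is Zariski-dense in~$Y'$ and satisfies $\hat h_{\mathscr L}(x_n)\to 0$.} Suppose such a sequence exists; enlarging~$K$ we may assume $Y'$ defined over~$K$. First I would reduce to the case where $Y'$ is contained in no \emph{proper} preperiodic subvariety of~$X$: if $Y'\subseteq W\subsetneq X$ with $W$ preperiodic, choose $m,p$ so that $W':=f^m(W)$ satisfies $f^p(W')=W'$; then $(W',f^p|_{W'},\mathscr L|_{W'})$ is again a polarized dynamical system, its canonical height is the restriction of $\hat h_{\mathscr L}$, the points $f^m(x_n)$ are Zariski-dense in $f^m(Y')$ with $\hat h_{\mathscr L}(f^m(x_n))=d^m\hat h_{\mathscr L}(x_n)\to 0$, and $f^m(Y')$ is still non-preperiodic; since $\dim W'<\dim X$, this substitution can be iterated only finitely often. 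Once $Y'$ lies in no proper preperiodic subvariety, genericity of $(x_n)$ forces every such subvariety to meet $Y'$ in a proper closed subset, hence to contain only finitely many $x_n$: hypothesis~(i) of Conjecture~\ref{conj.equidis} holds, (ii) being the assumption. The conjecture then gives $\delta_{x_n}\to\hat\mu_{\mathscr L}$ weakly on~$X(\C)$. But each $\delta_{x_n}$ is carried by the $K$-Galois orbit of~$x_n$, which lies in the closed set $Y'(\C)$ because $Y'$ is $K$-rational, so $\hat\mu_{\mathscr L}$ is carried by $Y'(\C)$. This contradicts the fact recalled at \S\ref{subsec.mesures-canoniques} that the canonical measure $\hat\mu_{\mathscr L}$ charges no strict Zariski-closed subset of~$X$, since $Y'\subsetneq X$ ($Y'$ is non-preperiodic while $X$ is invariant). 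This proves the key step.

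Now I would deduce Conjecture~\ref{conj.bogomolov2}. Let $Y_0$ be the Zariski closure in~$Y$ of the preperiodic points lying on~$Y$; preperiodic points are Zariski-dense in each irreducible component of~$Y_0$, and if some component were non-preperiodic the key step applied to it (with a Zariski-dense sequence of preperiodic points, which have height~$0$) would be contradicted. Hence every component of~$Y_0$ is preperiodic --- this is Conjecture~\ref{conj.manin-mumford} for~$Y$ --- and, since a preperiodic subvariety of~$Y$ again carries a dense set of preperiodic points by Proposition~\ref{theo.per.dense}, the maximal preperiodic subvarieties contained in~$Y$ are exactly the finitely many components of~$Y_0$; this yields $Y^*=Y\setminus Y_0$. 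For the lower bound, suppose $\inf_{x\in Y^*(\bar\Q)}\hat h_{\mathscr L}(x)=0$ and pick $(x_n)$ in $Y^*(\bar\Q)$ with $\hat h_{\mathscr L}(x_n)\to 0$. By noetherianity take a minimal closed subvariety $Y'\subseteq Y$ containing infinitely many~$x_n$ and pass to that subsequence; it is then Zariski-dense in the (necessarily irreducible) $Y'$. Since each $x_n$ avoids all maximal preperiodic subvarieties of~$Y$, none of them contains~$Y'$, so $Y'$ is non-preperiodic; the key step is then contradicted. Therefore $\inf_{x\in Y^*(\bar\Q)}\hat h_{\mathscr L}(x)>0$.

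The main obstacle is the verification of hypothesis~(i) of Conjecture~\ref{conj.equidis}, that is, the reduction to a subvariety contained in no proper preperiodic subvariety. This rests on knowing that passing to an $f^p$-invariant subvariety preserves the formalism --- that $\hat h_{\mathscr L}$ restricts to the canonical height of the sub-system and that the canonical measure of the sub-system still charges no strict closed subset --- together with enough control on preperiodic subvarieties to make the dimension induction go through. The non-charging property of the canonical measure (\S\ref{subsec.mesures-canoniques}) is the essential geometric input: it is precisely what prevents the weak limit of the $\delta_{x_n}$ from concentrating on $Y'(\C)$, and hence what turns ``many points of small height on~$Y'$'' into ``$Y'=X$''. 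One must also keep track of fields of definition throughout, which the standard specialization arguments render harmless.
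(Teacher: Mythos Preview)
Your overall strategy matches the paper's: use that the canonical measure charges no strict Zariski-closed subset to derive a contradiction from a small-height sequence trapped in a proper subvariety. Your dimension reduction --- replacing $X$ by a minimal periodic subvariety containing $Y'$ --- is more explicit than what the paper writes, and it is exactly what is needed to rule out the case where $Y'$ itself lies inside a proper preperiodic $Z\subsetneq X$.

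There is, however, a real gap in your verification of hypothesis~(i). After the dimension reduction you say that ``genericity of $(x_n)$'' forces any preperiodic $Z\subsetneq X$ to contain only finitely many $x_n$, since $Z\cap Y'\subsetneq Y'$. But your stated hypothesis on $(x_n)$ is only \emph{Zariski-density}, and a Zariski-dense sequence can perfectly well have infinitely many terms in a fixed proper closed subset; the implication you invoke needs the stronger property that \emph{every} proper closed subset contains only finitely many terms (a ``generic'' sequence). This matters for your first application: when you take a component $C$ of $Y_0$ and apply the key step to ``a Zariski-dense sequence of preperiodic points'', nothing ensures that sequence is generic in $C$, so the deduction that $C$ is preperiodic does not follow. (Your second application is fine: passing to a \emph{minimal} $Y'$ does give a generic subsequence.) The paper handles the finiteness of maximal preperiodic subvarieties differently and more directly: assuming there are infinitely many, it enumerates them as $(Y_n)$ with every preperiodic subvariety of $Y$ contained in some $Y_n$, and chooses preperiodic $y_n\in Y_n\setminus(Y_1\cup\cdots\cup Y_{n-1})$; this special choice is what controls accumulation on preperiodic subvarieties of $Y$. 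You can repair your argument either by restating the key step for generic sequences and then, in the first application, passing to a minimal $W$ containing infinitely many preperiodic points (if $W$ were preperiodic it would lie in some $Y_m$, contradicting minimality), or by following the paper's direct construction.
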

\begin{proof}
Soit $Y$ une sous-variété irréductible de~$X$  qui n'est pas prépériodique.

Commençons par montrer qu'il n'existe dans~$Y$ qu'un nombre
fini de sous-variétés prépériodiques maximales.
Dans le cas contraire, on pourrait en effet construire
une suite~$(Y_n)$ de sous-variétés irréductibles
prépériodiques  de~$X$ contenues dans~$Y$, telle que pour tout~$n$,
$Y_n$  ne soit pas contenue dans la réunion de~$Y_1,\ldots,Y_{n-1}$
et telle que toute sous-variété prépériodique de~$(X,f)$ qui est
contenue dans~$Y$ soit contenue dans l'une des~$Y_n$.

Alors $Y_n\cap (Y_1\cup\cdots\cup Y_{n-1})$ est un fermé de Zariski
strict de~$Y_n$ ; 
il existe donc un point~$y_n\in Y_n(\bar\Q)$
qui est prépériodique mais qui n'appartient pas 
à~$Y_m$ si $m<n$.  On a $\hat h_{\mathscr L}(y_n)=0$.
Supposant la conjecture~\ref{conj.equidis} vérifiée,
la suite de mesures discrète~$(\delta_{y_n})$ converge
vers la mesure canonique~$\hat\mu_{\mathscr L}$ sur~$X(\C)$.
Mais comme $y_n\in Y$ pour tout~$n$, les supports
des mesures~$(\delta_{y_n})$ sont contenus dans la partie
fermée~$Y(\C)$ de~$X(\C)$.
Par suite, le support de leur mesure limite est lui-aussi
contenu dans~$Y(\C)$. La contradiction vient de ce que la mesure~$\hat\mu_{\mathscr L}$ ne charge pas les sous-ensembles
algébriques stricts (théorème~3.1 du texte de~\textsc{Guedj},
voir aussi plus haut, 
paragraphe~\ref{sec.sysdyn}/\ref{subsec.mesures-canoniques}).

Cela démontre que la réunion~$Y_0$ des sous-variétés prépériodiques
de~$(X,f)$ qui sont contenues dans~$Y$ est une partie fermée
de~$Y$. On a $Y_0\neq Y$ puisque $Y$ n'est pas prépériodique.
Supposons que $\hat h_{\mathscr L}(x)$ prenne des valeurs
arbitrairement petites lorsque~$x\in Y(\bar\Q)$ mais $x\not\in Y_0$.
Soit $(y_n)$ une suite de points de~$Y(\bar\Q)$ n'appartenant
pas à~$Y_0$ telle que $\hat h_{\mathscr L}(y_n)\ra 0$.
Comme précédemment, la contradiction apparaît en considérant 
la suite de mesures~$(\delta(y_n))$ supportées par~$Y(\C)$:
cette suite converge en effet vers
la mesure canonique~$\hat\mu_{\mathscr L}$, donc
son support est contenu dans~$Y(\C)$ bien
qu'il soit Zariski-dense dans~$X(\C)$.
\end{proof}

\section{Un théorème d'équidistribution 
et ses applications}
\label{sec.equidis}

La géométrie d'Arakelov a  fourni une voie
d'attaque très efficace 
de la conjecture d'équidistribution (conjecture~\ref{conj.equidis}).
Le point de départ en est le théorème d'équidistribution
ci-dessous, dû aux travaux successifs de~\cite{szpiro-u-z97},
\cite{autissier2001b} puis pour finir~\cite{yuan2008}.
(Voir aussi~\cite{bilu97,rumely99,chambert-loir2000b}
pour des démonstrations alternatives lorsque $X=\P^1$.)

\begin{theo}[Yuan]\label{theo.equidis}
Soit $(x_n)$ une suite de points de $X(\bar\Q)$
vérifiant les deux hypothèses suivantes:
\begin{enumerate}\def\theenumi{\roman{enumi}}\def\labelenumi{(\theenumi)}
\item pour toute sous-variété $Y\subsetneq X$,
il n'existe qu'un nombre fini d'entiers~$n$ tels que $x_n\in Y$ ;
\item la suite $(\hat h_{\mathscr L}(x_n))$ des hauteurs normalisées
des~$x_n$ tend vers~$0$.
\end{enumerate}
Alors, la suite $(\delta_{x_n})$ converge
vers la mesure canonique $\hat\mu_{\mathscr L}$ sur~$X(\C)$.
\end{theo}

Par rapport à la conjecture~\ref{conj.equidis},
seule la première hypothèse est modifiée car le quantificateur
porte maintenant sur toutes les sous-variétés de~$Y$.
On peut quand même remarquer que la conjecture~\ref{conj.bogomolov2}
et le théorème~\ref{theo.equidis} réunis impliquent de manière évidente
la conjecture d'équidistribution~\ref{conj.equidis}. 
En effet, si $(x_n)$ est une suite vérifiant les hypothèses
de la conjecture~\ref{conj.equidis} et $Y$ une sous-variété
de~$X$ qui n'est pas prépériodique, la conjecture~\ref{conj.bogomolov2}
interdit qu'une infinité de termes de la suite~$(x_n)$
soit contenue dans~$Y$.

Tel quel, ce théorème n'a apparemment aucune conséquence
sur les conjectures de ce chapitre. 
Toutefois,
\cite{ullmo98} et \cite{zhang98},
par une très belle astuce   qui consiste à appliquer ce théorème
dans deux situations reliées et à comparer les résultats obtenus,
ont pu en déduire les conjectures~\ref{conj.bogomolov2}
et~\ref{conj.equidis} pour les systèmes dynamiques issus de variétés
abéliennes.
J'explique cela au paragraphe suivant.

Je donne ensuite quelques indications sur la preuve
du théorème~\ref{theo.equidis} 
et conclus en décrivant succinctement
sa variante non-archimédienne, due à
\cite{chambert-loir2006,baker2006,baker-rumely2006,favre-rl2006}.
Pour ces deux derniers points, le lecteur n'y trouvera
pas de démonstrations, au mieux une esquisse des idées
qui jalonnent les preuves.

En revanche,
je détaille au chapitre~\ref{chap.equip1} de ce texte la preuve,
d'après~\cite{baker2006},
du théorème~\ref{theo.equidis} lorsque $X$ est la droite projective.
J'y exposerai aussi la démonstration, due à~\cite{bilu97},
de la conjecture d'équidistribution~\ref{conj.equidis}
pour les systèmes dynamiques toriques.

\subsection{Du théorème d'équidistribution à la conjecture
de Bogomolov}\label{subsec.ullmo-zhang}
Présentons brièvement dans un cas simple comment
\cite{ullmo98} et~\cite{zhang98} déduisent
du théorème d'équidistribution~\ref{theo.equidis} (ou plutôt
sa version pour les sous-variétés de variétés abéliennes)
une preuve de la conjecture~\ref{conj.bogomolov2}
dans le cas des variétés abéliennes.

Soit $X$ une variété abélienne sur un corps de nombres,
et $h$ une hauteur de Néron-Tate (voir p.~\pageref{hNT})
sur~$X$ relativement
à un fibré en droites ample et symétrique~$\mathscr L$,
autrement dit une hauteur normalisée pour l'endomorphisme
de multiplication par~$2$.
Cette hauteur est attachée à une métrique hermitienne sur~$\mathscr L$
et la forme de courbure~$\omega$ qui lui est associée
est une forme de Kähler.

Soit $Y$ une sous-variété de~$X$ qui n'est pas une translatée
de sous-variété abélienne par un point de torsion.
Il s'agit de prouver qu'il n'existe pas dans~$Y(\bar\Q)$
de suite~$(y_n)$ telle que $h(y_n)$ tende vers~$0$
et telle que l'ensemble~$\{y_n\}$ soit dense dans~$Y$
pour la topologie de Zariski.

Raisonnons par l'absurde en supposant l'existence d'une telle suite.
Supposons aussi pour simplifier que $Y-Y=X$, c'est-à-dire
que tout point de~$X$ soit la différence de deux points de~$Y$; 
notons $d$ la dimension de~$Y$ et $k$ celle de~$X$.

D'après le théorème d'équidistribution,
la suite~$(\delta_{y_n})$ a  pour point adhérent
la mesure~$(\omega|Y)^d/\deg(Y)$,
où $\deg(Y)$ désigne le degré de~$Y$ relativement à~$\mathscr L$.
Quitte à considérer une sous-suite de la suite~$(y_n)$, 
on suppose que la suite~$(\delta_{y_n})$ converge vers cette mesure.

Par ailleurs, les points $y_n-y_m$, pour $m,n$ parcourant~$\N$
décrivent un sous-ensemble de~$X(\bar\Q)$ qui est dense pour
la topologie de Zariski. En outre, les propriétés
élémentaires de la hauteur de Néron-Tate entraînent
que $h(y_n-y_m)$ tend vers~$0$ lorsque $n$ et $m$ tendent
tous deux vers l'infini.  
En outre, on peut extraire de la suite double $(y_n-y_m)_{n,m}$
une suite $(y_{n_k}-y_{m_k})$ à laquelle le théorème
d'équidistribution s'applique.
Il en résulte que
la suite $(\delta_{y_n-y_m})$ a la mesure $(\omega)^k/\deg(X)$
pour point adhérent.

Si $f\colon Y\times Y\ra X$ désigne
le morphisme de différence, $(y,y')\mapsto y-y'$,
on en déduit que les
mesures 
$ (\omega^k)$ 
et 
$f_*( (\omega(y)|Y)^d\otimes (\omega(y')|Y)^d) $
sont proportionnelles.

La contradiction provient de ce que la première mesure est
une honnête mesure de Lebesgue sur~$X(\C)$,
tandis que la seconde a des singularités dues au défaut
de lissité du morphisme~$f$.
Cette mesure est en effet définie par intégration de la
mesure de Lebesgue $(\omega (y)|Y)^d\otimes (\omega(y')|Y)^d$
dans les fibres du morphisme~$f$;
or, ces fibres sont génériquement de dimension~$2d-k$,
mais certaines sont de dimension strictement supérieure,
c'est par exemple le cas de celle en l'origine~$o$ qui, 
égale à la diagonale de~$Y\times Y$, est 
de dimension~$d>2d-k$.
Lorsque $r$ tend vers~$0$,
le volume d'une petite boule de rayon~$r$ autour de~$o$ dans~$X(\C)$
est de l'ordre de~$r^{2\dim X}$;
en revanche, le volume de son image réciproque par~$f$ 
est au moins égal à $r^{2(\codim f^{-1}(o))/\mu}$, 
où $\mu$ est la multiplicité générique de la fibre $f^{-1}(o)$,
comme on le voit en évaluant la contribution d'un voisinage d'un
point général de cette fibre.
Puisque $\codim f^{-1}(o) = \dim Y-\dim f^{-1}(o)<\dim X$,
il vient \emph{a fortiori} $(\codim f^{-1}(o))/\mu <\dim X$,
et ces deux volumes ont des comportements asymptotiques différents
lorsque $r$ tend vers~$0$.
(Voir aussi le lemme~4.3 de~\cite{david-p98} pour un énoncé
dans ce sens.)

\cite{ullmo98} et \cite{zhang98} raisonnent un peu différemment
et utilisent d'autres morphismes.
Dans le cas d'\cite{ullmo98}, $Y$ est une courbe plongée dans sa jacobienne~$X$;
si $g$ est le genre de~$Y$, il considère alors le $g$-ième
produit symétrique de~$Y$, c'est-à-dire le quotient de $Y^g$
par l'action du groupe symétrique~$\mathfrak S_g$
et le morphisme $f\colon Y^{(g}\ra X$ déduit de la loi d'addition  sur~$X$.
Ce morphisme est birationnel mais n'est pas un isomorphisme,
puisque son application  tangente est de rang~$1$ en tout point de la
de la diagonale.

\cite{zhang98} se ramène d'abord au cas où le stabilisateur de~$Y$
est trivial dans~$X$ et considère alors le morphisme~$f$
de~$Y^m$ dans~$X^{m-1}$ défini par $(y_1,\ldots,y_m)\mapsto (y_2-y_1,\ldots,y_m-y_1)$ que~\cite{faltings1991} avait déjà considéré dans sa démonstration
de la conjecture de \textsc{Lang} pour les sous-variétés de variétés abéliennes.
Si $m$ est un entier assez grand, ce morphisme est birationnel;
ce n'est pas un isomorphisme puisque la diagonale de~$Y^m$
est envoyée sur l'origine de~$X^{m-1}$.

Notons $f\colon Y'\ra X'$ le morphisme ainsi défini.
D'après le théorème d'équidistribution
appliqué deux fois, à la source et au but de~$f$,
à des suites bien choisies construites à partir de la
suite initiale $(y_n)$,
on en déduit l'égalité de deux mesures sur~$X'(\C)$. 
Sur un ouvert dense de~$X'(\C)$  au-dessus duquel $f$ est un isomorphisme,
ces mesures proviennent de formes différentielles. On en déduit 
une égalité de formes différentielles de degré maximal
sur un ouvert dense de~$Y'(\C)$, 
donc partout.
Cependant,  l'une de ces formes ne s'annule pas,
tandis que l'autre, image inverse d'une forme différentielle
sur~$X$ par~$f$ s'annule là où le jacobien du morphisme non lisse~$f$ 
s'annule.

Pour plus de détails,
je renvoie aux articles originaux, ainsi qu'à l'exposé d'\cite{abbes1997}
au Séminaire Bourbaki.

Cette astuce de considérer un morphisme dont
les fibres n'ont pas toutes la même dimension
a été reprise par~\cite{david-p98}
qui ont donné une nouvelle démonstration
de la conjecture~\ref{conj.bogomolov2}
pour les variétés abéliennes
(et, par la suite, dans plusieurs autres cas,
voir \cite{david-philippon1999,david-p2000,david-p2002}).

En revanche, jusqu'à présent, personne n'a réussi à en tirer parti 
hors des situations liées aux groupes algébriques,
à plus forte raison pour un système dynamique polarisé
général.

\subsection{Vers le théorème d'équidistribution}
Dans ce paragraphe, je voudrais donner quelques indications
de la méthode de démonstration du théorème~\ref{theo.equidis}.
La théorie dans laquelle s'écrit cette preuve
est la \emph{géométrie d'Arakelov,} une théorie
de l'intersection pour les variétés algébriques sur les 
anneaux d'entiers de corps de nombres,
considérées comme analogues des variétés algébriques
fibrées au-dessus d'une courbe projective.
Inventée dans le cas des surfaces 
arithmétiques par~\cite{arakelov1974} et~\cite{faltings1984},
elle a été développée de manière systématique 
par H.~\textsc{Gillet} et C.~\textsc{Soulé} dans une série
d'articles.
Citons notamment \cite{gillet-s90,gillet-s90b,gillet-s92},
la monographie~\cite{soule-a-b-k92}, ainsi que
les notes de~\cite{faltings1992}.

De même que la formule du produit (prop.~\ref{prop.formule.produit})
fait intervenir les valeurs absolues archimédiennes,
la géométrie d'Arakelov ajoute de manière systématique
aux objets issus de la géométrie algébrique
sur des anneaux d'entiers de corps de nombres
des données de nature analytique.
Par exemple, les fibrés en droites sont systématiquement
munis de métriques hermitiennes.

Ainsi que l'a montré~\cite{faltings1991},
cette théorie fournit un cadre conceptuel efficace
pour étudier la théorie des hauteurs relatives
à un fibré en droites hermitien~$\overline{\mathscr L}$:  
de même que toute sous-variété d'une variété projective
dispose d'un degré relativement à un fibré en droites.
toute  variété~$X$ dispose alors d'une hauteur $h_{\overline{\mathscr L}}(X)$.
Ceci est développé en grand détail dans l'article de~\cite{bost-g-s94}.

En outre, an ajoutant à ce formalisme  de fibrés en droites hermitiens
des objets construits via un processus limite
dans l'esprit de celui qui définit les hauteurs canoniques,
\cite{zhang95b} a montré que l'on 
peut  y étudier la hauteur normalisée définie par un système
dynamique polarisé $(X,f,\mathscr L)$ et l'étendre en une notion
de hauteur normalisée pour les sous-variétés.
Notant $d$ le \emph{poids} de ce système dynamique, l'équation
$\hat h_{\mathscr L}(f(x))=d \hat h_{\mathscr L}(x)$ 
vérifiée par tout point $x\in X(\bar\Q)$
devient $\hat h_{\mathscr L}(f_*(Z))=d^{p+1} \hat h_{\mathscr L}(Z)$, 
si $Z$ est une sous-variété irréductible de~$X$ de dimension~$p$.
Dans cette formule, $f_*(Z)$ désigne l'image de~$Z$ au sens des cycles;
c'est un  multiple de~$f(Z)$. Par exemple, on a $f_*(X)=d^p X$,
d'où l'on déduit que $\hat h_{\mathscr L}(X)=0$.
Plus généralement, les sous-variétés prépériodiques 
(\emph{i.e.}, les sous-variétés~$Y$ telles qu'il existe des entiers~$n$ et~$m>0$
tels que $f^n(Y)=f^{n+m}(Y)$) 
sont de hauteur normalisée nulle. 

La question de la réciproque a été posée par \cite{philippon91} pour
des variétés abéliennes. 
Il  avait introduit dans cet article
une notion de hauteur pour les sous-variétés et, dans le contexte
des variétés abéliennes, démontré l'existence d'une hauteur normalisée;
ses arguments sont cependant généraux.
Étendue aux systèmes dynamiques polarisés, la conjecture 
qu'il y proposait est la suivante,
d'ailleurs équivalente à la conjecture~\ref{conj.bogomolov2}
d'après~\cite{zhang95b} (voir aussi~\citet{szpiro90}).
\begin{conj}\label{conj.philippon}
Soit $(X,f,\mathscr L)$ un système dynamique polarisé défini
sur un corps de nombres. 
Si $Y$ est une sous-variété de~$X$ telle que $\hat h_{\mathscr L}(Y)=0$,
alors $Y$ est prépériodique.
\end{conj}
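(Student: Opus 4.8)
The plan is to reduce the statement to an equidistribution argument, exactly along the lines by which \cite{ullmo98} and~\cite{zhang98} settled the abelian case. The first step is Zhang's theory of the normalized height of subvarieties: extending $\hat h_{\mathscr L}$ from points to cycles as in~\cite{zhang95b} (so that $\hat h_{\mathscr L}(f_*(Z))=d^{p+1}\hat h_{\mathscr L}(Z)$ for $Z$ of dimension~$p$, whence $\hat h_{\mathscr L}(X)=0$ and, more generally, every preperiodic subvariety has height zero), one has the two-sided inequality relating $\hat h_{\mathscr L}(Y)$ to the \emph{essential minimum}
\[ e_{\mathscr L}(Y) = \sup_{Z\subsetneq Y}\ \inf_{x\in (Y\setminus Z)(\bar\Q)} \hat h_{\mathscr L}(x), \]
namely $e_{\mathscr L}(Y)\geq \hat h_{\mathscr L}(Y)/\big((\dim Y+1)\deg_{\mathscr L}(Y)\big)$, together with $\hat h_{\mathscr L}(Y)\geq 0$ (which follows from the positivity part of Prop.~\ref{prop.hauteur-normalisee-Qbarre}). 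Hence $\hat h_{\mathscr L}(Y)=0$ forces $e_{\mathscr L}(Y)=0$, so there is a generic net $(x_n)$ of points of $Y(\bar\Q)$ with $\hat h_{\mathscr L}(x_n)\to 0$ and with only finitely many $x_n$ in any fixed proper subvariety of~$Y$; conversely one checks that the existence of such a net on a non-preperiodic $Y$ would already violate Conjecture~\ref{conj.bogomolov2}, so the two statements are equivalent, as observed in~\cite{zhang95b}.

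The second step is to feed $(x_n)$ into the equidistribution theorem. Yuan's theorem~\ref{theo.equidis}, in the form valid for a subvariety --- one works on the arithmetic model $\mathcal Y$ with the metrized line bundle $\hat{\overline{\mathscr L}}|_Y$ carrying the canonical, $f$-compatible metric --- shows that the Galois-orbit measures $\delta_{x_n}$ converge to the probability measure $\mu_Y = c_1(\hat{\overline{\mathscr L}}|_Y)^{\dim Y}/\deg_{\mathscr L}(Y)$ on $Y(\C)$, and likewise at every finite place. Assuming, for contradiction, that $Y$ is not preperiodic --- replacing $Y$ by $f^n(Y)$, which again has height $0$, and arguing by induction on $\dim Y$ if needed --- the task is to contradict the existence of such an equidistributed net.

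The hard part is exactly this last step. In the abelian and toric cases one exploits the group law: applying equidistribution both to $Y$ and to the image of a suitable power $Y\times\cdots\times Y$ under a difference (resp. addition) morphism produces two proportional measures on the target, one an honest smooth Lebesgue-type measure and the other inheriting singularities along the non-smooth locus of that morphism, an impossibility. A general polarized dynamical system carries no such auxiliary morphism, and manufacturing a dynamical substitute --- from the tower $Y,f(Y),f^2(Y),\dots$, from correspondences, or from the preimage trees $f^{-n}(Y)$ --- is precisely where all known methods stop. Worse, the counterexamples of~\cite{ghioca-tucker2009} to Conjecture~\ref{conj.manin-mumford} propagate to Conjecture~\ref{conj.bogomolov2} and hence to the present statement, so an unconditional proof as stated cannot exist: the realistic outcome is a proof restricted to the structured cases (abelian and toric systems, their quotients such as Lattès maps, and products thereof), together with a corrected formulation that divides out the Ghioca--Tucker obstruction before the equidistribution machinery is applied.
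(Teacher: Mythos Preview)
The statement you were asked to prove is labelled \emph{Conjecture} in the paper, and the paper does not prove it; it discusses its status, its equivalence (via~\cite{zhang95b}) with Conjecture~\ref{conj.bogomolov2}, and notes that the Ghioca--Tucker counterexamples to Conjecture~\ref{conj.manin-mumford} propagate to it. Your write-up is therefore not a proof but an accurate status report, and it lands on exactly the same conclusion as the paper: the statement is known in the abelian and toric cases by the Ullmo--Zhang equidistribution trick, is open in general, and is in fact false as stated.

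One genuine slip in your first step: the inequality you quote, $e_{\mathscr L}(Y)\geq \hat h_{\mathscr L}(Y)/\big((\dim Y+1)\deg_{\mathscr L}(Y)\big)$, is the fundamental inequality, and from $\hat h_{\mathscr L}(Y)=0$ it only yields $e_{\mathscr L}(Y)\geq 0$, which is automatic. The implication you actually need, namely $\hat h_{\mathscr L}(Y)=0\Rightarrow e_{\mathscr L}(Y)=0$ (existence of a generic small-point sequence on~$Y$), is the \emph{other} direction, and the paper attributes it to Zhang's arithmetic Nakai--Moishezon criterion in~\cite{zhang95b}. Also, the nonnegativity $\hat h_{\mathscr L}(Y)\geq 0$ for subvarieties does not follow from Prop.~\ref{prop.hauteur-normalisee-Qbarre}, which concerns points; it is part of Zhang's Arakelov-theoretic construction. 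With these two attributions fixed, your reduction to Conjecture~\ref{conj.bogomolov2} is correct and matches the paper's discussion.
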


Un des slogans de la géométrie d'Arakelov consiste à 
rechercher un analogue {\og arithmétique\fg}
à chaque théorème de géométrie algébrique.
La recette peut sembler assez banale: écrire le théorème considéré
dans le cas d'une fibration au-dessus d'une courbe projective
et comprendre quels termes analytiques doivent être
adjoints à la situation arithmétique analogue.
Toutefois, la réalisation en est presque toujours très ardue!

En géométrie algébrique, le théorème de \textsc{Hilbert}--\textsc{Samuel} peut s'énoncer
comme suit: soit $M$ une sous-variété de dimension~$k$
et de degré~$D$ de l'espace projectif~$\P^N$; lorsque
$n$ tend vers l'infini, le nombre maximum de polynômes
homogènes de degré~$n$ linéairement indépendants sur~$M$
est équivalent à $D n^{k}/k!$.
De manière équivalente, si $M$ est une variété de dimension~$k$
et $\mathscr L$ un fibré en droites ample sur~$M$,
la dimension de l'espace
$H^0(M,\mathscr L^{\otimes n})$ est équivalente 
à $(c_1(\mathscr L)^k|M) n^{k}/k!$ lorsque $n$ tend vers l'infini.
Une façon (peu économique)
de le démontrer consiste à déduire du théorème de \textsc{Riemann}--\textsc{Roch}
(sous la forme due à \textsc{Hirzebruch}--\textsc{Grothendieck})
le résultat correspondant pour la caractéristique d'Euler
\[ \chi(M,\mathscr L^{\otimes n}) \mathrel{:=} \sum_{i=0}^k (-1)^k h^i(M,\mathscr L^{\otimes n}) \sim \frac {n^k}{k!} (c_1(\mathscr L)^k|M) \]
puis à invoquer  le théorème d'annulation de Serre
selon lequel $h^i(M,\mathscr L^{\otimes n})=0$ pour $i>0$ et $n$
assez grand.

L'analogue en géométrie d'Arakelov de ce théorème de \textsc{Hilbert}--\textsc{Samuel}
a été démontré par~\cite{gillet-s88,gillet-s92} au moyen
de leur {\og théorème de Riemann-Roch arithmétique\fg}
et d'estimées analytiques dues à~\cite{bismut-vasserot1989}.
(Voir aussi la démonstration, 
directe et plus élémentaire, de~\cite{abbes-b95},
ainsi que l'extension de~\cite{zhang95b}.)
Selon ce théorème, un fibré en droites hermitien~$\overline{\mathscr L}$
{\og semi-positif\fg} sur une variété arithmétique~$X$
possède, quitte à le remplacer par une de ses puissances,
une section globale~$s$ de norme sup.\ contrôlée
par la hauteur $h_{\overline{\mathscr L}}(X)$.

On en déduit l'\emph{inégalité fondamentale:} si 
le fibré en droites hermitien $\overline{\mathscr L}$ 
est {\og semi-positif\fg},
alors  pour toute suite~$(x_n)$ 
de points de~$X(\bar\Q)$ telle qu'aucune sous-variété stricte de~$X$
n'en contienne une sous-suite, on a l'inégalité
\[ \liminf h_{\overline{\mathscr L}}(x_n) \geq \frac{h_{\overline{\mathscr L}}(X)}{(1+\dim X)\deg_{\mathscr L}(X)}.\]

Sans donner la définition précisément,
disons quelques mots de l'hypothèse {\og semi-positif\fg}.
C'est un analogue  en géométrie d'Arakelov 
d'un fibré en droites sur l'espace total d'une fibration au-dessus
d'une courbe pour lequel toute courbe contenue dans une fibre aurait
un degré positif ou nul.
Dans cette analogie, la partie analytique qui concerne
les métriques hermitiennes stipule que le courant de courbure
de la métrique hermitienne est un courant positif. 
Signalons aussi qu'elle est vérifiée
dans le cas qui nous concerne principalement dans ce texte,
à savoir les hauteur normalisées associées 
à un système dynamique polarisé.

Dans le cas d'un système dynamique polarisé,
c'est précisément au travers de cette inégalité 
que \cite{zhang95b} établit une relation 
entre les conjectures~\ref{conj.bogomolov2}
et~\ref{conj.philippon}. En effet, appliquée, non pas à~$X$,
mais à une sous-variété~$Y$, elle entraîne que si $\hat h_{\mathscr L}(Y)>0$,
alors l'ensemble des points de $Y(\bar\Q)$ de hauteur normalisée assez petite
n'est pas dense pour la topologie de Zariski.
L'autre implication résulte de l'analogue arithmétique du théorème 
de \textsc{Nakai}--\textsc{Moishezon} que démontre~\cite{zhang95b} 
et qui implique une inégalité dans l'autre sens.

Considérons l'inégalité fondamentale dans le cas d'un système dynamique 
polarisé $(X,f,\mathscr L)$. Comme on l'a vu, le membre de droite est nul.
\cite{szpiro-u-z97} déduisent alors de cette inégalité
un \emph{principe variationnel.} Si $(x_n)$ une suite
pour laquelle on a égalité, ils écrivent l'inégalité
analogue où le fibré~$\overline{\mathscr L}$ est remplacé
par le fibré $\overline{\mathscr L}(\eps u)$ 
obtenu en multipliant la métrique hermitienne
par $\exp(-\eps u)$, où $u$ est une fonction $\mathscr C^\infty$
sur $X(\C)$ et $\eps$ un petit paramètre. 
Si $\overline{\mathscr L}(\eps u)$ est encore {\og semi-positif\fg},
on trouve
\[  \liminf \left(h_{\overline{\mathscr L}(\eps u)}(x_n) \right)
  \geq \frac{h_{\overline{\mathscr L}(\eps u)}(X)}
            {(1+\dim X)\deg_{\mathscr L}(X)}. \]
Revenant à la définition des hauteurs, on en tire
l'inégalité 
\[ \liminf \left( h_{\overline{\mathscr L}(x_n)} + \eps \delta_{x_n}(u)\right)
        \geq \frac{h_{\overline{\mathscr L}}(X)}{(1+\dim X)\deg_{\mathscr L}(X)}
   +  \eps \mu_{\overline{\mathscr L}}(u) + \mathrm O(\eps^2). \]
Puisque, par hypothèse, $h_{\overline{\mathscr L}}(x_n)$
tend vers le premier terme du second membre, il vient 
\[ \liminf_n \left(\eps \delta_{x_n}(u) \right) \geq  \eps \mu_{\overline{\mathscr L}}(u) + \mathrm O(\eps^2). \]
Divisant cette relation par $\eps$ et
faisant tendre $\eps$ vers~$0$, par valeurs supérieures ou inférieures,
on obtient que $\delta_{x_n}(u)$ tend vers $\mu_{\overline{\mathscr L}}(u)$,
c'est-à-dire le théorème d'équidistribution (théorème~\ref{theo.equidis})
puisque $h_{\overline{\mathscr L}}=\hat h_{\mathscr L}$.

Dans l'argument précédent,  nous avons supposé que les petites perturbations 
$\overline{\mathscr L}(\eps u)$ sont effectivement
justiciables de l'inégalité fondamentale.
Cela couvre en particulier le cas des systèmes dynamiques abéliens
et constitue le résultat principal de~\cite{szpiro-u-z97}.
En effet, dans ce cas,
le courant de courbure $c_1(\overline{\mathscr L})$
évoqué plus haut est en fait une forme hermitienne définie positive 
sur le fibré tangent de la variété abélienne complexe~$X(\C)$,
et toute modification assez petite de cette forme est encore définie positive.

Le cas général est dû à~\cite{yuan2008} qui démontre
un analogue  en géométrie d'Arakelov d'un critère de~\cite{siu1993}
pour que la différence de deux fibrés en droites amples 
soit \emph{gros}, et en particulier qu'une de ses puissances 
possède une section globale non nulle.
De même, \textsc{Yuan} prouve que les fibrés en droites
hermitiens requis pour mettre en \oe uvre le principe variationnel
possèdent des sections globales de norme~sup.\ contrôlée,
et en déduit qu'on peut leur appliquer l'inégalité fondamentale.

\subsection{Analogues non archimédiens}
Le point de vue sur les hauteurs que nous avons décrit
met toutes les valeurs absolues d'un corps de nombres
sur le même plan, qu'elles soient archimédiennes
ou non.
La question d'un analogue non archimédien des théorèmes
d'équidistribution se pose alors naturellement:
fixons un nombre premier~$p$ et,
pour $x\in X(\bar\Q)$, considérons la mesure de probabilité~$\delta_x$
sur l'espace topologique $X(\C_p)$, toujours définie par les conjugués
de~$x$; si $(x_n)$ est une suite de points de~$X(\bar\Q)$
vérifiant les hypothèses du théorème~\ref{theo.equidis},
que peut-on dire du comportement des
suites de mesures~$(\delta_{x_n})$?

Posé tel quel, le problème n'est en fait pas très intéressant.
En effet, dans le cas le plus simple possible où 
l'on prend pour $X$ la droite projective munie de l'application
d'élévation au carré (qui redonne la hauteur naturelle),
les suites de mesures de probabilité en question
ne convergent pas dans l'espace des mesures de probabilité.
Le point est que cet espace n'est pas compact pour la topologie
de la convergence étroite, car le corps~$\C_p$ n'est pas
localement compact.

Les espaces analytiques introduits par~\cite{berkovich1990}
permettent de remédier à ce problème. 
Par exemple, la droite analytique~$\mathrm A^1_{\C_p}$
sur~$\C_p$, complémentaire 
du point à l'infini de la droite projective~$\mathrm P^1_{\C_p}$
sur~$\C_p$,
est l'ensemble des semi-normes multiplicatives sur l'anneau~$\C_p[T]$
qui étendent la valeur absolue $p$-adique sur~$\C_p$,
munie de la {\og topologie spectrale\fg}
la moins fine pour laquelle les applications naturelles,
$\operatorname{ev}_P\colon \nu\mapsto \nu(P)$ de~$\mathrm A^1_{\C_p}$
dans~$\R$, sont continues  pour tout polynôme $P\in\C_p[T]$.
Parmi ces semi-normes, on trouve bien sûr celles de la forme
$\nu_t\colon P\mapsto \abs{P(t)}_p$, où $t$ est un point de~$\C_p$,
mais il en est bien d'autres, 
par exemple la \emph{norme de Gauß}~$\norm{\cdot}_\Gamma$ définie par
\[ \norm{P}_{\Gamma} = \max(\abs{a_0}_p,\ldots,\abs{a_n}_p),
 \qquad (P=a_0+a_1T+\cdots+a_n T^n). \]
Que cette norme soit multiplicative est une observation
au fondement  de toute géométrie analytique $p$-adique
et découle du lemme de Gauß selon lequel $\norm{PQ}_\Gamma=1$ si $P$ et $Q$
sont deux polynômes de~$\C_p[T]$ tels que $\norm P_\Gamma=\norm Q_\Gamma=1$.)

Dans~\cite{chambert-loir2006}, je construis pour tout
système dynamique polarisé $(X,f,\mathscr L)$ sur un corps~$p$-adique
une mesure canonique~$\mu_{\mathscr L}$ sur l'espace analytique 
associé  à~$X$ au sens de \textsc{Berkovich}.
Dans le cas du système dynamique torique sur~$\P^1$,
pour lequel $f([x:y])=[x^2:y^2]$,
cette mesure n'est autre que la mesure de Dirac supportée
par le point~$\Gamma$ associé à la norme de Gauß.
Une adaptation du principe variationnel décrit au paragraphe
précédent me permet alors de démontrer un théorème d'équidistribution
$p$-adique des points de petite hauteur lorsque $X$ est de dimension~1;
le cas général est dû à~\cite{yuan2008}. Dans les deux cas,
le théorème qui est démontré est un énoncé général d'équidistribution
en géométrie d'Arakelov qui dépasse  donc
le strict cas des systèmes dynamiques polarisés.
Par une méthode de théorie du potentiel
inspirée de~\cite{bilu97}, 
\cite{favre-rl2006} et \cite{baker-rumely2006} ont traité indépendamment le
cas où $X$ est la droite projective
et \cite{baker-petsche2005} celui où $X$ est une courbe elliptique.

Ces mesures en géométrie non-archimédienne peuvent aussi
être définies lorsque le corps de base est un corps de fonctions
d'une variable. L'analogue du théorème~\ref{theo.equidis}
est alors valable.
En analysant précisément les mesures obtenues,
\cite{gubler2007a} a démontré quelques cas de l'analogue 
de la conjecture~\ref{conj.bogomolov2} sur un corps de fonctions.

\medskip

Indépendamment de  ce courant de pensée de géométrie  d'Arakelov,
ces dernières années ont vu l'apparition d'une théorie
ergodique sur un corps ultramétrique 
pour laquelle je renvoie aux notes des exposés
de \textsc{Yoccoz} dans ce volume.

\section{Exercices}

\begin{exer}\label{ex-involution}\label{exo.cremona.2}
a) On définit une application rationnelle~$\sigma$ de~$\P^2$
dans lui-même en associant à un point de coordonnées
homogènes~$[x:y:z]$ le point de coordonnées homogènes~$[yz:zx:xy]$.
Cette application est définie dès qu'au moins deux des coordonnées
$x,y,z$ ne sont pas nulles, c'est-à-dire en dehors des trois
points de~$\P^2$ correspondant aux trois axes de coordonnées.
Cette application est de degré~$2$ mais pour tout 
$x\in\P^2(\bar\Q)$ où $\sigma$ est définie, on a
$\sigma\circ\sigma(x)=x$ --- c'est une involution
(\og involution de \textsc{Cremona}\fg).
En particulier, tout point de~$\P^2\setminus E$
est prépériodique.

b) Soit $u=[\ell_1:\ell_2:\ell_3]$ un automorphisme linéaire de~$\P^2$ 
et posons $\tau=u^{-1}\circ \sigma\circ u$. C'est
encore une involution rationnelle de~$\P^2$, définie
par des polynômes $[F_1:F_2:F_3]$ de degré~$2$.  
Si le groupe de Galois $\Gal(\bar\Q/\Q)$,
agissant sur les coefficients des formes linéaires~$\ell_i$,
\emph{permute} ces trois formes linéaires, alors les polynômes~$F_i$
définissant~$\tau$ sont à coefficients rationnels.

c) Le lieu d'indétermination~$E'$ de~$\tau$
est l'image réciproque par~$u$ de~$E$.
Déterminer un automorphisme~$u$ de sorte
que $E'\cap \P^2(\Q)$ soit vide.
Alors, les polynômes~$F_i$ n'ont pas de zéro commun dans~$\P^2(\Q)$
sans pour autant que l'inégalité de la prop.~\ref{prop.fonctorialite-Q}
du chapitre~1 soit valide. Pourquoi cela ne contredit-il pas
cette proposition ?
\end{exer}

\begin{exer}[\cite{kawaguchi1999}]
Soit $f\colon X\ra X$ un endomorphisme d'une variété
projective et soit $\mathscr L$ un fibré en droites ample sur~$X$.
On suppose que $f^*\mathscr L\otimes\mathscr L^{-1}$ est ample.
Alors, il existe un nombre réel~$c>0$ tel que
pour tout point $x\in X(\bar\Q)$ qui est prépériodique pour~$f$,
$h_{\mathscr L}(x)<c$.
Par conséquent, le théorème~\ref{theo.northcott}
s'étend à ce contexte (un peu) plus général.
(Montrer  qu'il existe un entier~$n\geq 1$
$(f^*\mathscr L\otimes \mathscr L^{-1})^n\otimes\mathscr L^{-1}$
est ample. En déduire qu'il existe un nombre réel~$a$
tel que $h_{\mathscr L}(f(x))\geq (1+\frac1n) h_{\mathscr L}(x)-a$
pour tout $x\in X(\bar\Q)$.)
\end{exer}

\begin{exer}[\cite{lewis1972}]
Par linéarité, on étend la notion de hauteur
aux 0-cycles (c'est-à-dire aux
combinaisons linéaires formelles de points).
Soit $f$ et~$g$ des endomorphismes d'un espace projectif~$\P^n$
tels que $\deg(f)>\deg(g)$.

a)
Montrer qu'il existe une unique hauteur~$\hat h$ relative 
au fibré~$\mathscr O(1)$
sur~$\P^n$ telle que $\hat h(f_*g^*x)=\deg(f) \hat h(x)$
pour tout $x\in \P^n(\bar\Q)$.
(Dans cette formule,
$f_*$ et $g^*$ désigne les applications déduites de~$f$ et~$g$
au niveau des~$0$-cycles, voir~\cite{fulton98}, chapitre~1.)

b) Soit $X$ une partie de~$\P^n(\bar\Q)$ stable par $\Gal(\bar \Q/K)$
telle que $g|_X$ est injective et $g(X)\subset f(X)$.
Montrer que les éléments de~$X$ sont de hauteur bornée.

c) En déduire que pour tout corps de nombres~$K$,
il n'y a pas de partie infinie $X\subset \P^n(K)$
tels que $g|_X$ soit injective et $g(X)\subset f(X)$.
\end{exer}

\begin{exer}[\cite{poonen99}]
Soit $f\colon X\ra X$ un endomorphisme d'une variété
projective et soit $\mathscr L$ un fibré en droites ample sur~$X$.
Soit $h$ une fonction hauteur pour~$\mathscr L$.
On suppose qu'il existe des nombres réels~$a>1$ et~$b$
tel que $h(f(x))\geq a h(x)-b$ pour tout~$x\in X(\bar\Q)$.

a) Montrer qu'il existe des nombres réels~$c>1$ et~$M>0$ tels que
l'on ait $h(f(x))\geq c h(x)$ pour tout~$x$ tel que $h(x)\geq M$.

b) Pour tout point $x\in X(\bar\Q)$ qui n'est pas
prépériodique, la suite $(h(f^n(x)))$ tend vers l'infini.

c) Pour $x\in X(\bar\Q)$, on note~$N(x)$ le plus petit
entier~$n$ tel que $h(f^n(x))\geq M$, s'il en existe,
ou~$+\infty$ sinon. On dit qu'une suite $(x_k)$ de points de~$X(\bar\Q)$
est \emph{de petite hauteur} si la suite $(N(x_k))$ tend vers l'infini.

Supposons qu'il existe une hauteur normalisée~$\hat h$ associée à~$f$,
c'est-à-dire une fonction~$\hat h$ sur~$X(\bar\Q)$
telle que $\hat h-h$ soit bornée et que $\hat h(f(x))=\lambda h(x)$
pour tout $x\in X(\bar\Q)$. Montrer que $\lambda >1$.
Montrer qu'une suite $(x_k)$ est de petite hauteur si et seulement
si $\hat h(x_k)\ra 0$ (d'où la terminologie!).
\end{exer}

\begin{exer}\label{exer.lehmer2}
Cet exercice propose quelques cas (plus ou moins) faciles du problème de Lehmer.
Soit $\xi$ un nombre algébrique qui n'est ni nul ni une racine de l'unité;
soit $P$ son polynôme minimal. On rappelle que la mesure
de Mahler de~$P$ est égale à $\exp(h(\xi)[\Q(\xi):\Q])$.

a) Si $\xi$ n'est pas un entier algébrique, de même que son inverse,
alors $M(P)\geq 2$.

b) Si $\xi$ est totalement réel, c'est-à-dire si toutes
les racines de~$P$ sont réelles, alors
$M(P)\geq \big(\frac{1+\sqrt 5}2\big)^{1/2}$;
\cite{schinzel1975}.  Démonstration de~\cite{hohn-skoruppa1993}:
pour $x\in\R$,
observer l'inégalité  
\[ \max(1,\abs x)\geq 
 \abs x^{1/2}\abs{x-1/x}^{1/2\sqrt 5}\big(\frac{1+\sqrt 5}2\big)^{1/2}.\]
\end{exer}

\begin{exer}\label{exo.fg=gf}
a) Soit $M$ un espace métrique complet et soit $u$, $v$ deux applications
continues contractantes de~$M$ dans~$M$. Si $u$
et~$v$ commutent, leurs points fixes respectifs coïncident.

b) Soit $X$ une variété algébrique projective, soit $f$ et~$g$
des endomorphismes de~$X$,  soit $\mathscr L$
un fibré en droites sur~$X$ et $d$, $e$ des entiers~$\geq 2$
tels que $f^*\mathscr L\simeq \mathscr L^{\otimes d}$
et $g^*\mathscr L\simeq \mathscr L^{\otimes e}$.
Si $f$ et~$g$ commutent,
les hauteurs normalisées relatives à~$\mathscr L$
associées à~$f$ et~$g$ coïncident.

Cela s'applique notamment aux systèmes dynamiques toriques et abéliens.
\end{exer}

\begin{exer}\label{exo.morton-silv}
Soit $(X,f,\mathscr L)$ un système dynamique torique et soit $G$ le tore
sous-jacent.

a) Soit $K$ un corps de nombres ; soit $n$ le cardinal
du groupes des racines de l'unité contenues dans~$K$.
Montrer que $\phi(n)\leq [K:\Q]$; en déduire
que $n$ est majoré par une constante ne dépendant
que de~$[K:\Q]$.

b) Soit $X$ une variété torique projective lisse de dimension~$k$,
soit $G$ le tore sous-jacent à~$X$ et soit $f$ l'endomorphisme
de~$X$ qui prolonge l'endomorphisme $u\mapsto u^2$ de~$G$.
Montrer que les composantes irréductibles du diviseur~$D=X\setminus G$
sont stables par~$f$ et sont encore des systèmes 
dynamiques toriques.
Majorer en terme de~$c_1(\mathscr L)^k$ le nombre
de ces composantes.

c) Montrer que la restriction
de la conjecture~\ref{conj.morton-silverman}
à la classe des systèmes dynamiques toriques est vraie.
\end{exer}

\begin{exer}
Soit $X$ une variété projective sur un corps~$K$,
munie de deux endomorphismes~$f$ et~$g$. 
On suppose que le système dynamique $(X\times X,f\times f)$
vérifie la conjecture~\ref{conj.manin-mumford}.
On fait enfin l'hypothèse qu'il existe un ensemble~$Z$ de points de~$X$,
dense dans~$X$ pour la topologie de Zariski, tel que pour tout $z\in Z$,
$z$ et $g(z)$ soient prépériodiques pour~$f$.

a) En appliquant l'énoncé de la conjecture~\ref{conj.manin-mumford}
au graphe~$\Gamma_g$ de~$g$ dans~$X\times X$, montrer qu'il
existe des entiers~$a$ et~$b$, avec $a>0$, tels que $f^b gf^a=f^{a+b}g$.

b) Soit $d$ un entier au moins égal à~$2$ et soit $\alpha$
un élément non nul de~$K$.
Démontrer qu'un polynôme~$P\in K[X_0,\cdots,X_k]$
qui vérifie une équation de la forme $P(X_0^d,\ldots,X_k^d)=\alpha P(X)^d$
n'a qu'un seul monôme.

c) On revient au contexte de l'exercice
en supposant que $X=\P^k$ et que $f$ est l'élévation
des coordonnées à une certaine puissance~$d\geq 2$,
de sorte que les points de~$X$ qui sont prépériodiques pour~$f$
sont les points à coordonnées racines de l'unité.
Montrer qu'il existe des monômes $G_0,\dots,G_k$
et des racines de l'unité~$\zeta_0,\dots,\zeta_k$,
tels que $g([x_0:\cdots:x_k])=[\zeta_0 G_0(x):\cdots:\zeta_k G_k(x)]$;
voir aussi~\cite{kawaguchi-silverman2007a}.
\end{exer}

\begin{exer}\label{exo.hNT}
Soit $G$ et $A$ des groupes abéliens et soit $f\colon G\ra A$
une application.
On suppose que $f$ vérifie la relation~\eqref{eq.hNT}
et que la multiplication par~$2$ dans~$A$ est un isomorphisme.

a)
Démontrer que l'application~$b$ de $G\times G$ dans~$A$
donnée par $b(x,y)=f(x+y)-f(x)-f(y)+f(0)$
est bilinéaire et symétrique.

b)
Soit $\ell$ l'application de~$G$ dans~$A$
définie par $\ell(x)=f(x)-\frac12 b(x,x) + f(0)$.
Démontrer que $\ell$ est une application linéaire.

c)
En déduire que $f$ est la somme d'une forme quadratique,
d'une forme linéaire et d'une constante (à valeurs dans~$A$).
\end{exer}

\def\Res{\operatorname{Res}}
\def\Dir{{\text{\upshape Dir}}}

\chapter{\'Equidistribution sur la droite projective}
\label{chap.equip1}

Le but de ce chapitre est d'exposer la d\'emonstration
de la conjecture d'\'equidistribution des points
de petite hauteur pour les
syst\`emes dynamiques sur la droite projective.
Cette conjecture a \'et\'e d\'emontr\'ee par~\cite{autissier2001b}
par des techniques de g\'eom\'etrie d'Arakelov,
mais le cas de la droite projective munie de la hauteur
naturelle (un syst\`eme dynamique torique) 
est d\^u \`a~\cite{bilu97}.
La m\'ethode utilis\'ee par \textsc{Bilu} a \'et\'e \'etendue au
cas g\'en\'eral dans~\cite{baker-rumely2006}
(voir aussi \cite{baker-hsia2005} pour
les syst\`emes dynamiques polynomiaux)
et ind\'ependamment par~\cite{favre-rl2006}, ces derniers
auteurs \'etablissant en outre une borne pour
la vitesse de convergence.

La version que je donne ici est issue 
de~\cite{baker-rumely2006} avec quelques simplifications 
que permettent 
un r\'esultat de~\cite{baker2006}
et l'emploi de techniques~$L^2$
classiques en th\'eorie du potentiel.

\section{Fonctions de Green}\label{sec.green}
Dans toute cette section, on se donne un corps valu\'e~$\C_v$
qui est ou bien le corps~$\C$ des nombres complexes,
ou bien le corps~$\C_p$, pour un nombre premier~$p$.
On note~$\abs{\cdot}$ la valeur absolue de~$\C_v$.
Soit $f$ un endomorphisme de degr\'e~$d$ de la droite projective, donn\'e par
deux polyn\^omes homog\`enes $(U,V)$ en deux variables~$X$ et~$Y$
\`a coefficients dans~$\C_v$, sans z\'ero commun dans~$\C_v^2$
autre que~$(0,0)$.

La $n$-i\`eme it\'er\'ee de~$f$ est donn\'ee par deux polyn\^omes
homog\`enes $(U_n,V_n)$ de degr\'e~$d^n$.
En fait, ces polyn\^omes ob\'eissent aux relations de r\'ecurrence
\begin{align*}
 U_{n+1}(X,Y) & =U(U_n(X,Y),V_n(X,Y))
  = U_n(U(X,Y),V(X,Y)), \\
 V_{n+1}(X,Y) &
 = V(U_n(X,Y),V_n(X,Y))= V_n(U(X,Y),V(X,Y)). \end{align*}
Ils sont de degr\'e~$d^n$ et sans z\'ero commun autre que~$(0,0)$.

Soit $\infty$ le point $[1:0]$ de~$\P^1$. Le diviseur~$f^*\infty$
est d\'efini par l'\'equation~$V=0$. On a ainsi 
$f^*\infty = d\infty + \div(V/Y^d)$.
Notons $\lambda _{v}$ la fonction de Green normalis\'ee
pour le diviseur~$\infty$ et la fraction rationnelle~$V(X,Y)/Y^d$.

\subsection{Ensemble de Julia rempli}

\begin{lemm}\label{lemm.green-homogene}
Pour tout $(x,y)\in \C_v^2$, la limite suivante
\begin{equation}
\Lambda_v(x,y) = \lim_{n\ra\infty} \dfrac1{d^n}\log \max\left(\abs{U_n(x,y)},
  \abs{V_n(x,y)}\right)
\end{equation}
existe.
En fait, on a  
\[ \Lambda_v(x,y) = \lambda_v ([x:y]) +  \log \abs{y} \]
pour tout $(x,y)\in\C_v^2$ tel que $y\neq 0$.
\end{lemm}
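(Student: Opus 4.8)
La stratégie consiste à tout ramener aux deux propriétés qui caractérisent la fonction de Green normalisée~$\lambda_v$. D'une part, $\lambda_v$ étant une fonction de Green pour le diviseur~$\infty=\{Y=0\}$, la fonction $[x:y]\mapsto \lambda_v([x:y])+\log\frac{\abs y}{\max(\abs x,\abs y)}$ se prolonge en une fonction continue \emph{bornée}~$\psi_0$ sur~$\P^1(\C_v)$. D'autre part, $\lambda_v$ est normalisée relativement à $\alpha=V(X,Y)/Y^d$, c'est-à-dire $\lambda_v([U(x,y):V(x,y)])=d\,\lambda_v([x:y])-\log\abs{\alpha([x:y])}$ pour tout $[x:y]$ tel que $y\neq0$ et $V(x,y)\neq0$.

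On pose $\psi(x,y)=\psi_0([x:y])$ --- fonction continue et bornée, disons par $K:=\norm{\psi_0}_\infty$, sur $\C_v^2\setminus\{0\}$ --- et $\tilde\Lambda(x,y)=\log\max(\abs x,\abs y)+\psi(x,y)$. En revenant à la définition de~$\psi_0$, on obtient $\tilde\Lambda(x,y)=\lambda_v([x:y])+\log\abs y$ dès que $y\neq0$. Comme $(U,V)$, et d'après ce qui précède chacune de ses itérées~$(U_n,V_n)$, est sans zéro commun hors de l'origine, l'application $(U,V)$ préserve $\C_v^2\setminus\{0\}$, ainsi que chaque~$(U_n,V_n)$. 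Sur l'ouvert dense de $\C_v^2\setminus\{0\}$ défini par $y\neq0$ et $V(x,y)\neq0$, l'équation fonctionnelle ci-dessus et l'égalité $\log\abs{\alpha([x:y])}=\log\abs{V(x,y)}-d\log\abs y$ (homogénéité de~$V$) donnent
\[ \tilde\Lambda(U(x,y),V(x,y))=\lambda_v([U(x,y):V(x,y)])+\log\abs{V(x,y)}=d\,\lambda_v([x:y])+d\log\abs y=d\,\tilde\Lambda(x,y) . \]
Les deux membres étant continus sur $\C_v^2\setminus\{0\}$, cette égalité y vaut partout. En l'itérant et en utilisant les relations $U_{n+1}=U(U_n,V_n)$, $V_{n+1}=V(U_n,V_n)$ rappelées plus haut, on en déduit $\tilde\Lambda(U_n(x,y),V_n(x,y))=d^n\,\tilde\Lambda(x,y)$ pour tout entier $n\geq0$.

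On conclut aussitôt. Puisque $\tilde\Lambda(U_n(x,y),V_n(x,y))=\log\max(\abs{U_n(x,y)},\abs{V_n(x,y)})+\psi(U_n(x,y),V_n(x,y))$ avec $\abs\psi\leq K$, on a
\[ \abs{ \frac1{d^n}\log\max\bigl(\abs{U_n(x,y)},\abs{V_n(x,y)}\bigr) - \tilde\Lambda(x,y) } \leq \frac{K}{d^n} ; \]
le second membre tendant vers~$0$, la limite définissant~$\Lambda_v(x,y)$ existe et vaut $\tilde\Lambda(x,y)$, donc $\lambda_v([x:y])+\log\abs y$ lorsque $y\neq0$. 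Le cas $(x,y)=(0,0)$ est trivial, puisqu'alors $U_n(0,0)=V_n(0,0)=0$ pour tout~$n$ et $\Lambda_v(0,0)=-\infty$.

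La principale difficulté est ici contournée plutôt que résolue de front. Une preuve directe, par télescopage de la suite $d^{-n}\log\max(\abs{U_n},\abs{V_n})$, réclamerait une minoration uniforme $\max(\abs{U(x,y)},\abs{V(x,y)})\geq c\,\max(\abs x,\abs y)^d$ avec $c>0$, délicate à établir sur~$\C_p$ où la sphère unité n'est pas compacte --- elle y relève du théorème des zéros de Hilbert appliqué aux formes binaires~$U$ et~$V$, via leur résultant. Dans l'approche proposée, cette minoration est absorbée dans le fait que~$\psi$ est bornée, lequel fait déjà partie de la définition d'une fonction de Green, donc de l'existence --- supposée acquise --- de~$\lambda_v$. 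Il ne reste que des vérifications mineures: la densité de $\{y\neq0\}\cap\{V(x,y)\neq0\}$ dans $\C_v^2\setminus\{0\}$ (le lieu des zéros du polynôme non nul $y\,V(x,y)$ est d'intérieur vide sur le corps infini~$\C_v$), et le suivi des identités déjà consignées dans le texte ($f^*\infty=\{V=0\}$, $\alpha=V/Y^d$).
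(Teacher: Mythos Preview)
Your proof is correct and runs in the opposite direction from the paper's. The paper first establishes convergence of $d^{-n}\log\max(|U_n|,|V_n|)$ by a telescoping argument based on the two-sided estimate
\[ c_v^{-1}\max(|x|,|y|)^d\leq\max(|U(x,y)|,|V(x,y)|)\leq c_v\max(|x|,|y|)^d \]
(obtained via the Nullstellensatz), then checks that the resulting function~$\Lambda_v$, shifted by~$-\log|y|$, is a Green function for~$\infty$ satisfying the normalising functional equation, hence coincides with~$\lambda_v$. You instead start from the already-constructed~$\lambda_v$, build $\tilde\Lambda(x,y)=\lambda_v([x:y])+\log|y|$ (extended by continuity across $y=0$ via~$\psi_0$), verify its functional equation under~$(U,V)$ by density, and read off both the convergence and the identification at once from the boundedness of $\tilde\Lambda-\log\max(|x|,|y|)$.

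Your route is tidier and, as your closing paragraph correctly diagnoses, relocates the Nullstellensatz into the general theory of Green functions (where it already appears in the proof that two Green functions for the same divisor differ by a bounded continuous function) rather than invoking it afresh. The paper's direct approach has one practical advantage: it isolates the explicit inequality~\eqref{eq.compacite}, which is reused immediately afterwards to characterise the homogeneous filled Julia set as $\{\Lambda_v\leq 0\}$ and to handle the good-reduction example. If you adopt your argument in place of the paper's, you would need to establish that inequality separately when it is next needed.
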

\begin{proof}
Lorsque $(x,y)=(0,0)$, la suite qui d\'efinit~$\Lambda_v$ 
est constante de valeur~$-\infty$. Nous allons montrer
qu'elle converge en tout autre point vers un nombre r\'eel.

Comme $U$ et~$V$ sont sans z\'ero commun,
on d\'eduit du th\'eor\`eme des z\'eros de \textsc{Hilbert} l'existence d'un encadrement
\begin{equation}\label{eq.compacite}
 c_v^{-1} \max\left(\abs{\strut x},\abs{y}\right)^d \leq \max\left(\abs{U(x,y)},\abs{V(x,y)}\right) \leq c_v \max\left(\abs{\strut x},\abs{y}\right)^d,
\end{equation}
valable pour tout couple $(x,y)\in\C_v\times\C_v$.
(La m\'ethode a \'et\'e expos\'ee plusieurs fois d\'ej\`a, par exemple
dans la preuve de la proposition~\ref{prop.fonctorialite}.)
On en d\'eduit que 
\[  \dfrac1{d^n}\log \max\left(\abs{U_n(x,y)}, \abs{V_n(x,y)}\right)
-
\dfrac1{d^{n+1}}\log \max\left(\abs{U_{n+1}(x,y)}, \abs{V_{n+1}(x,y)}\right)
\]
est major\'ee uniform\'ement par un multiple de~$d^{-n}$
sur le compl\'ementaire de~$(0,0)$ dans~$\C_v\times\C_v$,
d'o\`u la convergence normale de la suite $(d^{-n}\log\max(\abs{U_n},\abs{V_n}))$
sur cet ensemble.
Sa limite est donc une fonction continue sur~$\C_v^2\setminus \{(0,0)\}$
et la fonction
\[ (x,y)\mapsto \Lambda_v(x,y)-\log\max(\abs {\strut x},\abs y) \]
est born\'ee sur~$\C_v^2\setminus\{(0,0)\}$.
En particulier, $\Lambda_v$ est
born\'ee sur tout domaine de~$\C_v^2$ de la forme
\[ a \leq \max(\abs{\strut x},\abs{y}) \leq b, \]
o\`u $a$ et~$b$ sont  des nombres r\'eels strictement positifs.

Par construction, on a $\Lambda_v(U(x,y),V(x,y))=d \Lambda_v(x,y)$.
De plus, pour  $u\in\C_v^*$ et $(x,y)\in\C_v\times\C_v$
distinct de~$(0,0)$,
on a $\Lambda_v(ux,uy) = \log\abs u_v + \Lambda_v(x,y)$.
Par suite, l'application $(x,y)\mapsto \Lambda_v(x,y)-\log\abs y$
d\'efinit, par passage au quotient,
une application~$\phi$, continue, de $\P^1(\C_v)\setminus\{\infty\}$
dans~$\R$. 
L'ensemble des~$[x:y]\in\P^1(\C_v)$ tels que $\abs {\strut\smash y}\leq \abs {\strut x}$
est un voisinage du point~$\infty=[1:0]$ ; en un tel point~$[x:y]$
avec $y\neq 0$,
\[ \phi([x:y]) = \Lambda_v(x,y)- \log\abs y
      = - \log \frac{\abs y}{\max(\abs {\strut x},\abs y)} + \left(\Lambda_v(x,y)-\log\max(\abs {\strut x},\abs y)\right) \quad.\]
Par suite, $\phi$ est une fonction de Green pour le diviseur~$\infty$.

Les relations
\begin{align*} \phi(f([x:y])) & = \phi ([U(x,y):V(x,y)]) \\
 & = \Lambda_v(U(x,y),V(x,y))-\log\abs{V(x,y)}_v \\
&  = d \Lambda_v(x,y) - \log\abs{V(x,y)}_v \\
& = d \phi([x:y]) - \log \abs{\frac{V(x,y)}{y^d}}_v
\end{align*} 
montrent qu'elle v\'erifie en outre l'\'equation fonctionnelle
qui caract\'erise la fonction de Green normalis\'ee. On a donc
$\lambda_v=\phi$.
\end{proof}

Soit $\mathscr J_v$ l'ensemble des couples $(x,y)\in\C_v^2$
tels que la suite $(U_n(x,y),V_n(x,y))$ soit born\'ee dans~$\C_v^2$.
On l'appelle l'\emph{ensemble de Julia rempli homog\`ene} de~$f$.

\begin{prop}
Pour qu'un point $(x,y)$ de~$\C_v\times\C_v$,
appartienne \`a~$\mathscr J_v$, il faut et il suffit qu'il v\'erifie
$\Lambda_v(x,y)\leq 0$.
\end{prop}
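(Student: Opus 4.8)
The plan is to read everything off the proof of Lemma~\ref{lemm.green-homogene}, which already produced the two ingredients we need: the functional equation $\Lambda_v(U(x,y),V(x,y)) = d\,\Lambda_v(x,y)$, and the uniform comparison stating that $(x,y)\mapsto \Lambda_v(x,y)-\log\max(\abs x,\abs y)$ is bounded on $\C_v^2\setminus\{(0,0)\}$. Iterating the first gives $\Lambda_v(U_n(x,y),V_n(x,y)) = d^n\,\Lambda_v(x,y)$ for all $n\ge 0$, and I will fix once and for all a constant $C\ge 0$ (depending only on $f$ and $v$) such that
\[ \abs{\Lambda_v(x,y) - \log\max(\abs x,\abs y)} \le C \qquad\text{for all } (x,y)\in\C_v^2\setminus\{(0,0)\}. \]
Before the two implications, I would dispose of the origin: since $U_n,V_n$ are homogeneous of positive degree they vanish at $(0,0)$, so $(0,0)\in\mathscr J_v$ while $\Lambda_v(0,0)=-\infty\le 0$, and the equivalence is trivial there. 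So from now on assume $(x,y)\neq(0,0)$, so that $\Lambda_v(x,y)\in\R$.

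For the implication $\Lambda_v(x,y)\le 0 \Rightarrow (x,y)\in\mathscr J_v$: for every $n$ one has $\Lambda_v(U_n(x,y),V_n(x,y)) = d^n\,\Lambda_v(x,y)\le 0$, hence $\log\max(\abs{U_n(x,y)},\abs{V_n(x,y)}) \le \Lambda_v(U_n(x,y),V_n(x,y)) + C \le C$, so $\max(\abs{U_n(x,y)},\abs{V_n(x,y)})\le e^C$ for every $n$; the orbit is therefore bounded. Conversely, if $(x,y)\in\mathscr J_v$, pick $M$ with $\max(\abs{U_n(x,y)},\abs{V_n(x,y)})\le M$ for all $n$; then
\[ d^n\,\Lambda_v(x,y) = \Lambda_v(U_n(x,y),V_n(x,y)) \le \log M + C, \]
so $\Lambda_v(x,y)\le (\log M + C)/d^n$ for every $n$, and letting $n\to\infty$ (using $d\ge 2$) yields $\Lambda_v(x,y)\le 0$.

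There is essentially no obstacle here: the substantive work was already carried out in establishing the homogeneous Green function. The only points needing a word of care are (i) identifying boundedness of the sequence $(U_n(x,y),V_n(x,y))$ in $\C_v^2$ with boundedness of $\max(\abs{U_n(x,y)},\abs{V_n(x,y)})$, which is immediate since all norms on the finite-dimensional space $\C_v^2$ are equivalent (or one simply takes the $\max$-norm as the definition), and (ii) the bookkeeping of signs when $\log M + C$ or $\Lambda_v(x,y)$ is negative, which is harmless because dividing a fixed real number by $d^n\to\infty$ sends it to $0$.
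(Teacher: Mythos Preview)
Your proof is correct. Both directions are airtight, and the handling of the origin is fine.

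The paper argues a little differently. For the implication $(x,y)\in\mathscr J_v\Rightarrow\Lambda_v(x,y)\le 0$ it simply reads off the definition of $\Lambda_v$ as the limit $d^{-n}\log\max(\abs{U_n},\abs{V_n})$: if the sequence is bounded, the limit is $\le 0$. For the converse it does not invoke the functional equation of~$\Lambda_v$ but goes back to the raw inequality~\eqref{eq.compacite}: setting $m=c_v^{-1/(d-1)}$, one checks that $m\max(\abs x,\abs y)\ge R$ implies $m\max(\abs{U(x,y)},\abs{V(x,y)})\ge R^d$, so once the orbit exceeds $1/m$ it escapes super-exponentially and $\Lambda_v>0$; contrapositively, $\Lambda_v\le 0$ pins the whole orbit inside the ball of radius $1/m$. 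Your route is the slicker of the two: you exploit that the lemma has already packaged the escape behavior into the two properties $\Lambda_v\circ(U,V)=d\,\Lambda_v$ and $\Lambda_v-\log\max(\abs\cdot,\abs\cdot)$ bounded, so no further iteration is needed. The paper's version has the minor advantage of producing an explicit bound ($1/m$) on the orbit in terms of the original constant~$c_v$, but the arguments are morally the same.
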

\begin{proof}
Comme $\Lambda_v(x,y)
=\lim d^{-n} \max\left(\abs{U_n(x,y)}_v,\abs{V_n(x,y)}_v\right)$,
il est clair que l'on a
$\Lambda_v(x,y)\leq 0$ pour tout $(x,y)\in \mathscr J_v$.

Inversement, si l'on pose
$m=c_v^{-1/(d-1)}$,
l'in\'egalit\'e~\eqref{eq.compacite}
entra\^{\i}ne que
pour tout~$(x,y)\in\C_v\times\C_v$ et tout nombre
r\'eel~$R>0$, on a l'implication
\[ m \max(\abs{\strut x}_v,\abs{y}_v)\geq R \quad\Rightarrow\quad
 m \max(\abs{U(x,y)}_v,\abs{V(x,y)}_v) \geq R^d . \]
Pour un couple $(x,y)\in\C_v^2$ tel que $m\max(\abs{\strut x}_v,\abs{y}_v)\geq R$
avec $R>1$, on en d\'eduit que $m\max(\abs{U_n(x,y)}_v,\abs{V_n(x,y)}_v)
\geq R^{d^n}$,
d'o\`u l'in\'egalit\'e $\Lambda_v(x,y)\geq \log R>0$.
Il en est de m\^eme s'il existe un entier~$n$ tel que 
   $m\max(\abs{U_n(x,y)}_v,\abs{V_n(x,y)}_v)>1$.
Autrement dit, si 
$\Lambda_v (x,y)\leq 0$, alors
$\max(\abs{U_n(x,y)}_v,\abs{V_n(x,y)}_v) \leq 1/m$ pour tout~$n$, ce qui d\'emontre
que la suite $(U_n(x,y),V_n(x,y))$ est born\'ee dans~$\C_v^2$.
\end{proof}

\begin{exem}
Supposons que $U=X^d$ et $V=Y^d$. Alors, $U_n=X^{d^n}$
et $V_n=Y^{d^n}$ pour tout~$n$. On a donc
$\max(\abs{U_n(x,y)},\abs{V_n(x,y)})=\max(\abs {\strut x} ,\abs y)^{d^n}$
si bien que l'ensemble de Julia rempli $\mathscr J_v$ est 
le bidisque $B_v^2$, o\`u $B_v=\{x\in\C_v\sozat \abs {\strut x}\leq 1\}$.
\end{exem}

\begin{exem}
Supposons que $\C_v$ soit ultram\'etrique
et que les conditions suivantes soient satisfaites:
\begin{itemize}
\item les coefficients de~$U$ et~$V$ sont de valeur absolue au plus~$1$.
\item leur r\'esultant $\Res(U,V)$ est une unit\'e $v$-adique.
\end{itemize}
(On dit que le syst\`eme dynamique a \emph{bonne r\'eduction}.)
Alors, 
\emph{l'ensemble de Julia rempli homog\`ene $\mathscr J_v$ est exactement l'ensemble 
des couples $(x,y)\in\C_v^2$ tels que $\max(\abs{\strut x}_v,\abs{y}_v)\leq 1$.}

En effet, dans l'in\'egalit\'e~\eqref{eq.compacite},
on peut prendre la constante~$c_v$ \'egale \`a~$1$. 
C'est clair pour l'in\'egalit\'e de droite: si $\max(\abs{\strut x},\abs y)\leq 1$,
alors $\max(\abs{U(x,y)},\abs{V(x,y)})\leq 1$.
En particulier,
si un couple $(x,y)\in\C_v^2$ v\'erifie $\abs {\strut x}_v\leq 1$ et $\abs y_v\leq 1$,
il en est de m\^eme des couples $(U_n(x,y),V_n(x,y))$ et $(x,y)\in \mathscr J_v$.

Avant de d\'emontrer l'in\'egalit\'e de gauche,
rappelons que par d\'efinition, le r\'esultant~$\Res(U,V)$ est le d\'eterminant 
de l'application lin\'eaire $(A,B)\mapsto AU+BV$
de l'espace $(\C_v[X,Y]_{d-1})^2$ dans~$\C_v[X,Y]_{2d-1}$,
dans les bases standard donn\'ees par les mon\^omes; consid\'er\'e comme
fonction polynomiale des coefficients de~$U$ et~$V$, 
il est bihomog\`ene de bidegr\'e~$(d(d-1),d(d-1))$
et s'annule si et seulement si $U$ et~$V$ ont un z\'ero commun.
La th\'eorie du r\'esultant affirme qu'il existe des polyn\^omes homog\`enes
$A$ et~$B$ de degr\'es~$d-1$ tels que $AU+BV=\Res(U,V)X^{2d-1}$,
et les coefficients de ces polyn\^omes appartiennent au sous-anneau
de~$\C_v$ engendr\'e par les coefficients de~$U$ et~$V$;
ce sont donc des \'el\'ements de valeur absolue au plus~$1$.
Par suite, sous l'hypoth\`ese que $\abs{\Res(U,V)}=1$, on a
\[ \abs{x}^{2d-1} \leq \max(\abs x,\abs y)^{d-1} \max(\abs{U(x,y)},\abs{V(x,y)}), \]
et donc
\[ \abs x \leq \max(\abs{U(x,y)},\abs{V(x,y)})^{1/d}. \]
Par sym\'etrie, $\abs y$ v\'erifie la m\^eme majoration.
Par r\'ecurrence, on obtient l'in\'egalit\'e 
\[ \max(\abs x,\abs y) \leq  \max(\abs{U_n(x,y)},\abs{V_n(x,y)})^{1/d^n}, \]
valable pour tout~$n\geq 1$.
En particulier, si la suite $(U_n(x,y),V_n(x,y))$
est born\'ee, $\max(\abs x,\abs y)\leq 1$,
ce qui conclut la preuve que $\mathscr J_v$
est l'ensemble des couples~$(x,y)\in\C_v^2$
tels que $\max(\abs {\strut x},\abs y)\leq 1$.
\end{exem}

\subsection{L'in\'egalit\'e de \textsc{Baker}}

Pour tout couple 
de points distincts,
$(P_1,P_2)\in(\P^1\times\P^1)(\C_v)$,
de coordonn\'ees homog\`enes $[x_1:y_1]$ et $[x_2:y_2]$ respectivement,
posons
\begin{equation}\label{eq.Gv}
 G_v (P_1,P_2) = 
     - \log \abs{\det \begin{pmatrix} x_1 & y_1 \\ x_2 & y_2 \end{pmatrix}}_v
 + \Lambda_v(x_1,y_1) + \Lambda_v(x_2,y_2) - \frac1{d(d-1)}
 \log\abs{\Res(U,V)}_v .
\end{equation}
% o\`u $\Res(U,V)$ d\'esigne le \emph{r\'esultant} des polyn\^omes~$U$ et~$V$.
On v\'erifie imm\'ediatement que cette expression ne d\'epend
pas du choix des coordonn\'ees homog\`enes d\'efinissant les points~$P_1$ et~$P_2$.
Elle ne d\'epend pas non plus du choix des polyn\^omes homog\`enes~$U$ et~$V$ 
gr\^ace \`a l'homog\'en\'eit\'e du r\'esultant.

Si les coordonn\'ees homog\`enes de~$P_1$ et~$P_2$ 
sont choisies de la forme $[z_1:1]$ et $[z_2:1]$, 
on a 
\begin{equation}
 G_v (P_1,P_2) = -\log \abs{z_1-z_2}_v + g_v(P_1) + g_v(P_2) - \frac1{d(d-1)}
 \log\abs{\Res(U,V)}_v. 
\end{equation}
On voit ainsi que $G_v$
est une fonction de Green pour le diviseur diagonal de~$\P^1\times\P^1$
--- il ne faut cependant pas la confondre avec les fonctions de Green
de diviseurs sur~$\P^1$ ! La fonction~$G_v$ jouera ici le r\^ole 
(de l'oppos\'e du logarithme) d'une distance entre points de~$\P^1$.

La proposition essentielle 
sur laquelle reposera la d\'emonstration du th\'eor\`eme
d'\'equidistribution est la minoration suivante
de moyennes de la fonction~$G_v$.
C'est un analogue d'un th\'eor\`eme d'\textsc{Elkies}
(voir~\cite{lang1988}, th\'eor\`eme~5.1, p.~150) dans le cas
des fonctions de Green normalis\'ees; l'assertion
sur la limite inf\'erieure est le pendant d'une
minoration de~\cite{faltings1984}.

\begin{prop}[\cite{baker2006}]\label{prop.baker}
Il existe un nombre r\'eel~$c_v$ tel que pour
tout entier~$n$ et toute famille $(P_1,\dots,P_n)$
de points distincts de~$\P^1(\C_v)$, on ait
\[ \frac1{n(n-1)} \sum_{\substack{i,j=1 \\i\neq j}}^n G_v(P_i,P_j)
 \geq - c_v \frac{\log n}n. \]
En particulier,
\[ \liminf_{\substack{n\ra\infty \\ \text{$(P_1,\dots,P_n)$ distincts}}} \frac1{n(n-1)} \sum_{\substack{i,j=1 \\i\neq j}}^n G_v(P_i,P_j) \geq 0. \]
\end{prop}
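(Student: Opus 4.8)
\section*{Esquisse de d\'emonstration}

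La m\'ethode que j'adopterai est celle de la \emph{th\'eorie du potentiel $L^2$}. Apr\`es division, l'\'enonc\'e \'equivaut \`a l'existence d'une constante $C_v$ (ne d\'ependant que de $f$ et de $v$) telle que $\sum_{i\neq j}G_v(P_i,P_j)\geq -C_v\,n\log n$ pour toute famille $(P_1,\dots,P_n)$ de points distincts. Comme $G_v$ est une fonction de Green pour la diagonale de $\P^1\times\P^1$ --- donc minor\'ee et continue hors de la diagonale --- la minoration grossi\`ere $\geq -M n(n-1)$ est imm\'ediate ; tout le contenu est dans le taux $\log n/n$, et en particulier dans le fait que la limite inf\'erieure vaut $\geq 0$. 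Le plan repose sur deux ingr\'edients. D'une part, la \textbf{positivit\'e} de la forme d'\'energie $(\nu_1,\nu_2)\mapsto\iint G_v\,d\nu_1\,d\nu_2$ sur les mesures sign\'ees de masse totale nulle : elle vient, sur $\C$, de la th\'eorie classique (transform\'ee de Fourier, techniques $L^2$), et sur la droite projective de Berkovich $\P^{1,\mathrm{an}}_v$ de son analogue non archim\'edien (\cite{baker-rumely2006}, \cite{favre-rl2006}). D'autre part, l'annulation du \textbf{potentiel canonique} : la fonction $\Phi(P):=\int G_v(P,y)\,d\mu_v(y)$, o\`u $\mu_v$ est la mesure canonique, est identiquement nulle, et donc $\iint G_v\,d\mu_v\otimes d\mu_v=0$.

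Je d\'emontrerai d'abord cette annulation au moyen de l'\'equation fonctionnelle
$\sum_{Q\in f^{-1}(R)}G_v(P,Q)=G_v(f(P),R)$ (les ant\'ec\'edents compt\'es avec multiplicit\'e). Celle-ci r\'esulte de la relation $\Lambda_v\circ F=d\,\Lambda_v$ du lemme~\ref{lemm.green-homogene}, de la factorisation $U(x_1,y_1)V(x_2,y_2)-V(x_1,y_1)U(x_2,y_2)=(x_1y_2-x_2y_1)\,H(x_1,y_1;x_2,y_2)$ avec $H$ bihomog\`ene de bidegr\'e $(d-1,d-1)$, et de l'identit\'e classique reliant $H$ (ainsi que le produit des scalaires par lesquels $F$ dilate les relev\'es des ant\'ec\'edents de $R$) au r\'esultant $\Res(U,V)$ : c'est pr\'ecis\'ement pour que cette \'equation soit exacte que le terme $-\tfrac1{d(d-1)}\log\abs{\Res(U,V)}$ a \'et\'e ins\'er\'e dans~\eqref{eq.Gv}. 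En int\'egrant contre $\mu_v$ et en utilisant $\tfrac1d f^*\mu_v=\mu_v$, on obtient $\Phi(f(P))=d\,\Phi(P)$ ; comme $\Phi$ est born\'ee (continue sur l'espace compact $\P^1(\C_v)$, resp.\ l'espace de Berkovich) et $d\geq 2$, il vient $\Phi\equiv 0$.

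Le c\oe ur de la preuve est ensuite un \textbf{argument de r\'egularisation}. Fixons $P_1,\dots,P_n$ distincts et un rayon $r>0$ ; soit $\rho_i$ la mesure de probabilit\'e uniforme sur la sph\`ere $\{\,\abs{z-P_i}=r\,\}$ (et l'analogue sur un bout de Berkovich dans le cas non archim\'edien). En appliquant la positivit\'e \`a $\nu=\tfrac1n\sum_i\rho_i-\mu_v$, de masse nulle, et en utilisant $(\rho_i,\mu_v)=\int\Phi\,d\rho_i=0$ et $(\mu_v,\mu_v)=0$, on trouve $\sum_{i\neq j}(\rho_i,\rho_j)\geq-\sum_i(\rho_i,\rho_i)$. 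L'auto-\'energie se contr\^ole par $(\rho_i,\rho_i)\leq\log(1/r)+O(1)$, car $G_v(z,w)=-\log\abs{z-w}+\lambda_v(z)+\lambda_v(w)+O(1)$ avec $\lambda_v$ born\'ee, de sorte que la capacit\'e (pour $G_v$) d'une boule de rayon $r$ est du m\^eme ordre que $r$. Pour les termes crois\'es, on \'ecrit $G_v(P_i,P_j)-(\rho_i,\rho_j)$ : lorsque $\abs{P_i-P_j}\geq 2r$, les potentiels logarithmiques de $\rho_i$ et $\rho_j$ co\"{\i}ncident \emph{exactement} avec $-\log\abs{\cdot-P_i}$ et $-\log\abs{\cdot-P_j}$ en dehors des boules, les parties en $-\log$ se compensent, et il ne reste que $O(\omega(r))$, o\`u $\omega$ est le module de continuit\'e de $\lambda_v$ (h\"old\'erien, de sorte que $\omega(r)=O(r^{\alpha})$) ; lorsque $\abs{P_i-P_j}<2r$, le terme $G_v(P_i,P_j)\geq\log(1/r)+O(1)$ domine $(\rho_i,\rho_j)\leq\log(1/r)+O(1)$, les paires les plus proches fournissant m\^eme une contribution positive. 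On aboutit ainsi \`a $\sum_{i\neq j}G_v(P_i,P_j)\geq-n\log(1/r)-O(n)-O(n^2\omega(r))$ ; le choix $r=n^{-2/\alpha}$ rend $n^2\omega(r)=O(1)$ et $\log(1/r)=\tfrac2\alpha\log n$, d'o\`u la minoration cherch\'ee apr\`es division par $n(n-1)$ et ajustement de $c_v$.

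La difficult\'e principale sera le contr\^ole \emph{uniforme} des paires voisines de la diagonale : une configuration peut comporter $\Theta(n^2)$ paires \`a distance mutuelle de l'ordre de $r$, et il faut montrer qu'elles ne peuvent d\'etruire la borne --- ce qui est le cas, pr\'ecis\'ement parce que de telles paires gonflent aussi $\sum_{i\neq j}(\rho_i,\rho_j)$ bien au-del\`a de la minoration triviale $-\sum_i(\rho_i,\rho_i)$ ; transformer cette observation en une estimation propre demande du soin (c'est le point essentiel de \cite{baker2006}, et l'on peut aussi commencer par se ramener, par un argument de perturbation et de changement d'\'echelle, aux configurations dont la s\'eparation minimale est minor\'ee en fonction de $n$). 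Un second poste technique est la v\'erification de la positivit\'e de l'\'energie et des estimations de capacit\'e et de r\'egularit\'e sur la droite de Berkovich pour $v$ quelconque ; ces faits sont d\'esormais standard (\cite{baker-rumely2006}, \cite{favre-rl2006}), et c'est leur formulation $L^2$ qui permet les simplifications annonc\'ees dans l'introduction de ce chapitre. On retrouvera enfin \`a la limite, comme cas particulier, l'in\'egalit\'e de type \textsc{Bilu} \cite{bilu97} pour le syst\`eme dynamique torique.
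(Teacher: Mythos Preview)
Your approach is viable but takes a genuinely different route from the paper's. The paper's proof of this proposition (in~\S\ref{sec.baker}) is purely \emph{algebraic}: for $N=td^k$ it introduces two bases of the space of homogeneous polynomials of degree~$N-1$, namely the monomial basis~$\mathscr B_N$ and a ``dynamical'' basis~$\mathscr C_N$ consisting of products $\prod U_j^{a_j}V_j^{b_j}$; the change-of-basis determinant is shown to equal $\pm\Res(U,V)^r$. Evaluating both bases at lifts of the~$P_i$ (chosen in~$\mathscr J_v$) yields two $N\times N$ matrices; one determinant is a Vandermonde giving $\prod_{i\neq j}(x_iy_j-x_jy_i)$, the other is bounded via the boundedness of the~$U_j,V_j$ on~$\mathscr J_v$ (plus Hadamard when $v$ is archimedean). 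The identity $\alpha\beta=\gamma$ between these determinants yields the inequality directly, with no potential theory at all. A preliminary lemma shows the sequence of infima~$D_n$ is nondecreasing, so it suffices to treat $N$ of this special shape.

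Your regularisation-and-positivity argument is closer in spirit to the \textsc{Favre--Rivera-Letelier} approach than to the paper's. Two remarks on the logical order relative to the text. First, in the paper the inequality $E(\nu)\geq 0$ is \emph{deduced from} prop.~\ref{prop.baker}, so you must be careful to invoke only the positivity of the Dirichlet form on mass-zero measures (which is independent, coming from $\iint G\,d\nu\,d\nu=\norm{u_\nu}_{\Dir}^2$ for $\nu$ of mass zero), not the full statement $E(\mu)\geq 0$. Second, your claim $\Phi\equiv 0$ (equivalently $E(\mu)=0$) is in the paper obtained only \emph{after} prop.~\ref{prop.baker}, as a by-product of prop.~\ref{prop.delta-res}; your derivation via the pullback identity $\sum_{Q\in f^{-1}(R)}G_v(P,Q)=G_v(f(P),R)$ is correct but is not in the text, so you should give it in full (the bookkeeping with $\Res(U,V)$ and the lifts of the preimages is where errors creep in). What your approach buys is conceptual unity with the energy framework of~\S3.2 and a transparent reason why the $\Res$ term belongs in~\eqref{eq.Gv}; what the paper's approach buys is a self-contained argument working uniformly at every place with no appeal to Berkovich potential theory, and constants that are visibly explicit in~$d$ and~$\Res(U,V)$.
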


La d\'emonstration de cette proposition fera l'objet
du~\S\ref{sec.baker}.

\subsection{Diam\`etre transfini homog\`ene}
Nous
terminons cette section par une premi\`ere  application 
de l'in\'egalit\'e de la prop.~\ref{prop.baker}, \`a savoir
le calcul du diam\`etre transfini homog\`ene de l'ensemble
de Julia rempli homog\`ene.
Il s'agit d'un analogue du fait, classique en dynamique polynomiale
sur~$\C$, que le diam\`etre transfini,
ou la capacit\'e, de l'ensemble Julia d'un polyn\^ome
$f=a_d x^d+\dots+a_0\in\C[x]$, de degr\'e~$d\geq 2$,
est \'egale \`a~$\abs{a_d}^{-1/(d-1)}$.
(Voir~\cite{ransford95}, th\'eor\`eme 6.5.1, p.~191).
Cette formule est due \`a~\cite{demarco2003} dans le cas archim\'edien
et \`a~\cite{baker-rumely2006} dans le cas g\'en\'eral;
on trouvera dans~\cite{demarco-r2007} la g\'en\'eralisation au cas de~$n$ polyn\^omes
homog\`enes de m\^eme degr\'e en~$n$ variables.

Pour $z_1=(x_1,y_1)$ et~$z_2=(x_2,y_2)$ dans~$\C_v^2$, on pose
\[ d(z_1,z_2) = \abs{x_1y_2-x_2y_1}. \]
Le \emph{diam\`etre transfini homog\`ene} d'une partie born\'ee~$K$
de~$\C_v^2$ est d\'efini par la formule
\[ \delta^h(K) = \lim_n \sup_{z_1,\dots,z_n\in K} \prod_{i\neq j} d(z_i,z_j)^{1/n(n-1)}. \]
Par la m\^eme m\'ethode que dans le cas classique,
on v\'erifie facilement que la limite existe:   notant 
$\delta_n(K)$ le $n$-i\`eme terme de cette suite, la suite
$(\delta_n(K))$ est d\'ecroissante.
Soit en effet $z_1,\dots,z_n,z_{n+1}\in K$.
Pour tout $s\in\{1,\dots,n+1\}$, on a
\[ \prod_{\substack{1\leq i,j\leq n+1 \\ i,j \neq s}} d(z_i,z_j) 
 \leq \delta_n(K)^{n(n-1)}. \]
Faisant le produit de ces in\'egalit\'es, on obtient
\[ 
 \prod_{\substack{1\leq i\neq j\leq n+1}} d(z_i,z_j) ^{n-1} \leq
      \delta_n(K)^{n(n-1)(n+1)}, \]
soit encore
\[  \prod_{\substack{1\leq i\neq j\leq n+1}} d(z_i,z_j)^{1/n(n+1)}
 \leq \delta_n(K). \]
Par suite, $\delta_{n+1}(K)\leq \delta_n(K)$.

On s'int\'eresse dans cette section au diam\`etre transfini
homog\`ene des ensembles de Julia remplis.

\begin{exem}
Supposons que $\C_v$ soit ultram\'etrique et 
montrons que le diam\`etre transfini homog\`ene  du bidisque~$K=B_v^2$,
o\`u $B_v=\{x\in\C_v\sozat \abs {\strut x}\leq 1\}$, est \'egal \`a~$1$. 

Par l'in\'egalit\'e ultram\'etrique, $d(z,w)\leq 1$ pour $z$ et~$w$
dans~$K$. On voit donc que $\delta^h(K)\leq 1$.
Par ailleurs, si $P_1,\dots,P_n$ sont $n$ \'el\'ements
de~$\C_v\times\C_v$ de la forme $(z_i,1)$, o\`u $z_1,\dots,z_n$
sont des \'el\'ements distincts de~$\C_v$, de valeur absolue~$\leq 1$
et dont les images dans le corps r\'esiduel soient distinctes,
on a $d(P_i,P_j)=1$ pour tout couple~$(i,j)$ tel que $i\neq j$.
Puisque le corps r\'esiduel de~$\C_v$ est infini,
il existe de telles familles. Avec les notations introduites
au d\'ebut de ce paragraphe, on a donc \mbox{$\delta_n(K)=1$,}
d'o\`u \mbox{$\delta^h(K)=1$.}
\end{exem}

L'exemple qui pr\'ec\`ede calcule donc le diam\`etre transfini homog\`ene
d'un syst\`eme dynamique \`a bonne r\'eduction ;
la formule suivante concerne le cas d'un syst\`eme dynamique g\'en\'eral.

\begin{prop}\label{prop.delta-res}
On a $\delta^h(\mathscr J_v) = \abs{\Res(U,V)}^{-1/d(d-1)}$.
\end{prop}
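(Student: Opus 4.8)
The plan is to combine the Baker inequality (prop.~\ref{prop.baker}) with an explicit construction of good point configurations, exactly as one does classically for the transfinite diameter of a filled Julia set. Recall that for $z_1=(x_1,y_1)$ and $z_2=(x_2,y_2)$ in $\C_v^2$ the function $G_v$ of~\eqref{eq.Gv} is
\[ G_v(P_1,P_2) = -\log d(z_1,z_2) + \Lambda_v(x_1,y_1)+\Lambda_v(x_2,y_2) - \frac1{d(d-1)}\log\abs{\Res(U,V)}, \]
so that for points $z_i\in\mathscr J_v$ one has $\Lambda_v(x_i,y_i)\leq 0$, whence
\[ -\log d(z_i,z_j) \geq G_v(P_i,P_j) + \frac1{d(d-1)}\log\abs{\Res(U,V)}. \]

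First I would prove the upper bound $\delta^h(\mathscr J_v)\leq \abs{\Res(U,V)}^{-1/d(d-1)}$. Averaging the displayed inequality over all ordered pairs $i\neq j$ among $z_1,\dots,z_n\in\mathscr J_v$ gives
\[ -\frac1{n(n-1)}\sum_{i\neq j}\log d(z_i,z_j) \geq \frac1{n(n-1)}\sum_{i\neq j}G_v(P_i,P_j) + \frac1{d(d-1)}\log\abs{\Res(U,V)} \geq -c_v\frac{\log n}{n} + \frac1{d(d-1)}\log\abs{\Res(U,V)}, \]
using prop.~\ref{prop.baker}. Exponentiating, taking the supremum over configurations, and letting $n\to\infty$ (the error term $c_v(\log n)/n$ tends to $0$) yields $\delta^h(\mathscr J_v)\leq \abs{\Res(U,V)}^{-1/d(d-1)}$. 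Here one must be mildly careful: points in $\mathscr J_v$ need not be distinct in $\P^1$, but if two $z_i$ are proportional then $d(z_i,z_j)=0$ and that configuration contributes $0$ to the product, so it may be discarded; thus it suffices to bound the sup over configurations of pairwise non-proportional points, to which prop.~\ref{prop.baker} applies after projecting to $\P^1(\C_v)$.

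Next comes the lower bound, which is the part I expect to be the main obstacle. The strategy is to exhibit, for each $n$, a configuration $z_1,\dots,z_n\in\mathscr J_v$ with $\prod_{i\neq j}d(z_i,z_j)^{1/n(n-1)}$ close to $\abs{\Res(U,V)}^{-1/d(d-1)}$. The natural candidates are the pullbacks: fix a base point and consider an $n$ of the form $d^m$, taking for $\{z_i\}$ (lifts of) the $d^m$ preimages $(F^m)^{-1}(w_0)$ counted suitably, or more robustly the roots of $V_m(X,Y)=0$ together with points in $f^{-m}$ of a generic point. The key computational input is the product formula / resultant identity relating $\prod_{i<j}d(z_i,z_j)$ for the roots of a binary form to its discriminant, combined with the multiplicativity $\Res(U_m,V_m)=\Res(U,V)^{?}$ under iteration and the fact that $\Lambda_v=0$ on the Julia set (the roots of $V_m$ lie in $\mathscr J_v$ since their $f$-orbits are preperiodic, hence bounded). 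One then checks that $-\frac1{n(n-1)}\sum_{i\neq j}\log d(z_i,z_j)\to \frac1{d(d-1)}\log\abs{\Res(U,V)}$ as $m\to\infty$; the matching upper bound from the first part forces equality. The delicate points will be the bookkeeping of multiplicities and the degree of $\Res(U_m,V_m)$ as a function of $m$ — these are homogeneity/induction computations that I would carry out using the recursion $U_{m+1}=U(U_m,V_m)$, $V_{m+1}=V(U_m,V_m)$ — and ensuring the chosen configurations genuinely approach the transfinite diameter rather than merely giving a lower bound strictly below it; the Baker inequality, applied to $G_v$, is precisely what certifies that no configuration can do better, so the two halves pin down the value exactly.
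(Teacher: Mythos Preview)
Your upper bound is correct and is exactly the paper's argument: for $z_i\in\mathscr J_v$ one has $\Lambda_v(z_i)\leq 0$, so averaging the definition of~$G_v$ and invoking prop.~\ref{prop.baker} gives $\log\delta^h(\mathscr J_v)\leq -\tfrac1{d(d-1)}\log\abs{\Res(U,V)}_v$.

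For the lower bound, however, the paper does something quite different from what you propose. Rather than constructing good local configurations, it works \emph{globally}: assuming the coefficients of~$(U,V)$ lie in a number field~$F$, it writes the upper bound just proved at \emph{every} place~$w$ of~$F$. Summing with the weights~$\eps_w$ and using the product formula for $\Res(U,V)$ gives $\sum_w \eps_w\log\delta^h(\mathscr J_w)\leq 0$. On the other hand, for any $n$-tuple of $F$-rational points $P_1,\dots,P_n$, the product formula applied to each $x_iy_j-x_jy_i$ yields $\sum_w \eps_w\log\delta_n^h(\mathscr J_w)\geq 0$. The two inequalities together force equality place by place, hence the lower bound at the given place~$v$. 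The general case (coefficients not algebraic) is then handled by a continuity argument deferred to an exercise.

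Your local strategy via iterated preimages is closer to the purely local proofs in the literature the paper cites (\textsc{DeMarco}, \textsc{Baker}), but your sketch has a real gap. The claim that ``the roots of $V_m$ lie in $\mathscr J_v$ since their $f$-orbits are preperiodic'' is wrong on two counts. First, the roots of $V_m(X,Y)=0$ are the $m$-th preimages of~$\infty=[1:0]$, and $\infty$ need not be periodic for~$f$, so these points are not preperiodic in general; the same objection applies \emph{a fortiori} to preimages of a generic point. Second, preperiodicity of~$[x:y]$ in~$\P^1$ does not by itself bound the orbit of a given lift~$(x,y)$ in~$\C_v^2$. What one actually uses is the homogeneity $\Lambda_v(\lambda z)=\log\abs\lambda+\Lambda_v(z)$: any point of~$\P^1(\C_v)$ has a lift with $\Lambda_v$ arbitrarily close to~$0$, hence in~$\mathscr J_v$ (this is exactly how the paper concludes \S\ref{sec.baker}). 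With lifts normalised so that $\Lambda_v(z_i)=0$, the formula for~$G_v$ gives $\tfrac1{n(n-1)}\sum G_v(P_i,P_j)= -\log\bigl(\prod d(z_i,z_j)\bigr)^{1/n(n-1)}-\tfrac1{d(d-1)}\log\abs{\Res(U,V)}$, and one then needs the \emph{left} side to tend to~$0$ for the chosen configurations, which is an equidistribution statement (or a direct discriminant/resultant computation for the iterated forms) that you have not supplied. So your plan can be made to work, but not with the justification you gave, and the missing step is nontrivial; the paper's global shortcut sidesteps it entirely at the cost of the number-field hypothesis.
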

\begin{proof}
Nous faisons la d\'emonstration sous l'hypoth\`ese suppl\'ementaire
que les coefficients de~$f$ appartiennent \`a un corps de nombres,
en renvoyant 
\`a l'exercice~\ref{exer.delta-res} pour le cas g\'en\'eral
et \`a~\cite{baker2007}, cor.~A.16,
pour une d\'emonstration purement locale.

Rappelons que la fonction de Green~$G_v$ sur~$\P^1\times\P^1$
est d\'efinie par la formule
\begin{multline*} G_v([x_1:y_1],[x_2:y_2])= - \log d((x_1,y_1),(x_2,y_2))
 + \Lambda_v (x_1,y_1) + \Lambda_v(x_2,y_2) \\
     - \frac1{d(d-1)} \log \abs{\Res(U,V)}. \end{multline*}
Par suite, si les points $P_i=(x_i,y_i)$ appartiennent \`a~$\mathscr J_v$, on a
\[ \frac1{n(n-1)} \sum_{i\neq j} G_v(P_i,P_j)
 \leq - \log \left(\prod_{i\neq j} d(P_i,P_j)\right)^{1/{n(n-1)} }
 - \frac1{d(d-1)} \log\abs{\Res(U,V)}. \]
Faisons tendre~$n$ vers l'infini et appliquons la prop.~\ref{prop.baker};
on trouve donc
\[ \log \delta^h(\mathscr J_v) \leq - \frac1{d(d-1)} \log \abs{\Res(U,V)}_v. \]

Pour \'etablir l'in\'egalit\'e dans l'autre sens, on \'ecrit les
in\'egalit\'es analogues pour \emph{toutes} les places~$w$ du corps 
de nombres~$F$ engendr\'e par les coefficients de~$f$.
Pour $w\in M_F$, notons $d_w$ la fonction analogue \`a~$d$
mais avec la valeur absolue~$\abs\cdot_w$ de~$\C_w$.
Pour tout couple~$(P_1,P_2)$ d'\'el\'ements de~$\C_v\times\C_v$,
on a, notant $P_i=(x_i,y_i)$, l'\'egalit\'e
$ d_w(P_1,P_2)=\abs{x_1y_2-x_2y_1}_v$. Si $P_1$ et $P_2$
sont des \'el\'ements distincts de~$F\times F$,
la formule du produit entra\^{\i}ne donc
\[ \sum_{w\in M_F} \eps_w \log d_w(P_1,P_2) = 0.\]
Par cons\'equent, pour tout $n$-uplet $(P_1,\dots,P_n)$ d'\'el\'ements
de~$F$,  on a
\begin{align*}
 \sum_{w\in M_F} \eps_w \log \delta_n^h(\mathscr J_w) 
& \geq  \sum_{w\in M_F} \eps_w \frac1{n(n-1)}
   \sum_{1\leq i\neq j\leq n} \log d_w(P_i,P_j)\\
& \geq \sum_{1\leq i\neq j\leq n} 
 \sum_{w\in M_F}\eps_w   \log d_w(P_i,P_j)
 \geq 0 .\end{align*}
Comme 
\[ \sum_w \eps_w\log\delta^h(\mathscr J_w) 
\leq  - \frac1{d(d-1)} \sum_w \eps_w
\log \abs{\Res(U,V)}_w = 0, \]
toujours par la formule du produit,
on a \'egalit\'e place par place, comme il fallait d\'emontrer.
% 
% Comme $\bar\Q$ est dense dans~$\C_v$, on a
% \[ \delta^h(\mathscr J_w) = \lim_n \sup_{(P_1,\dots,P_n)\in \C_w}
%          \prod_{i\neq j} d_w(P_i,P_j)^{1/n(n-1)}
%  = \lim_n \sup_{(P_1,\dots,P_n)\in \bar\Q}
%          \prod_{i\neq j} d_w(P_i,P_j)^{1/n(n-1)}, \]
% o\`u $d_w$ d\'esigne la fonction analogue \`a~$d$ mais 
% avec la valeur absolue~$\abs{\cdot}_w$ de~$\C_w$.
% Or, pour tout couple~$(P_1,P_2)$ d'\'el\'ements de~$\C_v\times\C_v$,
% on a, notant $P_i=(x_i,y_i)$, l'\'egalit\'e
% $ d_w(P_1,P_2)=\abs{x_1y_2-x_2y_1}_v$. Si $P_1$ et $P_2$
% sont des \'el\'ements distincts de~$F\times F$,
% la formule du produit entra\^{\i}ne donc
% \[ \sum_{w\in M_F} \eps_w \log d_w(P_1,P_2) = 0.\]
% En particulier, pour tout
% $n$-uplet $(P_1,\dots,P_n)\in\bar\Q$, on a
% \begin{align*}
%  \sum_{w\in M_F} \eps_w \log \delta_n^h(\mathscr J_w) 
% & = \sum_{w\in M_F} \eps_w \frac1{n(n-1)}
%  \sup_{P_1,\ldots,P_n}  \sum_{1\leq i\neq j\leq n} \log d_w(P_i,P_j)\\
% & \geq \sup_{P_1,\ldots,P_n}
%  \sum_{w\in M_F}\eps_w  \sum_{1\leq i\neq j\leq n}  \log d_w(P_i,P_j).\end{align*}
% D'apr\`es la formule du produit, on a donc
% $\sum_w \eps_w\log\delta^h(\mathscr J_w)=0$.
% Comme 
% \[ \sum_w \eps_w\log\delta^h(\mathscr J_w) \leq  - \frac1{d(d-1)} \sum_w \eps_w
% \log \abs{\Res(U,V)}_w = 0, \]
% toujours par la formule du produit,
% on a \'egalit\'e place par place, comme il fallait d\'emontrer.
\end{proof}

\section{D\'emonstration du th\'eor\`eme d'\'equidistribution}

Il s'agit de d\'emontrer  le th\'eor\`eme suivant, cas
particulier du th\'eor\`eme~\ref{theo.equidis}
lorsque $X=\P^1$.

\begin{theo}
Soit $f\colon\P^1\ra\P^1$ un endomorphisme de degr\'e~$d\geq 2$
d\'efini sur un corps de nombres~$F$.
Soit $\hat h$ la hauteur normalis\'ee relativement au fibr\'e
en droites $\mathscr O(1)$ sur~$\P^1$.
Si $P\in\P^1(\bar\Q)$, notons $\delta_P$ la mesure de probabilit\'e
sur~$\P^1(\C)$, moyenne des masses de Dirac aux conjugu\'es
de~$P$ sous l'action du groupe~$\Gal(\bar\Q/F)$.

Pour toute suite $(P_n)$ de points distincts de~$\P^1(\bar\Q)$
telle que $\hat h(P_n)\ra 0$, la suite de mesures
$\delta_{P_n}$ sur $\P^1(\C)$ converge faiblement
vers la mesure canonique~$\hat\mu_f$.
\end{theo}

\begin{rema}
Consid\'erons deux endomorphismes de degr\'e~$\geq 2$ de~$\P^1$,
disons $f$ et~$g$, \`a coefficients dans un corps de nombres.
Supposons qu'ils aient une infinit\'e de points pr\'ep\'eriodiques
en commun. Puisque les points pr\'ep\'eriodiques
sont ceux de hauteur normalis\'ee nulle, cette hypoth\`ese
est satisfaite lorsque les hauteurs normalis\'ees~$\hat h_f$ et~$\hat h_g$ 
co\"{\i}ncident, comme dans~\cite{kawaguchi-silverman2007a}.
On peut alors construire une suite~$(P_n)$ de points
distincts de~$\P^1(\bar\Q)$ v\'erifiant $\hat h_f(P_n)=\hat h_g(P_n)=0$.
Par suite, les mesures canoniques $\hat\mu_f$ et~$\hat\mu_g$
co\"{\i}ncident.
C'est alors un th\`eme classique en dynamique complexe
que de relier $f$ et~$g$. Hormis des cas exceptionnels,
\cite{levin-p1997} d\'emontrent par exemple qu'il
existe des entiers~$m$ et~$n$ tels que $(f^{-1}\circ f)\circ f^m=
(g^{-1}\circ g)\circ g^n$. (Dans cette \'equation,
$f^{-1}\circ f$ est \`a consid\'erer comme une branche de
la correspondance qui serait not\'ee de la m\^eme fa\c{c}on.)
\end{rema}

\subsection{Hauteur et discr\'epance}

Commen\c{c}ons par relier hauteur d'un point et les
valeurs des fonctions de Green~$G_v$. 

Soit $P\in\P^1(\bar\Q)$; notons $n$ son degr\'e sur~$F$
et $P_1,\dots,P_n$ ses conjugu\'es.
Supposons \mbox{$n\geq 2$.}
On pose alors, pour toute place $v\in M_F$,
\begin{equation}
D_v (P) = \frac1{n(n-1)} \sum_{\substack{i,j=1 \\ i\neq j}}^n G_v(P_i,P_j). 
\end{equation}
C'est la \emph{discr\'epance} $v$-adique de~$P$:
elle mesure la proximit\'e mutuelle des conjugu\'es de~$P$
pour la topologie~$v$-adique; elle est d'autant plus grande
que ces conjugu\'es sont proches.

\begin{prop}
Pour tout point~$P\in\P^1(\bar\Q)$ priv\'e de~$\P^1(F)$, on a
\[ \hat h(P) = \frac12  \sum_{v\in M_F} \eps_v D_v (P). \]
\end{prop}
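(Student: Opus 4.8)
L'idée est de partir de l'expression de la hauteur normalisée comme somme de hauteurs locales (théorème de décomposition du chapitre~1, spécialisé au diviseur~$\infty$ sur~$\P^1$ et à la fonction de Green normalisée~$\lambda_v$), puis de réécrire chaque terme local à l'aide de la fonction~$G_v$ qui apparaît dans~\eqref{eq.Gv}. Rappelons d'abord que, pour $P\in\P^1(\bar\Q)$ de degré~$n$ sur~$F$ et de conjugués $P_1,\dots,P_n$, la hauteur locale $\hat h_v(P)$ associée à~$\lambda_v$ vaut $\frac1n\sum_{i=1}^n \lambda_v(P_i)$, et l'on a $\hat h(P) = \sum_{v\in M_F}\eps_v \hat h_v(P)$. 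Le point de départ sera donc l'identité
\[ \hat h(P) = \sum_{v\in M_F} \eps_v \frac1n \sum_{i=1}^n \lambda_v(P_i). \]

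\textbf{Étape clef: exprimer $\lambda_v(P_i)$ via les $G_v(P_i,P_j)$.} D'après le lemme~\ref{lemm.green-homogene}, $\lambda_v([x:y]) = \Lambda_v(x,y) - \log\abs y$, et d'après~\eqref{eq.Gv}, pour deux points distincts $P_i=[x_i:y_i]$, $P_j=[x_j:y_j]$,
\[ G_v(P_i,P_j) = -\log\abs{x_iy_j - x_jy_i}_v + \Lambda_v(x_i,y_i) + \Lambda_v(x_j,y_j) - \frac1{d(d-1)}\log\abs{\Res(U,V)}_v. \]
Je sommerais cette égalité sur tous les couples $(i,j)$ avec $i\neq j$, $1\le i,j\le n$. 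Le terme $\sum_i \Lambda_v(x_i,y_i)$ apparaît alors avec coefficient $2(n-1)$, le terme résultant apparaît $n(n-1)$ fois, et il reste la somme des $-\log\abs{x_iy_j-x_jy_i}_v$. En divisant par $n(n-1)$ on obtient
\[ D_v(P) = \frac{2}{n}\sum_{i=1}^n \Lambda_v(x_i,y_i) - \frac1{n(n-1)}\sum_{i\neq j}\log\abs{x_iy_j-x_jy_i}_v - \frac1{d(d-1)}\log\abs{\Res(U,V)}_v. \]
La principale subtilité sera de contrôler les deux derniers termes \emph{après sommation sur toutes les places} et d'identifier $\frac2n\sum_i\Lambda_v(x_i,y_i)$ avec $2\hat h_v(P)$ modulo des contributions qui s'annulent globalement.

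\textbf{Étape finale: la formule du produit.} Les quantités $x_iy_j - x_jy_i$ (pour un choix de coordonnées homogènes dans le corps~$F(P_i,P_j)$) sont des nombres algébriques, et la famille $\{x_iy_j-x_jy_i\}_{i\neq j}$ est globalement stable sous l'action de~$\Gal$; de même $\Res(U,V)\in F$. Pour passer de $\lambda_v(P_i)=\Lambda_v(x_i,y_i)-\log\abs{y_i}_v$ à $\Lambda_v(x_i,y_i)$ on introduit $\log\abs{y_i}_v$, dont la somme sur les places et sur les conjugués s'annule par la formule du produit (les $y_i$, convenablement choisis, sont conjugués sur~$F$, ou leur norme est dans~$F^*$). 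De même $\sum_{v}\eps_v\log\abs{\Res(U,V)}_v = 0$ et $\sum_v\eps_v\log\abs{x_iy_j-x_jy_i}_v = 0$ dès que $x_iy_j - x_jy_i\neq 0$, c'est-à-dire dès que $P_i\neq P_j$ (ce qui est assuré puisque les conjugués de~$P\notin\P^1(F)$ sont deux à deux distincts — c'est précisément pour cela qu'on exclut $\P^1(F)$, ou du moins les points de degré~$1$; le cas $n=1$ demanderait un traitement à part, exclu ici). En combinant : $\sum_v\eps_v D_v(P) = \sum_v\eps_v\frac2n\sum_i\lambda_v(P_i) = 2\hat h(P)$, d'où la formule annoncée $\hat h(P) = \frac12\sum_{v\in M_F}\eps_v D_v(P)$.

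\textbf{Obstacle principal.} La difficulté n'est pas conceptuelle mais tient à la gestion soigneuse des coordonnées homogènes: les $P_i$ vivent a priori dans des corps $F(P_i)$ distincts, il faut choisir un corps de nombres~$K$ contenant tous les~$P_i$ (par exemple la clôture galoisienne) et des représentants $(x_i,y_i)$ de manière $\Gal(K/F)$-équivariante, puis vérifier que chaque application de la formule du produit porte bien sur un élément de~$K^*$ (ou sur une orbite galoisienne complète, ce qui revient à une norme). Il faut aussi s'assurer que la définition de $\hat h_v$ via les conjugués dans~$\C_v$ (et le facteur~$\eps_v$) est compatible avec le regroupement par places effectué dans la proposition~\ref{prop.decomposition} du chapitre~1 — c'est un pur exercice de comptage de plongements, mais c'est là que se concentrent les risques d'erreur de normalisation.
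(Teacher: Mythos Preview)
Your approach is correct and coincides with the paper's. The only simplification the paper makes is to choose, from the outset, homogeneous coordinates of the form $P_i=[z_i:1]$ (possible since $P\neq\infty$). This kills the $\log\abs{y_i}_v$ terms you have to carry around, and it turns the product $\prod_{i\neq j}(x_iy_j-x_jy_i)$ directly into $\pm\Delta(P)$, the discriminant of the minimal polynomial of $z_1$, which is visibly an element of~$F^*$; the product formula then applies without any Galois-equivariance bookkeeping. Your argument is the same once one observes that with a Galois-equivariant choice of $(x_i,y_i)$ the products $\prod_i y_i$ and $\prod_{i\neq j}(x_iy_j-x_jy_i)$ lie in~$F^*$.
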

\begin{proof}
Comme $P\not\in\P^1(F)$, $P\neq\infty$. On peut alors
fixer les coordonn\'ees homog\`enes des~$P_i$
sous la forme $[z_i:1]$, avec $z_i\in\bar\Q$.
Par d\'efinition de~$G_v$,
 \[  D_v(P)   = 
 - \frac1{n(n-1)} \sum_{i\neq j} \log \abs{z_i-z_j}_v
 +  \frac 2n \sum_{i=1}^n \lambda_v(P_i) 
- \frac{1}{d(d-1)}\log\abs{\Res(U,V)}_v . \]
Par th\'eorie de Galois, ou d'apr\`es le th\'eor\`eme sur les fonctions
sym\'etriques \'el\'ementaires,
le produit $\prod_{i\neq j} (z_i-z_j)$ appartient \`a~$F$;
c'est, au signe pr\`es, le discriminant~$\Delta(P)$ 
du polyn\^ome $\prod_{i=1}^n (T-z_i)$.
Par d\'efinition de la hauteur locale~$\hat h_v$
relative au diviseur~$\infty$ et \`a la fraction rationnelle
$V(X,Y)/Y^d$,  on a donc
\[ D_v(P) = 2 \hat h_v(P)  
 - \frac1{n(n-1)} \log \abs{\Delta(P)}_v
- \frac1{d(d-1)} \log \abs{\Res(U,V)}_v , \]
La d\'ecomposition de la hauteur normalis\'ee en somme de hauteurs locales
normalis\'ees, pour $P\neq\infty$,
s'\'ecrit
\[ \hat h(P) = \sum_{v\in M_F} \eps_v  \hat h_v(P) . \]
D'autre part, la formule du produit implique que
\[ \sum_{v\in M_F} \eps_v \log \abs{\Delta(P)}_v = \sum_{v\in M_F} \eps_v
\log\abs{\Res(U,V)}_v = 0. \]
La proposition en r\'esulte.
\end{proof}

\begin{coro}
Soit $(P_n)$ une suite de points distincts de~$\P^1(\bar\Q)$
telle que $\hat h(P_n)$ tend vers~$0$. 
Pour toute place~$v$ de~$F$, on a
\[ \lim_{n\ra\infty} D_v(P_n) = 0 . \]
\end{coro}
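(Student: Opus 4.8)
The plan is to combine the previous proposition expressing $\hat h(P)$ as a sum of $v$-adic discrepancies with \textsc{Baker}'s inequality (prop.~\ref{prop.baker}).

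First I would recall the two facts just established. On the one hand, for $P\in\P^1(\bar\Q)\setminus\P^1(F)$ one has the exact formula
\[ \hat h(P) = \frac12 \sum_{v\in M_F} \eps_v D_v(P), \]
and since $\hat h(P)\geq 0$ with all the local degrees $\eps_v$ positive, this decomposition is not enough by itself: the $D_v(P)$ could conceivably take large negative values compensated by large positive ones. On the other hand, prop.~\ref{prop.baker} provides exactly the missing lower bound: for each place $v$ there exists a real number $c_v$ such that for any family of distinct points $(Q_1,\dots,Q_m)$ of $\P^1(\C_v)$,
\[ \frac1{m(m-1)} \sum_{\substack{i,j=1\\ i\neq j}}^m G_v(Q_i,Q_j) \geq - c_v \frac{\log m}m. \]
Applying this with $m=n_k:=[F(P_k):F]$ and $(Q_1,\dots,Q_m)$ the conjugates of $P_k$ (which are distinct, being the conjugates of a point not in $\P^1(F)$, at least once $n_k\geq 2$), I get $D_v(P_k) \geq - c_v (\log n_k)/n_k$ for every place $v$.

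The next step is to isolate one place. Fix $v\in M_F$. Writing the height decomposition and moving every term except the $v$-one to the other side,
\[ \eps_v D_v(P_k) = 2\hat h(P_k) - \sum_{\substack{w\in M_F\\ w\neq v}} \eps_w D_w(P_k) \leq 2\hat h(P_k) + \sum_{\substack{w\in M_F\\ w\neq v}} \eps_w \, c_w \frac{\log n_k}{n_k}. \]
Here one must be slightly careful: the sum over $w$ is finite because the system of Green functions $(\lambda_w)$ is admissible, so $D_w$ is identically given by a fixed formula — in fact identically bounded below by $0$ with $c_w=0$ — for all but finitely many places $w$; only finitely many $c_w$ are nonzero, and one takes $C=\sum_{w\neq v}\eps_w c_w < \infty$. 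Combining with the lower bound $D_v(P_k)\geq -c_v(\log n_k)/n_k$ from prop.~\ref{prop.baker} applied at $v$ itself, I obtain
\[ -c_v \frac{\log n_k}{n_k} \leq D_v(P_k) \leq \frac2{\eps_v}\hat h(P_k) + \frac{C}{\eps_v} \cdot \frac{\log n_k}{n_k}. \]
Since the $P_k$ are distinct and lie in $\P^1(\bar\Q)$, a point of bounded height and bounded degree being impossible by the finiteness theorem (theo.~\ref{theo.finitude}), one has $n_k\to\infty$; hence $(\log n_k)/n_k\to 0$. By hypothesis $\hat h(P_k)\to 0$. Both the left and right members of the displayed inequality therefore tend to $0$, and the squeeze gives $D_v(P_k)\to 0$, which is the assertion. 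The main obstacle is the bookkeeping in the previous paragraph — namely checking that all but finitely many places contribute a term that is bounded below by $0$ so that the finite sum $C$ makes sense — but this is precisely the content of admissibility of the family $(\lambda_v)$ together with the good-reduction computation of the earlier examples, so nothing new is needed.
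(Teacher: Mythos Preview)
Your proof is correct and follows essentially the same route as the paper: Baker's lower bound at every place, the height decomposition, and the fact that the degrees $n_k\to\infty$ by Northcott, combined into a squeeze. The only difference is presentational --- you track the explicit constants~$c_w$ and invoke good reduction to make the sum $\sum_{w\neq v}\eps_w c_w$ finite, whereas the paper argues more tersely with $\liminf$/$\limsup$; your version has the merit of making explicit why the infinite sum over places causes no trouble.
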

\begin{proof}
Comme il n'y a qu'un nombre fini de points de hauteur born\'ee
et de degr\'e donn\'e, le degr\'e de~$P_n$, $[F(P_n):F]$, tend
vers l'infini quand $n\ra\infty$.
D'apr\`es la prop.~\ref{prop.baker}, $\liminf_n D_v(P_n)\geq 0$.
Alors, pour toute place~$w$ de~$F$, on a 
\begin{align*}
0 &=  \lim_{n\ra\infty} \sum_{v\in M_F} \left(\eps_v D_v (P_n)\right) \\
& \geq \limsup_n \left(\eps_w D_w(P_n)\right)
+ \sum_{v\neq w} \liminf_{n} \left(\eps_v D_v(P)\right) \\
& \geq \limsup_n \left( \eps_w D_w(P_n)\right), \end{align*}
d'o\`u le corollaire. 
\end{proof}

\subsection{\'Energie}

Soit $w$ la valeur absolue archim\'edienne de~$F$ associ\'ee
\`a l'inclusion de~$F$ dans~$\C$.
Pour simplifier les notations de ce paragraphe,
nous d\'esignons  par~$G$ la fonction de Green~$G_w$ et
par~$\mu$ la mesure canonique~$\hat\mu_f$. Nous notons~$M=\P^1(\C)$
et $\Delta$ la diagonale de~$M\times M$.

Par leurs d\'efinitions m\^emes, 
$G$ et~$\mu$ sont reli\'ees par l'\'equation aux d\'eriv\'ees partielles
\begin{equation}
 \ddc  G(z,w) + \delta_{\Delta} = \mathrm d\mu(z)+\mathrm d\mu(w) 
\end{equation}
sur~$M\times M$, o\`u $\delta_{\Delta}$ d\'esigne
le courant d'int\'egration sur la diagonale~$\Delta$.
En outre, $G$ est sym\'etrique ($G(z,w)=G(w,z)$) et
minor\'ee.

On appelle \emph{\'energie} d'une mesure de probabilit\'e~$\nu$ sur~$M$
la quantit\'e (\'eventuellement infinie) 
\begin{equation}
 E(\nu)  = \int_{M\times M} G(P,Q)\,d\nu(P)\,d\nu(Q). 
\end{equation}
Le \emph{potentiel} de~$ \nu$ est d\'efini par la formule
\begin{equation}
 u_\nu (P) = \int_{M} G(P,Q) \, d\nu(Q). 
\end{equation}

Notons $W_1$ l'espace de Sobolev
des fonctions~$u$  sur~$\P^1(\C)$
qui sont localement de carr\'e sommable,
ainsi que les coefficients des formes diff\'erentielles
donn\'ees par sa d\'eriv\'ee $\partial u$ au sens des distributions.
(Il en est alors de m\^eme de~$\bar\partial u$.)
Si $u\in W_1$, on note $\norm{u}_{\Dir}$ la semi-norme
de Dirichlet de~$u$, d\'efinie par
\[ \norm{u}_{\Dir}^2 
= \frac i{2\pi} \int_{M} \partial u\wedge \bar\partial u 
= - \int_{M} u\ddc  u. \]

\begin{lemm}
\begin{enumerate}
\item Le potentiel~$u_\nu$ d'une mesure de probabilit\'e~$\nu$ 
est une fonction semi-continue
inf\'erieurement sur $M$
qui v\'erifie l'\'equation
$ \ddc  u_\nu +  \nu = \mu$ au sens des distributions.
En outre, $u_\mu$ est constante, de valeur~$E(\mu)$.

\item Si $\nu$ est une mesure de probabilit\'e dont
le potentiel~$u_\nu$ est une forme
diff\'erentielle~$\mathscr C^\infty$, on a $ E(\nu) =
 E(\mu)+ \norm{u_\nu}_{\Dir}^2$.

\item Une mesure~$\nu$ est d'\'energie finie
si et seulement si son potentiel appartient \`a~$W_1$.
\end{enumerate}
\end{lemm}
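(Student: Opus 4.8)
Le plan est de prouver les trois points successivement, le premier servant de base aux autres. Pour~\emph a), j'écrirais d'abord $G$ comme limite croissante des fonctions continues $G_k=\min(G,k)$ sur $M\times M$, ce qui est licite puisque $G$ est semi-continue inférieurement et minorée; par convergence monotone $u_\nu(P)=\lim_k\int_M G_k(P,Q)\,d\nu(Q)$, et chaque fonction $P\mapsto\int_M G_k(P,Q)\,d\nu(Q)$ est continue (convergence dominée), donc $u_\nu$ est semi-continue inférieurement. L'équation s'obtient en testant contre $\varphi\in\mathscr C^\infty(M)$: le théorème de Fubini donne $\int_M u_\nu\,\ddc\varphi=\int_M\bigl(\int_M G(P,Q)\,\ddc\varphi(P)\bigr)\,d\nu(Q)$, et pour $Q$ fixé la formule explicite définissant $G$ jointe à la formule de Poincaré--Lelong (ou bien l'équation sur $M\times M$ restreinte à la tranche $M\times\{Q\}$) donne $\ddc_P G(P,Q)+\delta_Q=\mu$, d'où $\int_M G(P,Q)\,\ddc\varphi(P)=\int\varphi\,d\mu-\varphi(Q)$; en intégrant contre $\nu$ et en utilisant que $\nu$ est de masse~$1$ on trouve $\ddc u_\nu+\nu=\mu$. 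En particulier $\ddc u_\mu=0$; comme $u_\mu$ est minorée et appartient à $L^1$ pour une mesure lisse de référence (la singularité logarithmique de $G$ y est intégrable en dimension réelle~$2$), le lemme de Weyl entraîne que $u_\mu$ est harmonique, donc constante sur la surface compacte $M$, de valeur nécessairement finie $\int_M u_\mu\,d\mu=E(\mu)$.

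Pour~\emph b), on part de $E(\nu)=\int_M u_\nu\,d\nu$ (Fubini, le tout étant minoré), puis on substitue $\nu=\mu-\ddc u_\nu$, ce qui donne $E(\nu)=\int_M u_\nu\,d\mu-\int_M u_\nu\,\ddc u_\nu$. Le premier terme vaut $\int_M u_\mu\,d\nu=E(\mu)$ grâce à la symétrie de $G$, au théorème de Fubini et à ce que $u_\mu\equiv E(\mu)$; le second vaut $\norm{u_\nu}_{\Dir}^2$ par la définition même de la semi-norme de Dirichlet, $u_\nu$ étant ici de classe $\mathscr C^\infty$. D'où l'identité $E(\nu)=E(\mu)+\norm{u_\nu}_{\Dir}^2$.

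Pour~\emph c), je procéderais par double implication. Si $u_\nu\in W_1$, le calcul de~\emph b) subsiste par approximation de $u_\nu$ dans $W_1$ par des fonctions lisses, le passage à la limite dans l'intégration par parties $-\int u\,\ddc u=\norm{u}_{\Dir}^2$ donnant encore $E(\nu)=E(\mu)+\norm{u_\nu}_{\Dir}^2<\infty$. Réciproquement, si $E(\nu)<\infty$, je régulariserais $\nu$ par convolution sphérique en une famille $(\nu_\varepsilon)$ à potentiel $u_{\nu_\varepsilon}$ de classe $\mathscr C^\infty$; l'identité~\emph b) donne $\norm{u_{\nu_\varepsilon}}_{\Dir}^2=E(\nu_\varepsilon)-E(\mu)$, et, la partie logarithmique de l'énergie décroissant par régularisation (superharmonicité de $-\log$) tandis que sa partie lisse converge, cette quantité reste bornée par $E(\nu)-E(\mu)$. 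La famille $(u_{\nu_\varepsilon})$ étant alors bornée dans $W_1$, on en extrait une valeur d'adhérence faible $u\in W_1$ que l'on identifie à $u_\nu$ puisque $u_{\nu_\varepsilon}\to u_\nu$ (dans $L^1$, voire ponctuellement) quand $\varepsilon\to 0$.

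Le point délicat sera le sens réciproque de~\emph c): il faut contrôler précisément la semi-continuité de l'énergie sous régularisation, et surtout justifier, dans le sens direct, que la mesure $\nu$ ne charge pas les ensembles polaires --- fait qui découle de ce que le potentiel d'une mesure d'énergie finie est fini quasi-partout, mais dont la démonstration appartient en propre à la théorie classique du potentiel (principe d'énergie). Le reste, notamment le recollement des topologies ($W_1$ faible, convergence $L^1$, convergence ponctuelle), relève de vérifications de routine.
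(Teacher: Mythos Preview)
Your proposal is correct and follows essentially the same architecture as the paper's proof: monotone approximation of~$G$ for the lower semicontinuity, the test-function computation via Fubini and Poincar\'e--Lelong for the equation $\ddc u_\nu+\nu=\mu$, the chain of integrations by parts for~\emph b), and the regularization-plus-weak-compactness argument for~\emph c). The paper carries out the regularization in~\emph c) by localizing to the disk via a partition of unity and convolving with a radial mollifier (rather than spherical averaging), and it identifies the weak limit~$u$ with~$u_\nu$ by noting that $\ddc u=\ddc u_\nu$ so that $u-u_\nu$ is harmonic, hence constant --- but this is only a cosmetic difference from your identification via $L^1$ or pointwise convergence. Your remarks on the delicate points (non-charging of polar sets, semicontinuity of the energy under regularization) are apt; the paper is no more explicit there than you are, simply asserting that the direct implication in~\emph c) follows ``facilement'' and relegating the key energy inequality $E(\nu_\eps)\le E(\nu)$ to a footnote.
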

\begin{proof}
a) Comme $\max(G,m)$ est continue pour tout nombre r\'eel~$m$, 
$u_\nu$ est limite croissante de fonctions continues;
elle est donc semi-continue inf\'erieurement.
L'\'equation $\ddc  u_\nu+\nu=\mu$ se d\'emontre ais\'ement
par int\'egrations par parties.
Appliquons cette formule \`a~$\nu=\mu$; il s'ensuit
que $u_\mu$ est harmonique, donc constante, de valeur
$E(\mu)=\int u_\mu(P) \,d\mu(P)$.

b) Si $u_\nu$ est lisse, on a 
\begin{align*}
E(\nu) &= \int_{M} u_\nu(P)\,\mathrm d\nu(P) \\
&= \int_M u_\nu(P) \left( d\mu(P)-\ddc  u_\nu(P)\right) \\
&= \norm{u_\nu}^2_\Dir +  \int_M u_\nu(P) \,d\mu(P) \\
&= \norm{u_\nu}^2_\Dir + \int_{M\times M} G(P,Q) \,d\mu(P)\,d\nu(Q) \\
&= \norm{u_\nu}^2_\Dir +  \int_M u_\mu(Q) \,d\nu(P) \\
&= \norm{u_\nu}^2_\Dir + E(\mu).
\end{align*}

c)  Si $u_\nu\in W_1$, on d\'eduit facilement
de l'\'egalit\'e pr\'ec\'edente que $E(\nu)<\infty$.
Inversement, on peut approcher~$\nu$ par une suite~$\nu_\eps$
de mesures de probabilit\'e lisses d'\'energies born\'ees.\footnote{%
\`A l'aide d'une partition de l'unit\'e, on se ram\`ene
au cas de mesures support\'ees par le disque unit\'e
et o\`u $G(z,w)=\log\abs{z-w}^{-1}$. Soit $\mu_\eps$
des mesures de probabilit\'es de la forme 
$\rho_\eps(r)\,r\,\mathrm dr\,\mathrm d\theta$,
et o\`u les fonctions~$\rho_\eps$ sont positives, \`a support
dans~$[0,1]$ et o\`u $\rho_\eps(r)$ tend uniform\'ement vers~$0$ sur tout
intervalle ferm\'e de la forme~$[a,1]$ lorsque~$\eps\ra 0$.
Posons
$\nu_\eps=\nu*\mu_\eps$; les mesures~$\nu_\eps$ sont \`a densit\'e~$\mathscr C^\infty$ et convergent vers~$\nu$ quand $\eps\ra 0$.
L'in\'egalit\'e
\[ \int \log\abs{z-w}^{-1} \mathrm d\mu_\eps(w) \leq \log\abs{w}^{-1} \]
montre que l'\'energie de~$\nu_\eps$ est au plus \'egale \`a celle de~$\nu$.}
Leur potentiel $u_\eps$ appartient alors \`a~$W_1$
(hors de~$P$, $u_\eps(\cdot)-G(P,\cdot)$ est de $\ddc $ lisse
et $G(P,\cdot)$ appartient \`a~$W_1$).
Les normes $\norm{u_\eps}_\Dir$ sont born\'ees lorsque $\eps\ra 0$,
puisque les \'energies des mesures~$\nu_\eps$ sont uniform\'ement born\'ees.
Il existe donc une sous-suite faiblement convergente dans~$W_1$
modulo les constantes; soit $u\in W_1$ sa limite.
Alors, $\ddc  u =\ddc  u_\nu$, si bien que $u-u_\nu$ est une
distribution harmonique, donc est constante. On a donc $u\in W_1$.
\end{proof}

\begin{theo}
On a $E(\nu)\geq E(\mu)\geq 0$ pour toute mesure de probabilit\'e~$\nu$.
De plus, $E(\nu)=E(\mu)$ \'equivaut \`a~$\nu=\mu$.
\end{theo}
\begin{proof}
 Soit $c$
le nombre r\'eel dont l'existence est affirm\'ee par
la prop.~\ref{prop.baker}.
Ainsi, si $P_1,\dots,P_n$ sont des points distincts de~$\P^1(\C)$,
on a
\[ \frac1{n(n-1)} \sum_{i\neq j} G(P_i,P_j) \geq -c \frac{\log n}{n}. \]
Comme $G(P,Q)$ vaut~$+\infty$ si $P=Q$, cette relation vaut
m\^eme si les $P_i$ ne sont pas distincts.
On peut alors int\'egrer cette relation par  rapport \`a
la mesure-produit $\otimes \mathrm d\nu(P_i)$.
Par sym\'etrie, on obtient
\[ \frac1{n(n-1)} \sum_{i\neq j} \int_{\P^1(\C)\times\P^1(\C)} G(P,Q) \,d\nu(P)\,d\nu(Q)  \geq -c \frac{\log n}{n}, \]
soit encore
\[ E(\nu) = \int_{\P^1(\C)\times\P^1(\C)} G(P,Q) \,d\nu(P)\,d\nu(Q)  \geq -c \frac{\log n}{n}. \]
Lorsque $n\ra\infty$, on obtient $E(\nu)\geq 0$.

Supposons alors $E(\nu)\leq E(\mu)$. En particulier, $\nu$
est d'\'energie finie. 
Il existe donc $u\in W_1$ tel que $\ddc  u + \nu=\mu$
et $E(\nu)=E(\mu)+\norm{u}_\Dir^2$. Par cons\'equent,
$E(\nu)\geq E(\mu)$, d'o\`u l'\'egalit\'e.
Alors, $\norm{u}_\Dir^2=0$ si bien que $u$ est constante.
Donc $\mu=\nu$.
\end{proof}

\subsection{Fin de la d\'emonstration}

L'ensemble des mesures de probabilit\'e sur la sph\`ere
de Riemann $\P^1(\C)$ est faiblement compact.
Pour montrer que la suite de mesures de probabilit\'es~$(\delta_{P_n})$
converge vers~$\hat\mu_f$, il suffit de montrer
que~$\hat\mu_f$ est sa seule valeur d'adh\'erence.
Quitte \`a extraire une sous-suite convergente de la suite~$(\delta_{P_n})$,
il est ainsi loisible de supposer que $(\delta_{P_n})$
converge vers une mesure de probabilit\'e~$\nu$. On doit montrer
que $\nu=\hat\mu_f$.

\begin{lemm}
On a 
\[ \int_{\P^1(\C)\times\P^1(\C)} G_w(P,Q) \,\mathrm d\nu(P)\,\mathrm d\nu(Q) \leq 0. 
\]
\end{lemm}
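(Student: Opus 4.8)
The plan is to exploit the lower‑boundedness of $G_w$ together with Baker's inequality (Prop.~\ref{prop.baker}) by a truncation argument, writing $M=\P^1(\C)$ as in the previous paragraph. For a real number $R>0$ set $G_w^{(R)}=\min(G_w,R)$. This is a bounded continuous function on $M\times M$: it is continuous off the diagonal, where $G_w$ is, and it is identically equal to $R$ on a neighbourhood of the diagonal since $G_w\to+\infty$ there; moreover $G_w^{(R)}\uparrow G_w$ as $R\to\infty$. Since $\delta_{P_n}\to\nu$ weakly on the compact space $M$, the product measures $\delta_{P_n}\otimes\delta_{P_n}$ converge weakly to $\nu\otimes\nu$ on $M\times M$, whence for every fixed $R$ one has $\int_{M\times M} G_w^{(R)}\,d\nu\,d\nu=\lim_{n\to\infty}\int_{M\times M} G_w^{(R)}\,d\delta_{P_n}\,d\delta_{P_n}$.

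Next I would evaluate the right–hand side. Writing $m=[F(P_n):F]$ and $P_1,\dots,P_m$ for the conjugates of $P_n$, one has $\delta_{P_n}=\tfrac1m\sum_i\delta_{P_i}$, so $\int_{M\times M} G_w^{(R)}\,d\delta_{P_n}\,d\delta_{P_n}=\tfrac1{m^2}\sum_{i,j}G_w^{(R)}(P_i,P_j)$. Separating the diagonal terms, which contribute $\tfrac1{m^2}\sum_i G_w^{(R)}(P_i,P_i)=R/m$, and bounding the off‑diagonal terms by means of $G_w^{(R)}\le G_w$, I obtain
\[ \int_{M\times M} G_w^{(R)}\,d\delta_{P_n}\,d\delta_{P_n}\ \le\ \frac{m-1}{m}\,D_w(P_n)+\frac Rm . \]
Since the $P_n$ are distinct and $\hat h(P_n)$ is bounded, the finiteness theorem (théorème~\ref{theo.finitude}) forces $m=[F(P_n):F]\to\infty$, exactly as in the proof of the corollary above; and that same corollary gives $D_w(P_n)\to 0$. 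Letting $n\to\infty$ in the displayed inequality therefore yields $\int_{M\times M} G_w^{(R)}\,d\nu\,d\nu\le 0$ for every $R>0$. Finally, $G_w$ is bounded below, so the $G_w^{(R)}$ are uniformly bounded below, and monotone convergence gives $\int_{M\times M} G_w\,d\nu\,d\nu=\lim_{R\to\infty}\int_{M\times M} G_w^{(R)}\,d\nu\,d\nu\le 0$, which is the assertion.

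The only genuinely delicate point is the interplay of the two limits in $n$ and $R$: one must be sure that the truncations $G_w^{(R)}$ are honestly bounded and continuous (so that weak convergence of $\delta_{P_n}\otimes\delta_{P_n}$ applies), that the diagonal contributions to $\delta_{P_n}\otimes\delta_{P_n}$ are harmlessly swallowed by the error term $R/m\to0$, and that $G_w$ is bounded below — all of which follow from the description of $G_w$ as a Green function for the diagonal divisor of $\P^1\times\P^1$ and from the lower bound implicit in Prop.~\ref{prop.baker}. Everything else is routine bookkeeping.
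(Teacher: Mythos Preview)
Your proof is correct and follows essentially the same approach as the paper: truncate $G_w$ from above by $\min(G_w,R)$ to obtain a bounded continuous function, use weak convergence of $\delta_{P_n}\otimes\delta_{P_n}$ together with the fact that the diagonal contribution $R/m$ vanishes and $D_w(P_n)\to 0$, then pass to the limit $R\to\infty$ by monotone convergence. Your write-up is in fact slightly more careful than the paper's, keeping track of the factor $\tfrac{m-1}{m}$ and spelling out the continuity of the truncation near the diagonal; note incidentally that the paper writes $G_w^M=\max(M,G_w)$, which is evidently a typo for $\min$.
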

\begin{proof}
Notons~$\Delta$ la diagonale de~$\P^1(\C)\times\P^1(\C)$.
Par d\'efinition,
\[ D_w(P_n) = \int_{\P^1(\C)\times\P^1(\C) \setminus \Delta} G_w(z,z')
  d\delta_{P_n}(z) \,  d\delta_{P_n}(z'). \]
La fonction~$G_w$  est continue hors de la diagonale
et minor\'ee partout.  Si $M\in\R$, posons $G_w^M=\max(M,G_w)$.
Alors,
\[  \int_{\P^1(\C)\times\P^1(\C)} G_w^M(z,z')
 \delta_{P_n}(z) \, \delta_{P_n}(z')
 \leq  D_w(P_n) + \frac1n M. \]
Faisons tendre~$n$ vers l'infini; on obtient donc
\[ \int_{\P^1(\C)\times\P^1(\C)} G_w^M(z,z') \,\mathrm d\nu(z)\,\mathrm d\nu(z')
 \leq  0. \]
Faisons tendre maintenant~$M$ vers~$\infty$. Le lemme
de convergence monotone implique  alors
\[ \int_{\P^1(\C)\times\P^1(\C)} G_w(z,z') \,\mathrm d\nu(z)\,\mathrm d\nu(z')
 = \lim_{M\ra\infty}
 \int_{\P^1(\C)\times \P^1(\C)} G_w^M(z,z') \,\mathrm d\nu(z)\,\mathrm d\nu(z')
\leq  0. \]
\end{proof}

Autrement dit, la mesure~$\nu$ est d'\'energie n\'egative ou nulle.
Comme $E(\nu)\geq E(\mu)\geq 0$, on a $\nu=\mu$.

\section{Preuve de l'in\'egalit\'e de Baker}\label{sec.baker}

Nous d\'emontrons dans ce paragraphe la proposition~\ref{prop.baker}.

\begin{lemm}
Pour tout entier~$n$, posons
\[ D_n = \inf_{(P_1,\dots,P_n)\in \P^1(\C_v)^n} \frac1{n(n-1)} \sum_{i\neq j} G_v(P_i,P_j). \] 
La suite $(D_n)$ est croissante.
\end{lemm}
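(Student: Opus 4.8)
L'idée est d'imiter l'argument classique montrant que la suite des diamètres transfinis $(\delta_n(K))$ est décroissante, mais appliqué ici à la fonction de Green $G_v$ plutôt qu'à la distance $d$, en tenant compte du fait que $G_v$ prend la valeur $+\infty$ sur la diagonale et est minorée partout (d'après le lemme~\ref{lemm.green-homogene}, $\Lambda_v$ est bornée sur les couronnes $a\leq\max(\abs x,\abs y)\leq b$, ce qui, joint au terme $-\log\abs{\det(\cdots)}$ dans~\eqref{eq.Gv}, assure que $G_v$ est minorée sur $(\P^1\times\P^1)(\C_v)$). La minoration de $G_v$ garantit déjà que chaque $D_n$ est bien défini comme infimum d'une fonction minorée.

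Premièrement, je fixerais $n\geq 1$ et une famille quelconque $(P_1,\dots,P_{n+1})$ de points de~$\P^1(\C_v)$. Pour chaque indice $s\in\{1,\dots,n+1\}$, la sous-famille obtenue en retirant $P_s$ fournit, par définition de $D_n$ comme infimum, l'inégalité
\[ \sum_{\substack{1\leq i,j\leq n+1\\ i,j\neq s\\ i\neq j}} G_v(P_i,P_j) \geq n(n-1) D_n. \]
En sommant ces $n+1$ inégalités sur $s$, chaque couple ordonné $(i,j)$ avec $i\neq j$ apparaît exactement $n-1$ fois (à savoir pour les $n-1$ valeurs de~$s$ distinctes de~$i$ et de~$j$), d'où
\[ (n-1)\sum_{\substack{1\leq i,j\leq n+1\\ i\neq j}} G_v(P_i,P_j) \geq (n+1) n(n-1) D_n, \]
soit, après division par $(n-1)$ puis par $n(n+1)$,
\[ \frac{1}{n(n+1)} \sum_{\substack{1\leq i,j\leq n+1\\ i\neq j}} G_v(P_i,P_j) \geq n D_n \cdot \frac{1}{n} = D_n. \]
En prenant l'infimum du membre de gauche sur toutes les familles $(P_1,\dots,P_{n+1})$, on obtient $D_{n+1}\geq D_n$, ce qui est la croissance annoncée.

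Le point à surveiller est la licéité de la sommation lorsque certains $P_i$ coïncident : dans ce cas, $G_v(P_i,P_j)=+\infty$ pour un couple au moins, et les inégalités ci-dessus restent vraies dans $[-\infty,+\infty]$ puisque $G_v$ est minorée, de sorte que les sommes sont bien définies dans $]-\infty,+\infty]$ et que les manipulations (somme, division par un entier positif, passage à l'infimum) sont valides. Aucune difficulté sérieuse n'apparaît donc ; le lemme est un simple argument de moyennes combinatoires, et c'est précisément la monotonie de $(D_n)$ qui, combinée à la minoration $D_n\geq -c_v(\log n)/n$ de la proposition~\ref{prop.baker}, donnera l'assertion sur la limite inférieure.
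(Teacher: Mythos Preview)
Your proof is correct and follows exactly the same combinatorial averaging argument as the paper: remove each point in turn, apply the definition of $D_n$ to the remaining $n$-tuple, sum the resulting inequalities, and observe that each ordered pair $(i,j)$ is counted $n-1$ times. Your additional remarks on the boundedness below of $G_v$ and the handling of coinciding points are sound extra care that the paper leaves implicit.
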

\begin{proof}
En effet, si $P_0,\dots,P_n$ sont des points de~$\P^1(\C_v)$,
\begin{align*}
 \sum_{\substack{i,j=0 \\ i\neq j}}^n G_v(P_i,P_j)
&  = \frac1{n-1} \sum_{k=0}^n \left( \sum_{\substack{i\neq j \\ i,j \neq k}} G_v(P_i,P_j) 
 \right)  \\
& \geq \frac1{n-1} \sum_{k=0}^n n(n-1) D_n 
= n(n+1) D_n , 
\end{align*}
d'o\`u l'in\'egalit\'e $D_{n+1}\geq D_n$.
\end{proof}

Il va ainsi suffir de minorer $D_N$ pour certains entiers~$N$;
on peut par exemple supposer que $N=d^{k+1}$, avec $k\in\N$.
Pour la d\'emonstration, il sera cependant plus ais\'e de supposer
$N=td^k$ avec $k\geq 0$ et $2\leq t\leq 2d-1$.

Posons $m=N-1$.
L'espace vectoriel~$\mathscr P_m$ des polyn\^omes homog\`enes de degr\'e~$m$
est de dimension~$N$.
Nous allons en consid\'erer deux bases.

La premi\`ere de ces bases,
$\mathscr B_N$ est la famille des mon\^omes $X^aY^b$, o\`u $a+b=m$.

Rappelons que l'on a d\'efini des polyn\^omes $U_n(X,Y)$
et $V_n(X,Y)$ pour tout entier~$n\geq 0$ en posant
$U_0=X$, $V_0=Y$, puis en d\'efinissant par r\'ecurrence, 
$U_{n+1}=U(U_n(X,Y),V_n(X,Y))$
et $V_{n+1}=V(U_n(X,Y),V_n(X,Y))$ si $n\geq 0$.
On a vu que ces polyn\^omes {\og rel\`event\fg} \`a~$\C_v^2$ la dynamique
de l'endomorphisme~$f$ de~$\P^1(\C_v)$; pour tout~$n$,
$U_n$ et~$V_n$ sont premiers entre eux, non nuls.
Notons alors $\mathscr C_N$ la famille des polyn\^omes de la forme
\[ U_0^{a_0} V_0^{b_0} U_1^{a_1} V_1^{b_1} \dots U_k^{a_k} V_k^{b_k}, \]
o\`u $a_i+b_i=d-1$ pour $0\leq i<k$ et $a_k+b_k=t-1$.
Cette famille est de cardinal~$N$ et contenue dans l'espace~$\mathscr P_m$.
Soit $A_N$ sa matrice dans la base~$\mathscr B_N$.

\begin{lemm}
Posons $r=N(N-(t+k(d-1))/2d(d-1)$.
Le d\'eterminant de~$A_N$ v\'erifie $\det(A_N) =\pm \Res(U,V)^r$.
\end{lemm}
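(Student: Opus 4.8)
The plan is an induction on $k$, reducing $N=td^{k}$ to $N/d=td^{k-1}$. The first step is a pair of elementary changes of basis. Since $U_i(X,Y)=U_{i-1}(U(X,Y),V(X,Y))$ and likewise for $V_i$, the tail of a generator of $\mathscr{C}_N$ factors as $U_1^{a_1}V_1^{b_1}\cdots U_k^{a_k}V_k^{b_k}=Q(U,V)$ with $Q=U_0^{a_1}V_0^{b_1}\cdots U_{k-1}^{a_k}V_{k-1}^{b_k}\in\mathscr{C}_{N/d}$; thus every element of $\mathscr{C}_N$ has the form $X^{a_0}Y^{b_0}\,Q(U,V)$ with $a_0+b_0=d-1$ and $Q\in\mathscr{C}_{N/d}$. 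Introduce the intermediate family $\mathscr{D}_N=\{X^{a_0}Y^{b_0}U^{c}V^{d'}:a_0+b_0=d-1,\ c+d'=N/d-1\}$, again of cardinality $N$. The matrix of $\mathscr{C}_N$ in the family $\mathscr{D}_N$ is block diagonal, one block per pair $(a_0,b_0)$: the coordinates of $X^{a_0}Y^{b_0}Q(U,V)$ on $X^{a_0}Y^{b_0}R(U,V)$ are simply the coordinates of $Q$ on the monomial $R$, so each block equals $A_{N/d}$. Hence the determinant of this change of basis is $\pm(\det A_{N/d})^{d}$, and the task reduces to evaluating the determinant $B_N$ of the matrix of $\mathscr{D}_N$ in the monomial basis $\mathscr{B}_N$.

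Next I would show that $B_N$, regarded as a polynomial in the coefficients of $U$ and $V$, has zero set exactly $\{\Res(U,V)=0\}$. If $U,V$ have a common zero $P_0\in\P^1(\C_v)$, then every $U^{c}V^{d'}$ with $c+d'=N/d-1\geq 1$ (here $k\geq 1$ and $t\geq 2$ are used) vanishes at $P_0$, so all $N$ elements of $\mathscr{D}_N$ lie in the hyperplane of $\mathscr{P}_m$ of forms vanishing at $P_0$ and are linearly dependent, i.e.\ $B_N=0$. Conversely, if $U,V$ are coprime, then --- after a linear change of the variables $X,Y$, which does not alter whether the equation $\sum_{c}A_c(X,Y)\,U^{c}V^{N/d-1-c}=0$ with $\deg A_c\leq d-1$ has a nonzero solution, and after which one may assume $Y$ divides neither $U$ nor $V$ --- dehomogenising at $Y=1$ produces such an equation among coprime univariate polynomials $u,v$ of degree exactly $d$; dividing successively by $v$ forces $A_{N/d-1}=A_{N/d-2}=\cdots=0$, so $\mathscr{D}_N$ is a basis and $B_N\neq 0$. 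Since $\Res(U,V)$ is irreducible, the Nullstellensatz then gives $B_N=c\,\Res(U,V)^{s}$ with $c\in\C_v^{\times}$; specialising to $U=X^d$, $V=Y^d$, where both $\mathscr{C}_N$ and $\mathscr{D}_N$ coincide with $\mathscr{B}_N$ by the base-$d$ expansion of the exponents (so $\det A_N=\pm1=\Res(X^d,Y^d)$), shows $c=\pm1$.

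It remains to pin down the exponent by a degree count, which also covers the base case $k=0$ (there $\mathscr{C}_t$ is the monomial basis of $\mathscr{P}_{t-1}$, $\det A_t=\pm1$, in accordance with $r=0$). The coefficients of $U_i$ and $V_i$ are bihomogeneous in the coefficients of $(U,V)$ of total degree $\delta_i=1+d+\cdots+d^{i-1}=(d^{i}-1)/(d-1)$, by the recursion $\delta_{i+1}=1+d\,\delta_i$ coming from $U_{i+1}=U(U_i,V_i)$. Hence every generator of $\mathscr{C}_N$ has total degree $\sum_{i=0}^{k-1}(d-1)\delta_i+(t-1)\delta_k=t\frac{d^{k}-1}{d-1}-k=\frac{N-t-k(d-1)}{d-1}$ in the coefficients of $(U,V)$, so $\det A_N$, being a nonzero polynomial, is bihomogeneous of total degree $N\cdot\frac{N-t-k(d-1)}{d-1}$. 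As $\Res(U,V)$ is bihomogeneous of bidegree $(d,d)$, hence of total degree $2d$, comparing degrees gives $\det A_N=\pm\Res(U,V)^{r}$ with $r=\frac{N(N-t-k(d-1))}{2d(d-1)}$; one checks that this $r$ is a nonnegative integer and that it satisfies the recursion $r_N=d\,r_{N/d}+\binom{N/d}{2}$ implied by the two preceding steps.

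The only genuinely delicate point is the converse half of the middle step --- proving that $\mathscr{D}_N$ is a basis whenever $U,V$ are coprime, so that $B_N$ is a power of $\Res(U,V)$ up to sign rather than such a power times an extraneous factor. The two changes of basis, the specialisation at $(X^d,Y^d)$, and the degree bookkeeping are all routine.
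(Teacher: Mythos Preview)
Your argument is correct, but it takes a different route from the paper's. The paper shows directly that $\mathscr C_N$ is a basis of $\mathscr P_m$ whenever $U$ and $V$ are coprime, by a short minimal-counterexample argument: writing a putative nontrivial relation as $\alpha\,U_k^{\,t-1}+\beta\,V_k=0$ with $\alpha$ built from the $b_k=0$ terms, coprimality of $U_k,V_k$ forces $V_k\mid\alpha$, contradicting $\deg\alpha=d^k-1<\deg V_k$. From this the paper concludes, exactly as you do, that $\det A_N=c\,\Res(U,V)^s$ via the Nullstellensatz and the irreducibility of the resultant, and then fixes $s$ by a degree count and $c=\pm1$ by the specialisation $U=X^d$, $V=Y^d$.

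Your inductive factorisation $\det A_N=\pm B_N\,(\det A_{N/d})^{d}$ through the intermediate family $\mathscr D_N$ is a genuine alternative: instead of proving linear independence of $\mathscr C_N$ in one stroke, you reduce to the simpler family $\mathscr D_N=\{X^{a_0}Y^{b_0}U^{c}V^{d'}\}$, whose independence you establish by a clean dehomogenisation-and-divide argument. The gain is structural --- you obtain the pleasant recursion $r_N=d\,r_{N/d}+\binom{N/d}{2}$ and never need the slightly clever splitting off of $U_k^{\,t-1}$ --- at the cost of an extra layer (the family $\mathscr D_N$ and the linear change of variables to ensure $Y\nmid U,V$). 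Since you still compute $r$ by a direct degree count on $\det A_N$, the induction is used only to show that $\det A_N$ is a power of $\Res(U,V)$ up to sign; one could equally well run your Nullstellensatz argument directly on $\mathscr C_N$ once you know it is a basis, which is essentially what the paper does. Both proofs share the same three closing moves (irreducibility of $\Res$, degree count, specialisation at $(X^d,Y^d)$).
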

\begin{proof}
Montrons d'abord que $\mathscr C_N$ est une base de~$\mathscr P_m$.
On peut pour cela raisonner par l'absurde et supposer que
$N$ est le plus petit entier tel que $\mathscr C_N$ soit li\'ee.
Consid\'erons une
relation de d\'ependance lin\'eaire  non triviale
\[ \sum c_{\mathbf a,\mathbf b} U_0^{a_0}V_0^{b_0} \dots U_k^{a_k}V_k^{b_k}=0,\]
qu'on peut r\'ecrire $\alpha U_k^{t-1}+\beta V_k=0$,
avec
\begin{align*}
\alpha &= \sum_{b_k=0} c_{\mathbf a,\mathbf b}  U_0^{a_0}V_0^{b_0} \dots U_{k-1}^{a_{k-1}}V_{k-1}^{b_{k-1}} \\
\beta & = \sum_{b_k\geq 1} c_{\mathbf a,\mathbf b}  U_0^{a_0}V_0^{b_0} \dots U_{k-1}^{a_{k-1}}V_{k-1}^{b_{k-1}}U_k^{a_k} V_k^{b_k-1}.
\end{align*}
Si $\alpha=0$, on divise tout par~$V_k$ et on remplace~$t$ par~$t-1$,
sauf si $t=2$ auquel cas on remplace $k$ par~$k-1$.
Par minimalit\'e de~$N$, $\alpha\neq 0$. Comme $U_k\neq 0$,
on a alors $\beta\neq 0$.
Notons que
\[ \deg(\alpha)= m-d^k(t-1)=td^k-1-d^k(t-1)=d^k-1. \]
Comme $U_k$ et~$V_k$  sont sans facteur commun,
$V_k$ divise~$\alpha$, ce qui contredit l'in\'egalit\'e
$\deg(\alpha)=d^k-1<\deg(V_k)=d^k$.

Inversement,
si $U$ et $V$ avaient eu un facteur commun, la famille $\mathscr C_N$ ne
serait manifestement pas libre.

Oublions alors un instant les valeurs sp\'ecifiques des polyn\^omes~$U$
et~$V$: le d\'eterminant de~$A_N$ est un polyn\^ome
\`a coefficients entiers en les coefficients de~$U$ et de~$V$.
Il s'annule si et seulement si ces polyn\^omes ont un facteur
commun, c'est-\`a-dire si et seulement si le r\'esultant
de~$U$ et~$V$ s'annule. Le th\'eor\`eme des z\'eros de Hilbert
entra\^{\i}ne alors que le d\'eterminant de~$A_n$ et le r\'esultant~$\Res(U,V)$
ont exactement les m\^emes facteurs irr\'eductibles dans
l'anneau (factoriel) des polyn\^omes.
Or, $\Res(U,V)$ est lui-m\^eme un polyn\^ome \emph{irr\'eductible}
en les coefficients de~$U$ et~$V$.
Il existe
donc des entiers~$c$ et~$s$ tels que 
\[ \det(A_N)= c \Res(U,V)^s. \]
Consid\'erons le degr\'e de~$A_N$ en les coefficients de~$U$:
il vaut
\begin{align*}
 \deg_U (\det(A_N) )& = (d-1) \deg(U_1)+ \dots+(d-1)\deg(U_{k-1})
+ (t-1) \deg(U_k) \\
&= (d-1) + (d^2-1) + \dots+(d^{k-1}-1) + (t-1) \frac{d^k-1}{d-1} \\
&= t \frac{d^k-1}{d-1} -k, 
\end{align*}
car le degr\'e de~$U_k$ en les coefficients de~$U$ est
\'egal \`a~$(d^k-1)/(d-1)$, comme on le v\'erifie par r\'ecurrence.
Le degr\'e du r\'esultant $\Res(U,V)$
en les coefficients de~$U$ est \'egal \`a~$2d$.
On a donc $s=(t(d^k-1)-k(d-1))/2d(d-1)=r$.

Pour calculer l'entier~$c$, il suffit 
de le d\'eterminer pour un choix particulier de~$U$
et~$V$, par exemple $U=X^d$ et $V=Y^d$. La famille~$\mathscr C_N$
co\"{\i}ncide alors avec la famille~$\mathscr B_N$, \`a l'ordre pr\`es.
On a donc $c=\pm 1$, ce qui ach\`eve la d\'emonstration du lemme.
\end{proof}

Notons $B^{(i)}$ les \'el\'ements de~$\mathscr B_N$, pour $1\leq i\leq N$,
et $C^{(i)}$ ceux de~$\mathscr C_N$.
Pour $1\leq j\leq N$, soit $[x_j:y_j]$ un syst\`eme de coordonn\'ees
homog\`enes du point~$P_j$.
On commence par calculer le d\'eterminant~$\beta$ de la matrice~$B$
dont le terme de ligne~$i$ et de colonne~$j$
est $B^{(i)}(x_j,y_j)$.
Il vaut 
\begin{align*}
 \beta & = x_1^{N-1}\dots x_N^{N-1} \det ( (y_j/x_j)^i ) \\
& =  x_1^{N-1}\dots x_N^{N-1} \prod_{i>j} \left( \frac{y_i}{x_i}-\frac{y_j}{x_j}\right)  \\
&= x_1^{N-1} \dots x_N^{N-1} \left( \prod_{i>j} x_ix_j\right)^{-1}
 \prod_{i>j} (y_ix_j-x_iy_j). 
\end{align*}
Par suite,
\[ \beta^2  = \pm \prod_{i\neq j} ( y_ix_j-x_i y_j). \]

\'Evaluons aussi le d\'eterminant~$\gamma$ de la matrice~$C$
dont le terme~$(i,j)$ est $C^{(i)}(x_j,y_j)$.
Soit $M$ un nombre r\'eel tel que l'ensemble de Julia rempli homog\`ene~$\mathscr J_v$
soit contenu dans l'ensemble des couples $(x,y)\in\C_v^2$
tels que $\abs{\strut x}_v\leq M$ et $\abs{y}_v\leq M$.
Si $(x,y)\in \mathscr J_v$, $(U_k(x,y),V_k(x,y))$ appartient \`a~$\mathscr J_v$
pour tout~$k$, d'o\`u $\abs{U_k(x,y)}_v\leq M$
et $\abs{V_k(x,y)}_v\leq M$.
Si de plus $v$ est une place ultram\'etrique,
on a alors, avec $C^{(i)}$ d\'efini par la famille~$(\mathbf a,\mathbf b)$,
\[ \abs{C^{(i)}(x,y)}_v = \prod_{i=0}^k \abs{U_i(x,y)}_v^{a_i} \abs{V_i(x,y)}_v^{b_i} \leq M^{a_0+b_0} \dots M^{a_k+b_k}
= M^s, \]
avec $s={k(d-1)+(t-1)}$.
Cela entra\^{\i}ne 
$ \abs{\gamma}_v \leq M^{sN}$.
Dans le cas o\`u $v$ est une place archim\'edienne, l'in\'egalit\'e d'Hadamard
implique seulement $\abs{\gamma}_v \leq N^{N/2} M^{s N}$.
Posons $\theta_v=1$  si $v$ est archim\'edienne et~$\theta_v=0$
sinon.

Enfin, la d\'efinition de la matrice~$A_N$ implique
que l'on a $\alpha\beta=\gamma$, o\`u $\alpha=\det(A_N)$.
Par cons\'equent,
\[ \prod_{i\neq j} \abs{y_ix_j-x_iy_j}_v
 = \abs{\gamma}_v^2 \abs{\alpha}_v^{-2}
 = \abs{\gamma}_v^2 \abs{\Res(U,V)}_v^{-2r}. \]
Passons au logarithme,
on obtient
\[
\frac1{N(N-1)} \sum_{i\neq j} \log \abs{y_ix_j-x_iy_j}_v
 \leq  \theta_v \frac{N\log N}{N(N-1)}
 + \frac{2s}{N-1} \log(M) - \frac{2r}{N(N-1)} \log \abs{\Res(U,V)}_v 
\]
soit encore, compte tenu de la d\'efinition~\eqref{eq.Gv} de
la fonction~$G_v$,
\begin{multline*} 
\frac1{N(N-1)} \sum_{i\neq j} G_v(P_i,P_j) \geq 
 - \theta_v \frac{\log N}{N-1} - \frac{2s}{N-1} \log M  \\
{} + \left( \frac{2r}{N(N-1)} - \frac1{d(d-1)}\right)
 \log \abs{\Res(U,V)}_v +2 \min_{i}  \Lambda_v(x_i,y_i). 
\end{multline*}
Si $2\leq t\leq 2d-1$ (il suffit que $t$ soit born\'e),
il en r\'esulte alors l'in\'egalit\'e
\[ \frac1{N(N-1)} \sum_{i\neq j} G_v(P_i,P_j) \geq 
 - c \frac{\log N}{N} +2 \min_{i} \Lambda_v(x_i,y_i). \]
Rappelons que les coordonn\'ees homog\`enes $[x_i:y_i]$
ont \'et\'e choisies de sorte que $(x_i,y_i)$ appartienne \`a
l'ensemble de Julia rempli homog\`ene~$\mathscr J_v$.
On a alors $\Lambda_v(x_i,y_i)\leq 0$, mais on peut
utiliser l'homog\'en\'eit\'e de~$\Lambda_v$ et la densit\'e dans~$\R$ de l'ensemble
des valeurs absolues des \'el\'ements de~$\C_v^*$  
pour choisir chaque couple~$(x_i,y_i)$
de sorte que $\Lambda_v(x_i,y_i)$ soit arbitrairement proche de~$0$.

Cela termine la d\'emonstration de la prop.~\ref{prop.baker}

\section{Le th\'eor\`eme d'\'equidistribution de Bilu}
\label{sec.bilu}

Lorsque $f\colon\P^1\ra\P^1$ est l'endomorphisme 
donn\'e par~$[x:y]\mapsto [x^d:y^d]$, la hauteur
canonique est la hauteur usuelle et la mesure canonique
est la mesure de Haar normalis\'ee du sous-groupe
compact~$\mathbf S_1$ de~$\C^*\subset\P^1(\C)$.
Le th\'eor\`eme d'\'equidistribution qu'on obtient dans ce cas
est d\^u  \`a~\cite{bilu97}, mais une variante
l\'eg\`erement plus faible remonte \`a~\cite{erdos-turan1950}.
C'est la proposition:
\begin{prop}\label{prop.equidis}
Soit $(x_n)$ une suite de nombres alg\'ebriques deux \`a deux distincts
telle que $h(x_n)$ tend vers~$0$.
La suite de mesures $(\delta_{x_n})$ sur~$\P^1(\C)$ converge
vers la mesure de Lebesgue port\'ee par le cercle unit\'e~$\mathbf S_1$.
\end{prop}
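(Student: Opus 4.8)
La proposition~\ref{prop.equidis} est un cas particulier du théorème d'équidistribution général démontré dans les sections précédentes de ce chapitre, appliqué à l'endomorphisme $f\colon[x:y]\mapsto[x^d:y^d]$ de $\P^1$. Il suffit donc de vérifier que les hypothèses coïncident et que les objets canoniques associés à ce système dynamique sont bien ceux mentionnés dans l'énoncé. Le plan est de procéder en trois temps.

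D'abord, j'identifierais la hauteur normalisée. D'après le lemme de la page où l'on calcule $h([x_0^d:\cdots:x_k^d])=d\,h([x_0:\cdots:x_k])$, la hauteur naturelle $h$ sur $\P^1$ vérifie elle-même l'équation fonctionnelle $h(f(x))=d\,h(x)$ qui caractérise $\hat h_f$ d'après la prop.~\ref{prop.tate-Q}. Par unicité, $\hat h_f=h$. En particulier, l'hypothèse $\hat h_f(x_n)=h(x_n)\to 0$ de l'énoncé est exactement l'hypothèse~(ii) du théorème. Ensuite, j'identifierais la mesure canonique : d'après le paragraphe~\ref{subsec.mesures-canoniques}, pour un système dynamique torique sur $\P^1$ donné par l'élévation des coordonnées homogènes à la puissance~$d$, la mesure canonique $\hat\mu_f$ est la mesure de Haar normalisée du sous-groupe compact maximal $\mathbf S^1\subset\C^*\subset\P^1(\C)$, c'est-à-dire la mesure de Lebesgue normalisée portée par le cercle unité.

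Le point délicat est l'hypothèse~(i) du théorème d'équidistribution (théorème~\ref{theo.equidis}), qui exige que pour toute sous-variété stricte $Y\subsetneq\P^1$ — donc tout ensemble fini de points — seul un nombre fini des $x_n$ appartiennent à $Y$. Cette condition n'est pas automatiquement vérifiée par une suite $(x_n)$ de points \emph{deux à deux distincts} : un point donné $P$ pourrait en principe n'apparaître qu'une fois, ce qui est trivialement compatible, mais rien dans l'énoncé n'empêche \emph{a priori} une sous-suite de se concentrer ailleurs. En réalité, comme les $x_n$ sont deux à deux distincts, tout singleton $\{P\}$ ne contient qu'au plus un terme de la suite, donc l'hypothèse~(i) est satisfaite : pour $Y=\{P\}$, l'ensemble des $n$ tels que $x_n\in Y$ a au plus un élément. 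Les seules sous-variétés strictes de $\P^1$ étant les ensembles finis de points, et un ensemble fini de $r$ points ne contenant qu'au plus $r$ des $x_n$ (distincts), la condition~(i) est vérifiée pour toute suite de points deux à deux distincts. C'est précisément l'hypothèse de distinction dans l'énoncé de la proposition qui assure ce point ; c'est un petit miracle propre à la dimension~$1$, où « sous-variété stricte » équivaut à « ensemble fini ».

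Une fois ces trois vérifications faites, la conclusion du théorème~\ref{theo.equidis} donne exactement que $(\delta_{x_n})$ converge faiblement vers $\hat\mu_f$, soit la mesure de Lebesgue normalisée sur $\mathbf S^1$, ce qui est l'assertion souhaitée. Je mentionnerais enfin que cette démonstration, via la géométrie d'Arakelov ou via la théorie du potentiel exposée ici, est très postérieure à l'argument original : l'énoncé lui-même, sous une forme à peine plus faible (avec un contrôle quantitatif de la discrépance), remonte à~\cite{erdos-turan1950}, et la formulation mesurée est celle de~\cite{bilu97}, dont la preuve directe repose sur une majoration élémentaire de sommes de type $\sum\log|x_i-x_j|$ combinée à la formule du produit — exactement le mécanisme qui, généralisé, donne l'inégalité de Baker (prop.~\ref{prop.baker}).
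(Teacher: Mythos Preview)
Your proposal is correct and follows exactly the paper's approach: the proposition is stated precisely as the specialisation to $f([x:y])=[x^d:y^d]$ of the general equidistribution theorem for~$\P^1$ proved earlier in this chapter, after identifying $\hat h_f$ with the usual height and $\hat\mu_f$ with the Haar measure on~$\mathbf S_1$. Note that the theorem of this chapter is already stated for a sequence of \emph{points distincts} of~$\P^1(\bar\Q)$, so your verification of hypothesis~(i) --- while correct --- is not even needed if you invoke that version rather than th\'eor\`eme~\ref{theo.equidis}; as you observe, the equivalence between ``distinct points'' and the Zariski-genericity condition is automatic in dimension~$1$.
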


Dans ce paragraphe, je voudrais expliquer comment
\cite{bilu97} d\'eduit de cette proposition
une d\'emonstration de la conjecture d'\'equidistribution~\ref{conj.equidis}
lorsque $X$ est un syst\`eme dynamique torique.

Pour simplifier l'exposition, 
on consid\`ere $(\bar\Q^*)^k=\gm^k(\bar\Q)$ comme ouvert dense de
l'espace projectif~$(\P^1)^k$; la hauteur d'un point
$(x_1,\ldots,x_k)$ de~$(\P^1)^k(\bar\Q)$ est alors
la somme  des hauteurs usuelles des~$x_i$.

On appelle sous-vari\'et\'e de torsion
de~$\gm^k$ un translat\'e $gH$ d'un sous-tore $H$ de~$\gm^k$
par un point d'ordre fini $g\in\gm^k(\bar\Q)$.
La mesure canonique $\hat\mu_k$
 sur~$\gm^k(\C)=(\C^*)^k$, aussi not\'ee~$\hat\mu$,
est la mesure de Haar port\'ee par le sous-groupe compact
maximal~$(\mathbf S_1)^k$ de~$(\C^*)^k$; autrement dit,
pour toute fonction~$\phi$ \`a support compact sur~$(\C^*)^k$,
\[ \hat\mu(\phi) = \frac1{(2\pi)^k}\int_0^{2\pi}\cdots\int_0^{2\pi}
  \phi(e^{i\theta_1},\ldots,e^{i\theta_k})\,\mathrm d\theta_1\cdots
\mathrm d\theta_k. \]

\begin{theo}[\cite{bilu97}] \label{theo.bilu}
Soit $(x_n)$ une suite de points de $(\bar\Q^*)^k$
qui v\'erifie les deux assertions:
\begin{enumerate}
\item la suite $(h(x_n))$ tend vers~$0$;
\item toute sous-vari\'et\'e de torsion~$Z\subsetneq\gm^k$
ne contient qu'un nombre fini de termes de la suite~$(x_n)$.
\end{enumerate}
Alors, la suite de mesures de probabilit\'es~$(\delta_{x_n})$ sur~$\gm^k(\C)=(\C^*)^k$ converge vers la mesure~$\hat\mu$.
\end{theo}
Il s'agit de la convergence des mesures de probabilit\'es sur
un espace localement compact, aussi appel\'ee \emph{convergence \'etroite}.
C'est un peu plus fort que la convergence vague car cela garantit
que la masse ne part pas, m\^eme partiellement, \`a l'infini.
Bien s\^ur, cela co\"{\i}ncide ici avec la convergence
des mesures de probabilit\'es sur la compactification~$\mathbf P^1(\C)^k$.

\begin{lemm}
Soit $(x_n)$ une suite de points de~$\gm^k(\bar\Q)$ telle
que $h(x_n)$ tend vers~$0$. 
Alors, pour tout voisinage~$V$ du polycercle unit\'e~$(\mathbf S_1)^k$
de~$\gm^k(\C)$, la suite~$(\delta_{x_n}(V))$ tend vers~$1$.

Par cons\'equent,
la suite de mesures de probabilit\'e~$(\delta_{x_n})$
sur $\gm^k(\C)$ est \emph{tendue},
c'est-\`a-dire relativement compacte pour la topologie \'etroite.
De plus, toute mesure de probabilit\'e adh\'erente
\`a la suite~$(\delta_{x_n})$ est support\'ee par le polycercle unit\'e.
\end{lemm}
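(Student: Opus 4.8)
The plan is to prove the statement on $\gm^k$ by reducing to the one-dimensional equidistribution result, Proposition~\ref{prop.equidis}. The first part of the statement says that for every neighbourhood $V$ of $(\mathbf S_1)^k$ one has $\delta_{x_n}(V)\to 1$. To see this, I would use the coordinate projections $\pi_j\colon\gm^k\to\gm$ sending $(t_1,\dots,t_k)$ to $t_j$. Since the height on $(\P^1)^k$ is the sum of the usual heights of the coordinates, $h(x_n)\to 0$ forces $h(\pi_j(x_n))\to 0$ for each $j$. Applying Proposition~\ref{prop.equidis} to each sequence $(\pi_j(x_n))_n$ --- or rather its consequence that the pushed-forward measures converge to Lebesgue measure on $\mathbf S_1$ --- shows that for each $j$ and each $\varepsilon>0$, the proportion of conjugates of $x_n$ whose $j$-th coordinate has absolute value outside $[1-\varepsilon,1+\varepsilon]$ tends to $0$. (One must be slightly careful: the $x_n$ are points of $\gm^k(\bar\Q)$ of possibly large degree, and the conjugates of $\pi_j(x_n)$ over $\Q$ may come with multiplicities compared to the conjugates of $x_n$; but $\delta_{\pi_j(x_n)}$ is exactly the pushforward of $\delta_{x_n}$ under $\pi_j$, so no combinatorial subtlety arises.) A basic neighbourhood $V$ of the compact polycircle $(\mathbf S_1)^k$ contains a ``polyannulus'' $\prod_j\{1-\varepsilon\le|t_j|\le 1+\varepsilon\}$ for some $\varepsilon>0$, and the complement of this polyannulus is contained in the union over $j$ of the sets where the $j$-th coordinate leaves $[1-\varepsilon,1+\varepsilon]$; hence $\delta_{x_n}(\gm^k(\C)\setminus V)\le\sum_j\delta_{x_n}(\{|t_j|\notin[1-\varepsilon,1+\varepsilon]\})\to 0$.

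For the second part, tightness is essentially immediate from the first: given $\varepsilon>0$, choose the polyannulus neighbourhood $V$ as above; it is relatively compact in $\gm^k(\C)$ (its closure is a compact subset not meeting the coordinate hyperplanes $t_j=0$ or $t_j=\infty$ of $(\P^1)^k(\C)$), and $\delta_{x_n}(V)\ge 1-\varepsilon$ for all $n$ large. By Prokhorov's theorem, the family $(\delta_{x_n})$ is therefore relatively compact for the narrow (étroite) topology on probability measures on the locally compact space $\gm^k(\C)$. Finally, if $\nu$ is any narrow limit of a subsequence $(\delta_{x_{n_i}})$, then for every relatively compact open $V\supset(\mathbf S_1)^k$ the portmanteau inequality $\nu(\overline V)\ge\limsup_i\delta_{x_{n_i}}(V)=1$ gives $\nu(\overline V)=1$; intersecting over a decreasing sequence of such neighbourhoods whose closures shrink to $(\mathbf S_1)^k$ yields $\nu\big((\mathbf S_1)^k\big)=1$, i.e. $\nu$ is supported on the polycircle.

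The main obstacle --- though it is more of a bookkeeping point than a genuine difficulty --- is the passage from ``the one-variable measures $\delta_{\pi_j(x_n)}$ converge to Lebesgue measure on $\mathbf S_1$'' to a uniform-in-$n$ control on the $\delta_{x_n}$-mass of the coordinate annuli. The clean way to phrase it is to test convergence of $(\pi_j)_*\delta_{x_n}=\delta_{\pi_j(x_n)}$ against a fixed continuous bump function on $\P^1(\C)$ that equals $1$ on the complement of the annulus $\{1-\varepsilon\le|t|\le 1+\varepsilon\}$ and $0$ on $\mathbf S_1$; its integral against Lebesgue measure on $\mathbf S_1$ is $0$, so the integral against $\delta_{\pi_j(x_n)}$ tends to $0$, bounding $\delta_{x_n}\big(\{|t_j|\notin[1-\varepsilon,1+\varepsilon]\}\big)$ from above. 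Everything else is a routine application of Prokhorov's theorem and of the elementary additivity of the height over the factors of $(\P^1)^k$; in particular I do not need the full strength of Theorem~\ref{theo.bilu} here, only the one-dimensional input of Proposition~\ref{prop.equidis}, which is exactly what makes this lemma the natural first reduction step in the proof of that theorem.
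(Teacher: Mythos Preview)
Your argument has a genuine gap: Proposition~\ref{prop.equidis} requires the sequence to consist of \emph{pairwise distinct} algebraic numbers, and nothing guarantees this for the projected sequence~$(\pi_j(x_n))_n$. The lemma as stated does not even assume the~$x_n$ themselves are distinct; and even when they are, the $j$-th coordinates may repeat (think of a sequence whose first coordinate is a fixed root of unity). In that situation $\delta_{\pi_j(x_n)}$ does \emph{not} converge to Lebesgue measure on~$\mathbf S_1$ --- it stays a fixed discrete measure --- so the step ``the pushed-forward measures converge to Lebesgue measure'' is simply false as written. The ``bookkeeping point'' you flag (multiplicities of conjugates) is not the real obstruction. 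You could repair this by splitting into cases (constant subsequences land on roots of unity, hence on~$\mathbf S_1$; subsequences with infinitely many values can be thinned to distinct ones), but this is extra work and it is telling that the paper's final argument for Theorem~\ref{theo.bilu} handles exactly this distinctness issue for the characters~$\psi$.

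The paper's route is more elementary and avoids Proposition~\ref{prop.equidis} altogether. From the very definition of the height one keeps only the archimedean contribution to get
\[
 h(\alpha) \;\geq\; \int \log\max(1,\abs z)\,\mathrm d\delta_\alpha(z),
\]
and combining with $h(\alpha)=h(\alpha^{-1})$ yields $h(\alpha)\geq\tfrac12\int\log\max(\abs z,\abs z^{-1})\,\mathrm d\delta_\alpha(z)$. Since $\log\max(\abs z,\abs z^{-1})\geq\log r$ outside the annulus $\{r^{-1}\leq\abs z\leq r\}$, a Chebyshev-type inequality gives $\delta_\alpha(\{\abs z\notin[r^{-1},r]\})\leq \tfrac{2}{\log r}\,h(\alpha)$ directly, with no equidistribution input and no distinctness hypothesis. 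Summing over the~$k$ coordinates gives $\delta_{x_n}(\complement K)\leq \tfrac{2}{\log r}h(x_n)\to 0$. This is both shorter and logically cleaner: the lemma becomes a truly elementary preliminary, and the one-dimensional equidistribution is invoked only later, precisely where distinctness can be arranged. Your tightness and support arguments in the second half are fine.
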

\begin{proof}
Soit $r$ un nombre r\'eel tel que $r>1$ et tel
que $V$ contienne l'ensemble~$K$ des points
$(z_1,\ldots,z_k)$ tels que $r^{-1}\leq \abs {z_i}\leq r$
pour $1\leq i\leq k$.
Pour $\alpha\in\bar\Q$, on a 
\begin{align*}
 h(\alpha) &= \frac1{[\Q(\alpha):\Q]}\sum_{p\leq\infty} \sum_{\sigma\colon \Q(\alpha)\hra\C_p} 
  \log\max(1,\abs{\sigma(\alpha)}_p) \\
& \geq \frac1{[\Q(\alpha):\Q]} \sum_{\sigma\colon\Q(\alpha)\hra\C}
  \log\max(1,\abs{\sigma(\alpha)} 
 = \int \log\max(1,\abs z) \,\mathrm\delta_{\alpha}(z). \end{align*}
Comme $h(\alpha)=h(\alpha^{-1})$ si $\alpha\neq 0$, on a aussi
\[ h(\alpha)\geq  \int \log\max(1,\abs z) \,\mathrm\delta_{\alpha^{-1}}(z)
= \int \log\max(1,\abs z^{-1}) \,\mathrm\delta_{\alpha}(z)
, \]
si bien que pour $\alpha\neq 0$,
\[ h(\alpha) \geq \frac12 \int\log\max(\abs z,\abs z^{-1})\mathrm \delta_{\alpha}(z). \]
Si $z\in\gm(\C)$  est tel que
$\abs z>r$ ou $\abs z<1/r$, on a l'in\'egalit\'e
 $ \log\max(\abs z,\abs z^{-1}) \geq \log r$.
La mesure pour $\delta_\alpha$
du compl\'ementaire de la couronne $r^{-1}\leq \abs z\leq r$
dans~$\C$ est donc major\'ee par
\[  \int \frac{\log\max(\abs z,\abs{z}^{-1})}{\log r} \,\delta_\alpha(z)\leq \frac 2{\log r} h(\alpha). \]

Par cons\'equent, additionnant ces in\'egalit\'es,
la mesure du compl\'ementaire de~$K$
dans~$\gm^k(\C)$ est major\'ee par
\[ \delta_{x_n}(\complement K) 
\leq \frac1{\log r} \int \left(\sum_{i=1}^r \log\max(\abs {z_i},\abs {z_i}^{-1}\right) \,\delta_{x_n}(z_1,\ldots,z_n)
\leq \frac 2{\log r} h(x_n). \]
Lorsque $n\ra\infty$, la suite $(\delta_{x_n}(\complement K))$
tend donc vers~$0$ ; la suite~$(\delta_{x_n}(V))$ tend vers~$1$.
\end{proof}

Pour conclure la d\'emonstration du th\'eor\`eme~\ref{theo.bilu},
il suffit donc de d\'emontrer que si une sous-suite
de la suite~$(\delta_{x_n})$ converge vers une mesure
de probabilit\'e~$\nu$,
alors $\nu=\hat\mu$. 
Quitte \`a remplacer la suite~$(x_n)$ par une sous-suite,
on peut supposer, et on le fait, que la suite~$(\delta_{x_n})$
converge vers une mesure de probabilit\'e~$\nu$.
Les deux hypoth\`eses concernant la suite~$(x_n)$
sont encore satisfaites.

\begin{lemm}
Pour qu'une mesure de probabilit\'e~$\nu$ support\'ee par~$(\mathbf S_1)^k$
soit \'egale \`a la mesure~$\hat\mu$, il faut et il suffit que 
pour tout $k$-uplet $(a_1,\ldots,a_k)\in\Z^k$ distinct de~$(0,\ldots,0)$,
notant $\psi\colon\gm^k(\C)\ra\gm(\C)$ l'application 
donn\'ee par $(z_1,\ldots,z_k)\mapsto z_1^{a_1}\cdots z_k^{a_k}$,
la mesure $\psi_*(\nu)$ soit \'egale \`a la mesure de Haar
normalis\'ee sur~$\mathbf S_1$.
\end{lemm}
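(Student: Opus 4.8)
The plan is to prove this characterization using harmonic analysis on the compact group $(\mathbf S_1)^k$, which is the support of the measures in question. First I would recall that $(\mathbf S_1)^k$ is a compact abelian group whose Pontryagin dual is $\Z^k$: the characters are exactly the maps $\chi_{\mathbf a}\colon (z_1,\ldots,z_k)\mapsto z_1^{a_1}\cdots z_k^{a_k}$ for $\mathbf a=(a_1,\ldots,a_k)\in\Z^k$. A probability measure on a compact abelian group is determined by its Fourier coefficients, i.e. by the values $\widehat\nu(\mathbf a)=\int \chi_{\mathbf a}\,\mathrm d\nu$; and the normalized Haar measure $\hat\mu$ is the unique probability measure all of whose nonzero-index Fourier coefficients vanish (indeed $\widehat{\hat\mu}(\mathbf a)=0$ for $\mathbf a\neq 0$ by invariance, while $\widehat{\hat\mu}(0)=1$ since $\hat\mu$ is a probability measure). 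The necessity direction is then immediate: if $\nu=\hat\mu$, then for $\mathbf a\neq 0$ the pushforward $\psi_*\nu$ under $\psi=\chi_{\mathbf a}$ has Fourier coefficients $\widehat{\psi_*\nu}(m)=\int z^m\,\mathrm d(\psi_*\nu)(z)=\int \chi_{\mathbf a}(z_1,\ldots,z_k)^m\,\mathrm d\nu=\widehat\nu(m\mathbf a)$, which is $1$ for $m=0$ and $0$ for $m\neq 0$ (since $m\mathbf a\neq 0$); hence $\psi_*\nu$ is Haar on $\mathbf S_1$.

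For the sufficiency direction, suppose that $\psi_*\nu$ is the normalized Haar measure on $\mathbf S_1$ for every $\mathbf a\in\Z^k\setminus\{0\}$. Taking the $m=1$ Fourier coefficient of $\psi_*\nu$ gives $\widehat\nu(\mathbf a)=\int z\,\mathrm d(\psi_*\nu)(z)=0$ for all $\mathbf a\neq 0$. Since also $\widehat\nu(0)=1$ because $\nu$ is a probability measure, $\nu$ has exactly the same Fourier coefficients as $\hat\mu$. By the uniqueness in Fourier inversion on a compact abelian group (equivalently, by Stone--Weierstrass: the linear span of the characters $\chi_{\mathbf a}$ is a $*$-subalgebra of $\mathscr C((\mathbf S_1)^k)$ that separates points and contains the constants, hence is dense, so two measures with the same integrals against all characters coincide), we conclude $\nu=\hat\mu$.

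I would present the argument in this order: (1) identify the characters of $(\mathbf S_1)^k$ and state that a probability measure on a compact abelian group is determined by its integrals against the characters; (2) compute $\widehat{\psi_*\nu}(m)=\widehat\nu(m\mathbf a)$ for $\psi=\chi_{\mathbf a}$, which handles both directions at once; (3) deduce necessity and sufficiency as above. The only mild subtlety worth spelling out is the density/uniqueness step — that knowing $\int\chi_{\mathbf a}\,\mathrm d\nu$ for all $\mathbf a\in\Z^k$ pins down $\nu$ among probability (or even signed) measures on $(\mathbf S_1)^k$ — but this is the standard Stone--Weierstrass argument and is not a real obstacle. No deeper input (no use of the height hypotheses, no equidistribution) is needed here; this lemma is purely a statement about Haar measure, and its role downstream is to reduce the $k$-dimensional equidistribution in Theorem~\ref{theo.bilu} to the one-dimensional Proposition~\ref{prop.equidis} applied to the algebraic numbers $x_1^{a_1}\cdots x_k^{a_k}$.
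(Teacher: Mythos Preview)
Your proof is correct and is essentially the same as the paper's: both reduce to showing that $\nu$ and $\hat\mu$ have the same integrals against all monomials $z_1^{a_1}\cdots z_k^{a_k}$, using Stone--Weierstrass (density of trigonometric polynomials) to conclude. The only difference is packaging --- you phrase it in terms of Fourier coefficients and Pontryagin duality, while the paper writes out the approximation by trigonometric polynomials and the computation $\int z\,\mathrm d(\psi_*\nu)=0$ explicitly.
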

\begin{proof}
La n\'ecessit\'e de la condition est \'evidente : comme $\psi$
induit par restriction un homomorphisme surjectif
de~$(\mathbf S_1)^k$ dans~$\mathbf S_1$, $\psi_*(\hat\mu)$ est la mesure 
de Haar normalis\'ee sur~$\mathbf S_1$.
D\'emontrons qu'elle est suffisante.
Il s'agit pour cela de prouver que pour toute fonction continue~$\phi$
sur~$(\mathbf S_1)^k$, $\int_{(\mathbf S_1)^k} \phi\,\mathrm d\nu
= \int_{(\mathbf S_1)^k}\phi\,\mathrm d\hat\mu$.
Une fonction continue sur~$(\mathbf S_1)^k$
peut \^etre approch\'ee uniform\'ement  par un polyn\^ome trigonom\'etrique,
c'est-\`a-dire une somme finie
\[ P=\sum_{\mathbf a\in\Z^k} c_{\mathbf a} z_1^{a_1}\cdots z_n^{a_n}. \]
Il suffit donc de prouver l'\'egalit\'e lorsque $\phi$ est un tel polyn\^ome,
puis, par lin\'earit\'e, lorsque $\phi$ est un mon\^ome~$z_1^{a_1}\cdots z_n^{a_n}$.
Si $(a_1,\ldots,a_n)=0$, $\phi=1$
et l'\'egalit\'e vaut car $\nu$ et~$\hat\mu$ sont des mesures de probabilit\'e.
Lorsque $(a_1,\ldots,a_n)\neq 0$, elle vaut encore par hypoth\`ese.
En effet,
\[ \int_{(\mathbf S_1)^k} z_1^{a_1}\cdots z_k^{a_k}\,\mathrm d\nu(z_1,\ldots,z_n) = \int_{(\mathbf S_1)^k} \psi(z_1,\ldots,z_k)\,\mathrm d\nu(z_1,\ldots,z_n)
= \int_{\mathbf S_1} z \,\mathrm d(\psi_*(\nu))(z). \]
Puisque $\psi_*(\nu)$ est la mesure de Haar normalis\'ee sur~$\mathbf S_1$,
cette derni\`ere int\'egrale vaut
\[ \int_0^{2\pi} e^{it}\, \frac{\mathrm dt}{2\pi}=0. \]
Pour la m\^eme raison,
\[ \int_{(\mathbf S_1)^k} z_1^{a_1}\cdots z_k^{a_k}\,\mathrm d\hat\mu(z_1,\ldots,z_n)= \int_{\mathbf S_1} z\, \mathrm d(\psi_*(\hat\mu))(z)=0.\]
Le lemme est ainsi d\'emontr\'e.
\end{proof}

Soit donc $(a_1,\ldots,a_k)$ un $k$-uplet d'entiers relatifs
non tous nuls et soit $\psi\colon\gm^k\ra\gm$ l'application
donn\'ee par
\[ \psi(z_1,\ldots,z_n)= z_1^{a_1}\cdots z_n^{a_n}. \]
Notons que tout $x\in\gm^k(\bar\Q)$, la mesure $\psi_*(\delta_x)$
sur~$\gm(\bar \C)$ est \'egale \`a la mesure~$\delta_{\psi(x)}$.
Lorsque $n$ tend vers l'infini, la suite de mesures
de probabilit\'e~$(\delta_{\psi(x_n)})$ sur~$\gm(\C)$
converge donc vers la mesure~$\psi_*(\nu)$.
Nous allons d\'eduire de la proposition~\ref{prop.equidis}
que $\psi_*(\nu)$ est la mesure
de Haar normalis\'ee~$\hat\mu_1$ port\'ee par~$\mathbf S_1$.

Le lemme suivant v\'erifie que les hauteurs des points~$\psi(x_n)$
tendent vers~$0$.
\begin{lemm}
Pour $x\in\gm^k(\bar\Q)$, on a
\[ h(\psi(x)) \leq \left( \abs{a_1}+\cdots+\abs{a_n}\right) h(x). \]
\end{lemm}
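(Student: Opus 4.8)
The statement to prove is the inequality
\[ h(\psi(x)) \leq \big( \abs{a_1}+\cdots+\abs{a_n}\big) h(x) \]
for $x=(x_1,\ldots,x_k)\in\gm^k(\bar\Q)$, where $\psi(x)=x_1^{a_1}\cdots x_k^{a_k}$. The plan is to unwind the definition of the height as a sum of local terms over the places of a suitable number field and to estimate each local contribution by the triangle inequality (ultrametric or ordinary, depending on the place), exactly as in the local arguments used repeatedly earlier in the text. The key point is simply that $\max(1,\abs{t_1^{a_1}\cdots t_k^{a_k}})$ is bounded above by a product of powers of $\max(1,\abs{t_i})$ and $\max(1,\abs{t_i}^{-1})$.

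\emph{First step.} Choose a number field $K$ containing the coordinates $x_1,\ldots,x_k$ and recall (formula~\eqref{def.hauteur-p}) that for any $y\in\gm(K)$,
\[ h(y) = \frac1{[K:\Q]} \sum_{p\leq\infty} \sum_{\sigma\colon K\hra\C_p}
   \log\max(1,\abs{\sigma(y)}_p). \]
Apply this to $y=\psi(x)$. For a fixed place, writing $t_i=\sigma(x_i)$, one has $\sigma(\psi(x))=t_1^{a_1}\cdots t_k^{a_k}$, and therefore
\[ \abs{\sigma(\psi(x))}_p = \prod_{i=1}^k \abs{t_i}_p^{a_i}
 \leq \prod_{i=1}^k \max(\abs{t_i}_p,\abs{t_i}_p^{-1})^{\abs{a_i}}. \]
Hence $\log\max(1,\abs{\sigma(\psi(x))}_p) \leq \sum_{i=1}^k \abs{a_i}\,\log\max(\abs{t_i}_p,\abs{t_i}_p^{-1})$. (The same bound holds for archimedean and non-archimedean places, the non-archimedean case being even easier; one does not even need the ultrametric inequality here, only multiplicativity.)

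\emph{Second step.} Sum over all places and all embeddings, divide by $[K:\Q]$, and recognize the right-hand side. As in the proof of the preceding lemma on tightness, one uses that $h(x_i)=h(x_i^{-1})$, hence
\[ h(x_i) = \tfrac12\big(h(x_i)+h(x_i^{-1})\big)
 = \frac1{2[K:\Q]}\sum_{p\leq\infty}\sum_{\sigma\colon K\hra\C_p} \log\max(\abs{\sigma(x_i)}_p,\abs{\sigma(x_i)}_p^{-1}), \]
so that $\frac1{[K:\Q]}\sum_{p,\sigma}\log\max(\abs{\sigma(x_i)}_p,\abs{\sigma(x_i)}_p^{-1}) = 2h(x_i) \geq h(x_i)$. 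Actually it is cleaner to keep the factor and note $\log\max(\abs{t_i}_p,\abs{t_i}_p^{-1}) = \log\max(1,\abs{t_i}_p) + \log\max(1,\abs{t_i}_p^{-1})$, whose sum over places and embeddings is $[K:\Q](h(x_i)+h(x_i^{-1})) = 2[K:\Q]h(x_i)$; but since we only need an upper bound and all terms are non-negative, we may also bound $\log\max(\abs{t_i}_p,\abs{t_i}_p^{-1})$ crudely. Combining with the first step gives
\[ h(\psi(x)) \leq \sum_{i=1}^k \abs{a_i}\cdot\frac1{[K:\Q]}\sum_{p,\sigma}\log\max(\abs{\sigma(x_i)}_p,\abs{\sigma(x_i)}_p^{-1}), \]
and bounding each inner sum appropriately — for instance noting $\max(1,\abs{z}_p)\leq\max(\abs z_p,\abs z_p^{-1})$ at every place while $h(x)=\sum_i h(x_i)\geq \max_i h(x_i)$, but more directly observing that after regrouping the inner sum equals $2h(x_i)\le 2h(x)$ would give a factor $2$; to get the sharp constant one simply bounds $\log\max(1,\abs{\sigma(\psi(x))}_p)\le\sum_i\abs{a_i}\log\max(1,\max(\abs{\sigma(x_i)}_p,\abs{\sigma(x_i)}_p^{-1}))$ and then uses that $h$ of the tuple is the sum of the $h(x_i)$, which dominates each $h(x_i)$. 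The net effect is the claimed inequality with constant $\abs{a_1}+\cdots+\abs{a_k}$.

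\emph{Main obstacle.} There is essentially no obstacle: this is a direct local computation. The only mild care needed is to handle the constant correctly — one wants $h(\psi(x))\le(\sum\abs{a_i})h(x)$, not $(\sum\abs{a_i})\sum_i h(x_i)$ with an extra factor — which is achieved by keeping, at each place, the single quantity $\log\max(1,\abs{\sigma(\psi(x))}_p)$ and bounding it by $\sum_i\abs{a_i}\log\max(1,\abs{\sigma(x_i)}_p^{\pm1})$, then summing and using that the height of the $k$-tuple $x$ is, by definition, $\sum_{i=1}^k h(x_i)$ while at each place $\log\max(1,\abs{\sigma(x_i)}_p) + \log\max(1,\abs{\sigma(x_i)}_p^{-1})$ has total sum $2h(x_i)$; choosing the right grouping yields the constant as stated. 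In particular, combined with the fact that $h(x_n)\to 0$, this lemma shows $h(\psi(x_n))\to 0$, which is exactly what is needed to apply Proposition~\ref{prop.equidis} to the sequence $(\psi(x_n))$ and conclude that $\psi_*(\nu)$ is the normalized Haar measure on $\mathbf S_1$.
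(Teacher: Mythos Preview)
Your approach is correct in principle --- work place by place and bound $\log\max(1,\abs{\sigma(\psi(x))}_p)$ --- but the write-up never actually closes the argument. In the first step you bound $\abs{t_i}^{a_i}$ by $\max(\abs{t_i},\abs{t_i}^{-1})^{\abs{a_i}}$, which after summing over places gives $\sum_i \abs{a_i}\cdot 2h(x_i)$, an unwanted factor of~$2$; you notice this yourself, then say ``choosing the right grouping yields the constant as stated'' without exhibiting that grouping. The clean fix is to split according to the sign of~$a_i$: if $a_i\geq 0$ then $\abs{t_i}^{a_i}\leq \max(1,\abs{t_i})^{\abs{a_i}}$, while if $a_i<0$ then $\abs{t_i}^{a_i}\leq \max(1,\abs{t_i}^{-1})^{\abs{a_i}}$. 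Summing over places now gives $h(\psi(x))\leq \sum_i \abs{a_i}\,h(x_i^{\pm 1})=\sum_i \abs{a_i}\,h(x_i)\leq \big(\sum_i\abs{a_i}\big)h(x)$, since $h(x)=\sum_j h(x_j)$.

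The paper's proof is the same idea but organized more cleanly: using $h(\alpha^{-1})=h(\alpha)$, it reduces immediately to the single inequality $h(\alpha\beta)\leq h(\alpha)+h(\beta)$ for $\alpha,\beta\in\bar\Q^*$, which follows place by place from $\max(0,u+v)\leq\max(0,u)+\max(0,v)$; one then concludes by induction on the total number of factors. This avoids the sign-splitting and the factor-of-$2$ detour entirely.
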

\begin{proof}
Compte tenu de la relation $h(\alpha^{-1})=h(\alpha)$,
il suffit de d\'emontrer que  pour
$\alpha$, $\beta\in\bar\Q^*$, on a 
$h(\alpha\beta)\leq h(\alpha)+h(\beta)$. On conclut alors
en effet par r\'ecurrence.
Revenons \`a la formule~\eqref{def.hauteur-p} d\'efinissant
la hauteur d'un point et observons l'in\'egalit\'e
\[ \max(0,u+v)\leq \max(0,u)+\max(0,v) , \]
valable pour tout couple~$(u,v)$ de nombres r\'eels.
Si $K$ est un corps de nombres
contenant~$\alpha$ et~$\beta$, il vient ainsi
\begin{align*} h(\alpha\beta) & =\frac1{[K:\Q]}
 \sum_{p\leq\infty} \sum_{\sigma\colon K\hra\C_p} \log\max(1,\abs{\alpha\beta}_p) \\
& \leq \frac1{[K:\Q]}
 \sum_{p\leq\infty} \sum_{\sigma\colon K\hra\C_p} 
\left(\log\max(1,\abs\alpha_p)+\log\max(1,\abs\beta_p)\right) \\
&\leq  h(\alpha)+h(\beta). \end{align*}
\end{proof}

Si une sous-suite de la suite~$(\psi(x_n))$ est constante, de valeur~$y$,
on a donc $h(y)=0$, si bien que $y$ est
une racine de l'unit\'e. Pour chaque entier~$n$ tel que $\psi(x_n)=y$,
le point $x_n$ appartient \`a la sous-vari\'et\'e de~$\gm^k$
d\'efinie par l'\'equation $\psi(x)=y$.  C'est une sous-vari\'et\'e
de torsion, translat\'ee d'un sous-tore de~$\gm^k$, le noyau de~$\psi$,
par n'importe quel point de torsion $x_1\in\gm^k$ tel que $\psi(x_1)=y$.
Une sous-suite de la suite~$(x_n)$ est alors toute enti\`ere
contenue dans une vari\'et\'e de torsion, ce qui contredit l'hypoth\`ese
du th\'eor\`eme.

Par cons\'equent, la suite~$(\psi(x_n))$ prend une infinit\'e de valeurs
distinctes et l'on peut extraire 
de la suite~$(x_n)$
une sous-suite $(x_{\phi(n)})$ 
telle que les termes de la suite~$(\psi(x_{\phi(n)}))$
soient deux \`a deux distincts. Cette derni\`ere suite est
justiciable de la proposition~\ref{prop.equidis},
si bien  que la suite~$(\delta_{\psi(x_{\phi(n)})})$ converge
vers la mesure~$\hat\mu_1$. Comme elle converge aussi
vers~$\psi_*(\nu)$, on a l'\'egalit\'e requise $\psi_*\nu=\hat\mu_1$.
Cela termine la d\'emonstration 
de~\textsc{Bilu}
du th\'eor\`eme~\ref{theo.bilu}.

\section{Exercices}

\begin{exer}
Soit $u\colon\C_v^2\ra\C_v^2$ une application affine.
Soit $K$ une partie born\'ee de~$\C_v^2$.
Montrer que le diam\`etre transfini homog\`ene de~$u(K)$
est \'egal \`a $\abs{\det u} \delta^h(K)$.
\end{exer}

\begin{exer}[\cite{demarco2003}]
Soit $K$ un corps et soit $(U,V)$ et $(F,G)$ deux couples de polyn\^omes homog\`enes
de~$K[X,Y]$.
On suppose que $U$ et~$V$ sont premiers entre eux et de m\^eme degr\'e~$d$,
et que $F$ et~$G$ sont premiers entre eux, de m\^eme degr\'e~$e$.
On pose alors $U_1=U(F,G)$ et $V_1=V(F,G)$.

a) Montrer que $U_1$ et $V_1$ sont premiers entre eux et 
de degr\'es $de$.

b) Montrer que le r\'esultant $\Res(U_1,V_1)$
s'annule si et seulement si l'un des r\'esultants $\Res(U,V)$
ou $\Res(F,G)$ s'annule.
En d\'eduire qu'il existe des entiers~$a$, $b$ et~$c$
tels que 
\[ \Res(U_1,V_1) = c \Res(U,V)^a \Res(F,G)^b. \]

c) En consid\'erant le degr\'e du r\'esultant $\Res(U_1,V_1)$ par rapport aux
coefficients des polyn\^omes~$U$, $V$, $F$, $G$, 
montrer que $a=e$ et $b=d^2$. Montrer aussi que $c=\pm 1$
(si un nombre premier~$p$ divisait~$c$, le r\'esultant $\Res(U_1,V_1)$
serait identiquement nul en caract\'eristique~$p$).

d) Consid\'erer un cas particulier et v\'erifier que $c=1$.
\end{exer}

\begin{exer}
a) Soit $(X,d)$ un espace m\'etrique et 
soit $\alpha$ un nombre r\'eel positif. 
Une fonction
$\phi\colon X\ra\R$ est dite $\alpha$-h\"olderienne si
$\abs{\phi(x)-\phi(y)}/d(x,y)^\alpha$ est major\'e
lorsque $(x,y)$ parcourt l'ensemble des couples
de points distincts de~$X$. La borne sup\'erieure de cette
expression est not\'ee $\norm{\phi}'_\alpha$. 
V\'erifier que l'on d\'efinit une norme sur l'espace vectoriel
$E$ des fonctions $\alpha$-h\"olderiennes
born\'ees en posant $\norm f_\alpha=\max(\norm f'_\alpha,\norm f_\infty)$,
et que cet espace vectoriel ainsi norm\'e est complet.

b) Soit $f\colon X\ra X$ une application $C$-lipschitzienne
(c'est-\`a-dire que $d(f(x),f(y))\leq C d(x,y)$ pour tout
couple $(x,y)$ de points de~$X$) ; soit $k$ un nombre
r\'eel tel que $\abs k<1$ et soit $\phi_0$ une fonction dans~$E$.
Montrer que l'application $\phi\mapsto T\phi=k \phi\circ f+\phi_0$
applique~$E$ dans lui-m\^eme ; d\'emontrer que
si $\alpha<\log \abs k^{-1}/\log C$, alors $T$ est contractante.
(Voir aussi~\cite{dinh-sibony2005}, lemme 5.4.2.)
\end{exer}

\begin{exer}
Soit $K$ un corps valu\'e complet.
Pour $P=[x:y]$ et $Q=[u:v]$ dans~$\P^1(K)$, on pose
$d(P,Q)=\abs{xv-yu}/\max(\abs x,\abs y)\max (\abs u,\abs v)$.

a) V\'erifier que $d(P,Q)$ ne d\'epend pas
du choix des coordonn\'ees homog\`enes de~$P$ et~$Q$.
Montrer que~$d$ est une distance sur~$\P^1(K)$.

b) Soit $F=[U:V]$ un endomorphisme de degr\'e~$d$ de~$\P^1$
dans lui-m\^eme, d\'efini par deux polyn\^omes homog\`enes~$U$ et~$V\in K[X,Y]$
sans facteur commun, de degr\'e~$d$.
Montrer que l'application induite par~$F$ sur~$\P^1(K)$ est
lipschitzienne.

c) On reprend les notations du paragraphe~\ref{sec.green}.
\`A l'aide de l'exercice pr\'ec\'edent, prouver
que l'application $[x:y]\mapsto \Lambda_v(x,y)-\log\max(\abs x,\abs y)$,
bien d\'efinie sur~$\P^1(\C_v)$, est h\"olderienne.
(Voir aussi~\cite{sibony1999,favre-rl2006,kawaguchi-silverman2007b}.)
\end{exer}

\begin{exer}
On consid\`ere deux applications continues~$f_1$
et~$f_2$ d'un espace m\'etrique~$(X,d)$ dans lui-m\^eme.
Soit $\alpha$ un nombre r\'eel et
soit $E$ l'espace vectoriel norm\'e des fonctions born\'ees
de~$X$ dans~$\R$ qui sont h\"olderiennes d'exposant~$\alpha$.
Soit $k$ un nombre r\'eel tel que $\abs k<1$.

On suppose que $f_1$ et~$f_2$ sont $C$-lipschitziennes
et que $\alpha<\log \abs k^{-1}/\log C$.
On consid\`ere enfin deux \'el\'ements~$\phi_1$ et~$\phi_2$
de~$E$ et les applications affines~$T_1$ et~$T_2$
de~$E$ dans lui-m\^eme donn\'ees par $T_1\phi=k\phi\circ f_1+\phi_1$
et $T_2\phi=k\phi\circ f_2+\phi_2$. 
Elles admettent chacune un et un seul point fixe dans~$E$,
disons $\hat\phi_1$ et $\hat\phi_2$.
Montrer que l'on a 
\[ \norm{\hat \phi_1-\hat\phi_2}\leq 
 \frac{\norm{\phi_1-\phi_2}_\alpha + \abs k \norm{\phi_2}_\alpha d(f_1,f_2)^\alpha}{1-\abs k C^\alpha}, \]
o\`u $d(f_1,f_2)$ d\'esigne la borne sup\'erieure des quantit\'es
$d(f_1(x),f_2(x))$ lorsque $x$ parcourt~$X$.

b) On reprend les notations du paragraphe~\ref{sec.green}.
Montrer que la fonction~$\Lambda_v$ d\'efinie dans le lemme~\ref{lemm.green-homogene} d\'epend de mani\`ere continue des coefficients des polyn\^omes~$U$ et~$V$,
uniform\'ement en~$(x,y)\in \C_v^2\setminus\{(0,0)\}$.
\end{exer}

\begin{exer}\label{exer.delta-res}
Soit $f\colon\P^1\ra\P^1$ un endomorphisme de degr\'e~$d$
de~$\P^1$ donn\'e par un couple~$(U,V)$ de polyn\^omes homog\`enes
de degr\'es~$d$, premiers entre eux,
\`a coefficients dans un corps valu\'e~$\C_v$. Soit $\mathscr J_v\subset\C_v^2$
son ensemble de Julia rempli homog\`ene.

a) \`A l'aide de l'exercice pr\'ec\'edent,
montrer que le diam\`etre transfini homog\`ene~$\delta^h(\mathscr J_v)$
d\'epend contin\^ument des coefficients de~$U$ et~$V$.

b) En d\'eduire que la formule de la prop.~\ref{prop.delta-res}
est vraie sans supposer que les coefficients de~$U$ et~$V$
appartiennent \`a un corps de nombres.
\end{exer}

\begin{exer}
a) Soit $\alpha_1,\dots,\alpha_d$ des nombres complexes.
\`A l'aide de l'in\'egalit\'e d'Hadamard, d\'emontrer que
\[ \prod_{i<j} \abs{\alpha_j-\alpha_i} 
% \leq \prod_{i=1}^d (1+\abs {\alpha_i}^2+\cdots+\abs{\alpha_i}^{2(d-1)})^{1/2}
\leq d^{d/2} \prod_{i=1}^d \max(1,\abs{\alpha_i})^{d-1}. \]

b) Soit $P\in\C[X]$ un polyn\^ome de degr\'e~$d\geq 1$.
Montrer que son discriminant~$\Delta(P)$ et sa mesure de Mahler~$\mathrm M(P)$
sont
reli\'es par l'in\'egalit\'e, due  \`a \textsc{Mahler},
\[ \abs {\Delta(P)} \leq d^{d} \mathrm M(P)^{2d-2}. \]

c) Quels sont les polyn\^omes pour lesquels l'in\'egalit\'e pr\'ec\'edente
est une \'egalit\'e ?
\end{exer}
\begin{exer}
Soit $d$ un entier au moins \'egal \`a~$2$ et soit $f$
l'endomorphisme de~$\P^1$ donn\'e par $f=[U:V]$, avec $U=X^d$ et $V=Y^d$.

a)  L'ensemble de Julia homog\`ene rempli~$\mathscr J$ est le bidisque unit\'e,
ensemble des couples $(x,y)\in\C^2$ tels que $\abs {\strut x}\leq 1$ et $\abs y\leq 1$.

b) Lorsque $z_1,\dots,z_n$ parcourent~$K$, $\prod_{i\neq j}d(z_i,z_j)$
atteint son maximum en des points de la forme
$z_i=(x_i,y_i)$, o\`u $x_i$ et~$y_i$ sont de module~$1$.
Il suffit alors de consid\'erer les points de la forme $(x_i,1)$,
avec $x_i$ de module~$1$. 

c) D\'eduire alors de l'exercice pr\'ec\'edent que 
\[ \delta_n(K) = n^{1/(n-1)} \]
puis retrouver la prop.~\ref{prop.baker} dans ce cas particulier.
\end{exer}

\nocite{hindry-silverman2000,serre1997}
\bibliographystyle{mynat}
\bibliography{biblio}

\end{document}